\documentclass[11pt, fleqn]{article}

\usepackage{amssymb, amsthm, amsmath}

\setlength{\textheight}{22cm}
\setlength{\textwidth}{15cm}
\setlength{\oddsidemargin}{0.5cm}
\setlength{\evensidemargin}{0.5cm}
\setlength{\topmargin}{0cm}

\numberwithin{equation}{section}

\theoremstyle{plain}
\newtheorem{theorem}{Theorem}[section]
\newtheorem{lemma}{Lemma}[section]
\newtheorem{corollary}{Corollary}[section]

\theoremstyle{definition}
\newtheorem{definition}{Definition}[section]

\theoremstyle{remark}
\newtheorem{remark}{Remark}[section]


\title{The initial value problem for motion of micropolar fluids with heat conduction in Banach spaces}

\author{Ry\^{o}hei Kakizawa\thanks{Graduate School of Mathematical Sciences, The University of Tokyo, 3-8-1 Komaba Meguro-ku Tokyo 153-8914, Japan (\textit{E-mail address:} kakizawa@ms.u-tokyo.ac.jp)}}

\date{}


\begin{document}

\maketitle

\begin{abstract}
We consider the abstract initial value problem for the system of evolution equations which describe motion of micropolar fluids with heat conduction in a bounded domain.
This problem has uniquely a mild solution locally in time for general initial data, and globally in time for small initial data.
Moreover, a mild solution of this problem can be a strong or classical solution under appropriate assumptions for initial data.
We prove the above properties by the theory of analytic semigroups on Banach spaces.
\end{abstract}


\section{Introduction}
Let $\Omega$ be a bounded domain in $\mathbb{R}^{3}$ with its $C^{2,1}$-boundary $\partial\Omega$.
Motion of micropolar fluids with heat conduction in $\Omega$ is described by the system of eight equations as follows:
\begin{equation}
\begin{split}
&\mathrm{div}u=0 & \mathrm{in} \ \Omega\times(0,T), \\
&\rho\{\partial_{t}+(u\cdot\nabla)\}u=2\mu_{r}\mathrm{rot}\omega+\rho f(\theta)-\nabla p+(\mu+\mu_{r})\Delta u & \mathrm{in} \ \Omega\times(0,T), \\
&\rho\{\partial_{t}+(u\cdot\nabla)\}\omega=-4\mu_{r}\omega+2\mu_{r}\mathrm{rot}u+\rho g(\theta)+(c_{a}+c_{d})\Delta\omega+(c_{0}+c_{d}-c_{a})\nabla\mathrm{div}\omega & \mathrm{in} \ \Omega\times(0,T), \\
&\rho c_{v}\{\partial_{t}+(u\cdot\nabla)\}\theta=\Phi(u;\omega)+\kappa\Delta\theta & \mathrm{in} \ \Omega\times(0,T),
\end{split}
\end{equation}
where $u=(u_{1},u_{2},u_{3})$ is the fluid velocity, $\omega=(\omega_{1},\omega_{2},\omega_{3})$ is the angular velocity, $p$ is the pressure, $\theta$ is the absolute temperature, $\rho$ is the density, $\mu$ is the coefficient of viscosity, $\mu_{r}$ is the coefficient of microrotation viscosity, $c_{0}$, $c_{a}$ and $c_{d}$ are coefficients of angular viscosity, $\kappa$ is the coefficient of heat conductivity, $c_{v}$ is the specific heat at constant volume, $f=(f_{1},f_{2},f_{3})$ and $g=(g_{1},g_{2},g_{3})$ are external force fields affected by $\theta$, $\Phi(u;\omega)$ is the viscous dissipation function defined as
\begin{equation*}
\Phi(u;\omega)=\Phi(u,u;\omega,\omega), \ \Phi(u,v;\omega,\psi)=\Phi_{1}(u,v)+\Phi_{2}(u,v;\omega,\psi)+\Phi_{3}(\omega,\psi)+\Phi_{4}(\omega,\psi)+\Phi_{5}(\omega,\psi),
\end{equation*}
\begin{equation*}
\Phi_{1}(u,v)=2\mu D(u):D(v), \ D(u)=\frac{1}{2}(\nabla u+(\nabla u)^{T}), \ \Phi_{2}(u,v;\omega,\psi)=4\mu_{r}\left(\frac{1}{2}\mathrm{rot}u-\omega\right)\left(\frac{1}{2}\mathrm{rot}v-\psi\right),
\end{equation*}
\begin{equation*}
\Phi_{3}(\omega,\psi)=c_{0}(\mathrm{div}\omega)(\mathrm{div}\psi), \ \Phi_{4}(\omega,\psi)=(c_{a}+c_{d})\nabla\omega:\nabla\psi, \ \Phi_{5}(\omega,\psi)=(c_{d}-c_{a})\nabla\omega:(\nabla\psi)^{T},
\end{equation*}
$(\nabla u)^{T}$ and $(\nabla\psi)^{T}$ are transposed matrices of $\nabla u$ and $\nabla\psi$ respectively.
These equations correspond to the law of conservation of mass, momentum, angular momentum and energy respectively.
Moreover, it is required that $\rho$, $\mu$, $\mu_{r}$, $c_{0}$, $c_{a}$, $c_{d}$, $\kappa$ and $c_{v}$ are positive constants, $c_{0}+c_{d}>c_{a}$.
See, for example, \cite{Lamb, Lukaszewicz 2, Serrin} on conservation laws of fluid motion and the derivation of the above equations.

Micropolar fluids belong to a kind of viscous fluids with the asymmetric stress tensor.
The law of conservation of angular momentum must be taken into account due to asymmetry of the stress tensor.
It is quite natural from the continuum mechanical point of view that (1.1) can be regarded as a generalization of the Navier-Stokes equations of heat-conductive fluids.
If $\omega$, $g$, $\mu_{r}$, $c_{0}$, $c_{a}$ are $c_{d}$ are formally taken as zeros, then the Navier-Stokes equations of heat-conductive fluids are deduced from (1.1).
We can utilize micropolar fluid mechanics to consider physical phenomena with the micro structure.
More precisely, micropolar fluid mechanics is valid for motion of viscous fluids consisting of rigid and randomly oriented (or spherical) particles in the case where deformation of particles can be neglected.
Some problems related to (1.1) have been studied in recent years.
{\L}ukaszewicz \cite{Lukaszewicz 1} treated the initial-boundary value problem for the Navier-Stokes equations of micropolar fluids in $L^{2}_{\sigma}(\Omega)\times (L^{2}(\Omega))^{3}$, where $L^{p}_{\sigma}(\Omega)$ $(1<p<\infty)$ is the closed subspace of $(L^{p}(\Omega))^{3}$ defined as in section 2.
Moreover, Yamaguchi \cite{Yamaguchi} considered the existence and uniqueness of global solutions of the initial-boundary value problem for the Navier-Stokes equations of micropolar fluids in $L^{3}_{\sigma}(\Omega)\times (L^{3}(\Omega))^{3}$.
Kagei and Skowron \cite{Kagei} discussed the existence and uniqueness of solutions of the initial-boundary value problem for (1.1) in $L^{2}_{\sigma}(\Omega)\times (L^{2}(\Omega))^{3}\times L^{2}(\Omega)$.

The main purpose of this paper is to discuss the existence, uniqueness and regularity of solutions of the initial-boundary value problem for (1.1) with the following initial-boundary data:
\begin{equation}
\begin{split}
&u|_{t=0}=u_{0} & \mathrm{in} \ \Omega, \\
&u|_{\partial\Omega}=0 & \mathrm{on} \ \partial\Omega\times(0,T), \\
&\omega|_{t=0}=\omega_{0} & \mathrm{in} \ \Omega, \\
&\omega|_{\partial\Omega}=0 & \mathrm{on} \ \partial\Omega\times(0,T), \\
&\theta|_{t=0}=\theta_{0} & \mathrm{in} \ \Omega, \\
&\theta|_{\partial\Omega}=\theta_{s} & \mathrm{on} \ \partial\Omega\times(0,T),
\end{split}
\end{equation}
where $\theta_{s}$ is the surface temperature on $\partial\Omega$ assumed to be a nonnegative constant.
As is mentioned in \cite{Fujita, Giga 3, Hishida, Kakizawa}, the abstract initial value problem for (1.1), (1.2) in Banach spaces is a strong method of analyzing (1.1), (1.2) with initial data $(u_{0},\omega_{0},\theta_{0})$ in $L^{p}_{\sigma}(\Omega)\times (L^{q}(\Omega))^{3}\times L^{r}(\Omega)$ $(1<p<\infty, \ 1<q<\infty, \ 1<r<\infty)$.
It is explained in section 2 that we can transform (1.1), (1.2) into the following abstract initial value problem for the system of evolution equations:
\begin{equation}\tag{I}
\begin{split}
&d_{t}u+A_{p}u=F(u,\omega,\theta) & \mathrm{in} \ (0,T), \\
&d_{t}\omega+\Gamma_{q}\omega=G(u,\omega,\theta) & \mathrm{in} \ (0,T), \\
&d_{t}\theta+B_{r}\theta=H(u,\omega,\theta) & \mathrm{in} \ (0,T), \\
&u(0)=u_{0}, \\
&\omega(0)=\omega_{0}, \\
&\theta(0)=\theta_{0},
\end{split}
\end{equation}
where $A_{p}$, $\Gamma_{q}$ and $B_{r}$ are sectorial operators in $L^{p}_{\sigma}(\Omega)$, $(L^{q}(\Omega))^{3}$ and $L^{r}(\Omega)$ respectively, $F(u,\omega,\theta)$, $G(u,\omega,\theta)$ and $H(u,\omega,\theta)$ are nonlinear terms corresponding to $(1.1)_{2}$, $(1.1)_{3}$ and $(1.1)_{4}$ respectively.
It is well known in \cite[Chapter 3]{Henry}, \cite[Chapter 6]{Pazy} that we can consider not only strong solutions but also mild solutions of (1.1), (1.2).

It is proved in this paper that (1.1), (1.2) has uniquely a mild solution locally in time for general initial data, and globally in time for small initial data.
Moreover, a mild solution of this problem can be a strong or classical solution under appropriate assumptions for initial data.
By following the argument based on \cite{Fujita, Giga 3, Hishida, Kakizawa}, first of all, the existence of local mild solutions follows from the successive approximation method.
Second, not only the existence of global mild solutions but also the asymptotic behavior of global mild solutions are obtained by global a priori estimates for mild solutions of (1.1), (1.2).

This paper is organized as follows: In section 2, we define basic notation used in this paper and a strong and mild solution of (1.1), (1.2), and state our main results and some lemmas for them.
We prove the existence and uniqueness of mild solutions of (1.1), (1.2) in section 3.
The regularity of mild solutions of (1.1), (1.2) is discussed in sections 4 and 5.

\section{Preliminaries and main results}
\subsection{Function spaces}
Function spaces and basic notation which we use throughout this paper are introduced as follows: The norm in $L^{s}(\Omega)$ $(1\leq s\leq\infty)$ and the norm in $W^{k,s}(\Omega)$ (the Sobolev space, $k \in \mathbb{Z}$, $k\geq 0$) are denoted by $\|\cdot\|_{s}$ and $\|\cdot\|_{k,s}$ respectively, $W^{0,s}(\Omega)=L^{s}(\Omega)$, $\|\cdot\|_{0,s}=\|\cdot\|_{s}$.
$C^{\infty}_{0}(\Omega)$ is the set of all functions which are infinitely differentiable and have compact support in $\Omega$.
$W^{k,s}_{0}(\Omega)$ is the completion of $C^{\infty}_{0}(\Omega)$ in $W^{k,s}(\Omega)$.
Let us introduce solenoidal function spaces.
$C^{\infty}_{0,\sigma}(\Omega):=\{u \in (C^{\infty}_{0}(\Omega))^{3} \ ; \ \mathrm{div}u=0\}$.
$L^{p}_{\sigma}(\Omega)$ $(1<p<\infty)$ is the completion of $C^{\infty}_{0,\sigma}(\Omega)$ in $(L^{p}(\Omega))^{3}$.
It follows from \cite[Theorem 2]{Fujiwara} that $(L^{p}(\Omega))^{3}$ is decomposed into $(L^{p}(\Omega))^{3}=L^{p}_{\sigma}(\Omega)\oplus L^{p}_{\pi}(\Omega)$, where $L^{p}_{\pi}(\Omega)=\{\nabla p \ ; \ p \in W^{1,p}(\Omega)\}$.
Let $P_{p}$ be the projection of $(L^{p}(\Omega))^{3}$ onto $L^{p}_{\sigma}(\Omega)$.
$C^{k,\delta}(\overline{\Omega})$ $(0<\delta\leq 1)$ is the H\"{o}lder space defined as in \cite[1.26--1.29]{Adams}, $C^{k,0}(\overline{\Omega})=C^{k}(\overline{\Omega})$, $C^{0}(\overline{\Omega})=C(\overline{\Omega})$.

Let $I$ be an interval in $\mathbb{R}$, $X$ be a Banach space.
$C(I;X)$ is the set of all $X$-valued functions which are continuous in $I$.
$C^{k}(I;X)$ $(k \in \mathbb{Z}, \ k\geq 0)$ is the set of all $X$-valued function which are continuously differentiable up to the order $k$ in $I$, $C^{0}(I;X)=C(I;X)$.
In the case where $I$ is a bounded closed interval in $\mathbb{R}$, $C^{0,\delta}(I;X)$ $(0<\delta\leq 1)$ is the set of all $X$-valued function which are uniformly H\"{o}lder continuous with the exponent $\delta$ on $I$.
If $I$ is not bounded or closed, $u \in C^{0,\delta}(I;X)$ means that $u \in C^{0,\delta}(I_{1};X)$ for any bounded closed interval $I_{1}$ contained in $I$.
$C^{k,\delta}(I;X)$ is the set of all $X$-valued functions $u$ which $u \in C^{k}(I;X)$ and $d^{k}_{t}u \in C^{0,\delta}(I;X)$, $C^{k,0}(I;X)=C^{k}(I;X)$.

$C_{b}(\mathbb{R};\mathbb{R}^{3})$ is the set of all $\mathbb{R}^{3}$-valued functions which are bounded continuous in $\mathbb{R}$.
$C^{k}(\mathbb{R};\mathbb{R}^{3})$ $(k \in \mathbb{Z}, \ k\geq 0)$ is the set of all $\mathbb{R}^{3}$-valued functions which are continuously differentiable up to the order $k$ in $\mathbb{R}$, $C^{0}(\mathbb{R};\mathbb{R}^{3})=C(\mathbb{R};\mathbb{R}^{3})$.
$C^{0,1}(\mathbb{R};\mathbb{R}^{3})$ is the set of all $\mathbb{R}^{3}$-valued functions which are uniformly Lipschitz continuous in $\mathbb{R}$.

\subsection{Strongly elliptic operators}
For the sake of simplicity, we assume that $\rho=1$, $\mu+\mu_{r}=1$, $c_{0}=1/2$, $c_{a}=1/4$, $c_{d}=3/4$, $\kappa=1$, $c_{v}=1$ and $\theta_{s}=0$ throughout this paper.
Let us introduce three linear operators $A_{p}$ $(1<p<\infty)$, $\Gamma_{q}$ $(1<q<\infty)$ and $B_{r}$ $(1<r<\infty)$ which appeared in (I).
$\Gamma_{q}$ is the strongly elliptic operator in $(L^{q}(\Omega))^{3}$ with the zero Dirichlet boundary condition defined as $\Gamma_{q}=-\Delta-\nabla\mathrm{div}$, $D(\Gamma_{q})=(W^{2,q}(\Omega))^{3}\cap(W^{1,q}_{0}(\Omega))^{3}$, where $D(\Gamma_{q})$ is the domain of $\Gamma_{q}$.
$B_{r}$ is the Laplace operator in $L^{r}(\Omega)$ with the same boundary condition as above defined as $B_{r}=-\Delta$, $D(B_{r})=W^{2,r}(\Omega)\cap W^{1,r}_{0}(\Omega)$.
We introduce the Stokes operator $A_{p}$ in $L^{p}_{\sigma}(\Omega)$ by $A_{p}=-P_{p}\Delta$, $D(A_{p})=(D(B_{p}))^{3}\cap L^{p}_{\sigma}(\Omega)$.
It is well known in \cite[Theorems 2.5.2 and 7.3.6]{Pazy}, \cite[Theorem 1]{Giga 1} that $\Gamma_{q}$, $B_{r}$ and $A_{p}$ are sectorial operators in $(L^{q}(\Omega))^{3}$, $L^{r}(\Omega)$ and $L^{p}_{\sigma}(\Omega)$ respectively.
Therefore, $-\Gamma_{q}$ generates an uniformly bounded analytic semigroup $\{e^{-t\Gamma_{q}}\}_{t\geq 0}$ on $(L^{q}(\Omega))^{3}$, fractional powers $\Gamma^{\beta}_{q}$ of $\Gamma_{q}$ can be defined for any $\beta\geq 0$, $\Gamma^{0}_{q}=I_{q}$, where $I_{q}$ is the identity operator in $(L^{q}(\Omega))^{3}$.
Similarly to $\Gamma_{q}$, an uniformly bounded analytic semigroup $\{e^{-tB_{r}}\}_{t\geq 0}$ $(\{e^{-tA_{p}}\}_{t\geq 0})$ on $L^{r}(\Omega)$ $(L^{p}_{\sigma}(\Omega))$ is generated, fractional powers $B^{\gamma}_{r}$ $(A^{\alpha}_{p})$ of $B_{r}$ $(A_{p})$ are defined for any $\gamma\geq 0$ $(\alpha\geq 0)$.
Moreover, it follows from \cite[Theorem 3]{Giga 2} that $D(A^{\alpha}_{p})$ is characterized as $D(A^{\alpha}_{p})=(D(B^{\alpha}_{p}))^{3}\cap L^{p}_{\sigma}(\Omega)$ for any $0\leq \alpha\leq 1$.
Let us introduce Banach spaces derived from $A^{\alpha}_{p}$, $\Gamma^{\beta}_{q}$ and $B^{\gamma}_{r}$.
$X^{\alpha}_{p}$, $Y^{\beta}_{q}$ and $Z^{\gamma}_{r}$ are defined as $D(A^{\alpha}_{p})$, $D(\Gamma^{\beta}_{q})$ and $D(B^{\gamma}_{r})$ with the norm $\|\cdot\|_{X^{\alpha}_{p}}=\|A^{\alpha}_{p}\cdot\|_{p}$, $\|\cdot\|_{Y^{\beta}_{q}}=\|\Gamma^{\beta}_{q}\cdot\|_{q}$ and $\|\cdot\|_{Z^{\gamma}_{r}}=\|B^{\gamma}_{r}\cdot\|_{r}$ respectively.
$\Lambda_{1}$ is the first eigenvalue of the Laplace operator with the zero Dirichlet boundary condition.

We state some lemmas concerning sectorial operators in Banach spaces.
See, for example, \cite[Chapter 1]{Henry}, \cite[Chapter 2]{Pazy} on the theory of analytic semigroups on Banach spaces and fractional powers of sectorial operators.
\begin{lemma}
Let $1<p<\infty$, $1<q<\infty$, $1<r<\infty$, $\alpha\geq 0$, $\beta\geq 0$, $\gamma\geq 0$, $0<\lambda<\Lambda_{1}$.
Then
\begin{equation}
\|A^{\alpha}_{p}e^{-tA_{p}}u\|_{p}\leq C_{A_{p},\alpha,\lambda}t^{-\alpha}e^{-\lambda t}\|u\|_{p},
\end{equation}
\begin{equation}
\|\Gamma^{\beta}_{q}e^{-t\Gamma_{q}}\omega\|_{q}\leq C_{\Gamma_{q},\beta,\lambda}t^{-\beta}e^{-\lambda t}\|\omega\|_{q},
\end{equation}
\begin{equation}
\|B^{\gamma}_{r}e^{-tB_{r}}\theta\|_{r}\leq C_{B_{r},\gamma,\lambda}t^{-\gamma}e^{-\lambda t}\|\theta\|_{r}
\end{equation}
for any $u \in L^{p}_{\sigma}(\Omega)$, $\omega \in (L^{q}(\Omega))^{3}$, $\theta \in L^{r}(\Omega)$, where $C_{A_{p}, \alpha,\lambda}$, $C_{\Gamma_{q},\beta,\lambda}$ and $C_{B_{r},\gamma,\lambda}$ are positive constants.
\end{lemma}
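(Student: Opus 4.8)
The plan is to derive all three inequalities from a single abstract fact about sectorial operators, combined with a spectral lower bound. Recall from \cite[Theorem 1.4.3]{Henry} that if $A$ is a sectorial operator on a Banach space whose spectrum satisfies $\mathrm{Re}\,\sigma(A)>a$ for some $a>0$, then for every exponent $\alpha\ge 0$ there is a constant $C_{\alpha}$ with $\|A^{\alpha}e^{-tA}u\|\le C_{\alpha}t^{-\alpha}e^{-at}\|u\|$ for all $t>0$. Since $A_{p}$, $\Gamma_{q}$ and $B_{r}$ are already known to be sectorial, the whole lemma reduces to verifying that each of their spectra lies in the half-plane $\{\mathrm{Re}\,z\ge\Lambda_{1}\}$; one then applies the abstract estimate with $a=\lambda$, which is legitimate because $0<\lambda<\Lambda_{1}\le\inf\mathrm{Re}\,\sigma$ forces the strict inequality $\mathrm{Re}\,\sigma>\lambda$.

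First I would establish the spectral bound in the Hilbert-space case $p=q=r=2$ by an elementary quadratic-form computation. For $B_{2}=-\Delta$ with the zero Dirichlet condition, the Poincar\'e inequality gives $\langle B_{2}\theta,\theta\rangle=\|\nabla\theta\|_{2}^{2}\ge\Lambda_{1}\|\theta\|_{2}^{2}$, so that $\inf\mathrm{Re}\,\sigma(B_{2})\ge\Lambda_{1}$ and, by self-adjointness, $\Lambda_{1}$ is in fact the smallest eigenvalue. For $\Gamma_{2}=-\Delta-\nabla\mathrm{div}$ the same test field yields $\langle\Gamma_{2}\omega,\omega\rangle=\|\nabla\omega\|_{2}^{2}+\|\mathrm{div}\,\omega\|_{2}^{2}\ge\|\nabla\omega\|_{2}^{2}\ge\Lambda_{1}\|\omega\|_{2}^{2}$, the extra divergence term only helping. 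For the Stokes operator $A_{2}=-P_{2}\Delta$ one has $\langle A_{2}u,u\rangle=\|\nabla u\|_{2}^{2}\ge\Lambda_{1}\|u\|_{2}^{2}$ for $u\in D(A_{2})$, since $P_{2}$ is the orthogonal projection onto $L^{2}_{\sigma}(\Omega)$; hence again $\inf\mathrm{Re}\,\sigma(A_{2})\ge\Lambda_{1}$.

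The main obstacle is passing from $s=2$ to a general integrability exponent, where the quadratic-form argument is unavailable. Here I would invoke the fact that for these three operators on a bounded $C^{2,1}$-domain the spectrum is independent of the underlying $L^{s}$-space: the resolvents are consistent on $L^{s}(\Omega)\cap L^{2}(\Omega)$, and the eigenfunctions, being solutions of elliptic problems with smooth data, lie in every $L^{s}(\Omega)$, so that $\sigma(A_{p})=\sigma(A_{2})$, $\sigma(\Gamma_{q})=\sigma(\Gamma_{2})$ and $\sigma(B_{r})=\sigma(B_{2})$. For the Stokes operator and the elliptic systems this is precisely the $L^{p}$-theory underlying \cite{Giga 1, Giga 2}. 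Consequently $\mathrm{Re}\,\sigma(A_{p})\ge\Lambda_{1}$, $\mathrm{Re}\,\sigma(\Gamma_{q})\ge\Lambda_{1}$ and $\mathrm{Re}\,\sigma(B_{r})\ge\Lambda_{1}$ for all admissible $p$, $q$, $r$.

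Finally, applying \cite[Theorem 1.4.3]{Henry} with $a=\lambda$ to each of $A_{p}$, $\Gamma_{q}$ and $B_{r}$ produces (2.1), (2.2) and (2.3) with suitable constants $C_{A_{p},\alpha,\lambda}$, $C_{\Gamma_{q},\beta,\lambda}$ and $C_{B_{r},\gamma,\lambda}$, completing the argument. I expect the $L^{s}$-independence of the spectrum to be the only non-routine point; the remainder is a direct citation together with the elementary form estimates above.
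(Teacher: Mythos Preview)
Your argument is correct and is essentially the same as the paper's: both reduce the lemma to \cite[Theorem 1.4.3]{Henry}, with your version supplying the spectral-bound verification (via the $L^{2}$ quadratic-form estimate and $L^{s}$-independence of the spectrum) that the paper leaves implicit in its one-line citation.
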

\begin{proof}
It is \cite[Theorem 1.4.3]{Henry}.
\end{proof}
\begin{lemma}
Let $1<p<\infty$, $1<q<\infty$, $1<r<\infty$, $0<\alpha\leq 1$, $0<\beta\leq 1$, $0<\gamma\leq 1$.
Then
\begin{equation}
\|(e^{-tA_{p}}-I_{p})u\|_{p}\leq C_{A_{p},\alpha}t^{\alpha}\|u\|_{X^{\alpha}_{p}},
\end{equation}
\begin{equation}
\|(e^{-t\Gamma_{q}}-I_{q})\omega\|_{q}\leq C_{\Gamma_{q},\beta}t^{\beta}\|\omega\|_{Y^{\beta}_{q}},
\end{equation}
\begin{equation}
\|(e^{-tB_{r}}-I_{r})\theta\|_{r}\leq C_{B_{r},\gamma}t^{\gamma}\|\theta\|_{Z^{\gamma}_{r}}
\end{equation}
for any $u \in X^{\alpha}_{p}$, $\omega \in Y^{\beta}_{q}$, $\theta \in Z^{\gamma}_{r}$, where $C_{A_{p},\alpha}$, $C_{\Gamma_{q},\beta}$ and $C_{B_{r},\gamma}$ are positive constants.
\end{lemma}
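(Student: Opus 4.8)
The three estimates (2.4)--(2.6) are proved by the same argument, so the plan is to concentrate on (2.6) for $B_r$; the reasoning for (2.4) and (2.5) is verbatim after replacing $B_r,B^{\gamma}_{r},\gamma,Z^{\gamma}_{r}$ by the corresponding objects for $A_p$ and $\Gamma_q$. I would derive everything from the smoothing estimate (2.3) of Lemma 2.1 together with the fundamental identity
$$(e^{-tB_{r}}-I_{r})\theta=-\int_{0}^{t}B_{r}e^{-sB_{r}}\theta\,ds,$$
which converts the difference on the left into an integral whose integrand can be controlled pointwise in $s$.

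First I would establish this identity for $\theta\in Z^{\gamma}_{r}=D(B^{\gamma}_{r})$. For $s>0$, analyticity of the semigroup gives $e^{-sB_{r}}\theta\in D(B_{r})$ and $\tfrac{d}{ds}e^{-sB_{r}}\theta=-B_{r}e^{-sB_{r}}\theta$, so for any $\epsilon\in(0,t)$ the fundamental theorem of calculus for $L^{r}(\Omega)$-valued functions yields $e^{-tB_{r}}\theta-e^{-\epsilon B_{r}}\theta=-\int_{\epsilon}^{t}B_{r}e^{-sB_{r}}\theta\,ds$. Letting $\epsilon\to 0^{+}$, the left-hand side converges to $(e^{-tB_{r}}-I_{r})\theta$ by strong continuity of the (uniformly bounded analytic, hence $C_{0}$) semigroup at $s=0$, while the integral converges because its integrand is absolutely integrable near the origin, as the next step shows.

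The key pointwise bound comes from factoring $B_{r}e^{-sB_{r}}\theta=B^{1-\gamma}_{r}e^{-sB_{r}}B^{\gamma}_{r}\theta$ and applying (2.3) with exponent $1-\gamma\in[0,1)$ (legitimate since $0<\gamma\leq 1$): for $0<\lambda<\Lambda_{1}$,
$$\|B_{r}e^{-sB_{r}}\theta\|_{r}\leq C_{B_{r},1-\gamma,\lambda}\,s^{\gamma-1}e^{-\lambda s}\|B^{\gamma}_{r}\theta\|_{r}=C_{B_{r},1-\gamma,\lambda}\,s^{\gamma-1}e^{-\lambda s}\|\theta\|_{Z^{\gamma}_{r}}.$$
Since $\gamma>0$, the factor $s^{\gamma-1}$ is integrable at the origin, which both justifies the passage $\epsilon\to 0^{+}$ above and gives the final estimate: integrating and using $\int_{0}^{t}s^{\gamma-1}e^{-\lambda s}\,ds\leq\int_{0}^{t}s^{\gamma-1}\,ds=t^{\gamma}/\gamma$, I obtain
$$\|(e^{-tB_{r}}-I_{r})\theta\|_{r}\leq\int_{0}^{t}\|B_{r}e^{-sB_{r}}\theta\|_{r}\,ds\leq\frac{C_{B_{r},1-\gamma,\lambda}}{\gamma}\,t^{\gamma}\|\theta\|_{Z^{\gamma}_{r}},$$
so the claim holds with $C_{B_{r},\gamma}=C_{B_{r},1-\gamma,\lambda}/\gamma$.

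The only delicate point is the integral identity for $\theta\in D(B^{\gamma}_{r})$ with $\gamma<1$, since such $\theta$ need not lie in $D(B_{r})$ and the integrand $B_{r}e^{-sB_{r}}\theta$ genuinely blows up like $s^{\gamma-1}$ as $s\to 0^{+}$. The $\epsilon$-truncation disposes of this non-integrable endpoint cleanly, and the hypothesis $\gamma>0$ is exactly what makes $s^{\gamma-1}$ integrable and the whole argument close; at the endpoint $\gamma=1$ the estimate degenerates to the elementary bound $\|(e^{-tB_{r}}-I_{r})\theta\|_{r}\leq(\sup_{s\geq 0}\|e^{-sB_{r}}\|)\,t\,\|B_{r}\theta\|_{r}$ coming directly from the identity and uniform boundedness of the semigroup.
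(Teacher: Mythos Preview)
Your proof is correct. The paper does not give an argument at all but simply cites \cite[Theorem 1.4.3]{Henry}; your integral-representation argument combined with the smoothing bound of Lemma~2.1 is exactly the standard proof of that theorem, so you have spelled out what the reference contains rather than taken a different route.
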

\begin{proof}
It is \cite[Theorem 1.4.3]{Henry}.
\end{proof}
\begin{lemma}
Let $1<p<\infty$, $1<q<\infty$, $1<r<\infty$, $0<\alpha\leq 1$, $0<\beta\leq 1$, $0<\gamma\leq 1$.
Then
\begin{equation}
\|e^{-tA_{p}}u\|_{X^{\alpha}_{p}}=o(t^{-\alpha}) \ \mathrm{as} \ t\rightarrow +0,
\end{equation}
\begin{equation}
\|e^{-t\Gamma_{q}}\omega\|_{Y^{\beta}_{q}}=o(t^{-\beta}) \ \mathrm{as} \ t\rightarrow +0,
\end{equation}
\begin{equation}
\|e^{-tB_{r}}\theta\|_{Z^{\gamma}_{r}}=o(t^{-\gamma}) \ \mathrm{as} \ t\rightarrow +0
\end{equation}
for any $u \in L^{p}_{\sigma}(\Omega)$, $\omega \in (L^{q}(\Omega))^{3}$, $\theta \in L^{r}(\Omega)$.
\end{lemma}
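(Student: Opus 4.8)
The plan is to reduce the little-$o$ assertion (2.7) to the big-$O$ estimate (2.1) of Lemma 2.1 together with a density argument. Recalling that $\|e^{-tA_{p}}u\|_{X^{\alpha}_{p}}=\|A^{\alpha}_{p}e^{-tA_{p}}u\|_{p}$ by definition of the norm on $X^{\alpha}_{p}$, it suffices to prove $t^{\alpha}\|A^{\alpha}_{p}e^{-tA_{p}}u\|_{p}\to 0$ as $t\to+0$. I would carry this out only for (2.7); the proofs of (2.8) and (2.9) are verbatim the same, with $(\Gamma_{q},Y^{\beta}_{q})$ and $(B_{r},Z^{\gamma}_{r})$ in place of $(A_{p},X^{\alpha}_{p})$.

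First I would dispose of the case $v\in X^{\alpha}_{p}=D(A^{\alpha}_{p})$. Here the fractional power commutes with the semigroup, so that $A^{\alpha}_{p}e^{-tA_{p}}v=e^{-tA_{p}}A^{\alpha}_{p}v$; since $\{e^{-tA_{p}}\}_{t\geq 0}$ is uniformly bounded, say by $M$, this yields
\[
t^{\alpha}\|A^{\alpha}_{p}e^{-tA_{p}}v\|_{p}=t^{\alpha}\|e^{-tA_{p}}A^{\alpha}_{p}v\|_{p}\leq Mt^{\alpha}\|v\|_{X^{\alpha}_{p}}\longrightarrow 0
\]
as $t\to+0$, which is the desired conclusion on the dense subspace.

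For general $u\in L^{p}_{\sigma}(\Omega)$ I would use an $\varepsilon$-approximation. Since $D(A_{p})\subset X^{\alpha}_{p}$ and $D(A_{p})$ is dense in $L^{p}_{\sigma}(\Omega)$, given $\varepsilon>0$ I can choose $v\in X^{\alpha}_{p}$ with $\|u-v\|_{p}<\varepsilon$. Writing $u=(u-v)+v$ and applying (2.1) to the first summand gives
\[
t^{\alpha}\|A^{\alpha}_{p}e^{-tA_{p}}u\|_{p}\leq C_{A_{p},\alpha,\lambda}e^{-\lambda t}\|u-v\|_{p}+t^{\alpha}\|A^{\alpha}_{p}e^{-tA_{p}}v\|_{p}.
\]
As $t\to+0$ the second term tends to $0$ by the previous step, while the first is at most $C_{A_{p},\alpha,\lambda}\varepsilon$; hence $\limsup_{t\to+0}t^{\alpha}\|A^{\alpha}_{p}e^{-tA_{p}}u\|_{p}\leq C_{A_{p},\alpha,\lambda}\varepsilon$. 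Since $\varepsilon>0$ is arbitrary, the $\limsup$ equals $0$, which is precisely (2.7).

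The argument is essentially routine, and the only points needing care are the two structural facts invoked above: the commutation $A^{\alpha}_{p}e^{-tA_{p}}=e^{-tA_{p}}A^{\alpha}_{p}$ on $D(A^{\alpha}_{p})$, and the density of $D(A^{\alpha}_{p})$ in $L^{p}_{\sigma}(\Omega)$. Both are standard properties of fractional powers of sectorial operators generating analytic semigroups; in particular, the density is inherited from the density of $D(A_{p})$ together with the inclusion $D(A_{p})\subset D(A^{\alpha}_{p})$, valid for $0<\alpha\leq 1$. I expect no genuine obstacle beyond correctly citing these facts from \cite[Chapter 1]{Henry}, \cite[Chapter 2]{Pazy}.
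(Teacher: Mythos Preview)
Your proof is correct and is precisely the standard density argument underlying \cite[Exercise~1.4.10]{Henry}, which is all the paper invokes for its proof. There is nothing to add: the paper gives only the citation, and your write-up is the natural expansion of it.
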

\begin{proof}
It is \cite[Exercise 1.4.10]{Henry}.
\end{proof}
\begin{lemma}
Let $1<p<\infty$, $1<q<\infty$, $1<r<\infty$, $0\leq\alpha\leq 1$, $0\leq\beta\leq 1$, $0\leq\gamma\leq 1$.
Then
\begin{equation}
X^{\alpha}_{p}\hookrightarrow (W^{k,s}(\Omega))^{3} \ \mathrm{if} \ \frac{1}{p}-\frac{2\alpha-k}{3}\leq\frac{1}{s}\leq\frac{1}{p},
\end{equation}
\begin{equation}
Y^{\beta}_{q}\hookrightarrow (W^{k,s}(\Omega))^{3} \ \mathrm{if} \ \frac{1}{q}-\frac{2\beta-k}{3}\leq\frac{1}{s}\leq\frac{1}{q},
\end{equation}
\begin{equation}
Z^{\gamma}_{r}\hookrightarrow W^{k,s}(\Omega) \ \mathrm{if} \ \frac{1}{r}-\frac{2\gamma-k}{3}\leq\frac{1}{s}\leq\frac{1}{r},
\end{equation}
where $\hookrightarrow$ is the continuous inclusion.
\end{lemma}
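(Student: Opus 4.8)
The three assertions share one mechanism, so the plan is to prove the scalar statement for $Z^{\gamma}_{r}=D(B^{\gamma}_{r})$ in full and then obtain the other two by running the identical argument on $\Gamma_{q}$ and $A_{p}$. What makes the argument uniform is that each of $B_{r}$, $\Gamma_{q}$, $A_{p}$ is a sectorial operator of differential order $2$ on a bounded $C^{2,1}$ domain whose full domain already lies in the second-order Sobolev space: $D(B_{r})=W^{2,r}(\Omega)\cap W^{1,r}_{0}(\Omega)\hookrightarrow W^{2,r}(\Omega)$, $D(\Gamma_{q})\hookrightarrow(W^{2,q}(\Omega))^{3}$, and $D(A_{p})=(D(B_{p}))^{3}\cap L^{p}_{\sigma}(\Omega)\hookrightarrow(W^{2,p}(\Omega))^{3}$. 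It therefore suffices to show, for such an operator $\mathcal{L}$, that the fractional power domain $D(\mathcal{L}^{\theta})$ carries Sobolev regularity of order $2\theta$; the stated index ranges are then exactly the scaling conditions of the Sobolev embedding theorem with smoothness $2\theta$, dimension $3$, and base exponent $r$, $q$, or $p$.

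For the scalar case I would first identify $D(B^{\gamma}_{r})$ with a concrete function space by interpolation. The Dirichlet Laplacian $B_{r}=-\Delta$ on a smooth bounded domain has bounded imaginary powers (Seeley), hence admits the complex-interpolation identity
\[
D(B^{\gamma}_{r})=[L^{r}(\Omega),\,D(B_{r})]_{\gamma}\qquad(0\le\gamma\le1),
\]
with norm equivalent to $\|\cdot\|_{Z^{\gamma}_{r}}=\|B^{\gamma}_{r}\cdot\|_{r}$, which is legitimate because $B_{r}$ has bounded imaginary powers and $0\notin\sigma(B_{r})$. By functoriality of complex interpolation, the inclusion $D(B_{r})\hookrightarrow W^{2,r}(\Omega)$ interpolated against $L^{r}(\Omega)=W^{0,r}(\Omega)$ gives $D(B^{\gamma}_{r})\hookrightarrow[L^{r},W^{2,r}]_{\gamma}=H^{2\gamma}_{r}(\Omega)$, the Bessel potential space of order $2\gamma$.

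It then remains to invoke the Sobolev embedding for Bessel potential spaces: for $0\le k\le2\gamma$ one has $H^{2\gamma}_{r}(\Omega)\hookrightarrow W^{k,s}(\Omega)$ precisely when $2\gamma-\tfrac{3}{r}\ge k-\tfrac{3}{s}$ and $s\ge r$, that is, when $\tfrac{1}{r}-\tfrac{2\gamma-k}{3}\le\tfrac{1}{s}\le\tfrac{1}{r}$, which is the asserted range; chaining the two inclusions yields $Z^{\gamma}_{r}\hookrightarrow W^{k,s}(\Omega)$, the case $\gamma=0$ being the trivial $Z^{0}_{r}=L^{r}(\Omega)$. For $Y^{\beta}_{q}$ the same three steps apply verbatim to the strongly elliptic system $\Gamma_{q}$, which likewise has bounded imaginary powers and domain in $(W^{2,q}(\Omega))^{3}$, giving $Y^{\beta}_{q}\hookrightarrow(H^{2\beta}_{q}(\Omega))^{3}\hookrightarrow(W^{k,s}(\Omega))^{3}$. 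For $X^{\alpha}_{p}$ I would instead use the characterization $D(A^{\alpha}_{p})=(D(B^{\alpha}_{p}))^{3}\cap L^{p}_{\sigma}(\Omega)$ for $0\le\alpha\le1$ from \cite[Theorem 3]{Giga 2}, with equivalent norms, to reduce directly to the scalar embedding applied componentwise.

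The main obstacle is the second step: turning the abstract object $D(\mathcal{L}^{\theta})$ into a genuine Sobolev-type space. This rests on the bounded-imaginary-power property of the Dirichlet Laplacian, of the system $\Gamma_{q}$, and (for the reduction used here) on Giga's domain characterization of the Stokes operator, which is what licenses the interpolation identity and the identification with $H^{2\theta}$. The accompanying care about Dirichlet boundary conditions in $[L^{r},W^{2,r}\cap W^{1,r}_{0}]_{\gamma}$ is secondary, since these only alter the space when $2\gamma>1/r$ and never weaken the embedding. The only other delicate point is the borderline case $\tfrac{1}{s}=\tfrac{1}{r}-\tfrac{2\gamma-k}{3}$, where the scaling holds with equality and the elementary Sobolev inequality is insufficient; there one must use the sharp fractional (Bessel potential) Sobolev embedding, which nonetheless remains valid as a bounded inclusion. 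Everything else is routine index bookkeeping.
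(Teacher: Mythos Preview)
Your argument is correct. The paper's own proof is simply the one-line citation ``It is \cite[Theorem 1.6.1]{Henry},'' so there is no in-paper argument to compare against beyond the content of that reference.

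That said, your route and Henry's are genuinely different. Henry's Theorem 1.6.1 is proved for an arbitrary sectorial operator whose full domain sits in $W^{m,p}$: the mechanism is the moment inequality $\|A^{\theta}x\|\le C\|Ax\|^{\theta}\|x\|^{1-\theta}$ (valid for any sectorial $A$) combined with the Gagliardo--Nirenberg interpolation inequality, and it does not require bounded imaginary powers. Your approach instead uses BIP (Seeley for $B_{r}$ and $\Gamma_{q}$) to identify $D(\mathcal{L}^{\theta})$ with the complex interpolation space and then invokes the sharp Bessel-potential Sobolev embedding; for $A_{p}$ you sidestep the BIP question by reducing to the scalar case via Giga's domain characterization $D(A^{\alpha}_{p})=(D(B^{\alpha}_{p}))^{3}\cap L^{p}_{\sigma}(\Omega)$, which is exactly the right move. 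What your approach buys is the endpoint $\tfrac{1}{s}=\tfrac{1}{p}-\tfrac{2\alpha-k}{3}$ cleanly, since the Bessel-potential embedding is sharp there, whereas Henry's argument in its standard form yields only the strict inequality. What Henry's approach buys is generality and economy: it needs no operator-specific input beyond the domain inclusion $D(\mathcal{L})\hookrightarrow W^{2,p}$, so it applies uniformly without invoking Seeley or Giga.
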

\begin{proof}
It is \cite[Theorem 1.6.1]{Henry}.
\end{proof}

\subsection{Abstract initial value problem for \rm{(1.1)}, \rm{(1.2)}}
Let $1<p<\infty$, $1<q<\infty$, $1<r<\infty$, $0\leq \alpha_{0}<1$, $0\leq \beta_{0}<1$, $0\leq\gamma_{0}<1$, $u_{0} \in X^{\alpha_{0}}_{p}$, $\omega_{0} \in Y^{\beta_{0}}_{q}$, $\theta_{0} \in Z^{\gamma_{0}}_{r}$.
Then we apply $P_{p}$ to $(1.1)_{2}$, and get the following abstract initial value problem:
\begin{equation}\tag{I}
\begin{split}
&d_{t}u+A_{p}u=F(u,\omega,\theta) & \mathrm{in} \ (0,T], \\
&d_{t}\omega+\Gamma_{q}\omega=G(u,\omega,\theta) & \mathrm{in} \ (0,T], \\
&d_{t}\theta+B_{r}\theta=H(u,\omega,\theta) & \mathrm{in} \ (0,T], \\
&u(0)=u_{0}, \\
&\omega(0)=\omega_{0}, \\
&\theta(0)=\theta_{0},
\end{split}
\end{equation}
where
\begin{equation*}
F(u,\omega,\theta):=-P_{p}(u\cdot\nabla)u+2\mu_{r}P_{p}\mathrm{rot}\omega+P_{p}f(\theta),
\end{equation*}
\begin{equation*}
G(u,\omega,\theta):=-(u\cdot\nabla)\omega-4\mu_{r}\omega+2\mu_{r}\mathrm{rot}u+g(\theta),
\end{equation*}
\begin{equation*}
H(u,\omega,\theta):=-(u\cdot\nabla)\theta+\Phi(u;\omega).
\end{equation*}
In order to solve (I), first of all, we shall find a solution satisfying the following abstract integral equations related to (I):
\begin{equation}\tag{II}
\begin{split}
&u(t)=e^{-tA_{p}}u_{0}+\displaystyle\int^{t}_{0}e^{-(t-s)A_{p}}F(u,\omega,\theta)(s)ds, \\
&\omega(t)=e^{-t\Gamma_{q}}\omega_{0}+\displaystyle\int^{t}_{0}e^{-(t-s)\Gamma_{q}}G(u,\omega,\theta)(s)ds, \\
&\theta(t)=e^{-tB_{r}}\theta_{0}+\displaystyle\int^{t}_{0}e^{-(t-s)B_{r}}H(u,\omega,\theta)(s)ds
\end{split}
\end{equation}
for any $0\leq t\leq T$.
Let us introduce a strong and mild solution of (1.1), (1.2) defined on $[0,T]$.
A strong and mild solution of (1.1), (1.2) defined on $[0,\infty)$ is similarly defined.
\begin{definition}
$(u,\omega,\theta)$ is called a strong solution of (1.1), (1.2) if it satisfies
\begin{equation*}
u \in C([0,T];X^{\alpha_{0}}_{p})\cap C((0,T];X^{1}_{p}), \ d_{t}u \in C((0,T];L^{p}_{\sigma}(\Omega)),
\end{equation*}
\begin{equation*}
\omega \in C([0,T];X^{\beta_{0}}_{q})\cap C((0,T];Y^{1}_{q}), \ d_{t}\omega \in C((0,T];(L^{q}(\Omega))^{3}),
\end{equation*}
\begin{equation*}
\theta \in C([0,T];Z^{\gamma_{0}}_{r})\cap C((0,T];Z^{1}_{r}), \ d_{t}\theta \in C((0,T];L^{r}(\Omega))
\end{equation*}
and (I).
\end{definition}
\begin{definition}
$(u,\omega,\theta)$ is called a mild solution of (1.1), (1.2) if it satisfies
\begin{equation*}
u \in C([0,T];X^{\alpha_{0}}_{p}),
\end{equation*}
\begin{equation*}
\omega \in C([0,T];Y^{\beta_{0}}_{q}),
\end{equation*}
\begin{equation*}
\theta \in C([0,T];Z^{\gamma_{0}}_{r})
\end{equation*}
and (II).
\end{definition}

\subsection{Main results}
We will state our main results in this subsection.
It is essential for our main results to be assumed that $f \in C^{0,1}(\mathbb{R};\mathbb{R}^{3})$ with the Lipschitz constant $L_{f}$, $f(0)=0$, $g \in C^{0,1}(\mathbb{R};\mathbb{R}^{3})$ with the Lipschitz constant $L_{g}$, $g(0)=0$, $p$, $q$, $r$, $\alpha_{0}$, $\beta_{0}$ and $\gamma_{0}$ satisfy the following inequalities:
\begin{equation}
\begin{split}
&1<p<\infty, \ 1<q<\infty, \ 1<r<\infty, \ \left|\frac{1}{p}-\frac{1}{q}\right|<\frac{1}{3}, \\
&\frac{1}{p}-\frac{1}{2r}<\frac{1}{3}, \ \frac{1}{r}-\frac{1}{p}<\frac{2}{3}, \ \frac{1}{q}-\frac{1}{2r}<\frac{1}{3}, \ \frac{1}{r}-\frac{1}{q}<\frac{2}{3},
\end{split}
\end{equation}
\begin{equation}
\begin{split}
&\max\left\{0, \frac{3}{2p}-\frac{1}{2}\right\}\leq\alpha_{0}<1, \ 0\leq\beta_{0}<1, \ 0\leq\gamma_{0}<1, \\
&\alpha_{0}-\beta_{0}-\frac{3}{2}\left(\frac{1}{p}-\frac{1}{q}\right)\leq\frac{1}{2}, \\
&\alpha_{0}-\frac{\gamma_{0}}{2}-\frac{3}{2}\left(\frac{1}{p}-\frac{1}{2r}\right)\geq 0, \ -1<\alpha_{0}-\gamma_{0}-\frac{3}{2}\left(\frac{1}{p}-\frac{1}{r}\right)\leq 1, \\
&\beta_{0}-\frac{\gamma_{0}}{2}-\frac{3}{2}\left(\frac{1}{q}-\frac{1}{2r}\right)\geq 0, \ -1<\beta_{0}-\gamma_{0}-\frac{3}{2}\left(\frac{1}{q}-\frac{1}{r}\right)\leq 1.
\end{split}
\end{equation}
The first purpose of this paper is to discuss the existence and uniqueness of mild solutions of (1.1), (1.2).
We shall prove the following theorems:
\begin{theorem}
Let $f \in C^{0,1}(\mathbb{R};\mathbb{R}^{3})$ with the Lipschitz constant $L_{f}$, $f(0)=0$, $g \in C^{0,1}(\mathbb{R};\mathbb{R}^{3})$ with the Lipschitz constant $L_{g}$, $g(0)=0$, $p$, $q$, $r$, $\alpha_{0}$, $\beta_{0}$ and $\gamma_{0}$ satisfy $(2.13)$, $(2.14)$, $u_{0} \in X^{\alpha_{0}}_{p}$, $\omega_{0} \in Y^{\beta_{0}}_{q}$, $\theta_{0} \in Z^{\gamma_{0}}_{r}$.
Then there exists a positive constant $T_{*}\leq T$ depending only on $\Omega$, $p$, $q$, $r$, $\alpha_{0}$, $\beta_{0}$, $\gamma_{0}$, $u_{0}$, $\omega_{0}$, $\theta_{0}$, $\mu_{r}$, $L_{f}$, $L_{g}$ and $T$ such that $(1.1)$, $(1.2)$ has uniquely a mild solution $(u,\omega,\theta)$ on $[0,T_{*}]$ satisfying
\begin{equation*}
\mathrm{(i)} \ t^{\alpha-\alpha_{0}}u \in C([0,T_{*}];X^{\alpha}_{p}),
\end{equation*}
\begin{equation*}
t^{\beta-\beta_{0}}\omega \in C([0,T_{*}];Y^{\beta}_{q}),
\end{equation*}
\begin{equation*}
t^{\gamma-\gamma_{0}}\theta \in C([0,T_{*}];Z^{\gamma}_{r}),
\end{equation*}
\begin{equation}
\|u(t)\|_{X^{\alpha}_{p}}\leq Ct^{\alpha_{0}-\alpha}(\|u_{0}\|_{X^{\alpha_{0}}_{p}}+\|\omega_{0}\|_{Y^{\beta_{0}}_{q}}+\|\theta_{0}\|_{Z^{\gamma_{0}}_{r}}),
\end{equation}
\begin{equation}
\|\omega(t)\|_{Y^{\beta}_{q}}\leq Ct^{\beta_{0}-\beta}(\|u_{0}\|_{X^{\alpha_{0}}_{p}}+\|\omega_{0}\|_{Y^{\beta_{0}}_{q}}+\|\theta_{0}\|_{Z^{\gamma_{0}}_{r}}),
\end{equation}
\begin{equation}
\|\theta(t)\|_{Z^{\gamma}_{r}}\leq Ct^{\gamma_{0}-\gamma}(\|u_{0}\|_{X^{\alpha_{0}}_{p}}+\|\omega_{0}\|_{Y^{\beta_{0}}_{q}}+\|\theta_{0}\|_{Z^{\gamma_{0}}_{r}})
\end{equation}
for any $\alpha_{0}\leq\alpha<1$, $\beta_{0}\leq\beta<1$, $\gamma_{0}\leq\gamma<1$, $0<t\leq T_{*}$, where $C$ is a positive constant independent of $u$, $\omega$, $\theta$ and $t$.
\begin{equation}
\mathrm{(ii)} \ \|u(t)\|_{X^{\alpha}_{p}}=o(t^{\alpha_{0}-\alpha}) \ \mathrm{as} \ t\rightarrow +0,
\end{equation}
\begin{equation}
\|\omega(t)\|_{Y^{\beta}_{q}}=o(t^{\beta_{0}-\beta}) \ \mathrm{as} \ t\rightarrow +0,
\end{equation}
\begin{equation}
\|\theta(t)\|_{Z^{\gamma}_{r}}=o(t^{\gamma_{0}-\gamma}) \ \mathrm{as} \ t\rightarrow +0
\end{equation}
for any $\alpha_{0}<\alpha<1$, $\beta_{0}<\beta<1$, $\gamma_{0}<\gamma<1$.
\end{theorem}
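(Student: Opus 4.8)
The plan is to construct the solution by the successive-approximation (Banach fixed-point) method applied to the integral system (II), following the Fujita--Kato scheme of \cite{Fujita, Giga 3, Hishida, Kakizawa}. First I would fix working exponents $\alpha,\beta,\gamma$ with $\alpha_{0}<\alpha<1$, $\beta_{0}<\beta<1$, $\gamma_{0}<\gamma<1$ chosen as close to the upper endpoints as the inequalities (2.13)--(2.14) require, and set up the complete metric space
\[
K=\bigl\{(u,\omega,\theta): t^{\alpha-\alpha_{0}}u\in C([0,T_{*}];X^{\alpha}_{p}),\ t^{\beta-\beta_{0}}\omega\in C([0,T_{*}];Y^{\beta}_{q}),\ t^{\gamma-\gamma_{0}}\theta\in C([0,T_{*}];Z^{\gamma}_{r})\bigr\}
\]
equipped with the supremum over $0<t\le T_{*}$ of the weighted fractional-power norms, restricted to a ball of radius $R$ comparable to $\|u_{0}\|_{X^{\alpha_{0}}_{p}}+\|\omega_{0}\|_{Y^{\beta_{0}}_{q}}+\|\theta_{0}\|_{Z^{\gamma_{0}}_{r}}$. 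The map $\Psi$ sends $(u,\omega,\theta)$ to the right-hand side of (II). By Lemma 2.1 the linear parts $e^{-tA_{p}}u_{0}$, $e^{-t\Gamma_{q}}\omega_{0}$, $e^{-tB_{r}}\theta_{0}$ obey the weighted bounds with the correct homogeneity $t^{\alpha_{0}-\alpha}$, so the entire difficulty lies in estimating the Duhamel integrals.

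Second, and this is the technical heart, I would estimate each term of $F$, $G$, $H$. Applying $A^{\alpha}_{p}e^{-(t-s)A_{p}}$ (and its analogues) and extracting the factor $(t-s)^{-\alpha}e^{-\lambda(t-s)}$ from Lemma 2.1, it remains to bound each nonlinear term in $L^{p}$ (resp.\ $L^{q}$, $L^{r}$) by the weighted norms of $(u,\omega,\theta)$. For the convective terms I would use H\"older's inequality, e.g.\ $\|(u\cdot\nabla)u\|_{p}\le\|u\|_{s_{1}}\|\nabla u\|_{s_{2}}$ with $1/s_{1}+1/s_{2}=1/p$, and convert each factor into $X^{\alpha}_{p}$-norms through the Sobolev embeddings of Lemma 2.4; the lower bound $\alpha_{0}\ge\max\{0,\tfrac{3}{2p}-\tfrac12\}$ is exactly what keeps the resulting time singularity $s^{-\eta}$ integrable ($\eta<1$). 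The first-order cross terms $\mathrm{rot}\,\omega$ and $\mathrm{rot}\,u$, which transfer between the $L^{p}$ and $L^{q}$ scales, are handled by $|1/p-1/q|<1/3$ and the coupling bound $\alpha_{0}-\beta_{0}-\tfrac{3}{2}(1/p-1/q)\le\tfrac12$, while the Lipschitz source terms $f(\theta),g(\theta)$ are controlled by $f(0)=g(0)=0$ and the embeddings governed by $1/r-1/p<2/3$, $1/r-1/q<2/3$. After the Beta-integral $\int_{0}^{t}(t-s)^{-\alpha}s^{-\eta}\,ds=B(1-\alpha,1-\eta)\,t^{1-\alpha-\eta}$ each contribution has the required homogeneity $t^{\alpha_{0}-\alpha}$; all the exponent bookkeeping is precisely what (2.13)--(2.14) encode.

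The step I expect to be the main obstacle is the viscous dissipation $\Phi(u;\omega)$ in the energy equation, since it is \emph{quadratic in first-order derivatives} of $u$ and $\omega$ and thus the most regularity-demanding term, forcing the tightest coupling to the temperature scale. To bound $\|\Phi(u;\omega)\|_{r}$ I would estimate products such as $\|\nabla u\|_{2r}^{2}\lesssim\|u\|_{X^{\alpha}_{p}}^{2}$ via the embedding $X^{\alpha}_{p}\hookrightarrow W^{1,2r}$ of Lemma 2.4, which is available with $\alpha<1$ exactly when $1/p-1/(2r)<1/3$ (and $1/q-1/(2r)<1/3$ for the $\omega$-part); matching the time weights then requires the sign conditions $\alpha_{0}-\gamma_{0}/2-\tfrac{3}{2}(1/p-1/(2r))\ge0$ and $\beta_{0}-\gamma_{0}/2-\tfrac{3}{2}(1/q-1/(2r))\ge0$, while the convective temperature term $(u\cdot\nabla)\theta$ uses $-1<\alpha_{0}-\gamma_{0}-\tfrac{3}{2}(1/p-1/r)\le1$. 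With all estimates in hand, the nonlinear contributions carry a positive power of $T_{*}$, and---because $\alpha>\alpha_{0}$ strictly---Lemma 2.3 makes the weighted norm of the linear part tend to $0$ as $T_{*}\to+0$; together these make $\Psi$ a contraction on $K$ for general data, giving a unique fixed point, which is the mild solution (its continuity in $X^{\alpha_{0}}_{p},Y^{\beta_{0}}_{q},Z^{\gamma_{0}}_{r}$ up to $t=0$ following from $u_{0}\in X^{\alpha_{0}}_{p}$ and the vanishing of the integral term as $t\to+0$). Uniqueness in the full class of Definition 2.4 then follows from the same bilinear Lipschitz estimates and a Gronwall argument, and reinserting the solution into (II) yields (2.15)--(2.17) for every $\alpha_{0}\le\alpha<1$, $\beta_{0}\le\beta<1$, $\gamma_{0}\le\gamma<1$.

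Finally, the little-$o$ estimates (2.18)--(2.20) follow from Lemma 2.3 applied to the linear parts: for $\alpha>\alpha_{0}$ one has $\|e^{-tA_{p}}u_{0}\|_{X^{\alpha}_{p}}=\|e^{-tA_{p}}A^{\alpha_{0}}_{p}u_{0}\|_{X^{\alpha-\alpha_{0}}_{p}}=o(t^{\alpha_{0}-\alpha})$ as $t\to+0$, and analogously for $\omega$ and $\theta$, while the Duhamel integrals are of strictly higher order in $t$ by the estimates of the second and third steps and are therefore also $o(t^{\alpha_{0}-\alpha})$.
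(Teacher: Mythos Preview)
Your proposal is correct and follows essentially the same Fujita--Kato scheme as the paper's proof in Section~3. The only refinements the paper adds are bookkeeping: instead of a single working triple $(\alpha,\beta,\gamma)$ it fixes nine auxiliary exponents $\alpha_i,\beta_i,\gamma_i$ ($i=1,2,3$) tailored term-by-term via Lemmas~2.5--2.13 (together with small shifts $\delta_1,\delta_2,\delta_3$ to handle the cases $\alpha_0=0$, $\beta_0=0$, $\gamma_0=0$), and it splits the Cauchy-sequence argument into two cases (Lemmas~3.2 and~3.3) according to whether the coupling inequalities (2.49)--(2.51) are strict or saturated, since in the saturated case the smallness comes entirely from the $o(1)$ behavior of $K^0(t)$ rather than from a positive power of $T_*$.
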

\begin{theorem}
Let $f \in C^{0,1}(\mathbb{R};\mathbb{R}^{3})$ with the Lipschitz constant $L_{f}$, $f(0)=0$, $g \in C^{0,1}(\mathbb{R};\mathbb{R}^{3})$ with the Lipschitz constant $L_{g}$, $g(0)=0$, $p$, $q$, $r$, $\alpha_{0}$, $\beta_{0}$ and $\gamma_{0}$ satisfy $(2.13)$, $(2.14)$, $u_{0} \in X^{\alpha_{0}}_{p}$, $\omega_{0} \in Y^{\beta_{0}}_{q}$, $\theta_{0} \in Z^{\gamma_{0}}_{r}$, $0<\lambda<\Lambda_{1}$.
Then there exist positive constants $\varepsilon_{1}$ and $\varepsilon_{2}$ depending only on $\Omega$, $p$, $q$, $r$, $\alpha_{0}$, $\beta_{0}$, $\gamma_{0}$, $L_{f}$, $L_{g}$ and $\lambda$ such that $(1.1)$, $(1.2)$ has uniquely a mild solution $(u,\omega,\theta)$ on $[0,\infty)$ satisfying
\begin{equation*}
\mathrm{(i)} \ \min\{t,1\}^{\alpha-\alpha_{0}}e^{\lambda t}u \in C_{b}([0,\infty);X^{\alpha}_{p}),
\end{equation*}
\begin{equation*}
\min\{t,1\}^{\beta-\beta_{0}}e^{\lambda t}\omega \in C_{b}([0,\infty);Y^{\beta}_{q}),
\end{equation*}
\begin{equation*}
\min\{t,1\}^{\gamma-\gamma_{0}}e^{\lambda t}\theta \in C_{b}([0,\infty);Z^{\gamma}_{r}),
\end{equation*}
\begin{equation}
\|u(t)\|_{X^{\alpha}_{p}}\leq C\min\{t,1\}^{\alpha_{0}-\alpha}e^{-\lambda t}(\|u_{0}\|_{X^{\alpha_{0}}_{p}}+\|\omega_{0}\|_{Y^{\beta_{0}}_{q}}+\|\theta_{0}\|_{Z^{\gamma_{0}}_{r}}),
\end{equation}
\begin{equation}
\|\omega(t)\|_{Y^{\beta}_{q}}\leq C\min\{t,1\}^{\beta_{0}-\beta}e^{-\lambda t}(\|u_{0}\|_{X^{\alpha_{0}}_{p}}+\|\omega_{0}\|_{Y^{\beta_{0}}_{q}}+\|\theta_{0}\|_{Z^{\gamma_{0}}_{r}}),
\end{equation}
\begin{equation}
\|\theta(t)\|_{Z^{\gamma}_{r}}\leq C\min\{t,1\}^{\gamma_{0}-\gamma}e^{-\lambda t}(\|u_{0}\|_{X^{\alpha_{0}}_{p}}+\|\omega_{0}\|_{Y^{\beta_{0}}_{q}}+\|\theta_{0}\|_{Z^{\gamma_{0}}_{r}})
\end{equation}
for any $\alpha_{0}\leq\alpha<1$, $\beta_{0}\leq\beta<1$, $\gamma_{0}\leq\gamma<1$, $t>0$, where $C$ is a positive constant independent of $u$, $\omega$, $\theta$ and $t$ provided that
\begin{equation*}
\mu_{r}\leq\varepsilon_{1},
\end{equation*}
\begin{equation*}
\|u_{0}\|_{X^{\alpha_{0}}_{p}}+\|\omega_{0}\|_{Y^{\beta_{0}}_{q}}+\|\theta_{0}\|_{Z^{\gamma_{0}}_{r}}\leq\varepsilon_{2}.
\end{equation*}
\end{theorem}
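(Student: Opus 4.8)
The plan is to build the global solution by the same successive-approximation / contraction scheme that underlies Theorem 2.1, but carried out on the whole half-line $[0,\infty)$ in a function space whose norm already encodes the target decay $\min\{t,1\}^{\alpha_{0}-\alpha}e^{-\lambda t}$. I would first fix intermediate exponents $\alpha_{1}\in(\alpha_{0},1)$, $\beta_{1}\in(\beta_{0},1)$, $\gamma_{1}\in(\gamma_{0},1)$ large enough that the Sobolev embeddings of Lemma 2.4 render the nonlinearities $F,G,H$ estimable in the base spaces; the inequalities $(2.13)$, $(2.14)$ are exactly what guarantee that such exponents exist. For a triple I would work with the norm
\begin{equation*}
\|(u,\omega,\theta)\|_{*}=\sup_{t>0}\ \sum_{\alpha\in\{\alpha_{0},\alpha_{1}\}}\min\{t,1\}^{\alpha-\alpha_{0}}e^{\lambda t}\|u(t)\|_{X^{\alpha}_{p}}+(\text{analogous }\omega,\theta\text{ terms}),
\end{equation*}
on the corresponding complete space, and in a closed ball $\mathcal{K}_{R}=\{\|(u,\omega,\theta)\|_{*}\le R\}$. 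For the homogeneous part, Lemma 2.1 with $0<\lambda<\Lambda_{1}$ gives $\|A^{\alpha}_{p}e^{-tA_{p}}u_{0}\|_{p}\le Ct^{-(\alpha-\alpha_{0})}e^{-\lambda t}\|u_{0}\|_{X^{\alpha_{0}}_{p}}$, and likewise for $\Gamma_{q},B_{r}$, so the linear terms $e^{-tA_{p}}u_{0}$, $e^{-t\Gamma_{q}}\omega_{0}$, $e^{-tB_{r}}\theta_{0}$ have $\|\cdot\|_{*}$-norm at most $C_{0}\mathcal D$, where $\mathcal D=\|u_{0}\|_{X^{\alpha_{0}}_{p}}+\|\omega_{0}\|_{Y^{\beta_{0}}_{q}}+\|\theta_{0}\|_{Z^{\gamma_{0}}_{r}}$; continuity at $t=0$ and membership in $C_{b}$ follow from Lemmas 2.2 and 2.3 exactly as in Theorem 2.1. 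The full range $\alpha_{0}\le\alpha<1$ in (i) and $(2.23)$--$(2.25)$ is then recovered directly from (II) once the fixed point is known in $\mathcal{K}_{R}$.

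The heart of the proof is the estimate of the Duhamel integrals, which I would organise by the type of term. The quadratic terms, namely the convective parts $(u\cdot\nabla)u$, $(u\cdot\nabla)\omega$, $(u\cdot\nabla)\theta$ and the dissipation $\Phi(u;\omega)$, are controlled by H\"older's inequality together with the embeddings $X^{\alpha_{1}}_{p}\hookrightarrow W^{1,s}$ and their $q,r$-analogues from Lemma 2.4; inserting Lemma 2.1 inside $\int_{0}^{t}$ yields kernels $(t-s)^{-\theta}e^{-\lambda(t-s)}$ with $\theta<1$ (this is precisely where $(2.13)$--$(2.14)$ are used), contributing $\le C\|(u,\omega,\theta)\|_{*}^{2}$. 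The linear coupling terms $2\mu_{r}P_{p}\mathrm{rot}\,\omega$, $2\mu_{r}\mathrm{rot}\,u$ and $-4\mu_{r}\omega$ carry the factor $\mu_{r}$ and contribute $\le C_{1}\mu_{r}\|(u,\omega,\theta)\|_{*}$. Finally the force terms obey $\|f(\theta)\|\le L_{f}\|\theta\|$ and $\|g(\theta)\|\le L_{g}\|\theta\|$, since $f(0)=g(0)=0$ and $f,g$ are Lipschitz; the cross-integrability between the $\theta$-scale and the $u,\omega$-scales required here is supplied by $\tfrac1p-\tfrac1{2r}<\tfrac13$, $\tfrac1r-\tfrac1p<\tfrac23$ and their $q$-analogues in $(2.13)$.

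The structural point that makes the non-small constants $L_{f},L_{g}$ harmless is that the $\theta$-equation $H=-(u\cdot\nabla)\theta+\Phi(u;\omega)$ carries \emph{no} linear forcing, so its weighted norm is bounded by $C_{0}\|\theta_{0}\|_{Z^{\gamma_{0}}_{r}}+C\|(u,\omega,\theta)\|_{*}^{2}$ alone. Substituting this bound into the $u$- and $\omega$-estimates, the overall inequality takes the form $\|\Psi(u,\omega,\theta)\|_{*}\le C_{0}\mathcal D+C_{1}\mu_{r}\|(u,\omega,\theta)\|_{*}+C_{2}\|(u,\omega,\theta)\|_{*}^{2}$, with $C_{0},C_{2}$ depending also on $L_{f},L_{g}$, and the difference $\Psi(u_{1},\dots)-\Psi(u_{2},\dots)$ satisfies a contraction estimate with factor $C_{1}\mu_{r}+C_{2}(R_{1}+R_{2})$. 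I would therefore fix $\varepsilon_{1}$ so that $C_{1}\varepsilon_{1}\le\tfrac14$, then set $R=2C_{0}\mathcal D$ and choose $\varepsilon_{2}$ so small that $C_{2}R\le\tfrac14$ whenever $\mathcal D\le\varepsilon_{2}$; then $\Psi$ is a self-map and a contraction on $\mathcal{K}_{R}$, its unique fixed point is the global mild solution with $\|(u,\omega,\theta)\|_{*}\le 2C_{0}\mathcal D$, which is exactly $(2.23)$--$(2.25)$, and uniqueness on $[0,\infty)$ follows because on each finite interval the fixed point agrees with the local solution of Theorem 2.1.

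I expect the main obstacle to be the uniform-in-$t$ bookkeeping of the two competing weights $\min\{t,1\}^{\bullet}$ and $e^{\lambda t}$ across the entire half-line. The convolution kernels must be shown integrable both for small $t$, where the power weight is active and the time singularities of Lemma 2.1 must be matched against the embedding exponents, and for large $t$, where $\min\{t,1\}\equiv1$ and all decay must come from $e^{-\lambda(t-s)}$ overcoming the factor $e^{\lambda t}$. The standard device is to split $\int_{0}^{t}=\int_{0}^{t/2}+\int_{t/2}^{t}$ and to handle $0<t\le1$ and $t\ge1$ separately, but checking that every one of the mixed terms closes with a constant independent of $t$ is the genuinely laborious part, and it is what distinguishes this global estimate from the finite-time estimate of Theorem 2.1.
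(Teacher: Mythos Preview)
Your structural insight is correct and matches the paper's: the $\theta$-equation $H=-(u\cdot\nabla)\theta+\Phi(u;\omega)$ carries no linear forcing, so its weighted bound is purely quadratic, and this is what ultimately absorbs the non-small Lipschitz constants $L_f,L_g$. The paper, however, does \emph{not} run a direct contraction on $[0,\infty)$. It first quotes the local solution from Theorem~2.1, then proves an a~priori estimate (Lemma~3.6) for the quantities $E_{1,\alpha}(t)=\sup_{0<s\le t}m(s)^{\alpha-\alpha_0}e^{\lambda s}\|u(s)\|_{X^\alpha}$ and their $\omega,\theta$ analogues, arriving at $E(t)\le C\mathcal D+C\mu_rE(t)+C(1+\mu_r)E(t)^2$; smallness of $\mu_r$ and of $\mathcal D$ then traps $E(t)$ for all $t$, and a continuation argument gives the global solution. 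In that setting your substitution is perfectly valid, because one is estimating a \emph{solution}, where input and output of $\Psi$ coincide.

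This is exactly where your direct fixed-point version breaks. In the map $\Psi$, the forces $P_pf(\theta)$ and $g(\theta)$ act on the \emph{input} $\theta$; for an arbitrary $(u,\omega,\theta)\in\mathcal K_R$ you only know the crude bound $E_\theta\le R$, not the refined $E_\theta\le C_0\|\theta_0\|+CR^2$, which holds only for the \emph{output} $\theta$-component. Hence the honest self-map estimate is $\|\Psi(\cdot)\|_*\le C_0\mathcal D+(C\mu_r+CL_f+CL_g)R+CR^2$, and likewise the Lipschitz factor of $\Psi$ contains $CL_f+CL_g$, which is not small. To close, you must either switch to the paper's a~priori/continuation route, or modify the fixed-point setup (for instance, iterate on the invariant subset $\{E_\theta\le C_0\mathcal D+CR^2\}\subset\mathcal K_R$ and argue via $\Psi^2$, or run a nested scheme solving the $\theta$-equation for given $(u,\omega)$ first).

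There is a second, more easily repaired gap. You apply Lemma~2.1 inside the Duhamel integral with the \emph{same} rate $\lambda$ as the target weight, producing kernels $(t-s)^{-\theta}e^{-\lambda(t-s)}$. For the quadratic pieces the extra $e^{-\lambda s}$ from the squared decay saves the day, but for every linear-in-solution piece (the $\mu_r$-couplings and, before any substitution, the $f(\theta),g(\theta)$ terms) the factors $e^{\lambda t}\cdot e^{-\lambda(t-s)}\cdot e^{-\lambda s}$ cancel completely, leaving $\int_0^t(t-s)^{-\theta}m(s)^{\cdots}\,ds\sim t^{1-\theta}\to\infty$; the split $\int_0^{t/2}+\int_{t/2}^t$ does not help. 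The paper's remedy is to invoke Lemma~2.1 in the convolution with an auxiliary $\lambda_1\in(\lambda,\Lambda_1)$, so that the spare factor $e^{-(\lambda_1-\lambda)(t-s)}$ makes every kernel integrable uniformly in $t$; it also assigns $\theta$ a slightly faster weight $e^{\lambda_2 t}$ with $\lambda<\lambda_2<\min\{2\lambda,\lambda_1\}$, though this last refinement is not essential once $\lambda_1>\lambda$ is in place.
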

The second purpose of this paper is to study the regularity of mild solutions of (1.1), (1.2).
As for the regularity of $(d_{t}u,d_{t}\omega,d_{t}\theta)$, it will be required that $p$, $q$, $r$, $\alpha_{0}$, $\beta_{0}$ and $\gamma_{0}$ satisfy the following inequalities:
\begin{equation}
\alpha_{0}\geq 3\left(\frac{1}{p}-\frac{1}{2r}\right), \ \beta_{0}\geq \max\left\{\frac{3}{2p}-\frac{1}{2}, 3\left(\frac{1}{q}-\frac{1}{2r}\right)\right\}, \ \gamma_{0}\geq \frac{3}{2p}-\frac{1}{2},
\end{equation}
\begin{equation}
\begin{split}
&\alpha_{0}\geq \frac{3}{2}\left(\frac{1}{p}+\frac{1}{q}-\frac{1}{r}\right), \ \beta_{0}\geq \frac{3}{2}\left(\frac{1}{p}+\frac{1}{q}-\frac{1}{r}\right), \\
&\alpha_{0}-\gamma_{0}\geq \frac{3}{2}\left(\frac{1}{p}-\frac{1}{q}\right)-\frac{1}{2}, \ \beta_{0}-\gamma_{0}\geq \frac{3}{2}\left(\frac{1}{q}-\frac{1}{p}\right)-\frac{1}{2}, \\
&\beta_{0}-\alpha_{0}>-\frac{1}{2}, \ \beta_{0}-\gamma_{0}>-\frac{1}{2}, \\
&2\alpha_{0}-\beta_{0}\geq \max\left\{\frac{3}{2p}-\frac{1}{2}, 3\left(\frac{1}{p}-\frac{1}{2r}\right)\right\}, \ 2\alpha_{0}-\gamma_{0}\geq \frac{3}{2p}-\frac{1}{2}, \\
&2\beta_{0}-\alpha_{0}\geq 3\left(\frac{1}{q}-\frac{1}{2r}\right), \\
&\alpha_{0}+\beta_{0}-\gamma_{0}\geq \frac{3}{2p}-\frac{1}{2}, \ \alpha_{0}-\beta_{0}+\gamma_{0}\geq \frac{3}{2p}-\frac{1}{2}, \\
&-1<\alpha_{0}-\gamma_{0}-\frac{3}{2}\left(\frac{1}{q}-\frac{1}{r}\right)\leq 1, \ -1<\beta_{0}-\gamma_{0}-\frac{3}{2}\left(\frac{1}{p}-\frac{1}{r}\right)\leq 1.
\end{split}
\end{equation}
We shall prove the following theorems:
\begin{theorem}
If a mild solution $(u,\omega,\theta)$ of $(1.1)$, $(1.2)$ in Theorem $2.1$ is defined on $[0,T]$, then $(u,\omega,\theta)$ is a strong solution of $(1.1)$, $(1.2)$ on $[0,T]$ satisfying
\begin{equation*}
\mathrm{(i)} \ u \in C^{0,\hat{\alpha}}((0,T];X^{1}_{p}), \ d_{t}u \in C^{0,\hat{\alpha}}((0,T];L^{p}_{\sigma}(\Omega)),
\end{equation*}
\begin{equation*}
\omega \in C^{0,\hat{\beta}}((0,T];Y^{1}_{q}), \ d_{t}\omega \in C^{0,\hat{\beta}}((0,T];(L^{q}(\Omega))^{3}),
\end{equation*}
\begin{equation*}
\theta \in C^{0,\hat{\gamma}}((0,T];Z^{1}_{r}), \ d_{t}\theta \in C^{0,\hat{\gamma}}((0,T];L^{r}(\Omega))
\end{equation*}
for some $0<\hat{\alpha}<1$, $0<\hat{\beta}<1$, $0<\hat{\gamma}<1$,
\begin{equation}
\|u(t)\|_{X^{1}_{p}}\leq Ct^{\alpha_{0}-1}(\|u_{0}\|_{X^{\alpha_{0}}_{p}}+\|\omega_{0}\|_{Y^{\beta_{0}}_{q}}+\|\theta_{0}\|_{Z^{\gamma_{0}}_{r}}),
\end{equation}
\begin{equation}
\|\omega(t)\|_{Y^{1}_{q}}\leq Ct^{\beta_{0}-1}(\|u_{0}\|_{X^{\alpha_{0}}_{p}}+\|\omega_{0}\|_{Y^{\beta_{0}}_{q}}+\|\theta_{0}\|_{Z^{\gamma_{0}}_{r}}),
\end{equation}
\begin{equation}
\|\theta(t)\|_{Z^{1}_{r}}\leq Ct^{\gamma_{0}-1}(\|u_{0}\|_{X^{\alpha_{0}}_{p}}+\|\omega_{0}\|_{Y^{\beta_{0}}_{q}}+\|\theta_{0}\|_{Z^{\gamma_{0}}_{r}})
\end{equation}
for any $0<t\leq T$, where $C$ is a positive constant independent of $u$, $\omega$, $\theta$ and $t$.
\begin{equation*}
\mathrm{(ii)} \ u \in C^{0,\hat{\alpha}}((0,T];X^{1}_{p}), \ d_{t}u \in C^{0,\tilde{\alpha}}((0,T];X^{\alpha}_{p}),
\end{equation*}
\begin{equation*}
\omega \in C^{0,\hat{\beta}}((0,T];Y^{1}_{q}), \ d_{t}\omega \in C^{0,\tilde{\beta}}((0,T];Y^{\beta}_{q}),
\end{equation*}
\begin{equation*}
\theta \in C^{0,\hat{\gamma}}((0,T];Z^{1}_{r}), \ d_{t}\theta \in C^{0,\tilde{\gamma}}((0,T];Z^{\gamma}_{r})
\end{equation*}
for any $0<\hat{\alpha}<1$, $0<\hat{\beta}<1$, $0<\hat{\gamma}<1$, $0\leq\alpha<1$, $0\leq\beta<1$, $0\leq\gamma<1$, $0<\tilde{\alpha}<1-\alpha$, $0<\tilde{\beta}<1-\beta$, $0<\tilde{\gamma}<1-\gamma$ provided that $p$, $q$, $r$, $\alpha_{0}$, $\beta_{0}$ and $\gamma_{0}$ satisfy $(2.24)$.
\begin{equation}
\mathrm{(iii)} \ \|d_{t}u(t)\|_{X^{\alpha}_{p}}\leq Ct^{\alpha_{0}-\alpha-1}(\|u_{0}\|_{X^{\alpha_{0}}_{p}}+\|\omega_{0}\|_{Y^{\beta_{0}}_{q}}+\|\theta_{0}\|_{Z^{\gamma_{0}}_{r}}),
\end{equation}
\begin{equation}
\|d_{t}\omega(t)\|_{Y^{\beta}_{q}}\leq Ct^{\beta_{0}-\beta-1}(\|u_{0}\|_{X^{\alpha_{0}}_{p}}+\|\omega_{0}\|_{Y^{\beta_{0}}_{q}}+\|\theta_{0}\|_{Z^{\gamma_{0}}_{r}}),
\end{equation}
\begin{equation}
\|d_{t}\theta(t)\|_{Z^{\gamma}_{r}}\leq Ct^{\gamma_{0}-\gamma-1}(\|u_{0}\|_{X^{\alpha_{0}}_{p}}+\|\omega_{0}\|_{Y^{\beta_{0}}_{q}}+\|\theta_{0}\|_{Z^{\gamma_{0}}_{r}})
\end{equation}
for any $0\leq\alpha<1$, $0\leq\beta<1$, $0\leq\gamma<1$, $0<t\leq T$, where $C$ is a positive constant independent of $u$, $\omega$, $\theta$ and $t$ provided that $p$, $q$, $r$, $\alpha_{0}$, $\beta_{0}$ and $\gamma_{0}$ satisfy $(2.24)$, $(2.25)$.
\end{theorem}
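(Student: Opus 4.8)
The plan is to argue by the standard regularity theory for semilinear abstract parabolic equations driven by analytic semigroups (\cite[Chapter 3]{Henry}, \cite[Chapter 6]{Pazy}): once the inhomogeneous terms $F(u,\omega,\theta)$, $G(u,\omega,\theta)$, $H(u,\omega,\theta)$, evaluated along the mild solution of Theorem 2.1, are shown to be locally H\"{o}lder continuous on $(0,T]$ with values in the base spaces $L^{p}_{\sigma}(\Omega)$, $(L^{q}(\Omega))^{3}$, $L^{r}(\Omega)$ (with at most an integrable singularity as $t\rightarrow +0$), the abstract result that such a mild solution is automatically a classical solution of (I) yields that $(u,\omega,\theta)$ is a strong solution, together with $u\in C((0,T];X^{1}_{p})$, $d_{t}u\in C((0,T];L^{p}_{\sigma}(\Omega))$ and their counterparts. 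Thus the heart of the matter is to transfer the time regularity of $(u,\omega,\theta)$ in the intermediate spaces $X^{\alpha}_{p}$, $Y^{\beta}_{q}$, $Z^{\gamma}_{r}$ into H\"{o}lder regularity of the nonlinearities.

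For part (i) I would first establish that $t^{\alpha-\alpha_{0}}u$, $t^{\beta-\beta_{0}}\omega$, $t^{\gamma-\gamma_{0}}\theta$ are H\"{o}lder continuous on every $[\tau,T]\subset(0,T]$ in $X^{\alpha}_{p}$, $Y^{\beta}_{q}$, $Z^{\gamma}_{r}$. This follows by differencing the integral equations (II): the contribution of the semigroup part is handled by Lemma 2.1 and Lemma 2.2 (writing $e^{-tA_{p}}-e^{-sA_{p}}=(e^{-(t-s)A_{p}}-I_{p})e^{-sA_{p}}$), while the Duhamel part is controlled by the same lemmas together with the a priori bounds (2.15)--(2.17) for the nonlinearities. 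Next I would estimate $F$, $G$, $H$ in the base spaces by exploiting their explicit structure: the convective terms $(u\cdot\nabla)u$, $(u\cdot\nabla)\omega$, $(u\cdot\nabla)\theta$ and the coupling terms $\mathrm{rot}\,\omega$, $\mathrm{rot}\,u$ by H\"{o}lder's inequality after invoking the embeddings of Lemma 2.4; the terms $f(\theta)$, $g(\theta)$ by the Lipschitz bounds $\|f(\theta)\|\leq L_{f}\|\theta\|$, $\|g(\theta)\|\leq L_{g}\|\theta\|$; and the viscous dissipation $\Phi(u;\omega)$, which is quadratic in $\nabla u$, $\mathrm{rot}\,u$, $\omega$, $\nabla\omega$ and $\mathrm{div}\,\omega$, again by H\"{o}lder's inequality and Lemma 2.4. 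Combining these product estimates with the H\"{o}lder continuity of $(u,\omega,\theta)$ gives the H\"{o}lder continuity of $F$, $G$, $H$; the inequalities (2.13), (2.14) are exactly the conditions guaranteeing that every exponent produced by Lemma 2.4 is admissible and that the resulting time singularities stay strictly below $t^{-1}$, hence integrable. The estimates (2.26)--(2.28) then follow by applying Lemma 2.1 with $\alpha=\beta=\gamma=1$ to the integral representation of $(u,\omega,\theta)$ together with these nonlinear bounds.

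For parts (ii) and (iii) I would bootstrap by differentiating (II). Writing, for instance,
\begin{equation*}
d_{t}u(t)=-A_{p}e^{-tA_{p}}u_{0}+F(u,\omega,\theta)(t)-\int_{0}^{t}A_{p}e^{-(t-s)A_{p}}\bigl(F(u,\omega,\theta)(t)-F(u,\omega,\theta)(s)\bigr)\,ds+(e^{-tA_{p}}-I_{p})F(u,\omega,\theta)(t),
\end{equation*}
and the analogous identities for $d_{t}\omega$, $d_{t}\theta$, one estimates $d_{t}u$ in $X^{\alpha}_{p}$ by applying $A^{\alpha}_{p}$ and using Lemma 2.1: the singular integral converges precisely because the H\"{o}lder exponent $\hat{\alpha}$ of $F$ exceeds $\alpha$ (and likewise $\hat{\gamma}>\gamma$ for the $\theta$-equation), which is what forces the relations among the exponents. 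To estimate the nonlinearities now in the \emph{higher-order} spaces $X^{\alpha}_{p}$, $Y^{\beta}_{q}$, $Z^{\gamma}_{r}$ rather than the base spaces, one repeats the product estimates of part (i) but with the stronger target norms; the additional inequalities (2.24) and (2.25) are exactly what is needed so that the products $(u\cdot\nabla)u$, $\mathrm{rot}\,\omega$, $\Phi(u;\omega)$ and so on land in these spaces with integrable time weights, yielding (2.29)--(2.31) and the H\"{o}lder continuity $d_{t}u\in C^{0,\tilde{\alpha}}((0,T];X^{\alpha}_{p})$ and its counterparts.

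I expect the main obstacle to be the bookkeeping for the nonlinearities in the base and higher-order norms, and in particular the viscous dissipation term $\Phi(u;\omega)$ in $H$. Being quadratic in the first derivatives of $u$ and $\omega$, it is the most demanding: controlling it in $L^{r}(\Omega)$ (and later in $Z^{\gamma}_{r}$) requires $\nabla u$, $\nabla\omega$ in $L^{2r}$-type spaces, which through Lemma 2.4 translates into the sharp lower bounds on $\alpha_{0}$, $\beta_{0}$, $\gamma_{0}$ appearing in (2.14), (2.24) and (2.25). Verifying that each such condition is simultaneously compatible with integrability of the Duhamel singularities at both $s=0$ and $s=t$, and with the H\"{o}lder-exponent requirement for the derivative estimates, is the delicate accounting at the core of the proof.
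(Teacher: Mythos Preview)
Your plan for part (i) matches the paper's: establish time-H\"older continuity of $(u,\omega,\theta)$ in the intermediate spaces by differencing (II) and using Lemmas 2.1--2.2 together with the a priori bounds (2.15)--(2.17); transfer this via the product estimates (Lemmas 2.5--2.13) to H\"older continuity of $F,G,H$ in the base spaces; then invoke the abstract regularity result (packaged in the paper as Lemma 4.2) to conclude that the mild solution is strong and to obtain (2.26)--(2.28). One caution: ``Lemma 2.1 with $\alpha=1$'' alone cannot give (2.26), since $\int_0^t(t-s)^{-1}\|F(s)\|_p\,ds$ diverges; the H\"older bound on $F$ is what allows the Duhamel integral to be split, exactly as in Lemma 4.2~(iii).

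For parts (ii) and (iii) there is a real gap. The claim in (ii) is that $d_tu\in C^{0,\tilde\alpha}((0,T];X^\alpha_p)$ for \emph{every} $0\le\alpha<1$, and by your own representation of $d_tu$ (equivalently Lemma 4.2~(i)) this forces the time-H\"older exponent of $F$ in $L^p_\sigma(\Omega)$ to be arbitrarily close to $1$; in fact $F$ must be locally Lipschitz on $(0,T]$. Part (i) only yields an exponent below $\min\{1-\alpha_1,1-\beta_1,1-\gamma_1\}$, which is bounded away from $1$. Your proposed remedy, placing $F,G,H$ in higher-order spaces $X^\alpha_p,Y^\beta_q,Z^\gamma_r$, does not by itself raise the \emph{time} exponent; moreover, since $f,g$ are assumed merely Lipschitz, $P_pf(\theta)$ and $g(\theta)$ need not lie in any $D(A^\delta_p)$ or $D(\Gamma^\delta_q)$ with $\delta>0$, so that route is blocked for the forcing terms. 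The paper's mechanism is different: it rewrites $u(t+h)-u(t)$ via the shifted integral equation (3.39) so that $F(s+h)-F(s)$ appears under the Duhamel integral, feeds this back through the product estimates to obtain a closed system of integral inequalities for $\|F(\cdot+h)-F(\cdot)\|_p+\|G(\cdot+h)-G(\cdot)\|_q+\|H(\cdot+h)-H(\cdot)\|_r$, and closes it with a generalized Gronwall lemma (Lemma 4.3), producing a bound linear in $h$. Condition (2.24) is precisely what makes the exponents in this Gronwall system admissible, and (2.25) then ensures the resulting Lipschitz constant carries the correct power of $t$ so that Lemma 4.2~(iv) yields (2.29)--(2.31). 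Without this Gronwall step (or an equivalent iteration that upgrades the time-H\"older exponent of $F,G,H$ to $1$) your argument cannot reach the full range $0\le\alpha<1$ claimed in (ii) and (iii).
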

\begin{theorem}
Let $(u,\omega,\theta)$ be a mild solution of $(1.1)$, $(1.2)$ on $[0,\infty)$ satisfying continuity properties and estimates $(2.21)$--$(2.23)$ in Theorem $2.2$.
Then $(u,\omega,\theta)$ is a strong solution of $(1.1)$, $(1.2)$ on $[0,\infty)$ satisfying
\begin{equation*}
\mathrm{(i)} \ u \in C^{0,\hat{\alpha}}((0,\infty);X^{1}_{p}), \ d_{t}u \in C^{0,\hat{\alpha}}((0,\infty);L^{p}_{\sigma}(\Omega)),
\end{equation*}
\begin{equation*}
\omega \in C^{0,\hat{\beta}}((0,\infty);Y^{1}_{q}), \ d_{t}\omega \in C^{0,\hat{\beta}}((0,\infty);(L^{q}(\Omega))^{3}),
\end{equation*}
\begin{equation*}
\theta \in C^{0,\hat{\gamma}}((0,\infty);Z^{1}_{r}), \ d_{t}\theta \in C^{0,\hat{\gamma}}((0,\infty);L^{r}(\Omega))
\end{equation*}
for some $0<\hat{\alpha}<1$, $0<\hat{\beta}<1$, $0<\hat{\gamma}<1$,
\begin{equation}
\|u(t)\|_{X^{1}_{p}}\leq C\min\{t,1\}^{\alpha_{0}-1}e^{-\lambda t}(\|u_{0}\|_{X^{\alpha_{0}}_{p}}+\|\omega_{0}\|_{Y^{\beta_{0}}_{q}}+\|\theta_{0}\|_{Z^{\gamma_{0}}_{r}}),
\end{equation}
\begin{equation}
\|\omega(t)\|_{Y^{1}_{q}}\leq C\min\{t,1\}^{\beta_{0}-1}e^{-\lambda t}(\|u_{0}\|_{X^{\alpha_{0}}_{p}}+\|\omega_{0}\|_{Y^{\beta_{0}}_{q}}+\|\theta_{0}\|_{Z^{\gamma_{0}}_{r}}),
\end{equation}
\begin{equation}
\|\theta(t)\|_{Z^{1}_{r}}\leq C\min\{t,1\}^{\gamma_{0}-1}e^{-\lambda t}(\|u_{0}\|_{X^{\alpha_{0}}_{p}}+\|\omega_{0}\|_{Y^{\beta_{0}}_{q}}+\|\theta_{0}\|_{Z^{\gamma_{0}}_{r}})
\end{equation}
for any $t>0$, where $C$ is a positive constant independent of $u$, $\omega$, $\theta$ and $t$.
\begin{equation*}
\mathrm{(ii)} \ u \in C^{0,\hat{\alpha}}((0,\infty);X^{1}_{p}), \ d_{t}u \in C^{0,\tilde{\alpha}}((0,\infty);X^{\alpha}_{p}),
\end{equation*}
\begin{equation*}
\omega \in C^{0,\hat{\beta}}((0,\infty);Y^{1}_{q}), \ d_{t}\omega \in C^{0,\tilde{\beta}}((0,\infty);Y^{\beta}_{q}),
\end{equation*}
\begin{equation*}
\theta \in C^{0,\hat{\gamma}}((0,\infty);Z^{1}_{r}), \ d_{t}\theta \in C^{0,\tilde{\gamma}}((0,\infty);Z^{\gamma}_{r})
\end{equation*}
for any $0<\hat{\alpha}<1$, $0<\hat{\beta}<1$, $0<\hat{\gamma}<1$, $0\leq\alpha<1$, $0\leq\beta<1$, $0\leq\gamma<1$, $0<\tilde{\alpha}<1-\alpha$, $0<\tilde{\beta}<1-\beta$, $0<\tilde{\gamma}<1-\gamma$ provided that $p$, $q$, $r$, $\alpha_{0}$, $\beta_{0}$ and $\gamma_{0}$ satisfy $(2.24)$.
\begin{equation}
\mathrm{(iii)} \ \|d_{t}u(t)\|_{X^{\alpha}_{p}}\leq C\min\{t,1\}^{\alpha_{0}-\alpha-1}e^{-\lambda t}(\|u_{0}\|_{X^{\alpha_{0}}_{p}}+\|\omega_{0}\|_{Y^{\beta_{0}}_{q}}+\|\theta_{0}\|_{Z^{\gamma_{0}}_{r}}),
\end{equation}
\begin{equation}
\|d_{t}\omega(t)\|_{Y^{\beta}_{q}}\leq C\min\{t,1\}^{\beta_{0}-\beta-1}e^{-\lambda t}(\|u_{0}\|_{X^{\alpha_{0}}_{p}}+\|\omega_{0}\|_{Y^{\beta_{0}}_{q}}+\|\theta_{0}\|_{Z^{\gamma_{0}}_{r}}),
\end{equation}
\begin{equation}
\|d_{t}\theta(t)\|_{Z^{\gamma}_{r}}\leq C\min\{t,1\}^{\gamma_{0}-\gamma-1}e^{-\lambda t}(\|u_{0}\|_{X^{\alpha_{0}}_{p}}+\|\omega_{0}\|_{Y^{\beta_{0}}_{q}}+\|\theta_{0}\|_{Z^{\gamma_{0}}_{r}})
\end{equation}
for any $0\leq\alpha<1$, $0\leq\beta<1$, $0\leq\gamma<1$, $t>0$, where $C$ is a positive constant independent of $u$, $\omega$, $\theta$ and $t$ provided that $p$, $q$, $r$, $\alpha_{0}$, $\beta_{0}$ and $\gamma_{0}$ satisfy $(2.24)$, $(2.25)$.
\end{theorem}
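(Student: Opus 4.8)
The plan is to treat Theorem 2.4 as the global-in-time counterpart of Theorem 2.3, so that the \emph{qualitative} conclusions — that the given mild solution is in fact a strong solution, together with the H\"older-continuity assertions in parts (i) and (ii) — reduce to Theorem 2.3, while the genuinely new content is the \emph{uniform decay} estimates (2.33)--(2.38). The reduction is immediate: the restriction of the mild solution of Theorem 2.2 to any bounded interval $[0,T]$ is a mild solution in the sense of Definition 2.2 satisfying the hypotheses of Theorem 2.3, so for every $t>0$, which lies in some $(0,T]$, Theorem 2.3 already supplies $u\in C^{0,\hat{\alpha}}((0,T];X^{1}_{p})$, $d_{t}u\in C^{0,\hat{\alpha}}((0,T];L^{p}_{\sigma}(\Omega))$, the analogues for $\omega,\theta$, and $d_{t}u\in C^{0,\tilde{\alpha}}((0,T];X^{\alpha}_{p})$ under (2.24). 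Since H\"older continuity on $(0,\infty)$ means H\"older continuity on each compact subinterval, this yields all of (i) and (ii) except the decay.

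The decay estimates I would prove from scratch using the global a priori bounds (2.21)--(2.23). The first step is a pointwise-in-time bound on the nonlinear terms: using the embeddings of Lemma 2.4 to pass from fractional-power norms to Sobolev norms, the Lipschitz hypotheses $f,g\in C^{0,1}(\mathbb{R};\mathbb{R}^{3})$ with $f(0)=g(0)=0$, and (2.21)--(2.23), I would show $\|F(u,\omega,\theta)(s)\|_{p}\leq C\min\{s,1\}^{-\sigma}e^{-\lambda s}(M+M^{2})$ with $M=\|u_{0}\|_{X^{\alpha_{0}}_{p}}+\|\omega_{0}\|_{Y^{\beta_{0}}_{q}}+\|\theta_{0}\|_{Z^{\gamma_{0}}_{r}}$ and some $\sigma<1$, and similarly for $G,H$. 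Here (2.13)--(2.14) are exactly what keep $\sigma<1$ and make the embeddings admissible. The linear coupling terms inherit the factor $e^{-\lambda s}$ while the quadratic terms decay like $e^{-2\lambda s}$, so $e^{-\lambda s}$ is the governing rate.

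Next I would insert these bounds into the variation-of-constants formula (II) and apply $A_{p}$ (respectively $\Gamma_{q}$, $B_{r}$, or a fractional power for (2.36)--(2.38)), controlling the leading term $A_{p}e^{-tA_{p}}u_{0}$ by Lemma 2.1 and the Duhamel integral by the same lemma. Two devices are essential. First, to reach the top norm $X^{1}_{p}$ the kernel $\|A_{p}e^{-(t-s)A_{p}}\|\sim(t-s)^{-1}$ is not integrable at $s=t$, so I would use the H\"older continuity of $F$ established above to split $F(s)=(F(s)-F(t))+F(t)$; the difference gains a factor $(t-s)^{\hat{\alpha}}$ restoring integrability, and the frozen term integrates explicitly via $\int^{t}_{0}A_{p}e^{-(t-s)A_{p}}\,ds=I_{p}-e^{-tA_{p}}$, handled by Lemma 2.2. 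Second, to obtain the uniform factor $e^{-\lambda t}$ rather than a growing $t^{\hat{\alpha}}e^{-\lambda t}$, I would apply Lemma 2.1 with a rate $\lambda'$ chosen so that $\lambda<\lambda'<\Lambda_{1}$; writing $e^{-\lambda s}=e^{-\lambda t}e^{\lambda(t-s)}$ turns the convolution into $e^{-\lambda t}\int^{t}_{0}(t-s)^{-1+\hat{\alpha}}e^{-(\lambda'-\lambda)(t-s)}\,ds\leq Ce^{-\lambda t}$, the spare spectral gap $\lambda'-\lambda>0$ absorbing the polynomial. For $t\geq 1$ this is the pure exponential asserted by $\min\{t,1\}=1$, and for $0<t\leq 1$ the bound coincides with the local estimate from Theorem 2.3. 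The derivative estimates (2.36)--(2.38) follow by bounding $d_{t}u=-A_{p}u+F(u,\omega,\theta)$ in $X^{\alpha}_{p}$ and invoking (2.24)--(2.25) to make the exponents admissible.

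The main obstacle I anticipate is the simultaneous management of the non-integrable Duhamel singularity and the uniform exponential decay across the \emph{coupled} system: the three equations feed into one another's nonlinearities, so the H\"older exponents $\hat{\alpha},\hat{\beta},\hat{\gamma}$, the pointwise decay exponents $\sigma$, and the margin $\lambda'-\lambda$ must be chosen consistently for all three components at once, and verifying that (2.13), (2.14), (2.24), (2.25) are precisely sufficient for every exponent to land in the admissible range is the delicate bookkeeping at the heart of the argument.
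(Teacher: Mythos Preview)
Your overall strategy matches the paper's: Section~4 opens with the single sentence ``Since the proof of Theorem 2.4 is essentially the same as the proof of Theorem 2.3, we have only to prove Theorem 2.3,'' and your reduction of the qualitative H\"older statements to Theorem 2.3 via restriction to bounded intervals is exactly the intended reading. Your handling of the decay estimates (2.33)--(2.35) by re-running the Duhamel argument with the weighted bounds (2.21)--(2.23) and a spectral margin $\lambda<\lambda'<\Lambda_{1}$ (so that $e^{-\lambda'(t-s)}e^{-\lambda s}=e^{-\lambda t}e^{-(\lambda'-\lambda)(t-s)}$ absorbs the polynomial factor) is precisely the device the paper uses in the proof of Lemma 3.6, and it carries over to the $X^{1}$ estimate once the H\"older continuity of $F,G,H$ is in hand.

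There is, however, a genuine gap in your treatment of (2.36)--(2.38). Writing $d_{t}u=-A_{p}u+F(u,\omega,\theta)$ and bounding each piece in $X^{\alpha}_{p}$ requires $\|A_{p}u\|_{X^{\alpha}_{p}}=\|A_{p}^{1+\alpha}u\|_{p}$, i.e.\ $u\in X^{1+\alpha}_{p}$, which is \emph{more} than the $X^{1}_{p}$ regularity you have established; for $\alpha>0$ this step fails. The paper avoids this by working instead with the Duhamel decomposition $d_{t}u=-A_{p}e^{-tA_{p}}u_{0}+d_{t}\mathcal{F}(u,\omega,\theta)$ and invoking Lemma 4.2\,(iv), which bounds $\|d_{t}\mathcal{F}\|_{X^{\alpha}_{p}}$ directly from a \emph{Lipschitz-in-time} estimate $\|F(t+h)-F(t)\|_{p}\leq L_{F}h\,t^{-c}$. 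That Lipschitz constant is not a consequence of the pointwise bounds you wrote down; in the paper it is obtained by deriving coupled integral inequalities (4.27)--(4.29) for the time-differences of $F,G,H$ and closing them with the generalized Gronwall Lemma 4.3, under the extra hypotheses (2.24), (2.25). Your proposal should replace the $d_{t}u=-A_{p}u+F$ shortcut by this route: establish the Lipschitz bound on $F,G,H$ (with the $e^{-\lambda t}$ weight carried along), then feed it into Lemma 4.2\,(iv).
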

Some detailed considerations admit that a strong solution of (1.1), (1.2) with initial data $(u_{0},\omega_{0},\theta_{0}) \in L^{p}_{\sigma}(\Omega)\times (L^{q}(\Omega))^{3}\times L^{r}(\Omega)$ can be grasped in the classical sense.
Let $p$, $q=p$ and $r$ satisfy the following inequalities:
\begin{equation}
3<p<\infty, \ 3<r<\infty, \ \frac{1}{p}-\frac{1}{2r}\leq 0, \ \frac{1}{r}-\frac{1}{p}<\frac{2}{3}.
\end{equation}
Then we can take $\alpha_{0}$, $\beta_{0}$ and $\gamma_{0}$ in (2.13), (2.14), (2.24), (2.25) as zeros.
It is derived from Theorems 2.3 and 2.4 that we obtain the following corollaries:
\begin{corollary}
Let $f \in C^{0,1}(\mathbb{R};\mathbb{R}^{3})\cap C^{1}(\mathbb{R};\mathbb{R}^{3})$, $f(0)=0$, $g \in C^{0,1}(\mathbb{R};\mathbb{R}^{3})\cap C^{1}(\mathbb{R};\mathbb{R}^{3})$, $g(0)=0$, $p$, $q=p$ and $r$ satisfy $(2.38)$, $u_{0} \in L^{p}_{\sigma}(\Omega)$, $\omega_{0} \in (L^{p}(\Omega))^{3}$, $\theta_{0} \in L^{r}(\Omega)$.
Then a strong solution $(u,\omega,\theta)$ of $(1.1)$, $(1.2)$ in Theorem $2.3$ is a classical solution of $(1.1)$, $(1.2)$ in $(0,T]$ satisfying
\begin{equation*}
u \in C^{0,\hat{\alpha}}((0,T];(C^{2,\alpha}(\overline{\Omega}))^{3}), \ d_{t}u \in C^{0,\tilde{\alpha}}((0,T];(C^{1,\alpha}(\overline{\Omega}))^{3}),
\end{equation*}
\begin{equation*}
\omega \in C^{0,\hat{\beta}}((0,T];(C^{2,\beta}(\overline{\Omega}))^{3}), \ d_{t}\omega \in C^{0,\tilde{\beta}}((0,T];(C^{1,\beta}(\overline{\Omega}))^{3}),
\end{equation*}
\begin{equation*}
\theta \in C^{0,\hat{\gamma}}((0,T];C^{2,\gamma}(\overline{\Omega})), \ d_{t}\theta \in C^{0,\tilde{\gamma}}((0,T];C^{1,\gamma}(\overline{\Omega}))
\end{equation*}
for any $0<\hat{\alpha}<1/2$, $0<\hat{\beta}<1/2$, $0<\hat{\gamma}<1/2$, $0<\alpha<1-3/p$, $0<\beta<1-3/p$, $0<\gamma<1-3/r$ and for some $0<\tilde{\alpha}<1$, $0<\tilde{\beta}<1$, $0<\tilde{\gamma}<1$.
\end{corollary}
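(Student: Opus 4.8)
The plan is to obtain the corollary from Theorem 2.3 by converting the abstract fractional-power regularity into genuine Hölder regularity on $\overline{\Omega}$, the point being that the hypotheses single out the most favourable indices. First I would record the bookkeeping: with $q=p$ and $\alpha_{0}=\beta_{0}=\gamma_{0}=0$, the four inequalities of (2.38) (in particular $\frac1p-\frac1{2r}\le 0$, which forces $p\ge 2r>r>3$) imply every inequality in (2.13), (2.14), (2.24), (2.25); this is a direct check, most left-hand sides vanishing once the exponents are set to zero. Hence the solution produced by Theorem 2.1 on $[0,T]$ is covered by Theorem 2.3 and is a strong solution carrying its time-Hölder statements and quantitative estimates. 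Feeding these into Lemma 2.4 and the Sobolev embedding $W^{k,s}(\Omega)\hookrightarrow C^{m,\delta}(\overline{\Omega})$ (valid with $\delta=1-\frac{3}{s}$ since $p,r>3$) gives the \emph{base} spatial regularity: $u(t),\omega(t)\in (C^{1,1-3/p}(\overline{\Omega}))^{3}$ and $\theta(t)\in C^{1,1-3/r}(\overline{\Omega})$, while the time derivatives lie in $X^{1/2}_{p}$, $Y^{1/2}_{p}$, $Z^{1/2}_{r}$ and hence in $C^{0,1-3/p}$ resp. $C^{0,1-3/r}$. This is only $C^{1}$ in space, so one more step is needed.

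The core of the proof is a spatial bootstrap to $C^{2,\cdot}$ carried out through the \emph{stationary} elliptic systems obtained by freezing time. At each fixed $t\in(0,T]$ the abstract equations read $A_{p}u(t)=F(u,\omega,\theta)(t)-d_{t}u(t)$, $\Gamma_{p}\omega(t)=G(u,\omega,\theta)(t)-d_{t}\omega(t)$ and $B_{r}\theta(t)=H(u,\omega,\theta)(t)-d_{t}\theta(t)$, i.e. a Stokes system for $u$ (recovering also $\nabla p$), the strongly elliptic system $-\Delta-\nabla\mathrm{div}$ for $\omega$, and $-\Delta$ for $\theta$, all with the zero Dirichlet condition on the $C^{2,1}$-boundary. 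I would run these in the order $\theta\to u\to\omega$. Using the base regularity, the right-hand side $-(u\cdot\nabla)\theta+\Phi(u;\omega)-d_{t}\theta$ of the $\theta$-equation lies in $C^{0,1-3/r}$, so Schauder (elliptic) regularity gives $\theta(t)\in C^{2,\gamma}$ for $\gamma<1-3/r$; in particular $\theta(t)$ is Lipschitz, whence $f(\theta),g(\theta)\in C^{0,1}$ \emph{regardless} of $f,g$ being merely $C^{1}$, which is exactly what keeps the temperature coupling from degrading the velocity exponent. The right-hand side $2\mu_{r}\mathrm{rot}\,\omega+f(\theta)-(u\cdot\nabla)u-d_{t}u$ of the Stokes system is then in $C^{0,1-3/p}$, so $u(t)\in (C^{2,\alpha}(\overline{\Omega}))^{3}$ and $\nabla p(t)\in (C^{1,\alpha}(\overline{\Omega}))^{3}$ for $\alpha<1-3/p$; feeding the improved $u$ into the $\omega$-equation gives $\omega(t)\in(C^{2,\beta}(\overline{\Omega}))^{3}$ for $\beta<1-3/p$.

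For the time derivatives I would differentiate the \emph{evolution} equations rather than the elliptic ones, so as not to manufacture a spurious $d_{t}^{2}u$. Writing $v=d_{t}u$, one has $d_{t}v+A_{p}v=d_{t}F$, where $d_{t}F=-P_{p}[(d_{t}u\cdot\nabla)u+(u\cdot\nabla)d_{t}u]+2\mu_{r}P_{p}\mathrm{rot}\,d_{t}\omega+P_{p}[f'(\theta)d_{t}\theta]$; here the assumption $f,g\in C^{1}$ is indispensable, since it is what makes $d_{t}f(\theta)=f'(\theta)d_{t}\theta$ and $d_{t}g(\theta)=g'(\theta)d_{t}\theta$ meaningful and (because $p,r>3$) $p$-integrable. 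Each term of $d_{t}F$ lies in $L^{p}_{\sigma}(\Omega)$, and the analogous forcings for $d_{t}\omega$ and $d_{t}\theta$ lie in $(L^{p}(\Omega))^{3}$ and $L^{r}(\Omega)$; the parabolic regularity theory for analytic semigroups then places $d_{t}u(t)\in D(A_{p})=X^{1}_{p}$, $d_{t}\omega(t)\in Y^{1}_{p}$, $d_{t}\theta(t)\in Z^{1}_{r}$, hence in $(C^{1,1-3/p}(\overline{\Omega}))^{3}$, $(C^{1,1-3/p}(\overline{\Omega}))^{3}$, $C^{1,1-3/r}(\overline{\Omega})$. The Hölder-in-time exponents $\hat\alpha,\hat\beta,\hat\gamma<1/2$ and $\tilde\alpha,\tilde\beta,\tilde\gamma<1$ are inherited from the corresponding statements of Theorem 2.3 transported through these fixed continuous embeddings, the ceiling $1/2$ reflecting the half-derivative gap spent in passing from $X^{1}_{p}$-type to $C^{2,\cdot}$-type bounds.

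The main obstacle is the coupled nonlinear bootstrap itself: one must choose the order $\theta\to u\to\omega$ (and then the time-differentiated system) so that at each stage the products appearing in $F$, $G$, $H$ — the convective terms $(u\cdot\nabla)u$, $(u\cdot\nabla)\omega$, $(u\cdot\nabla)\theta$ and, most delicately, the viscous dissipation $\Phi(u;\omega)$, which is quadratic in $\nabla u,\nabla\omega$ and in $\omega$ — are already known to lie in the required Hölder class, and one must check that the exponents close at the sharp values $1-3/p$ and $1-3/r$. This is precisely where $p,r>3$ is used repeatedly (through $W^{1,s}(\Omega)\hookrightarrow L^{\infty}(\Omega)$ and the Hölder-algebra property), and where the $C^{1}$-smoothing of $\theta$ described above is essential. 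Granting these estimates, every term of (1.1) is a continuous function on $(0,T]\times\overline{\Omega}$, the projected abstract system and the pointwise system (1.1), (1.2) agree, and $(u,\omega,\theta)$ together with the recovered pressure $p$ is the asserted classical solution.
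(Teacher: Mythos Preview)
Your overall architecture matches the paper's: verify that (2.38) with $\alpha_{0}=\beta_{0}=\gamma_{0}=0$ feeds into (2.13), (2.14), (2.24), (2.25); invoke Theorem~2.3; bootstrap the spatial regularity of $(u,\omega,\theta)$ through the frozen-time elliptic problems $u=A^{-1}(F-d_{t}u)$, $\omega=\Gamma^{-1}(G-d_{t}\omega)$, $\theta=B^{-1}(H-d_{t}\theta)$; and finally differentiate the evolution equations to lift $d_{t}u$, $d_{t}\omega$, $d_{t}\theta$ into $X^{1}_{p}$, $Y^{1}_{p}$, $Z^{1}_{r}$.

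The one genuine divergence is in how you run the elliptic bootstrap. You go straight to Schauder theory in H\"older spaces: show the right-hand sides lie in $C^{0,1-3/p}$ (resp.\ $C^{0,1-3/r}$) and read off $C^{2,\alpha}$ from Schauder estimates for the Stokes system, the system $-\Delta-\nabla\mathrm{div}$, and the Laplacian. The paper instead stays inside the $L^{p}$ framework it has already built: Lemmas~5.1--5.8(ii) show $F,G,H\in W^{1,p}$ (resp.\ $W^{1,r}$), so that $A^{-1}$, $\Gamma^{-1}$, $B^{-1}$ map into $W^{3,p}$ (resp.\ $W^{3,r}$), and only then is the Sobolev embedding $W^{3,p}\hookrightarrow C^{2,\alpha}$ applied. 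Your route is slightly shorter conceptually but imports Schauder estimates for the Stokes and strongly elliptic systems, which are outside the paper's toolkit; the paper's route requires the extra $W^{1,\cdot}$ product estimates of \S5.1 but uses nothing beyond the operators already set up in \S2. Your observation that the order $\theta\to u\to\omega$ protects the velocity exponent is correct but not strictly needed: already at the base level $\theta\in C^{1,1-3/r}$ is Lipschitz, so $f(\theta),g(\theta)\in C^{0,1}$ without first improving $\theta$. For the time-derivative step you should make explicit that $d_{t}F$, $d_{t}G$, $d_{t}H$ are H\"older in $t$ (not merely bounded) with values in $L^{p}_{\sigma}$, $(L^{p})^{3}$, $L^{r}$; this is what Lemma~4.2 actually needs, and the paper checks it separately (Lemma~5.11).
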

\begin{corollary}
Let $f \in C^{0,1}(\mathbb{R};\mathbb{R}^{3})\cap C^{1}(\mathbb{R};\mathbb{R}^{3})$, $f(0)=0$, $g \in C^{0,1}(\mathbb{R};\mathbb{R}^{3})\cap C^{1}(\mathbb{R};\mathbb{R}^{3})$, $g(0)=0$, $p$, $q=p$ and $r$ satisfy $(2.38)$, $u_{0} \in L^{p}_{\sigma}(\Omega)$, $\omega_{0} \in (L^{p}(\Omega))^{3}$, $\theta_{0} \in L^{r}(\Omega)$.
Then a strong solution $(u,\omega,\theta)$ of $(1.1)$, $(1.2)$ in Theorem $2.4$ is a classical solution of $(1.1)$, $(1.2)$ in $(0,\infty)$ satisfying
\begin{equation*}
u \in C^{0,\hat{\alpha}}((0,\infty);(C^{2,\alpha}(\overline{\Omega}))^{3}), \ d_{t}u \in C^{0,\tilde{\alpha}}((0,\infty);(C^{1,\alpha}(\overline{\Omega}))^{3}),
\end{equation*}
\begin{equation*}
\omega \in C^{0,\hat{\beta}}((0,\infty);(C^{2,\beta}(\overline{\Omega}))^{3}), \ d_{t}\omega \in C^{0,\tilde{\beta}}((0,\infty);(C^{1,\beta}(\overline{\Omega}))^{3}),
\end{equation*}
\begin{equation*}
\theta \in C^{0,\hat{\gamma}}((0,\infty);C^{2,\gamma}(\overline{\Omega})), \ d_{t}\theta \in C^{0,\tilde{\gamma}}((0,\infty);C^{1,\gamma}(\overline{\Omega}))
\end{equation*}
for any $0<\hat{\alpha}<1/2$, $0<\hat{\beta}<1/2$, $0<\hat{\gamma}<1/2$, $0<\alpha<1-3/p$, $0<\beta<1-3/p$, $0<\gamma<1-3/r$ and for some $0<\tilde{\alpha}<1$, $0<\tilde{\beta}<1$, $0<\tilde{\gamma}<1$.
\end{corollary}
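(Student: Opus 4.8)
The plan is to start from the global strong solution $(u,\omega,\theta)$ produced by Theorem 2.4 (which applies because, under (2.38) with $q=p$, the choice $\alpha_{0}=\beta_{0}=\gamma_{0}=0$ satisfies (2.13), (2.14), (2.24), (2.25)) and to promote its spatial regularity from the fractional-power spaces to Hölder spaces by \emph{stationary} elliptic regularity, treating each of the three equations in (1.1) as an elliptic boundary value problem in the space variable with the time derivative carried to the right-hand side. Since both the assertion ``classical solution'' and the Hölder-in-time statements are local in time, the whole argument is the one already used for Corollary 2.1 on $(0,T]$, now run on every compact subinterval of $(0,\infty)$; the global-in-time continuity and the exponential factor $e^{-\lambda t}$ are inherited directly from the estimates (2.32)--(2.37) of Theorem 2.4.

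First I would record the relevant embeddings. Combining Lemma 2.4 with the Sobolev embedding $W^{k,s}(\Omega)\hookrightarrow C^{j,\delta}(\overline{\Omega})$ (valid since $\Omega\subset\mathbb{R}^{3}$ is bounded with $C^{2,1}$ boundary), one gets $X^{\alpha'}_{p}\hookrightarrow(C^{1,\alpha}(\overline{\Omega}))^{3}$, $Y^{\beta'}_{p}\hookrightarrow(C^{1,\beta}(\overline{\Omega}))^{3}$ and $Z^{\gamma'}_{r}\hookrightarrow C^{1,\gamma}(\overline{\Omega})$ whenever $\alpha',\beta',\gamma'$ are sufficiently close to $1$ and $0<\alpha,\beta<1-3/p$, $0<\gamma<1-3/r$ (here $3<p,r<\infty$ by (2.38)). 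Applying these to the continuity properties (ii) and the estimates (iii) of Theorem 2.4 shows that $u(t),\omega(t),\theta(t)$ together with $d_{t}u(t),d_{t}\omega(t),d_{t}\theta(t)$ are spatially of class $C^{1,\cdot}(\overline{\Omega})$, with the Hölder-in-$t$ dependence inherited from Theorem 2.4; this already yields the stated regularity of the time derivatives in $(C^{1,\alpha}(\overline{\Omega}))^{3}$ and $C^{1,\gamma}(\overline{\Omega})$.

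Next I would check that the nonlinear right-hand sides are spatially Hölder. Using the algebra property of $C^{0,\delta}(\overline{\Omega})$, the fact that composition of the Lipschitz (indeed $C^{1}$) maps $f,g$ with a Hölder function stays Hölder, and the $C^{1,\cdot}$ bounds just obtained, the quantities
\[
h=2\mu_{r}\,\mathrm{rot}\,\omega+f(\theta)-(u\cdot\nabla)u-d_{t}u,\qquad
k=-4\mu_{r}\omega+2\mu_{r}\,\mathrm{rot}\,u+g(\theta)-(u\cdot\nabla)\omega-d_{t}\omega,
\]
\[
m=\Phi(u;\omega)-(u\cdot\nabla)\theta-d_{t}\theta
\]
lie, for each fixed $t>0$, in $(C^{0,\alpha}(\overline{\Omega}))^{3}$, $(C^{0,\beta}(\overline{\Omega}))^{3}$ and $C^{0,\gamma}(\overline{\Omega})$ respectively, again with the time-Hölder structure coming from Theorem 2.4. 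Then I would invoke stationary Schauder theory: $\theta(t)$ solves the scalar Dirichlet problem $-\Delta\theta=m$, $\omega(t)$ solves the strongly elliptic Dirichlet system $-\Delta\omega-\nabla\mathrm{div}\,\omega=k$ (recall $c_{a}+c_{d}=c_{0}+c_{d}-c_{a}=1$ under the normalization), and $(u(t),p(t))$ solves the stationary Stokes system $-\Delta u+\nabla p=h$, $\mathrm{div}\,u=0$, $u|_{\partial\Omega}=0$ obtained from $(1.1)_{2}$. The Schauder estimates for these three problems on a $C^{2,1}$ domain promote $\theta(t),\omega(t),u(t)$ from $C^{1,\cdot}$ to $C^{2,\gamma},C^{2,\beta},C^{2,\alpha}$ respectively and recover $\nabla p(t)\in C^{0,\alpha}$.

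The main obstacle is this last elliptic step for $u$ and $\omega$: unlike the scalar Laplace equation, the velocity equation is a stationary Stokes system, so one must use the Solonnikov/Agmon--Douglis--Nirenberg Schauder estimates for systems while simultaneously controlling the pressure, and the microrotation equation is a genuinely coupled strongly elliptic system, so one must verify that its coefficient configuration meets the ellipticity/complementing conditions these estimates require. A secondary bookkeeping point is to keep every Schauder constant independent of $t$ (they depend only on $\Omega$ and the ellipticity data), so that the spatial Hölder bounds preserve the memberships $u\in C^{0,\hat{\alpha}}((0,\infty);(C^{2,\alpha}(\overline{\Omega}))^{3})$ and their analogues; this transfer is then automatic once $h,k,m$ are shown to be Hölder in $t$ with values in the Hölder spaces, and the admissible ranges $0<\hat{\alpha}<1/2$, $0<\alpha<1-3/p$, etc., are exactly the margins left by the embeddings of the second step. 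With $C^{2,\cdot}$ spatial and $C^{1}$ temporal regularity in hand, each equation of (1.1) holds pointwise, so $(u,\omega,\theta)$ is a classical solution and the proof is complete.
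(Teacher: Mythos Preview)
Your strategy is genuinely different from the paper's and is largely sound, but there is one real gap. The paper never invokes Schauder theory: it stays entirely in the $L^{p}$ scale, showing first (Lemma~5.9) that $F,G,H$ are H\"older in time with values in $W^{1,p},W^{1,p},W^{1,r}$, then (Lemma~5.10) that $u=A^{-1}(F-d_{t}u)\in C^{0,\hat\alpha}((0,T];(W^{3,p})^{3})$ via $d_{t}u\in X^{1/2}_{p}\hookrightarrow (W^{1,p})^{3}$ and the $L^{p}$ regularity of $A^{-1}$, and finally applies the Sobolev embedding $W^{3,p}\hookrightarrow C^{2,\alpha}$. Your Schauder bootstrap for the stationary Stokes, Lam\'e-type and Laplace problems is a legitimate alternative for the $C^{2,\cdot}$ regularity of $(u,\omega,\theta)$, and it buys you a direct recovery of $\nabla p\in C^{0,\alpha}$; the paper's $L^{p}$ route, in turn, avoids having to quote Schauder estimates for the Stokes and microrotation systems.

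The gap concerns $d_{t}u,d_{t}\omega,d_{t}\theta\in C^{1,\cdot}$. You assert this follows from Theorem~2.4(ii) by ``combining Lemma~2.4 with the Sobolev embedding $W^{k,s}\hookrightarrow C^{j,\delta}$'' to get $X^{\alpha'}_{p}\hookrightarrow(C^{1,\alpha}(\overline\Omega))^{3}$ for $\alpha'$ close to $1$. But Lemma~2.4 only embeds $X^{\alpha'}_{p}$ into $(W^{k,s})^{3}$ with \emph{integer} $k$, and the constraint $\tfrac{1}{p}-\tfrac{2\alpha'-k}{3}\le\tfrac{1}{s}\le\tfrac{1}{p}$ forces $k\le 1$ whenever $\alpha'<1$; composing with $W^{1,s}\hookrightarrow C^{0,\delta}$ therefore never reaches $C^{1,\cdot}$. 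Thus $d_{t}u\in(C^{1,\alpha})^{3}$ does not follow from Theorem~2.4 by the chain you wrote. The paper supplies exactly this missing piece in Lemma~5.12: differentiating the integral equation in $t$ and reapplying the analytic-semigroup regularity Lemma~4.2 to $d_{t}F,d_{t}G,d_{t}H$ (which are H\"older by Lemma~5.11) yields $d_{t}u\in C^{0,\hat\alpha}((0,T];X^{1}_{p})\subset C^{0,\hat\alpha}((0,T];(W^{2,p})^{3})\hookrightarrow C^{0,\hat\alpha}((0,T];(C^{1,\alpha})^{3})$. To repair your outline you must either insert this step, or replace the appeal to Lemma~2.4 by the sharper fractional embedding $D(A^{\alpha'}_{p})\hookrightarrow H^{2\alpha',p}\hookrightarrow C^{1,\alpha}$ (valid once $2\alpha'-3/p>1+\alpha$), which is true but is not the content of Lemma~2.4 and should be cited separately.
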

\begin{remark}
It can be easily seen from \cite[Theorem 7.3.6]{Pazy}, \cite[Theorem 1.3]{Shibata} that our main results are still valid, instead of (1.2), for the following initial-boundary data:
\begin{equation*}
\begin{split}
&u|_{t=0}=u_{0} & \mathrm{in} \ \Omega, \\
&u_{\nu}|_{\partial\Omega}=0 & \mathrm{on} \ \partial\Omega\times(0,T), \\
&K(T(u,p)\nu)_{\tau}+(1-K)u_{\tau}|_{\partial\Omega}=0 & \mathrm{on} \ \partial\Omega\times(0,T), \\
&\omega|_{t=0}=\omega_{0} & \mathrm{in} \ \Omega, \\
&\omega|_{\partial\Omega}=0 & \mathrm{on} \ \partial\Omega\times(0,T), \\
&\theta|_{t=0}=\theta_{0} & \mathrm{in} \ \Omega, \\
&\theta|_{\partial\Omega}=\theta_{s} \ \mathrm{or} \ \kappa\partial_{\nu}\theta+\kappa_{s}\theta|_{\partial\Omega}=0 & \mathrm{on} \ \partial\Omega\times(0,T),
\end{split}
\end{equation*}
where $\nu$ is the outward unit normal vector on $\partial\Omega$, $u_{\nu}:=\nu\cdot u$, $u_{\tau}:=u-u_{\nu}\nu$, $T(u,p)$ is the Cauchy stress tensor defined as
\begin{equation*}
T(u,p)=-pI_{3}+2\mu D(u),
\end{equation*}
$I_{3}$ is the third identity matrix, $0\leq K<1$ is a constant, $\kappa_{s}$ is a positive constant.
Moreover, it is useful to remark that $(T(u,p)\nu)_{\tau}=T(u,p)\nu-(\nu\cdot T(u,p)\nu)\nu=2\mu(D(u)\nu)_{\tau}$.
\end{remark}

\subsection{$L^{p}_{\sigma}\times (L^{q})^{3}\times L^{r}$-estimates for linear and nonlinear terms}
We will state and prove some lemmas which play an important role throughout this paper.
It is assured by them that we obtain $L^{p}_{\sigma}$-estimates for $F(u,\omega,\theta)$, $(L^{q})^{3}$-estimates for $G(u,\omega,\theta)$ and $L^{r}$-estimates for $H(u,\omega,\theta)$.
\begin{lemma}
Let $1<p<\infty$,
\begin{equation*}
\alpha_{1}>0, \ 0\leq\delta_{1}<\frac{1}{2}+\frac{3}{2}\left(1-\frac{1}{p}\right), \ \alpha_{1}+\delta_{1}>\frac{1}{2}, \ 2\alpha_{1}+\delta_{1}\geq\frac{3}{2p}+\frac{1}{2}.
\end{equation*}
Then
\begin{equation}
\|A^{-\delta_{1}}_{p}P_{p}(u\cdot\nabla)v\|_{p}\leq C_{1}\|u\|_{X^{\alpha_{1}}_{p}}\|v\|_{X^{\alpha_{1}}_{p}}
\end{equation}
for any $u, v \in X^{\alpha_{1}}_{p}$, where $C_{1}=C_{1}(\alpha_{1},\delta_{1})$ is a positive constant.
\end{lemma}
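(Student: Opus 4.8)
The plan is to prove \eqref{} by duality against $L^{p'}_{\sigma}(\Omega)$ (with $p'=p/(p-1)$ the conjugate exponent), thereby reducing the claim to a trilinear estimate that is closed by H\"{o}lder's inequality together with the embeddings of Lemma 2.4. For $\phi\in L^{p'}_{\sigma}(\Omega)$ with $\|\phi\|_{p'}\leq 1$ I would write
$$\langle A^{-\delta_{1}}_{p}P_{p}(u\cdot\nabla)v,\phi\rangle=\langle (u\cdot\nabla)v,A^{-\delta_{1}}_{p'}\phi\rangle,$$
using that the adjoint of $P_{p}$ fixes the solenoidal test function and that $(A^{-\delta_{1}}_{p})^{\ast}=A^{-\delta_{1}}_{p'}$. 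Setting $w:=A^{-\delta_{1}}_{p'}\phi$ one has $\|w\|_{X^{\delta_{1}}_{p'}}=\|\phi\|_{p'}\leq 1$, so that, taking the supremum over $\phi$,
$$\|A^{-\delta_{1}}_{p}P_{p}(u\cdot\nabla)v\|_{p}=\sup_{\|w\|_{X^{\delta_{1}}_{p'}}\leq 1}\left|\int_{\Omega}((u\cdot\nabla)v)\cdot w\,dx\right|,$$
and it suffices to bound this trilinear form by $\|u\|_{X^{\alpha_{1}}_{p}}\|v\|_{X^{\alpha_{1}}_{p}}\|w\|_{X^{\delta_{1}}_{p'}}$.

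Next I would estimate the form. Since $\mathrm{div}\,u=0$ and $u$ has vanishing normal trace on $\partial\Omega$, integration by parts gives $\int ((u\cdot\nabla)v)\cdot w=-\int ((u\cdot\nabla)w)\cdot v$ with no boundary contribution (first for $u\in C^{\infty}_{0,\sigma}(\Omega)$, then by density and continuity of both trilinear forms), so the single derivative may be placed on whichever of $v,w$ carries enough regularity. Applying H\"{o}lder's inequality with exponents $\frac1a+\frac1b+\frac1c=1$, I bound the form either by $\|u\|_{a}\|\nabla v\|_{b}\|w\|_{c}$ or by $\|u\|_{a}\|v\|_{b}\|\nabla w\|_{c}$, and control each factor through Lemma 2.4: $X^{\alpha_{1}}_{p}\hookrightarrow L^{a}$ and $X^{\alpha_{1}}_{p}\hookrightarrow (W^{1,b}(\Omega))^{3}$ for $u,v$, and $X^{\delta_{1}}_{p'}\hookrightarrow L^{c}$ or $\hookrightarrow (W^{1,c}(\Omega))^{3}$ for $w$.

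The exponent bookkeeping is then routine. Writing everything in terms of $\frac1a,\frac1b,\frac1c$, Lemma 2.4 constrains each reciprocal exponent to an interval whose left endpoint records the amount of regularity consumed; a feasible triple summing to $1$ exists precisely when the sum of these left endpoints does not exceed $1$, and a direct computation shows this is equivalent to $2\alpha_{1}+\delta_{1}\geq\frac{3}{2p}+\frac{1}{2}$, exactly the last hypothesis. The remaining assumptions serve to keep each individual embedding admissible: $\alpha_{1}>0$ makes the $u,v$ embeddings nontrivial, while $0\leq\delta_{1}<\frac12+\frac32(1-\frac1p)$ keeps the test-function embedding strictly subcritical, away from the borderline $L^{\infty}$ endpoint; the sum of right endpoints is $\frac1p+1\geq 1$, so the upper constraints hold automatically.

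The main obstacle is the regime in which both $\alpha_{1}<\frac12$ and $\delta_{1}<\frac12$, which the hypotheses permit since only $\alpha_{1}+\delta_{1}>\frac12$ is assumed. Here neither factor has a full derivative's worth of regularity to spare, so the integer integration by parts of the second step does not close: $X^{\alpha_{1}}_{p}\hookrightarrow (W^{1,b}(\Omega))^{3}$ fails for $\alpha_{1}<\frac12$, and likewise $X^{\delta_{1}}_{p'}\hookrightarrow (W^{1,c}(\Omega))^{3}$ fails for $\delta_{1}<\frac12$. The single derivative must instead be shared fractionally, and $\alpha_{1}+\delta_{1}>\frac12$ is precisely what guarantees a positive surplus $\alpha_{1}+\delta_{1}-\frac12>0$ for this split. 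I would carry this out either by bilinear interpolation between the two integer endpoints above, or, more concretely, by a fractional integration by parts: using the comparability $D(A^{1/2}_{p})=(W^{1,p}_{0}(\Omega))^{3}\cap L^{p}_{\sigma}(\Omega)$ with equivalent norms (from the cited work of Giga) together with the boundedness of the Riesz-type operators $\partial_{j}(-\Delta)^{-1/2}$, one realizes $\partial_{j}v$ in a negative-order space paired with $w\in X^{\delta_{1}}_{p'}$, the pairing being meaningful because the combined order $\alpha_{1}+\delta_{1}-\frac12$ is positive. This fractional step, rather than the H\"{o}lder bookkeeping, is where the genuine difficulty lies.
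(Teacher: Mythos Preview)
The paper gives no proof here; its entire argument is the citation ``It is \cite[Lemma 2.2]{Giga 3}.'' Your outline reconstructs the argument behind that citation: duality against $L^{p'}_{\sigma}$, the divergence form $(u\cdot\nabla)v=\partial_{j}(u_{j}v)$, H\"{o}lder's inequality, and the embeddings of Lemma 2.4. Your exponent bookkeeping recovering the constraint $2\alpha_{1}+\delta_{1}\geq\frac{3}{2p}+\frac{1}{2}$ is correct, and you have isolated the genuinely delicate regime $\alpha_{1}<\frac{1}{2}$, $\delta_{1}<\frac{1}{2}$ where neither integer integration by parts closes.

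One remark on how that regime is actually handled in the cited source. Giga--Miyakawa absorb the difficulty into a preliminary lemma (their Lemma 2.1) stating that $A^{-\delta_{1}}_{p}P_{p}\partial_{j}$ extends to a bounded operator from $L^{s}$ into $L^{p}_{\sigma}$ throughout the stated range of $\delta_{1}$, with the Sobolev-type relation between $s$ and $p$ one expects. That lemma rests on the identification $D(A^{\theta}_{p})=H^{2\theta,p}\cap L^{p}_{\sigma}$ obtained from bounded imaginary powers of $A_{p}$, which is precisely the ``fractional'' input you point to via $D(A^{1/2}_{p})=(W^{1,p}_{0}(\Omega))^{3}\cap L^{p}_{\sigma}$ and the Riesz-type operators. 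Packaged this way, the hard case becomes a mapping property of $A^{-\delta_{1}}P\nabla\cdot$ applied to $u\otimes v\in L^{s}$, after which only the H\"{o}lder step on $u\otimes v$ remains. Your bilinear-interpolation alternative would also work but is less direct on a bounded domain, where the commutation of $\partial_{j}$ with fractional powers of the Dirichlet Laplacian needs care. In short, your proposal is correct and already goes further than what the paper writes down.
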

\begin{proof}
It is \cite[Lemma 2.2]{Giga 3}.
\end{proof}
\begin{lemma}
Let $1<p<\infty$, $1<q<\infty$,
\begin{equation*}
\alpha_{2}, \beta_{2}\geq 0, \ \alpha_{2}>\frac{3}{2}\left(\frac{1}{p}-\frac{1}{q}\right), \ 0\leq\delta_{2}<\frac{1}{2}+\frac{3}{2}\left(1-\frac{1}{q}\right), \ \beta_{2}+\delta_{2}>\frac{1}{2}, \ \alpha_{2}+\beta_{2}+\delta_{2}\geq\frac{3}{2p}+\frac{1}{2}.
\end{equation*}
Then
\begin{equation}
\|\Gamma^{-\delta_{2}}_{q}(u\cdot\nabla)\omega\|_{q}\leq C_{2}\|u\|_{X^{\alpha_{2}}_{p}}\|\omega\|_{Y^{\beta_{2}}_{q}}
\end{equation}
for any $u \in X^{\alpha_{2}}_{p}$, $\omega \in Y^{\beta_{2}}_{q}$,where $C_{2}=C_{2}(\alpha_{2},\beta_{2},\delta_{2})$ is a positive constant.
\end{lemma}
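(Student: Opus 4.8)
The plan is to imitate the proof of Lemma 2.5 (the Giga--Miyakawa scheme), exploiting that $u$ is solenoidal and that $\Gamma_q^{-1/2}\mathrm{div}$ is bounded on $(L^q(\Omega))^3$. Since $\mathrm{div}\,u=0$, I would first rewrite the convective term in divergence form, $(u\cdot\nabla)\omega=\mathrm{div}(u\otimes\omega)$ with $(u\otimes\omega)_{ij}=u_j\omega_i$, so that $\Gamma_q^{-\delta_2}(u\cdot\nabla)\omega=\Gamma_q^{\frac12-\delta_2}\,N(u\otimes\omega)$ where $N:=\Gamma_q^{-1/2}\mathrm{div}$. The first step is to show $N$ is bounded: by duality $\|\Gamma_q^{-1/2}\mathrm{div}\,G\|_q=\sup_{\|\phi\|_{q'}=1}|\langle G,\nabla\Gamma_{q'}^{-1/2}\phi\rangle|$, and Lemma 2.4 applied with $\beta=1/2$, $k=1$, $s=q'$ gives $Y_{q'}^{1/2}\hookrightarrow (W^{1,q'}(\Omega))^3$, whence $\|\nabla\Gamma_{q'}^{-1/2}\phi\|_{q'}\le C\|\phi\|_{q'}$. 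The boundary term arising when $\mathrm{div}$ is integrated by parts vanishes because $u$, hence $u\otimes\omega$, vanishes on $\partial\Omega$; this I would justify by establishing the inequality first for $u,\omega$ in a dense class of smooth (solenoidal, boundary-vanishing) fields and then passing to the limit, both sides being continuous in the relevant norms.

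Next I would split according to the size of $\delta_2$. When $\delta_2\ge\frac12$ the power $\Gamma_q^{\frac12-\delta_2}$ is bounded, so it suffices to estimate $\|N(u\otimes\omega)\|_q\le C\|u\otimes\omega\|_q\le C\|u\|_{L^a}\|\omega\|_{L^c}$ by H\"older with $\frac1a+\frac1c=\frac1q$, after which the embeddings $X_p^{\alpha_2}\hookrightarrow L^a$ and $Y_q^{\beta_2}\hookrightarrow L^c$ of Lemma 2.4 finish the estimate. Admissible exponents exist precisely because $\alpha_2+\beta_2\ge\frac{3}{2p}$ (the $\delta_2=\frac12$ instance of the last hypothesis) and because $\alpha_2>\frac32(\frac1p-\frac1q)$ forces $\frac1p-\frac{2\alpha_2}{3}<\frac1q$, i.e.\ $u$ embeds into some $L^a$ with $a\ge q$ needed when $q>p$; the upper bound $\delta_2<\frac12+\frac32(1-\frac1q)$ keeps the dual exponent for the test function positive. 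More generally the scaling inequality $\alpha_2+\beta_2+\delta_2\ge\frac{3}{2p}+\frac12$ is exactly what makes the three reciprocal Sobolev exponents sum to at most $1$ in the H\"older step.

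The delicate regime is $\delta_2<\frac12$, where $\Gamma_q^{\frac12-\delta_2}$ is an unbounded positive power and, since both $\beta_2$ and $\delta_2$ may lie below $\frac12$, the single derivative can be carried neither by $\omega$ nor by the test function $\Gamma_{q'}^{-\delta_2}\phi$ alone. Here I would distribute the derivative fractionally, interpolating between the two endpoint bilinear estimates — one in which $\omega$ is differentiated (valid once $\beta_2\ge\frac12$) and one in which the test function is differentiated (valid once $\delta_2\ge\frac12$) — the interpolation parameter chosen so that the derivative shares on the two factors add up to the available $\beta_2+\delta_2>\frac12$; equivalently, one shows $N(u\otimes\omega)\in Y_q^{1/2-\delta_2}$ with norm controlled by $\|u\|_{X_p^{\alpha_2}}\|\omega\|_{Y_q^{\beta_2}}$, using that $N$ preserves the fractional scale and that the product inherits $2\beta_2\ (>1-2\delta_2)$ derivatives from $\omega$. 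I expect this fractional product/interpolation step to be the main obstacle: it requires a Gagliardo--Nirenberg-type product estimate in the $\Gamma_q$-fractional scale rather than the integer-order embeddings of Lemma 2.4, together with the verification that the two endpoints are compatible so that $\beta_2+\delta_2>\frac12$ is exactly the condition closing the argument.
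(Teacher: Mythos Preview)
The paper does not give a proof; it simply cites Hishida's Lemma~3.3, which in turn adapts Giga--Miyakawa's Lemma~2.2 to the mixed $L^p$--$L^q$ setting. Your sketch is precisely that Giga--Miyakawa scheme, so you are on the same track as the cited source.

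Two remarks. First, the boundary term in the integration by parts vanishes because $u\cdot\nu=0$ on $\partial\Omega$ (this is what membership in $L^p_\sigma(\Omega)$ gives), not because $u$ itself vanishes on $\partial\Omega$; for small $\alpha_2$ the full trace of $u$ need not be zero. Your density argument still repairs this, but the reason you stated is not the right one.

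Second, for $\delta_2<\tfrac12$ you propose bilinear interpolation, which can be made to work but is heavier than what Giga--Miyakawa and Hishida actually do. Their argument is uniform in $\delta_2$: by duality
\[
\langle \Gamma_q^{-\delta_2}(u\cdot\nabla)\omega,\phi\rangle
=-\langle u\otimes\omega,\nabla\Gamma_{q'}^{-\delta_2}\phi\rangle,
\]
and one uses the embedding $D(\Gamma_{q'}^{\delta_2})\hookrightarrow (W^{1,s'}(\Omega))^3$ with $\tfrac{1}{s'}=\tfrac{1}{q'}+\tfrac{1-2\delta_2}{3}$, valid for \emph{all} $0\le\delta_2<\tfrac12+\tfrac32(1-\tfrac1q)$ (this is the standard identification $D(\Gamma_{q'}^{\delta_2})=H^{2\delta_2,q'}$ together with Sobolev embedding, allowing $s'<q'$ when $\delta_2<\tfrac12$; note that Lemma~2.4 as stated in the paper only records the $s'\ge q'$ direction). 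A single H\"older inequality with exponents $(a,c,s')$ satisfying $\tfrac1a+\tfrac1c+\tfrac1{s'}=1$ then closes the estimate, and the conditions $\alpha_2>\tfrac32(\tfrac1p-\tfrac1q)$, $\beta_2+\delta_2>\tfrac12$, $\alpha_2+\beta_2+\delta_2\ge\tfrac{3}{2p}+\tfrac12$ are exactly what makes such a triple admissible for the embeddings $X_p^{\alpha_2}\hookrightarrow L^a$ and $Y_q^{\beta_2}\hookrightarrow L^c$. This avoids the fractional product step you flagged as the main obstacle.
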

\begin{proof}
It is \cite[Lemma 3.3]{Hishida}.
\end{proof}
\begin{lemma}
Let $1<p<\infty$, $1<r<\infty$,
\begin{equation*}
\alpha_{3}, \gamma_{3}\geq 0, \ \alpha_{3}>\frac{3}{2}\left(\frac{1}{p}-\frac{1}{r}\right), \ 0\leq\delta_{3}<\frac{1}{2}+\frac{3}{2}\left(1-\frac{1}{r}\right), \ \beta_{3}+\delta_{3}>\frac{1}{2}, \ \alpha_{3}+\gamma_{3}+\delta_{3}\geq\frac{3}{2p}+\frac{1}{2}.
\end{equation*}
Then
\begin{equation}
\|B^{-\delta_{3}}_{r}(u\cdot\nabla)\theta\|_{r}\leq C_{3}\|u\|_{X^{\alpha_{3}}_{p}}\|\theta\|_{Z^{\gamma_{3}}_{r}}
\end{equation}
for any $u \in X^{\alpha_{3}}_{p}$, $\theta \in Z^{\gamma_{3}}_{r}$,where $C_{3}=C_{3}(\alpha_{3},\gamma_{3},\delta_{3})$ is a positive constant.
\end{lemma}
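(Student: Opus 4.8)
The plan is to treat this as the scalar, temperature-equation analogue of Lemma 2.6: replacing the triple $(\Gamma_{q},\omega,\beta_{2})$ acting on $(L^{q}(\Omega))^{3}$ by $(B_{r},\theta,\gamma_{3})$ acting on $L^{r}(\Omega)$, the statement and its hypotheses coincide with those of Lemma 2.6 (the exponent printed as $\beta_{3}$ in the hypothesis should read $\gamma_{3}$). Accordingly I would mimic the proof of Lemma 2.6 in \cite{Hishida}, reducing the bound to H\"older's inequality together with the Sobolev-type embeddings of Lemma 2.4 and the $L^{s}$-boundedness of the Riesz-transform-type operators $B^{-1/2}_{s}\partial_{i}$ associated with the Dirichlet Laplacian.

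First I would use the incompressibility $\mathrm{div}\,u=0$, valid since $u\in L^{p}_{\sigma}(\Omega)$, to write $(u\cdot\nabla)\theta=\mathrm{div}(u\theta)$. Passing to the dual exponent $r'=r/(r-1)$ and using that the adjoint of $B^{-\delta_{3}}_{r}$ on $L^{r}(\Omega)$ is $B^{-\delta_{3}}_{r'}$ on $L^{r'}(\Omega)$, I would write
\[
\|B^{-\delta_{3}}_{r}(u\cdot\nabla)\theta\|_{r}=\sup_{\|\phi\|_{r'}\leq 1}|\langle u\theta,\nabla B^{-\delta_{3}}_{r'}\phi\rangle|,
\]
the boundary term produced by integration by parts vanishing because $u\cdot\nu=0$ on $\partial\Omega$. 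Then H\"older's inequality gives $\|u\theta\|_{s}\leq\|u\|_{a}\|\theta\|_{c}$ with $1/s=1/a+1/c$, and Lemma 2.4 yields $\|u\|_{a}\leq C\|u\|_{X^{\alpha_{3}}_{p}}$ for $1/p-2\alpha_{3}/3\leq 1/a\leq 1/p$ and $\|\theta\|_{c}\leq C\|\theta\|_{Z^{\gamma_{3}}_{r}}$ for $1/r-2\gamma_{3}/3\leq 1/c\leq 1/r$. It remains to control $\|\nabla B^{-\delta_{3}}_{r'}\phi\|_{s'}$ with $1/s+1/s'=1$.

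The step I expect to be the main obstacle is this last factor, because a gradient costs one derivative while $B^{-\delta_{3}}_{r'}$ supplies only $2\delta_{3}$ of them. When $\delta_{3}\geq 1/2$ one bounds it directly by the embedding $Z^{\delta_{3}}_{r'}\hookrightarrow W^{1,s'}(\Omega)$ (Lemma 2.4 with $k=1$), valid for $1/s'\geq 1/r'-(2\delta_{3}-1)/3$, so that $\|\nabla B^{-\delta_{3}}_{r'}\phi\|_{s'}\leq C\|\phi\|_{r'}$. When $\delta_{3}<1/2$ this one-derivative embedding is unavailable, and the missing regularity must be borrowed from $\theta$; here one distributes the single derivative between the smoothing $B^{-\delta_{3}}_{r'}$ and the regularity $\gamma_{3}$ of $\theta$, using the $L^{s}$-boundedness of $B^{-1/2}_{s}\partial_{i}$. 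The hypothesis $\gamma_{3}+\delta_{3}>1/2$ is exactly the statement that the total available smoothing exceeds the one derivative being distributed, which is what makes this splitting possible.

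Finally I would collect the exponent constraints. Choosing $1/a$ and $1/c$ at their admissible minima and imposing $1/s\leq 1/r+(2\delta_{3}-1)/3$ forces
\[
\alpha_{3}+\gamma_{3}+\delta_{3}\geq\frac{3}{2p}+\frac{1}{2},
\]
which is precisely the last hypothesis. The condition $\alpha_{3}>\frac{3}{2}(1/p-1/r)$ makes the integrability exponent for $u$ admissible (it permits $a>r$, giving room in the H\"older splitting), while $\delta_{3}<\frac{1}{2}+\frac{3}{2}(1-1/r)$ guarantees $1/s'=1/r'-(2\delta_{3}-1)/3>0$, so that the dual exponent $s'$ stays finite and the embedding on the test side is genuine. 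Checking that $s,s'\in(1,\infty)$ throughout completes the argument; once the sub-$1/2$ distribution of the derivative is arranged, everything reduces to Lemma 2.4 and H\"older's inequality exactly as in Lemma 2.6.
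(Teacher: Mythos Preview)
Your proposal is correct; the paper itself gives no argument beyond the one-line citation ``It is \cite[Lemma 3.3]{Hishida},'' and your sketch faithfully reconstructs the duality/H\"older/embedding method underlying that reference (indeed the same method as in the Giga--Miyakawa lemma cited for Lemma~2.5). You are also right that the hypothesis printed as $\beta_{3}+\delta_{3}>\tfrac{1}{2}$ is a typo for $\gamma_{3}+\delta_{3}>\tfrac{1}{2}$.
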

\begin{proof}
It is \cite[Lemma 3.3]{Hishida}.
\end{proof}
\begin{lemma}
Let $1<p<\infty$, $1<q<\infty$, $1<r<\infty$,
\begin{equation*}
\max\left\{0, \frac{1}{2}+\frac{3}{2}\left(\frac{1}{p}-\frac{1}{2r}\right)\right\}\leq\alpha_{3}\leq 1, \ \max\left\{0, \frac{1}{2}+\frac{3}{2}\left(\frac{1}{q}-\frac{1}{2r}\right)\right\}\leq\beta_{3}\leq 1.
\end{equation*}
Then
\begin{equation}
\begin{split}
\|\Phi(u,v;\omega,\psi)\|_{q}\leq& C_{4}(1+\mu_{r})(\|u\|_{X^{\alpha_{3}}_{p}}\|v\|_{X^{\alpha_{3}}_{p}}+\|u\|_{X^{\alpha_{3}}_{p}}\|\psi\|_{Y^{\beta_{3}}_{q}} \\
&+\|v\|_{X^{\alpha_{3}}_{p}}\|\omega\|_{Y^{\beta_{3}}_{q}}+\|\omega\|_{Y^{\beta_{3}}_{q}}\|\psi\|_{Y^{\beta_{3}}_{q}})
\end{split}
\end{equation}
for any $u, v \in X^{\alpha_{3}}_{p}$, $\omega, \psi \in Y^{\beta_{3}}_{q}$, where $C_{4}=C_{4}(\alpha_{3},\beta_{3})$ is a positive constant.
\end{lemma}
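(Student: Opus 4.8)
The plan is to estimate the five constituents of $\Phi$ separately and recombine. Writing $\Phi(u,v;\omega,\psi)=\Phi_{1}(u,v)+\Phi_{2}(u,v;\omega,\psi)+\Phi_{3}(\omega,\psi)+\Phi_{4}(\omega,\psi)+\Phi_{5}(\omega,\psi)$ and applying the triangle inequality in the relevant norm (the $L^{r}$-norm in which $\Phi$ enters the heat term $H$, consistent with the appearance of $r$ in the hypotheses), it suffices to bound each summand. Each summand is a pointwise bilinear expression whose two factors are of one of three types: a first-order derivative of $u$ or $v$ (the entries of $D(u)$ and $\mathrm{rot}\,u$), a first-order derivative of $\omega$ or $\psi$ (the entries of $\nabla\omega$, $\mathrm{div}\,\omega$), or an undifferentiated $\omega$ or $\psi$ (the terms $-\omega$, $-\psi$ inside $\Phi_{2}$). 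Note that $u,v$ enter only through gradients, while $\omega,\psi$ enter both as gradients (in $\Phi_{3},\Phi_{4},\Phi_{5}$) and as values (in $\Phi_{2}$). The coefficient $\mu_{r}$ occurs only through $\Phi_{2}$, which is what produces the prefactor $(1+\mu_{r})$.

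For a product of two factors I would use H\"{o}lder's inequality with the symmetric splitting $\tfrac{1}{r}=\tfrac{1}{2r}+\tfrac{1}{2r}$, reducing matters to $L^{2r}$-bounds on each factor. A derivative factor coming from $u$ is controlled by Lemma $2.4$ with $k=1$: the embedding $X^{\alpha_{3}}_{p}\hookrightarrow (W^{1,2r}(\Omega))^{3}$ yields $\|D(u)\|_{2r}+\|\mathrm{rot}\,u\|_{2r}\leq C\|u\|_{X^{\alpha_{3}}_{p}}$, and its admissibility threshold $\tfrac{1}{p}-\tfrac{2\alpha_{3}-1}{3}\leq\tfrac{1}{2r}$ is exactly the lower bound $\alpha_{3}\geq\tfrac{1}{2}+\tfrac{3}{2}\!\left(\tfrac{1}{p}-\tfrac{1}{2r}\right)$. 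Likewise a derivative factor from $\omega$ is handled by $Y^{\beta_{3}}_{q}\hookrightarrow (W^{1,2r}(\Omega))^{3}$, whose threshold is precisely $\beta_{3}\geq\tfrac{1}{2}+\tfrac{3}{2}\!\left(\tfrac{1}{q}-\tfrac{1}{2r}\right)$, giving $\|\nabla\omega\|_{2r}+\|\mathrm{div}\,\omega\|_{2r}\leq C\|\omega\|_{Y^{\beta_{3}}_{q}}$. The undifferentiated factors $\omega,\psi$ are estimated by the weaker $k=0$ case $Y^{\beta_{3}}_{q}\hookrightarrow (L^{2r}(\Omega))^{3}$, whose requirement $\beta_{3}\geq\tfrac{3}{2}\!\left(\tfrac{1}{q}-\tfrac{1}{2r}\right)$ is automatically implied by the first-order threshold above.

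Assembling these, $\Phi_{1}$ yields $\|u\|_{X^{\alpha_{3}}_{p}}\|v\|_{X^{\alpha_{3}}_{p}}$; expanding the product in $\Phi_{2}$ produces exactly the four combinations $\|u\|_{X^{\alpha_{3}}_{p}}\|v\|_{X^{\alpha_{3}}_{p}}$, $\|u\|_{X^{\alpha_{3}}_{p}}\|\psi\|_{Y^{\beta_{3}}_{q}}$, $\|v\|_{X^{\alpha_{3}}_{p}}\|\omega\|_{Y^{\beta_{3}}_{q}}$ and $\|\omega\|_{Y^{\beta_{3}}_{q}}\|\psi\|_{Y^{\beta_{3}}_{q}}$; and each of $\Phi_{3},\Phi_{4},\Phi_{5}$ yields $\|\omega\|_{Y^{\beta_{3}}_{q}}\|\psi\|_{Y^{\beta_{3}}_{q}}$. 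Collecting the fixed structural constants $2\mu,c_{0},c_{a},c_{d}$ and the embedding constants into $C_{4}$, and isolating the factor $4\mu_{r}$ of $\Phi_{2}$ into $(1+\mu_{r})$, gives the asserted bound.

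The step I expect to require the most care is the admissibility of the split exponent $2r$ in Lemma $2.4$, i.e. that $\tfrac{1}{2r}$ lie in the stated range $\left[\tfrac{1}{p}-\tfrac{2\alpha_{3}-1}{3},\tfrac{1}{p}\right]$ (and the analogue with $q$). The lower endpoint is just the $\alpha_{3},\beta_{3}$ thresholds used above; the upper endpoint $\tfrac{1}{2r}\leq\tfrac{1}{p}$ holds directly when $2r\geq p$, and in the complementary range $2r<p$ one must instead route the gradient factor through $L^{p}$ via the bounded-domain inclusion $L^{p}\hookrightarrow L^{2r}$, which calls for genuine first-order control of the velocity (and angular velocity). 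This is precisely the role of the additive $\tfrac{1}{2}$ in the hypotheses on $\alpha_{3},\beta_{3}$: it reflects the one derivative that each gradient factor costs. Beyond verifying this case distinction, the argument is a routine application of H\"{o}lder's inequality together with the embeddings already recorded.
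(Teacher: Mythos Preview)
Your proposal is correct and takes essentially the same approach as the paper: apply the Schwarz inequality (i.e., H\"older with the symmetric split $\tfrac{1}{r}=\tfrac{1}{2r}+\tfrac{1}{2r}$) to $\|\Phi(u,v;\omega,\psi)\|_{r}$, then invoke the embeddings (2.10) and (2.11) of Lemma~2.4 with $k=1$ and $s=2r$. Your extra care about the upper endpoint $\tfrac{1}{2r}\leq\tfrac{1}{p}$ in the admissible range of Lemma~2.4, and the bounded-domain workaround when $2r<p$, is a detail the paper's one-sentence proof does not address explicitly.
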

\begin{proof}
After applying the Schwarz inequality to $\|\Phi(u,v;\omega,\psi)\|_{r}$, we can obtain (2.42) by (2.10) with $\alpha=\alpha_{3}$, $k=1$ and $s=2r$, (2.11) with $\beta=\beta_{3}$, $k=1$ and $s=2r$.
\end{proof}
\begin{lemma}
Let $1<p<\infty$, $1<q<\infty$,
\begin{equation*}
\beta_{1}\geq 0, \ \beta_{1}>\frac{3}{2}\left(\frac{1}{q}-\frac{1}{p}\right), \ 0\leq\delta_{1}<\frac{1}{2}+\frac{3}{2}\left(1-\frac{1}{p}\right), \ \beta_{1}+\delta_{1}\geq\frac{3}{2}\left(\frac{1}{q}-\frac{1}{p}\right)+\frac{1}{2}.
\end{equation*}
Then
\begin{equation}
\|A^{-\delta_{1}}_{p}P_{p}\mathrm{rot}\omega\|_{p}\leq C_{5}\|\omega\|_{Y^{\beta_{1}}_{q}}
\end{equation}
for any $\omega \in Y^{\beta_{1}}_{q}$, where $C_{5}=C_{5}(\beta_{1},\delta_{1})$ is a positive constant.
\end{lemma}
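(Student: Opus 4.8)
The plan is to read \eqref{...}-style estimate \eqref{2.43} as the statement that the linear operator $A^{-\delta_{1}}_{p}P_{p}\mathrm{rot}$ is bounded from $Y^{\beta_{1}}_{q}$ into $L^{p}_{\sigma}(\Omega)$, and to recognize the four hypotheses on $(\beta_{1},\delta_{1})$ as the Sobolev scaling that makes this true. Setting $g=\Gamma^{\beta_{1}}_{q}\omega$, so that $\|g\|_{q}=\|\omega\|_{Y^{\beta_{1}}_{q}}$, the factor $\mathrm{rot}\,\Gamma^{-\beta_{1}}_{q}$ supplies a net smoothing of order $2\beta_{1}-1$ (two derivatives gained from $\Gamma^{-\beta_{1}}_{q}$, one spent by $\mathrm{rot}$), while $A^{-\delta_{1}}_{p}$ adds a further $2\delta_{1}$; passing from $L^{q}$ to $L^{p}$ costs $3(1/q-1/p)$, and balancing these is exactly $\beta_{1}+\delta_{1}\geq\frac{3}{2}(1/q-1/p)+\frac{1}{2}$. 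The conditions $\beta_{1}>\frac{3}{2}(1/q-1/p)$ and $0\leq\delta_{1}<\frac{1}{2}+\frac{3}{2}(1-1/p)$ are what keep the intermediate exponents inside $(1,\infty)$ and away from the endpoints of Lemma 2.4.

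First I would argue by duality. With $p'=p/(p-1)$ and $\phi\in L^{p'}_{\sigma}(\Omega)$, $\|\phi\|_{p'}\leq 1$, I transfer the fractional power and the projection to obtain $(A^{-\delta_{1}}_{p}P_{p}\mathrm{rot}\,\omega,\phi)=(\mathrm{rot}\,\omega,A^{-\delta_{1}}_{p'}\phi)$, using that $A^{-\delta_{1}}_{p'}\phi\in L^{p'}_{\sigma}(\Omega)$ is solenoidal so that $P_{p'}$ fixes it. Integration by parts then moves the single derivative in $\mathrm{rot}$ onto the smooth test function $\psi:=A^{-\delta_{1}}_{p'}\phi\in X^{\delta_{1}}_{p'}$, giving $(\omega,\mathrm{rot}\,\psi)$; for $\delta_{1}\geq 1/2$ the boundary term vanishes because $\psi\in D(A^{1/2}_{p'})\subset(W^{1,p'}_{0}(\Omega))^{3}$ has zero trace. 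An application of H\"{o}lder's inequality together with the embeddings $Y^{\beta_{1}}_{q}\hookrightarrow(L^{s}(\Omega))^{3}$ (Lemma 2.4, $k=0$) and $X^{\delta_{1}}_{p'}\hookrightarrow(W^{1,s'}(\Omega))^{3}$ (Lemma 2.4, $k=1$) then bounds $(\omega,\mathrm{rot}\,\psi)$ by $C\|\omega\|_{Y^{\beta_{1}}_{q}}\|\psi\|_{X^{\delta_{1}}_{p'}}=C\|\omega\|_{Y^{\beta_{1}}_{q}}\|\phi\|_{p'}$, and the displayed index conditions are precisely what force the two admissible ranges for $1/s$ to overlap.

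For the complementary low-regularity range I would avoid traces altogether by exploiting that $\mathrm{rot}$ is a divergence in disguise: $(\mathrm{rot}\,\omega)_{i}=\partial_{j}(\varepsilon_{ijk}\omega_{k})=(\mathrm{div}\,M(\omega))_{i}$ with $M(\omega)$ linear in $\omega$ and $\|M(\omega)\|_{s}\leq C\|\omega\|_{s}$. Hence $A^{-\delta_{1}}_{p}P_{p}\mathrm{rot}\,\omega=A^{-\delta_{1}}_{p}P_{p}\mathrm{div}\,M(\omega)$, and the desired estimate follows from the boundedness of $A^{-\delta_{1}}_{p}P_{p}\mathrm{div}$, i.e. the same mechanism that underlies Lemma 2.5, applied to $M(\omega)$ whose regularity is furnished by $\omega\in Y^{\beta_{1}}_{q}$ and Lemma 2.4. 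This reduction requires no boundary condition on $\omega$ or on the test function, and covers the cases left open by the first step.

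The main obstacle is the regime in which $\beta_{1}$ and $\delta_{1}$ are simultaneously below $1/2$, which the hypotheses genuinely admit (for instance when $q=p$, where only $\beta_{1}>0$ and $\beta_{1}+\delta_{1}\geq 1/2$ are demanded): there neither $\omega$ nor $A^{-\delta_{1}}_{p'}\phi$ need possess a trace, so the integration-by-parts boundary term in the first step cannot simply be discarded. The essential input that resolves this is the $L^{p}$-boundedness of $A^{-1/2}_{p}P_{p}\mathrm{div}$, which lets the one derivative carried by $\mathrm{rot}$ be absorbed without any trace hypothesis; the remainder of the work is the bookkeeping of verifying that the four inequalities on $(\beta_{1},\delta_{1})$ yield a nonempty intersection of admissible intermediate exponents in each case.
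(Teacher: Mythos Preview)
Your proposal is essentially a reconstruction of the argument behind \cite[Lemma 2.2]{Giga 3}, which is all the paper actually invokes: the paper's proof is the single line ``It follows immediately from \cite[Lemma 2.2]{Giga 3}.'' So in spirit you are taking the same route---duality, the identity $\mathrm{rot}\,\omega=\mathrm{div}\,M(\omega)$, and the $L^{p}$-boundedness of $A^{-1/2}_{p}P_{p}\mathrm{div}$---just spelled out rather than cited.

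There is, however, a residual gap in your treatment of the regime where both $\beta_{1}<1/2$ and $\delta_{1}<1/2$ simultaneously (which the hypotheses do allow, e.g.\ $q=p$, $\beta_{1}=\delta_{1}=1/4$). Your first step transfers the derivative to the test side and needs $X^{\delta_{1}}_{p'}\hookrightarrow (W^{1,s'}(\Omega))^{3}$, which by Lemma~2.4 forces $\delta_{1}\geq 1/2$; your second step keeps the derivative on $\omega$ and needs $Y^{\beta_{1}}_{q}\hookrightarrow (W^{1,s}(\Omega))^{3}$, hence $\beta_{1}\geq 1/2$. Pointing to the boundedness of $A^{-1/2}_{p}P_{p}\mathrm{div}$ is the right ingredient, but by itself it does not close the argument here: writing $A^{-\delta_{1}}_{p}P_{p}\mathrm{div}=A^{1/2-\delta_{1}}_{p}\bigl(A^{-1/2}_{p}P_{p}\mathrm{div}\bigr)$ leaves you with the unbounded factor $A^{1/2-\delta_{1}}_{p}$, and you have not explained why $A^{-1/2}_{p}P_{p}\mathrm{div}\,M(\omega)$ lands in $D(A^{1/2-\delta_{1}}_{p})$ with the right bound. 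What actually bridges the two endpoint cases in the Giga--Miyakawa argument is Stein-type complex interpolation of the analytic operator family $(\delta_{1},\beta_{1})\mapsto A^{-\delta_{1}}_{p}P_{p}\,\mathrm{rot}\,\Gamma^{-\beta_{1}}_{q}$ between the lines $\delta_{1}=1/2$ and $\beta_{1}=1/2$; once you add that step your outline becomes a complete proof, and indeed the one underlying the citation the paper uses.
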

\begin{proof}
It follows immediately from from \cite[Lemma 2.2]{Giga 3}.
\end{proof}
\begin{lemma}
Let $1<q<\infty$, $0\leq\beta_{2}\leq 1$.
Then
\begin{equation}
\|\omega\|_{q}\leq C_{6}\|\omega\|_{Y^{\beta_{2}}_{q}}
\end{equation}
for any $\omega \in Y^{\beta_{2}}_{q}$, where $C_{6}=C_{6}(\beta_{2})$ is a positive constant.
\end{lemma}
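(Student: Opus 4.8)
The claim is the estimate $\|\omega\|_{q}\leq C_{6}\|\omega\|_{Y^{\beta_{2}}_{q}}$ for $0\leq\beta_{2}\leq 1$, where by definition $\|\omega\|_{Y^{\beta_{2}}_{q}}=\|\Gamma^{\beta_{2}}_{q}\omega\|_{q}$. In other words, this asserts that the identity map $Y^{\beta_{2}}_{q}\hookrightarrow(L^{q}(\Omega))^{3}$ is continuous, i.e. that $\Gamma^{-\beta_{2}}_{q}$ is a bounded operator on $(L^{q}(\Omega))^{3}$. My plan is to prove this directly from the boundedness of the negative fractional power of the sectorial operator $\Gamma_q$, which is a standard consequence of the analytic-semigroup machinery already invoked in Lemma 2.3.

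First I would dispose of the boundary case $\beta_{2}=0$: there $\Gamma^{0}_{q}=I_{q}$, so the inequality is trivial with $C_{6}=1$. For $0<\beta_{2}\leq 1$ the plan is to use the integral representation of the negative fractional power of a sectorial operator whose semigroup decays. Since $-\Gamma_{q}$ generates a uniformly bounded analytic semigroup with the exponential decay recorded in Lemma 2.1 (inequality (2.2) with $\beta=0$ gives $\|e^{-t\Gamma_{q}}\omega\|_{q}\leq C_{\Gamma_{q},0,\lambda}e^{-\lambda t}\|\omega\|_{q}$ for $0<\lambda<\Lambda_{1}$), the operator $\Gamma^{-\beta_{2}}_{q}$ admits the representation
\begin{equation*}
\Gamma^{-\beta_{2}}_{q}\eta=\frac{1}{\Gamma(\beta_{2})}\int^{\infty}_{0}t^{\beta_{2}-1}e^{-t\Gamma_{q}}\eta\,dt,
\end{equation*}
where $\Gamma(\beta_{2})$ denotes the Gamma function. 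Applying the decay bound under the integral and integrating the convergent expression $\int^{\infty}_{0}t^{\beta_{2}-1}e^{-\lambda t}\,dt=\lambda^{-\beta_{2}}\Gamma(\beta_{2})$ yields $\|\Gamma^{-\beta_{2}}_{q}\eta\|_{q}\leq C_{\Gamma_{q},0,\lambda}\lambda^{-\beta_{2}}\|\eta\|_{q}$, so $\Gamma^{-\beta_{2}}_{q}$ is bounded on $(L^{q}(\Omega))^{3}$.

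To finish, I would set $\eta=\Gamma^{\beta_{2}}_{q}\omega$ for $\omega\in Y^{\beta_{2}}_{q}=D(\Gamma^{\beta_{2}}_{q})$; since $\Gamma^{-\beta_{2}}_{q}\Gamma^{\beta_{2}}_{q}\omega=\omega$, the previous display gives exactly $\|\omega\|_{q}\leq C_{\Gamma_{q},0,\lambda}\lambda^{-\beta_{2}}\|\Gamma^{\beta_{2}}_{q}\omega\|_{q}=C_{6}\|\omega\|_{Y^{\beta_{2}}_{q}}$. Taking the supremum of the constant over the compact range $\beta_{2}\in[0,1]$ (or simply fixing $\lambda$ and noting the constant depends continuously on $\beta_{2}$) produces a single $C_{6}=C_{6}(\beta_{2})$ as claimed. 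I do not anticipate a genuine obstacle here: the only point requiring care is justifying the integral representation and the interchange of norm and integral, but both are standard for sectorial operators with exponentially decaying semigroups, and indeed this inequality is nothing more than the statement that the continuous embedding $D(\Gamma^{\beta_{2}}_{q})\hookrightarrow(L^{q}(\Omega))^{3}$ holds, which is immediate from the closedness of $\Gamma^{\beta_{2}}_{q}$ together with boundedness of its inverse.
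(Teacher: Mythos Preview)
Your proof is correct, but the paper takes a different and shorter route. The paper simply invokes the embedding result of Lemma~2.4, specifically (2.11) with $\beta=\beta_{2}$, $k=0$, and $s=q$: the condition $\frac{1}{q}-\frac{2\beta_{2}}{3}\leq\frac{1}{q}\leq\frac{1}{q}$ is trivially satisfied for $\beta_{2}\geq 0$, so $Y^{\beta_{2}}_{q}\hookrightarrow (W^{0,q}(\Omega))^{3}=(L^{q}(\Omega))^{3}$ immediately.

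Your argument instead works from first principles: you use the integral representation $\Gamma^{-\beta_{2}}_{q}=\frac{1}{\Gamma(\beta_{2})}\int_{0}^{\infty}t^{\beta_{2}-1}e^{-t\Gamma_{q}}\,dt$ together with the exponential semigroup decay from Lemma~2.1 to show $\Gamma^{-\beta_{2}}_{q}$ is bounded, and then compose with $\Gamma^{\beta_{2}}_{q}$. This is entirely valid and has the merit of being self-contained---it relies only on Lemma~2.1 rather than the heavier embedding machinery of Lemma~2.4 (which in the paper is cited from Henry's book). The paper's approach buys brevity, since Lemma~2.4 is already stated for use elsewhere, and it treats the estimate as the trivial $k=0$ endpoint of a family of Sobolev-type embeddings rather than as a standalone fact about fractional powers.
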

\begin{proof}
It is clear from (2.11) with $\beta=\beta_{2}$, $k=0$ and $s=q$.
\end{proof}
\begin{lemma}
Let $1<p<\infty$, $1<q<\infty$,
\begin{equation*}
\alpha_{2}\geq 0, \ \alpha_{2}>\frac{3}{2}\left(\frac{1}{p}-\frac{1}{q}\right), \ 0\leq\delta_{2}<\frac{1}{2}+\frac{3}{2}\left(1-\frac{1}{q}\right), \ \alpha_{2}+\delta_{2}\geq\frac{3}{2}\left(\frac{1}{p}-\frac{1}{q}\right)+\frac{1}{2}.
\end{equation*}
Then
\begin{equation}
\|\Gamma^{-\delta_{2}}_{q}\mathrm{rot}u\|_{q}\leq C_{7}\|u\|_{X^{\alpha_{2}}_{p}}
\end{equation}
for any $u \in X^{\alpha_{2}}_{p}$, where $C_{7}=C_{7}(\alpha_{2},\delta_{2})$ is a positive constant.
\end{lemma}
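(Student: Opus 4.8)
The plan is to exploit the fact that this estimate is exactly the mirror image of Lemma 2.9 under the exchange $p\leftrightarrow q$, $A_{p}\leftrightarrow\Gamma_{q}$, $\delta_{1}\leftrightarrow\delta_{2}$, $\beta_{1}\leftrightarrow\alpha_{2}$: both assert that a single derivative (carried by $\mathrm{rot}$), after the negative fractional power $\Gamma^{-\delta_{2}}_{q}$ and starting from the regularity encoded in $X^{\alpha_{2}}_{p}$, lands boundedly in $L^{q}(\Omega)$. Since $\Gamma_{q}$ and $A_{p}$ generate the same Sobolev scale (Lemma 2.4), the argument of \cite[Lemma 2.2]{Giga 3} invoked for Lemma 2.9 transfers with only cosmetic changes (the absence of the projection $P_{p}$, which is bounded anyway). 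I would nonetheless give a self-contained derivation by duality so that the role of each scaling hypothesis becomes transparent.

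First I would reduce to a composite operator bound. Writing $w=A^{\alpha_{2}}_{p}u\in L^{p}_{\sigma}(\Omega)$ so that $\|u\|_{X^{\alpha_{2}}_{p}}=\|w\|_{p}$, the claim is the boundedness of $T:=\Gamma^{-\delta_{2}}_{q}\,\mathrm{rot}\,A^{-\alpha_{2}}_{p}:L^{p}_{\sigma}(\Omega)\to(L^{q}(\Omega))^{3}$. Equivalently, by duality with $1/q+1/q'=1$, it suffices to control the pairing $(\mathrm{rot}\,u,\psi)$ with $\psi=\Gamma^{-\delta_{2}}_{q'}\phi$ for $\phi\in(C^{\infty}_{0}(\Omega))^{3}$, $\|\phi\|_{q'}\leq1$, using $(\Gamma^{-\delta_{2}}_{q})^{\ast}=\Gamma^{-\delta_{2}}_{q'}$ (as $\Gamma_{q}$ is symmetric) and the identity $\|\psi\|_{Y^{\delta_{2}}_{q'}}=\|\Gamma^{\delta_{2}}_{q'}\psi\|_{q'}=\|\phi\|_{q'}$.

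The single derivative in $\mathrm{rot}$ must then be distributed between $u$ and $\psi$ according to the sizes of $\alpha_{2}$ and $\delta_{2}$. When $2\alpha_{2}\geq1$ I would keep the derivative on $u$, estimating $|(\mathrm{rot}\,u,\psi)|\leq\|\mathrm{rot}\,u\|_{s}\|\psi\|_{s'}$ and invoking $X^{\alpha_{2}}_{p}\hookrightarrow(W^{1,s}(\Omega))^{3}$ together with $Y^{\delta_{2}}_{q'}\hookrightarrow(L^{s'}(\Omega))^{3}$ from Lemma 2.4; when $2\delta_{2}\geq1$ I would instead integrate by parts, $(\mathrm{rot}\,u,\psi)=(u,\mathrm{rot}\,\psi)$, and move the derivative onto $\psi$ (here $\psi\in D(\Gamma^{\delta_{2}}_{q'})$ with $\delta_{2}\geq1/2$ has vanishing Dirichlet trace, so no boundary term survives); in the remaining range both derivatives are fractional and I would split $\mathrm{rot}=\Gamma^{1/2}_{q}(\Gamma^{-1/2}_{q}\mathrm{rot})$ and use that the zero-order operator $\Gamma^{-1/2}_{q}\mathrm{rot}$ is bounded on $L^{q}(\Omega)$ to absorb the derivative into the fractional powers before applying Lemma 2.4. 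In every case an admissible intermediate exponent exists precisely because of the hypotheses: $\alpha_{2}>\frac{3}{2}(\frac{1}{p}-\frac{1}{q})$ guarantees $X^{\alpha_{2}}_{p}\hookrightarrow(L^{q}(\Omega))^{3}$, the upper bound $\delta_{2}<\frac{1}{2}+\frac{3}{2}(1-\frac{1}{q})$ keeps the dual Sobolev index finite and positive, and the balance $\alpha_{2}+\delta_{2}\geq\frac{3}{2}(\frac{1}{p}-\frac{1}{q})+\frac{1}{2}$ is exactly the requirement that the total smoothing $2\alpha_{2}+2\delta_{2}$ cover the one derivative of $\mathrm{rot}$ plus the integrability jump $3(\frac{1}{p}-\frac{1}{q})$ from $L^{p}$ to $L^{q}$.

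I expect the main obstacle to be the intermediate range where neither $\alpha_{2}$ nor $\delta_{2}$ alone supplies a full derivative, together with the bookkeeping needed to justify the integration by parts and the consistency of $\Gamma^{-\delta_{2}}_{q}$, $\Gamma^{-\delta_{2}}_{q'}$, $A^{-\alpha_{2}}_{p}$ across different Lebesgue exponents. Concretely, the Riesz-type boundedness of $\Gamma^{-1/2}_{q}\mathrm{rot}$ on $L^{q}(\Omega)$ must be established (or cited) to make the fractional splitting rigorous, and the boundary term in the integration by parts must be disposed of by the Dirichlet conditions in the domains of $\Gamma$ and $A$ after a density reduction to $\phi\in(C^{\infty}_{0}(\Omega))^{3}$. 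These are exactly the technical ingredients packaged in \cite[Lemma 2.2]{Giga 3}, so in the final write-up I would either reproduce that splitting or, as in the proof of Lemma 2.9, simply cite it.
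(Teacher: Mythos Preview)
Your proposal is correct and lands on the same approach as the paper: the paper's entire proof is the one-line citation ``It follows immediately from \cite[Lemma 2.2]{Giga 3},'' which is exactly where your sketch concludes. Your additional duality-and-splitting exposition is accurate but unnecessary for the write-up, since the paper treats this lemma (like Lemma 2.9) as a direct corollary of the Giga--Miyakawa estimate.
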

\begin{proof}
It follows immediately from \cite[Lemma 2.2]{Giga 3}.
\end{proof}
\begin{lemma}
Let $f \in C^{0,1}(\mathbb{R})$ with the Lipschitz constant $L_{f}$, $f(0)=0$, $1<p<\infty$, $1<r<\infty$,
\begin{equation*}
\max\left\{0, \ \frac{3}{2}\left(\frac{1}{r}-\frac{1}{p}\right) \right\}\leq\gamma_{1}\leq 1.
\end{equation*}
Then
\begin{equation}
\|P_{p}f(\theta)\|_{p}\leq C_{8}L_{f}\|\theta\|_{Z^{\gamma_{1}}_{r}}
\end{equation}
for any $\theta \in Z^{\gamma_{1}}_{r}$, where $C_{8}=C_{8}(\gamma_{1})$ is a positive constant.
\end{lemma}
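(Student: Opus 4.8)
The plan is to combine three ingredients: the pointwise Lipschitz bound on $f$, the boundedness of the Helmholtz projection $P_{p}$ on $(L^{p}(\Omega))^{3}$, and the Sobolev embedding for $Z^{\gamma_{1}}_{r}$ supplied by Lemma 2.4.

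First I would exploit $f(0)=0$ together with the Lipschitz hypothesis to obtain the pointwise estimate $|f(\theta(x))|=|f(\theta(x))-f(0)|\leq L_{f}|\theta(x)|$ for almost every $x\in\Omega$. Integrating the $p$-th power over $\Omega$ gives $\|f(\theta)\|_{p}\leq L_{f}\|\theta\|_{p}$, where the left-hand norm is that of $(L^{p}(\Omega))^{3}$. Since $P_{p}$ is a bounded projection of $(L^{p}(\Omega))^{3}$ onto $L^{p}_{\sigma}(\Omega)$, applying it costs only the factor $\|P_{p}\|$, so that $\|P_{p}f(\theta)\|_{p}\leq\|P_{p}\|L_{f}\|\theta\|_{p}$. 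At this stage the whole problem reduces to controlling $\|\theta\|_{p}$ by $\|\theta\|_{Z^{\gamma_{1}}_{r}}$.

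The remaining step is the scalar embedding $Z^{\gamma_{1}}_{r}\hookrightarrow L^{p}(\Omega)$, which is exactly where the hypothesis $\gamma_{1}\geq\max\{0,\frac{3}{2}(\frac{1}{r}-\frac{1}{p})\}$ is used, and I would split into two cases. If $p\leq r$, then Lemma 2.4 (with $k=0$, $s=r$) gives $Z^{\gamma_{1}}_{r}\hookrightarrow L^{r}(\Omega)$ for any $\gamma_{1}\geq 0$, and since $\Omega$ is bounded one has $L^{r}(\Omega)\hookrightarrow L^{p}(\Omega)$; composing yields $\|\theta\|_{p}\leq C\|\theta\|_{Z^{\gamma_{1}}_{r}}$. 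If $p>r$, then $\frac{1}{p}<\frac{1}{r}$ and I would instead apply Lemma 2.4 directly with $k=0$, $s=p$: the upper constraint $\frac{1}{p}\leq\frac{1}{r}$ is automatic, while the lower constraint $\frac{1}{r}-\frac{2\gamma_{1}}{3}\leq\frac{1}{p}$ is precisely $\gamma_{1}\geq\frac{3}{2}(\frac{1}{r}-\frac{1}{p})$, which is guaranteed by the hypothesis. In either case $Z^{\gamma_{1}}_{r}\hookrightarrow L^{p}(\Omega)$ continuously.

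Combining the two displays gives $\|P_{p}f(\theta)\|_{p}\leq C_{8}L_{f}\|\theta\|_{Z^{\gamma_{1}}_{r}}$, with $C_{8}$ the product of $\|P_{p}\|$ and the embedding constant, depending only on $\gamma_{1}$ (and on $\Omega$, $p$, $r$). There is no genuine obstacle here; the only point demanding care is the case distinction in the embedding, since Lemma 2.4 only produces target exponents $s$ with $\frac{1}{s}\leq\frac{1}{r}$, so for $p<r$ one must route through $L^{r}$ using the boundedness of $\Omega$ rather than embedding into $L^{p}$ directly.
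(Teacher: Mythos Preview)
Your proof is correct and follows exactly the paper's approach: boundedness of $P_{p}$, the Lipschitz estimate $\|f(\theta)\|_{p}\leq L_{f}\|\theta\|_{p}$, and the embedding $Z^{\gamma_{1}}_{r}\hookrightarrow L^{p}(\Omega)$ via Lemma~2.4 with $k=0$, $s=p$. Your case split for $p<r$ is in fact more careful than the paper, which invokes (2.12) with $s=p$ without commenting on the constraint $1/s\leq 1/r$; routing through $L^{r}$ and using boundedness of $\Omega$ is the right way to close that gap.
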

\begin{proof}
It is known in \cite[Theorem 1]{Fujiwara} that $P_{p}$ is a bounded operator in $(L^{p}(\Omega))^{3}$.
Since $\|f(\theta)\|_{p}\leq L_{f}\|\theta\|_{p}$, (2.46) follows from (2.12) with $\gamma=\gamma_{1}$, $k=0$ and $s=p$.
\end{proof}
\begin{lemma}
Let $g \in C^{0,1}(\mathbb{R})$ with the Lipschitz constant $L_{g}$, $g(0)=0$, $1<q<\infty$, $1<r<\infty$,
\begin{equation*}
\max\left\{0, \ \frac{3}{2}\left(\frac{1}{r}-\frac{1}{q}\right) \right\}\leq\gamma_{2}\leq 1.
\end{equation*}
Then
\begin{equation}
\|g(\theta)\|_{q}\leq C_{9}L_{g}\|\theta\|_{Z^{\gamma_{2}}_{r}}
\end{equation}
for any $\theta \in Z^{\gamma_{2}}_{r}$, where $C_{9}=C_{9}(\gamma_{2})$ is a positive constant.
\end{lemma}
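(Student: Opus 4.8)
The plan is to follow the proof of Lemma 2.8 almost verbatim, the only simplification being that no projection operator appears here: the term $g(\theta)$ already lives in $(L^{q}(\Omega))^{3}$, so the argument splits into a pointwise Lipschitz reduction followed by a single Sobolev-type embedding.

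First I would reduce the $(L^{q}(\Omega))^{3}$-norm of the composition to an $L^{q}$-norm of $\theta$. Since $g$ is Lipschitz with constant $L_{g}$ and $g(0)=0$, the pointwise bound $|g(\theta(x))|=|g(\theta(x))-g(0)|\leq L_{g}|\theta(x)|$ holds for almost every $x\in\Omega$; raising to the $q$-th power and integrating yields $\|g(\theta)\|_{q}\leq L_{g}\|\theta\|_{q}$. This is the step that consumes the hypotheses $g\in C^{0,1}(\mathbb{R})$ and $g(0)=0$ and produces the factor $L_{g}$ in $(2.47)$.

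It then remains to bound $\|\theta\|_{q}$ by $\|\theta\|_{Z^{\gamma_{2}}_{r}}$, that is, to invoke the continuous inclusion $Z^{\gamma_{2}}_{r}\hookrightarrow L^{q}(\Omega)$. When $1/q\leq 1/r$ this is precisely $(2.12)$ with $\gamma=\gamma_{2}$, $k=0$ and $s=q$, the hypothesis $\gamma_{2}\geq\frac{3}{2}\left(\frac{1}{r}-\frac{1}{q}\right)$ being exactly the lower bound $\frac{1}{r}-\frac{2\gamma_{2}}{3}\leq\frac{1}{q}$ required there. The one point that deserves care, the mild analogue of an \emph{obstacle} in an otherwise routine lemma, is the complementary range $1/q>1/r$, where the admissibility condition $1/s\leq 1/r$ of $(2.12)$ is not met by $s=q$; there the factor $\max\{0,\cdot\}$ in the hypothesis forces only $\gamma_{2}\geq 0$, and I would instead chain the embedding $Z^{\gamma_{2}}_{r}\hookrightarrow L^{r}(\Omega)$ (which is $(2.12)$ with $s=r$, valid for all $\gamma_{2}\geq 0$) with the trivial inclusion $L^{r}(\Omega)\hookrightarrow L^{q}(\Omega)$ on the bounded domain $\Omega$. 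Combining the two steps gives $(2.47)$ with $C_{9}=C_{9}(\gamma_{2})$ the resulting embedding constant.
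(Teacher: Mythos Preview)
Your proposal is correct and follows the same approach as the paper: first use the Lipschitz bound with $g(0)=0$ to reduce to $\|g(\theta)\|_{q}\leq L_{g}\|\theta\|_{q}$, then invoke the embedding $Z^{\gamma_{2}}_{r}\hookrightarrow L^{q}(\Omega)$ via (2.12) with $\gamma=\gamma_{2}$, $k=0$, $s=q$. The paper's one-line proof simply cites (2.12) with these parameters and does not explicitly address the case $1/q>1/r$; your treatment of that range via $Z^{\gamma_{2}}_{r}\hookrightarrow L^{r}(\Omega)\hookrightarrow L^{q}(\Omega)$ on the bounded domain is a harmless extra bit of care that the paper leaves implicit.
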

\begin{proof}
Since $\|g(\theta)\|_{q}\leq L_{g}\|\theta\|_{q}$, (2.47) follows from (2.12) with $\gamma=\gamma_{2}$, $k=0$ and $s=q$.
\end{proof}

\subsection{$X^{\alpha}_{p}\times Y^{\beta}_{q}\times Z^{\gamma}_{r}$-estimates for linear and nonlinear terms}
First, we will fix nine exponents $\alpha_{1}$, $\alpha_{2}$, $\alpha_{3}$, $\beta_{1}$, $\beta_{2}$, $\beta_{3}$, $\gamma_{1}$, $\gamma_{2}$ and $\gamma_{3}$ in Lemmas 2.5--2.13 after the choice of three exponents $\delta_{1}$ in Lemmas 2.5 and 2.9, $\delta_{2}$ in Lemmas 2.6 and 2.11 and $\delta_{3}$ in Lemma 2.7.
We take $\delta_{1}$ as zero in the case where $\alpha_{0}>0$, and as an arbitrary positive constant in the case where $\alpha_{0}=0$.
$(\beta_{0},\delta_{2})$ and $(\gamma_{0},\delta_{3})$ are similarly taken.
It is essential for (2.13), (2.14) that we make an appropriate choice of $\alpha_{1}$ in Lemma 2.5, $\alpha_{2}$ in Lemmas 2.6 and 2.11, $\alpha_{3}$ in Lemmas 2.7 and 2.8, $\beta_{1}$ in Lemma 2.9, $\beta_{2}$ in Lemmas 2.6 and 2.10, $\beta_{3}$ in Lemma 2.8, $\gamma_{1}$ in Lemma 2.12, $\gamma_{2}$ in Lemma 2.13 and $\gamma_{3}$ in Lemma 2.7.
Some elementary demonstrations admit that we can chose $\alpha_{0}<\alpha_{1}<1-\delta_{1}$, $\alpha_{0}<\alpha_{2}<1-\delta_{1}$, $\alpha_{0}<\alpha_{3}<1-\delta_{1}$, $\beta_{0}<\beta_{1}<1-\delta_{2}$, $\beta_{0}<\beta_{2}<1-\delta_{2}$, $\beta_{0}<\beta_{3}<1-\delta_{2}$, $\gamma_{0}<\gamma_{1}<1-\delta_{3}$, $\gamma_{0}<\gamma_{2}<1-\delta_{3}$, $\gamma_{0}<\gamma_{3}<1-\delta_{3}$ which satisfy not only assumptions for Lemmas 2.5--2.13 but also
\begin{equation}
\begin{split}
&2\alpha_{1}+\delta_{1}\leq 1+\alpha_{0}, \ \alpha_{2}+\beta_{2}+\delta_{2}\leq 1+\alpha_{0} \ (\alpha_{2}+\delta_{2}<1+\alpha_{0}-\beta_{0}), \\
&\alpha_{3}+\gamma_{3}+\delta_{3}\leq 1+\alpha_{0}, \ \alpha_{3}\leq \alpha_{0}+\frac{1-\gamma_{0}}{2}, \ \beta_{3}\leq \beta_{0}+\frac{1-\gamma_{0}}{2},
\end{split}
\end{equation}
\begin{equation}
\begin{cases}
\beta_{1}+\delta_{1}<1-\alpha_{0}+\beta_{0} & \mathrm{if} \ \alpha_{0}-\beta_{0}-\dfrac{3}{2}\left(\dfrac{1}{p}-\dfrac{1}{q}\right)<\dfrac{1}{2}, \\
\beta_{1}+\delta_{1}=1-\alpha_{0}+\beta_{0} & \mathrm{if} \ \alpha_{0}-\beta_{0}-\dfrac{3}{2}\left(\dfrac{1}{p}-\dfrac{1}{q}\right)=\dfrac{1}{2},
\end{cases}
\end{equation}
\begin{equation}
\begin{cases}
\gamma_{1}<1-\alpha_{0}+\gamma_{0} & \mathrm{if} \ \left|\alpha_{0}-\gamma_{0}-\dfrac{3}{2}\left(\dfrac{1}{p}-\dfrac{1}{r}\right)\right|<1, \\
\gamma_{1}=1-\alpha_{0}+\gamma_{0} & \mathrm{if} \ \alpha_{0}-\gamma_{0}-\dfrac{3}{2}\left(\dfrac{1}{p}-\dfrac{1}{r}\right)=1,
\end{cases}
\end{equation}
\begin{equation}
\begin{cases}
\gamma_{2}<1-\beta_{0}+\gamma_{0} & \mathrm{if} \ \left|\beta_{0}-\gamma_{0}-\dfrac{3}{2}\left(\dfrac{1}{q}-\dfrac{1}{r}\right)\right|<1, \\
\gamma_{2}=1-\beta_{0}+\gamma_{0} & \mathrm{if} \ \beta_{0}-\gamma_{0}-\dfrac{3}{2}\left(\dfrac{1}{q}-\dfrac{1}{r}\right)=1.
\end{cases}
\end{equation}
These exponents are fixed throughout this paper.

Second, we obtain $X^{\alpha}_{p}\times Y^{\beta}_{q}\times Z^{\gamma}_{r}$-estimates for linear and nonlinear terms which appeared in (II).
Let $0\leq \alpha<1-\delta_{1}$, $0\leq \beta<1-\delta_{2}$, $0\leq\gamma<1-\delta_{3}$, $0<\lambda<\Lambda_{1}$, and set
\begin{equation*}
\mathcal{F}(u,\omega,\theta)(t)=\int^{t}_{0}e^{-(t-s)A_{p}}F(u,\omega,\theta)(s)ds,
\end{equation*}
\begin{equation*}
\mathcal{G}(u,\omega,\theta)(t)=\int^{t}_{0}e^{-(t-s)\Gamma_{q}}G(u,\omega,\theta)(s)ds,
\end{equation*}
\begin{equation*}
\mathcal{H}(u,\omega,\theta)(t)=\int^{t}_{0}e^{-(t-s)B_{r}}H(u,\omega,\theta)(s)ds.
\end{equation*}
Then $\|\mathcal{F}(u,\omega,\theta)(t)\|_{X^{\alpha}_{p}}$, $\|\mathcal{G}(u,\omega,\theta)(t)\|_{Y^{\beta}_{q}}$ and $\|\mathcal{H}(u,\omega,\theta)(t)\|_{Z^{\gamma}_{r}}$ are bounded as follows:
\begin{equation}
\begin{split}
\|\mathcal{F}(u,\omega,\theta)(t)\|_{X^{\alpha}_{p}}\leq& C_{A_{p},\alpha+\delta_{1},\lambda}C_{1}\int^{t}_{0}(t-s)^{-(\alpha+\delta_{1})}e^{-\lambda(t-s)}\|u(s)\|^{2}_{X^{\alpha_{1}}_{p}}ds \\
&+2C_{A_{p},\alpha+\delta_{1},\lambda}C_{5}\mu_{r}\int^{t}_{0}(t-s)^{-(\alpha+\delta_{1})}e^{-\lambda(t-s)}\|\omega(s)\|_{Y^{\beta_{1}}_{q}}ds \\
&+C_{A_{p},\alpha,\lambda}C_{8}L_{f}\int^{t}_{0}(t-s)^{-\alpha}e^{-\lambda(t-s)}\|\theta(s)\|_{Z^{\gamma_{1}}_{r}}ds,
\end{split}
\end{equation}
\begin{equation}
\begin{split}
\|\mathcal{G}(u,\omega,\theta)(t)\|_{Y^{\beta}_{q}}\leq& C_{\Gamma_{q},\beta+\delta_{2},\lambda}C_{2}\int^{t}_{0}(t-s)^{-(\beta+\delta_{2})}e^{-\lambda(t-s)}\|u(s)\|_{X^{\alpha_{2}}_{p}}\|\omega(s)\|_{Y^{\beta_{2}}_{q}}ds \\
&+4C_{\Gamma_{q},\beta,\lambda}C_{6}\mu_{r}\int^{t}_{0}(t-s)^{-\beta}e^{-\lambda(t-s)}\|\omega(s)\|_{Y^{\beta_{2}}_{q}}ds \\
&+2C_{\Gamma_{q},\beta+\delta_{2},\lambda}C_{7}\mu_{r}\int^{t}_{0}(t-s)^{-(\beta+\delta_{2})}e^{-\lambda(t-s)}\|u(s)\|_{X^{\alpha_{2}}_{p}}ds \\
&+C_{\Gamma_{q},\beta,\lambda}C_{9}L_{g}\int^{t}_{0}(t-s)^{-\beta}e^{-\lambda(t-s)}\|\theta(s)\|_{Z^{\gamma_{2}}_{r}}ds,
\end{split}
\end{equation}
\begin{equation}
\begin{split}
\|\mathcal{H}(u,\omega,\theta)(t)\|_{Z^{\gamma}_{r}}\leq& C_{B_{r},\gamma+\delta_{3},\lambda}C_{3}\int^{t}_{0}(t-s)^{-(\gamma+\delta_{3})}e^{-\lambda(t-s)}\|u(s)\|_{X^{\alpha_{3}}_{p}}\|\theta(s)\|_{Z^{\gamma_{3}}_{r}}ds \\
&+C_{B_{r},\gamma,\lambda}C_{4}(1+\mu_{r})\int^{t}_{0}(t-s)^{-\gamma}e^{-\lambda(t-s)}\|u(s)\|^{2}_{X^{\alpha_{3}}_{p}}ds \\
&+2C_{B_{r},\gamma,\lambda}C_{4}(1+\mu_{r})\int^{t}_{0}(t-s)^{-\gamma}e^{-\lambda(t-s)}\|u(s)\|_{X^{\alpha_{3}}_{p}}\|\omega(s)\|_{Y^{\beta_{3}}_{q}}ds \\
&+C_{B_{r},\gamma,\lambda}C_{4}(1+\mu_{r})\int^{t}_{0}(t-s)^{-\gamma}e^{-\lambda(t-s)}\|\omega(s)\|^{2}_{Y^{\beta_{3}}_{q}}ds
\end{split}
\end{equation}
for any $0\leq t\leq T$.
Let $(u_{1},\omega_{1},\theta_{1})$ and $(u_{2},\omega_{2},\theta_{2})$ be two mild solutions of (1.1), (1.2).
Then we have the following inequalities:
\begin{equation}
\begin{split}
\|\mathcal{F}(u_{2},\omega_{2}&,\theta_{2})(t)-\mathcal{F}(u_{1},\omega_{1},\theta_{1})(t)\|_{X^{\alpha}_{p}} \\
\leq& C_{A_{p},\alpha+\delta_{1},\lambda}C_{1}\int^{t}_{0}(t-s)^{-(\alpha+\delta_{1})}e^{-\lambda(t-s)}(\|u_{1}(s)\|_{X^{\alpha_{1}}_{p}}+\|u_{2}(s)\|_{X^{\alpha_{1}}_{p}}) \\
&\times\|(u_{2}-u_{1})(s)\|_{X^{\alpha_{1}}_{p}}ds \\
&+2C_{A_{p},\alpha+\delta_{1},\lambda}C_{5}\mu_{r}\int^{t}_{0}(t-s)^{-(\alpha+\delta_{1})}e^{-\lambda(t-s)}\|(\omega_{2}-\omega_{1})(s)\|_{Y^{\beta_{1}}_{q}}ds \\
&+C_{A_{p},\alpha,\lambda}C_{8}L_{f}\int^{t}_{0}(t-s)^{-\alpha}e^{-\lambda(t-s)}\|(\theta_{2}-\theta_{1})(s)\|_{Z^{\gamma_{1}}_{r}}ds,
\end{split}
\end{equation}
\begin{equation}
\begin{split}
\|\mathcal{G}(u_{2},\omega_{2}&,\theta_{2})(t)-\mathcal{G}(u_{1},\omega_{1},\theta_{1})(t)\|_{Y^{\beta}_{q}} \\
\leq& C_{\Gamma_{q},\beta+\delta_{2},\lambda}C_{2}\int^{t}_{0}(t-s)^{-(\beta+\delta_{2})}e^{-\lambda(t-s)}\|(u_{2}-u_{1})(s)\|_{X^{\alpha_{2}}_{p}}\|\omega_{2}(s)\|_{Y^{\beta_{2}}_{q}}ds \\
&+C_{\Gamma_{q},\beta+\delta_{2},\lambda}C_{2}\int^{t}_{0}(t-s)^{-(\beta+\delta_{2})}e^{-\lambda(t-s)}\|u_{1}(s)\|_{X^{\alpha_{2}}_{p}}\|(\omega_{2}-\omega_{1})(s)\|_{Y^{\beta_{2}}_{q}}ds \\
&+4C_{\Gamma_{q},\beta,\lambda}C_{6}\mu_{r}\int^{t}_{0}(t-s)^{-\beta}e^{-\lambda(t-s)}\|(\omega_{2}-\omega_{1})(s)\|_{Y^{\beta_{2}}_{q}}ds \\
&+2C_{\Gamma_{q},\beta+\delta_{2},\lambda}C_{7}\mu_{r}\int^{t}_{0}(t-s)^{-(\beta+\delta_{2})}e^{-\lambda(t-s)}\|(u_{2}-u_{1})(s)\|_{X^{\alpha_{2}}_{p}}ds \\
&+C_{\Gamma_{q},\beta,\lambda}C_{9}L_{g}\int^{t}_{0}(t-s)^{-\beta}e^{-\lambda(t-s)}\|(\theta_{2}-\theta_{1})(s)\|_{Z^{\gamma_{2}}_{r}}ds,
\end{split}
\end{equation}
\begin{equation}
\begin{split}
\|\mathcal{H}(u_{2},\omega_{2}&,\theta_{2})(t)-\mathcal{H}(u_{1},\omega_{1},\theta_{1})(t)\|_{Z^{\gamma}_{r}} \\
\leq& C_{B_{r},\gamma+\delta_{3},\lambda}C_{3}\int^{t}_{0}(t-s)^{-(\gamma+\delta_{3})}e^{-\lambda(t-s)}\|(u_{2}-u_{1})(s)\|_{X^{\alpha_{3}}_{p}}\|\theta_{2}(s)\|_{Z^{\gamma_{3}}_{r}}ds \\
&+C_{B_{r},\gamma+\delta_{3},\lambda}C_{3}\int^{t}_{0}(t-s)^{-(\gamma+\delta_{3})}e^{-\lambda(t-s)}\|u_{1}(s)\|_{X^{\alpha_{3}}_{p}}\|(\theta_{2}-\theta_{1})(s)\|_{Z^{\gamma_{3}}_{r}}ds \\
&+C_{B_{r},\gamma,\lambda}C_{4}(1+\mu_{r})\int^{t}_{0}(t-s)^{-\gamma}e^{-\lambda(t-s)}(\|u_{1}(s)\|_{X^{\alpha_{3}}_{p}}+\|u_{2}(s)\|_{X^{\alpha_{3}}_{p}}) \\
&\times\|(u_{2}-u_{1})(s)\|_{X^{\alpha_{3}}_{p}}ds \\
&+C_{B_{r},\gamma,\lambda}C_{4}(1+\mu_{r})\int^{t}_{0}(t-s)^{-\gamma}e^{-\lambda(t-s)}\|\omega_{2}(s)\|_{Y^{\beta_{3}}_{q}}\|(u_{2}-u_{1})(s)\|_{X^{\alpha_{3}}_{p}}ds \\
&+C_{B_{r},\gamma,\lambda}C_{4}(1+\mu_{r})\int^{t}_{0}(t-s)^{-\gamma}e^{-\lambda(t-s)}\|u_{1}(s)\|_{X^{\alpha_{3}}_{p}}\|(\omega_{2}-\omega_{1})(s)\|_{Y^{\beta_{3}}_{q}}ds \\
&+C_{B_{r},\gamma,\lambda}C_{4}(1+\mu_{r})\int^{t}_{0}(t-s)^{-\gamma}e^{-\lambda(t-s)}(\|\omega_{1}(s)\|_{Y^{\beta_{3}}_{q}}+\|\omega_{2}(s)\|_{Y^{\beta_{3}}_{q}}) \\
&\times\|(\omega_{2}-\omega_{1})(s)\|_{Y^{\beta_{3}}_{q}}ds
\end{split}
\end{equation}
for any $0\leq t\leq T$.

\section{Proof of Theorems 2.1 and 2.2}
We will prove Theorems 2.1 and 2.2 in this section.
In proving our main results, simplified notation is given as follows: We drop three subscripts $p$, $q$ and $r$ attached to $P$, $A$, $\Gamma$, $B$, $X^{\alpha}$, $Y^{\beta}$ and $Z^{\gamma}$ in the sequel.
It is useful to remark that a generic positive constant independent of $u$, $\omega$, $\theta$ and $t$ is simply denoted by $C$.

\subsection{Existence of local mild solutions}
We construct a mild solution $(u,\omega,\theta)$ of (1.1), (1.2) by the following successive approximation $(u^{m},\omega^{m},\theta^{m})$ $(m \in \mathbb{Z}, \ m\geq 0)$:
\begin{equation}
\begin{split}
&u^{0}(t)=e^{-tA}u_{0}, \\
&\omega^{0}(t)=e^{-t\Gamma}\omega_{0}, \\
&\theta^{0}(t)=e^{-tB}\theta_{0},
\end{split}
\end{equation}
\begin{equation}
\begin{split}
&u^{m+1}=u^{0}+\mathcal{F}(u^{m},\omega^{m},\theta^{m}), \\
&\omega^{m+1}=\omega^{0}+\mathcal{G}(u^{m},\omega^{m},\theta^{m}), \\
&\theta^{m+1}=\theta^{0}+\mathcal{H}(u^{m},\omega^{m},\theta^{m}).
\end{split}
\end{equation}
The following lemma admits that $\{t^{\alpha-\alpha_{0}}u^{m}\}_{m}$, $\{t^{\beta-\beta_{0}}\omega^{m}\}_{m}$ and $\{t^{\gamma-\gamma_{0}}\theta^{m}\}_{m}$ are well-defined as sequences in $C([0,T];X^{\alpha})$ for $\alpha=\alpha_{1}, \alpha_{2}, \alpha_{3}$, in $C([0,T];Y^{\beta})$ for $\beta=\beta_{1}, \beta_{2}, \beta_{3}$ and in $C([0,T];Z^{\gamma})$ for $\gamma=\gamma_{1}, \gamma_{2}, \gamma_{3}$ respectively.
\begin{lemma}
Let $\alpha=\alpha_{1}, \alpha_{2}, \alpha_{3}$, $\beta=\beta_{1}, \beta_{2}, \beta_{3}$, $\gamma=\gamma_{1}, \gamma_{2}, \gamma_{3}$.
Then there exist monotone increasing continuous functions $K^{m}_{1,\alpha}$, $K^{m}_{2,\beta}$ and $K^{m}_{3,\gamma}$ on $[0,T]$ for any $m \in \mathbb{Z}$, $m\geq 0$ such that $K^{m}_{1,\alpha}(0)=0$, $K^{m}_{2,\beta}(0)=0$, $K^{m}_{3,\gamma}(0)=0$,
\begin{equation}
\|u^{m}(t)\|_{X^{\alpha}}\leq K^{m}_{1,\alpha}(t)t^{\alpha_{0}-\alpha},
\end{equation}
\begin{equation}
\|\omega^{m}(t)\|_{Y^{\beta}}\leq K^{m}_{2,\beta}(t)t^{\beta_{0}-\beta},
\end{equation}
\begin{equation}
\|\theta^{m}(t)\|_{Z^{\gamma}}\leq K^{m}_{3,\gamma}(t)t^{\gamma_{0}-\gamma}
\end{equation}
for any $0<t\leq T$, $K^{m}_{1,\alpha}\leq K^{m+1}_{1,\alpha}$, $K^{m}_{2,\beta}\leq K^{m+1}_{2,\beta}$, $K^{m}_{3,\gamma}\leq K^{m+1}_{3,\gamma}$ on $[0,T]$.
\end{lemma}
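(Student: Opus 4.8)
The plan is to prove all three estimates together by induction on $m$, treating the nine functions $K^m_{1,\alpha}$ ($\alpha=\alpha_1,\alpha_2,\alpha_3$), $K^m_{2,\beta}$ ($\beta=\beta_1,\beta_2,\beta_3$), $K^m_{3,\gamma}$ ($\gamma=\gamma_1,\gamma_2,\gamma_3$) simultaneously, since the bound for $u^{m+1}$ couples to $\omega^m$ and $\theta^m$ through (2.52) (and cyclically for $\mathcal{G}$, $\mathcal{H}$). The decisive structural fact is that the chosen exponents satisfy $\alpha_0<\alpha_i$, $\beta_0<\beta_j$, $\gamma_0<\gamma_k$ \emph{strictly}, which is what makes $K^m(0)=0$ attainable. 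For the base case $m=0$ I would write $\|u^0(t)\|_{X^{\alpha}}=\|A^{\alpha-\alpha_0}e^{-tA}(A^{\alpha_0}u_0)\|_{p}$ and apply Lemma 2.3 with exponent $\alpha-\alpha_0\in(0,1]$ to get $t^{\alpha-\alpha_0}\|u^0(t)\|_{X^{\alpha}}\to 0$ as $t\to+0$; since this quantity is continuous on $(0,T]$, I set $K^0_{1,\alpha}(t):=\max_{0\le s\le t}s^{\alpha-\alpha_0}\|u^0(s)\|_{X^{\alpha}}$, which is continuous, monotone increasing, and vanishes at $0$. The functions $K^0_{2,\beta}$, $K^0_{3,\gamma}$ are defined in the same way from (3.1).

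For the inductive step I would insert (3.3)--(3.5) for $m$ into the estimates (2.52)--(2.54). Writing $\|u^m(s)\|_{X^{\alpha_1}}\le K^m_{1,\alpha_1}(s)s^{\alpha_0-\alpha_1}$ and its analogues, using monotonicity of $K^m$ to replace $K^m(s)$ by $K^m(t)$, and bounding $e^{-\lambda(t-s)}\le 1$, every term collapses to a Beta integral $\int_0^t(t-s)^{a-1}s^{b-1}\,ds=t^{a+b-1}B(a,b)$. The point to verify is that each integral converges ($a,b>0$) and that $a+b-1\ge\alpha_0-\alpha$ (resp.\ $\beta_0-\beta$, $\gamma_0-\gamma$), so that the result is $t^{\alpha_0-\alpha}$ times a nonnegative power of $t$. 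For the quadratic term of (2.52), $a=1-(\alpha+\delta_1)>0$ because $\alpha<1-\delta_1$, while $b=1+2(\alpha_0-\alpha_1)$ satisfies $b\ge\alpha_0+\delta_1>0$ by $2\alpha_1+\delta_1\le 1+\alpha_0$ from (2.48) together with the convention that $\delta_1>0$ whenever $\alpha_0=0$; the same inequality gives $a+b-1\ge\alpha_0-\alpha$. The $\mathrm{rot}\,\omega$ contribution requires $\beta_1+\delta_1\le 1-\alpha_0+\beta_0$, which is (2.49), and the $f(\theta)$ contribution requires $\gamma_1\le 1-\alpha_0+\gamma_0$, which is (2.50); the corresponding terms in (2.53) and (2.54) consume the remaining inequalities of (2.48), the bounds $\alpha_3\le\alpha_0+\tfrac{1-\gamma_0}{2}$, $\beta_3\le\beta_0+\tfrac{1-\gamma_0}{2}$, and (2.51) in exactly the same fashion.

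Collecting these contributions together with the $u^0$ term gives $\|u^{m+1}(t)\|_{X^{\alpha}}\le t^{\alpha_0-\alpha}\bigl[K^0_{1,\alpha}(t)+C\,(K^m_{1,\alpha_1}(t))^2 t^{\nu_1}+C\mu_r K^m_{2,\beta_1}(t)t^{\nu_2}+CL_f K^m_{3,\gamma_1}(t)t^{\nu_3}\bigr]$ with $\nu_i\ge 0$, and I would \emph{define} the bracketed quantity to be $K^{m+1}_{1,\alpha}(t)$ (analogously $K^{m+1}_{2,\beta}$, $K^{m+1}_{3,\gamma}$ from (2.53), (2.54)). Each is a finite sum of products of nonnegative, continuous, monotone increasing functions vanishing at $t=0$, so it inherits all three required properties. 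The relation $K^m\le K^{m+1}$ then follows by a second, nested induction on $m$: the base $K^0\le K^1$ holds because $K^1=K^0+(\text{nonnegative terms})$, and because the defining recursion is monotone in its arguments (all coefficients are nonnegative and $x\mapsto x^2$ is increasing on $[0,\infty)$), the inequality propagates from level $m$ to level $m+1$ across all nine functions at once.

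The only genuine difficulty is the exponent bookkeeping in the inductive step: one must confirm for each of the roughly ten integral terms in (2.52)--(2.54) both that the Beta integral converges and that the power of $t$ it produces is no more singular than $t^{\alpha_0-\alpha}$ (respectively $t^{\beta_0-\beta}$, $t^{\gamma_0-\gamma}$). This is precisely what the inequalities (2.48)--(2.51), together with the admissible ranges $\alpha_0<\alpha_i<1-\delta_1$, $\beta_0<\beta_j<1-\delta_2$, $\gamma_0<\gamma_k<1-\delta_3$, were arranged to guarantee, so once the integrals are matched to the correct constraints the remaining verifications are routine.
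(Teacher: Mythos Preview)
Your proposal is correct and follows essentially the same route as the paper's proof: the base case via Lemma 2.3 with $K^{0}_{1,\alpha}(t)=\sup_{0<s\le t}s^{\alpha-\alpha_0}\|u^{0}(s)\|_{X^{\alpha}}$, the inductive step by inserting (3.3)--(3.5) into (2.52)--(2.54), pulling $K^m(\cdot)$ outside the integral by monotonicity, evaluating the resulting Beta integrals, and then \emph{defining} $K^{m+1}$ by the resulting bracket (the paper's (3.9)--(3.11)). Your exponent bookkeeping against (2.48)--(2.51) and the nested induction for $K^m\le K^{m+1}$ are exactly what the paper does.
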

\begin{proof}
We give the inductive definition of $K^{m}_{1,\alpha}$, $K^{m}_{2,\beta}$ and $K^{m}_{3,\gamma}$ with respect to $m$.
$K^{0}_{1,\alpha}$, $K^{0}_{2,\beta}$ and $K^{0}_{3,\gamma}$ are defined as
\begin{equation}
K^{0}_{1,\alpha}(t)=\sup_{0<s\leq t}s^{\alpha-\alpha_{0}}\|u^{0}(s)\|_{X^{\alpha}},
\end{equation}
\begin{equation}
K^{0}_{2,\beta}(t)=\sup_{0<s\leq t}s^{\beta-\beta_{0}}\|\omega^{0}(s)\|_{Y^{\beta}},
\end{equation}
\begin{equation}
K^{0}_{3,\gamma}(t)=\sup_{0<s\leq t}s^{\gamma-\gamma_{0}}\|\theta^{0}(s)\|_{Z^{\gamma}}.
\end{equation}
It is obvious from (3.6)--(3.8) that (3.3)--(3.5) with $m=0$ hold for any $0<t\leq T$.
Moreover, Lemma 2.3 yields that $K^{0}_{1,\alpha}(0)=0$, $K^{0}_{2,\beta}(0)=0$, $K^{0}_{3,\gamma}(0)=0$.
Assume that there exist $K^{m}_{1,\alpha}$, $K^{m}_{2,\beta}$ and $K^{m}_{3,\gamma}$ for some $m \in \mathbb{Z}$, $m\geq 0$.
After applying (2.52)--(2.54) to $(u^{m},\omega^{m},\theta^{m})$, it is derived from (3.2) that we have the following inequalities:
\begin{equation*}
\begin{split}
\|u^{m+1}(t)\|_{X^{\alpha}}\leq& K^{0}_{1,\alpha}(t)t^{\alpha_{0}-\alpha} \\
&+C_{A,\alpha+\delta_{1},\lambda}C_{1}B(1-(\alpha+\delta_{1}),1+2(\alpha_{0}-\alpha_{1}))K^{m}_{1,\alpha_{1}}(t)^{2}t^{1+2\alpha_{0}-\alpha-2\alpha_{1}-\delta_{1}} \\
&+2C_{A,\alpha+\delta_{1},\lambda}C_{5}\mu_{r}B(1-(\alpha+\delta_{1}),1+\beta_{0}-\beta_{1})K^{m}_{2,\beta_{1}}(t)t^{1+\beta_{0}-\alpha-\beta_{1}-\delta_{1}} \\
&+C_{A,\alpha,\lambda}C_{8}L_{f}B(1-\alpha,1+\gamma_{0}-\gamma_{1})K^{m}_{3,\gamma_{1}}(t)t^{1+\gamma_{0}-\alpha-\gamma_{1}},
\end{split}
\end{equation*}
\begin{equation*}
\begin{split}
\|\omega^{m+1}(t)\|_{Y^{\beta}}\leq& K^{0}_{2,\beta}(t)t^{\beta_{0}-\beta} \\
&+C_{\Gamma,\beta+\delta_{2},\lambda}C_{2}B(1-(\beta+\delta_{2}),1+\alpha_{0}+\beta_{0}-\alpha_{2}-\beta_{2}) \\
&\times K^{m}_{1,\alpha_{2}}(t)K^{m}_{2,\beta_{2}}(t)t^{1+\alpha_{0}+\beta_{0}-\beta-\alpha_{2}-\beta_{2}-\delta_{2}} \\
&+4C_{\Gamma,\beta,\lambda}C_{6}\mu_{r}B(1-\beta,1+\beta_{0}-\beta_{2})K^{m}_{2,\beta_{2}}(t)t^{1+\beta_{0}-\beta-\beta_{2}} \\
&+2C_{\Gamma,\beta+\delta_{2},\lambda}C_{7}\mu_{r}B(1-(\beta+\delta_{2}),1+\alpha_{0}-\alpha_{2})K^{m}_{1,\alpha_{2}}(t)t^{1+\alpha_{0}-\beta-\alpha_{2}-\delta_{2}} \\
&+C_{\Gamma,\beta,\lambda}C_{9}L_{g}B(1-\beta,1+\gamma_{0}-\gamma_{2})K^{m}_{3,\gamma_{2}}(t)t^{1+\gamma_{0}-\beta-\gamma_{2}},
\end{split}
\end{equation*}
\begin{equation*}
\begin{split}
\|\theta^{m+1}(t)\|_{Z^{\gamma}}\leq& K^{0}_{3,\gamma}(t)t^{\gamma_{0}-\gamma} \\
&+C_{B,\gamma+\delta_{3},\lambda}C_{3}B(1-(\gamma+\delta_{3}),1+\alpha_{0}+\gamma_{0}-\alpha_{3}-\gamma_{3}) \\
&\times K^{m}_{1,\alpha_{3}}(t)K^{m}_{3,\gamma_{3}}(t)t^{1+\alpha_{0}+\gamma_{0}-\gamma-\alpha_{3}-\gamma_{3}-\delta_{3}} \\
&+C_{B,\beta,\lambda}C_{4}(1+\mu_{r})B(1-\gamma,1+2(\alpha_{0}-\alpha_{3}))K^{m}_{1,\alpha_{3}}(t)^{2}t^{1+2\alpha_{0}-\gamma-2\alpha_{3}} \\
&+2C_{B,\beta,\lambda}C_{4}(1+\mu_{r})B(1-\gamma,1+\alpha_{0}+\beta_{0}-\alpha_{3}-\beta_{3}) \\
&\times K^{m}_{1,\alpha_{3}}(t)K^{m}_{2,\beta_{3}}(t)t^{1+\alpha_{0}+\beta_{0}-\gamma-\alpha_{3}-\beta_{3}} \\
&+C_{B,\beta,\lambda}C_{4}(1+\mu_{r})B(1-\gamma,1+2(\beta_{0}-\beta_{3}))K^{m}_{2,\beta_{3}}(t)^{2}t^{1+2\beta_{0}-\gamma-2\beta_{3}}
\end{split}
\end{equation*}
for any $0<t\leq T$, where $B(x,y)$ is the beta function.
Therefore, $K^{m+1}_{1,\alpha}$, $K^{m+1}_{2,\beta}$ and $K^{m+1}_{3,\gamma}$ can be defined as
\begin{equation}
\begin{split}
K^{m+1}_{1,\alpha}(t)=&K^{0}_{1,\alpha}(t) \\
&+C_{A,\alpha+\delta_{1},\lambda}C_{1}B(1-(\alpha+\delta_{1}),1+2(\alpha_{0}-\alpha_{1}))K^{m}_{1,\alpha_{1}}(t)^{2}t^{1+\alpha_{0}-2\alpha_{1}-\delta_{1}} \\
&+2C_{A,\alpha+\delta_{1},\lambda}C_{5}\mu_{r}B(1-(\alpha+\delta_{1}),1+\beta_{0}-\beta_{1})K^{m}_{2,\beta_{1}}(t)t^{1+\beta_{0}-\alpha_{0}-\beta_{1}-\delta_{1}} \\
&+C_{A,\alpha,\lambda}C_{8}L_{f}B(1-\alpha,1+\gamma_{0}-\gamma_{1})K^{m}_{3,\gamma_{1}}(t)t^{1+\gamma_{0}-\alpha_{0}-\gamma_{1}},
\end{split}
\end{equation}
\begin{equation}
\begin{split}
K^{m+1}_{2,\beta}(t)=&K^{0}_{2,\beta}(t) \\
&+C_{\Gamma,\beta+\delta_{2},\lambda}C_{2}B(1-(\beta+\delta_{2}),1+\alpha_{0}+\beta_{0}-\alpha_{2}-\beta_{2}) \\
&\times K^{m}_{1,\alpha_{2}}(t)K^{m}_{2,\beta_{2}}(t)t^{1+\alpha_{0}-\alpha_{2}-\beta_{2}-\delta_{2}} \\
&+4C_{\Gamma,\beta,\lambda}C_{6}\mu_{r}B(1-\beta,1+\beta_{0}-\beta_{2})K^{m}_{2,\beta_{2}}(t)t^{1-\beta_{2}} \\
&+2C_{\Gamma,\beta+\delta_{2},\lambda}C_{7}\mu_{r}B(1-(\beta+\delta_{2}),1+\alpha_{0}-\alpha_{2})K^{m}_{1,\alpha_{2}}(t)t^{1+\alpha_{0}-\beta_{0}-\alpha_{2}-\delta_{2}} \\
&+C_{\Gamma,\beta,\lambda}C_{9}L_{g}B(1-\beta,1+\gamma_{0}-\gamma_{2})K^{m}_{3,\gamma_{2}}(t)t^{1+\gamma_{0}-\beta_{0}-\gamma_{2}},
\end{split}
\end{equation}
\begin{equation}
\begin{split}
K^{m+1}_{3,\gamma}(t)=&K^{0}_{3,\gamma}(t) \\
&+C_{B,\gamma+\delta_{3},\lambda}C_{3}B(1-(\gamma+\delta_{3}),1+\alpha_{0}+\gamma_{0}-\alpha_{3}-\gamma_{3}) \\
&\times K^{m}_{1,\alpha_{3}}(t)K^{m}_{3,\gamma_{3}}(t)t^{1+\alpha_{0}-\alpha_{3}-\gamma_{3}-\delta_{3}} \\
&+C_{B,\beta,\lambda}C_{4}(1+\mu_{r})B(1-\gamma,1+2(\alpha_{0}-\alpha_{3}))K^{m}_{1,\alpha_{3}}(t)^{2}t^{1+2\alpha_{0}-\gamma_{0}-2\alpha_{3}} \\
&+2C_{B,\beta,\lambda}C_{4}(1+\mu_{r})B(1-\gamma,1+\alpha_{0}+\beta_{0}-\alpha_{3}-\beta_{3}) \\
&\times K^{m}_{1,\alpha_{3}}(t)K^{m}_{2,\beta_{3}}(t)t^{1+\alpha_{0}+\beta_{0}-\gamma_{0}-\alpha_{3}-\beta_{3}} \\
&+C_{B,\beta,\lambda}C_{4}(1+\mu_{r})B(1-\gamma,1+2(\beta_{0}-\beta_{3}))K^{m}_{2,\beta_{3}}(t)^{2}t^{1+2\beta_{0}-\gamma_{0}-2\beta_{3}}
\end{split}
\end{equation}
It follows from (3.9)--(3.11) that
\begin{equation*}
\|u^{m+1}(t)\|_{X^{\alpha}}\leq K^{m+1}_{1,\alpha}(t)t^{\alpha_{0}-\alpha},
\end{equation*}
\begin{equation*}
\|\omega^{m+1}(t)\|_{Y^{\beta}}\leq K^{m+1}_{2,\beta}(t)t^{\beta_{0}-\beta},
\end{equation*}
\begin{equation*}
\|\theta^{m+1}(t)\|_{Z^{\gamma}}\leq K^{m+1}_{3,\gamma}(t)t^{\gamma_{0}-\gamma}
\end{equation*}
for any $0<t\leq T$.
Furthermore, we utilize inductive assumptions for $K^{m}_{1,\alpha}$, $K^{m}_{2,\beta}$ and $K^{m}_{3,\gamma}$, and conclude that $K^{m+1}_{1,\alpha}(0)=0$, $K^{m+1}_{2,\beta}(0)=0$, $K^{m+1}_{3,\gamma}(0)=0$.
It can be easily seen from the induction with respect to $m$ that $K^{m}_{1,\alpha}\leq K^{m+1}_{1,\alpha}$, $K^{m}_{2,\beta}\leq K^{m+1}_{2,\beta}$, $K^{m}_{3,\gamma}\leq K^{m+1}_{3,\gamma}$ on $[0,T]$ for any $m \in \mathbb{Z}$, $m\geq 0$.
\end{proof}
We can see that a mild solution $(u,\omega,\theta)$ of (1.1), (1.2) is constructed by the following lemmas.
Set $K^{m}(t)=\max\{K^{m}_{1,\alpha}(t), K^{m}_{2,\beta}(t), K^{m}_{3,\gamma}(t) \ ; \ \alpha=\alpha_{1}, \alpha_{2}, \alpha_{3}, \beta=\beta_{1}, \beta_{2}, \beta_{3}, \gamma=\gamma_{1}, \gamma_{2}, \gamma_{3}\}$.
Then it follows from Lemma 3.1 that $K^{m}$ is a monotone increasing continuous function on $[0,T]$ satisfying $K^{m}(0)=0$, $K^{m}\leq K^{m+1}$ on $[0,T]$ for any $m \in \mathbb{Z}$, $m\geq 0$.
It is required that $C$ is independent of not only $u$, $\omega$, $\theta$ and $t$ but also $m$ throughout this subsection.
\begin{lemma}
Let $\alpha_{0}$, $\beta_{0}$ and $\gamma_{0}$ satisfy $(2.49)_{1}$, $(2.50)_{1}$, $(2.51)_{1}$.
Then there exists a positive constant $T_{1}\leq T$ depending only on $\Omega$, $p$, $q$, $r$, $\alpha_{0}$, $\beta_{0}$, $\gamma_{0}$, $u_{0}$, $\omega_{0}$, $\theta_{0}$, $\mu_{r}$, $L_{f}$, $L_{g}$ and $T$ such that $\{t^{\alpha-\alpha_{0}}u^{m}\}_{m}$, $\{t^{\beta-\beta_{0}}\omega^{m}\}_{m}$ and $\{t^{\gamma-\gamma_{0}}\theta^{m}\}_{m}$ are Cauchy sequences in $C([0,T_{1}];X^{\alpha})$ for $\alpha=\alpha_{1}, \alpha_{2}, \alpha_{3}$, in $C([0,T_{1}];Y^{\beta})$ for $\beta=\beta_{1}, \beta_{2}, \beta_{3}$ and in $C([0,T_{1}];Z^{\gamma})$ for $\gamma=\gamma_{1}, \gamma_{2}, \gamma_{3}$ respectively.
\end{lemma}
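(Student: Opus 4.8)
The plan is to run the standard contraction estimate on consecutive iterates, parallel to the boundedness argument of Lemma 3.1 but now applied to the difference estimates (2.55)--(2.57). First I would promote the monotonicity of Lemma 3.1 to a uniform-in-$m$ bound. Since $K^m\le K^{m+1}$, $K^m(0)=0$, and each $K^{m+1}$ arises from $K^m$ through the recursion (3.9)--(3.11) in which every power of $t$ is nonnegative (by $(2.49)_1$, $(2.50)_1$, $(2.51)_1$ and the analogous exponent conditions fixed in \S2.6), I would construct a supersolution $\bar K$ of that recursion on a short interval $[0,T_1]$, using that $K^0(t)\to 0$ as $t\to+0$. Comparison then gives $K^m(t)\le\bar K(t)=:M(t)$ for all $m$, with $M(t)\to 0$ as $t\to+0$; this $M$ supplies the coefficient bound in the difference estimates.

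Next I would introduce the scaled difference quantities
\[
D^m_{1,\alpha}(t):=\sup_{0<s\le t}s^{\alpha-\alpha_0}\|(u^{m+1}-u^m)(s)\|_{X^{\alpha}},
\]
and likewise $D^m_{2,\beta}(t)$, $D^m_{3,\gamma}(t)$ for $\alpha=\alpha_1,\alpha_2,\alpha_3$, $\beta=\beta_1,\beta_2,\beta_3$, $\gamma=\gamma_1,\gamma_2,\gamma_3$. Because (3.2) yields $u^{m+1}-u^m=\mathcal F(u^m,\omega^m,\theta^m)-\mathcal F(u^{m-1},\omega^{m-1},\theta^{m-1})$, I would apply (2.55) with $(u_2,\omega_2,\theta_2)=(u^m,\omega^m,\theta^m)$ and $(u_1,\omega_1,\theta_1)=(u^{m-1},\omega^{m-1},\theta^{m-1})$, bound the coefficient factors $\|u^m\|_{X^{\alpha_1}},\|u^{m-1}\|_{X^{\alpha_1}},\ldots$ by $M(t)$ times the appropriate power of $s$, and bound each difference factor by the relevant $D^{m-1}$. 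Evaluating the time integrals as beta functions exactly as in Lemma 3.1 produces $D^m_{1,\alpha}(t)\le \Theta_1(t)(D^{m-1}_{1,\alpha_1}(t)+D^{m-1}_{2,\beta_1}(t)+D^{m-1}_{3,\gamma_1}(t))$, and analogously from (2.56), (2.57). Collecting everything into $D^m(t):=\max\{D^m_{1,\alpha},D^m_{2,\beta},D^m_{3,\gamma}\}$ gives a single scalar recursion $D^m(T_1)\le\Theta(T_1)\,D^{m-1}(T_1)$.

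The decisive point is that $\Theta(t)\to 0$ as $t\to+0$, so $T_1$ can be chosen (depending on the listed data through $M$, hence through $\|u_0\|_{X^{\alpha_0}}$, $\|\omega_0\|_{Y^{\beta_0}}$, $\|\theta_0\|_{Z^{\gamma_0}}$) so small that $\Theta(T_1)\le 1/2$. The quadratic self-interaction contributions (from $(u\cdot\nabla)u$, $\Phi$, and the like) carry a factor $M(t)$ together with a nonnegative power of $t$, hence vanish with $M(t)$ even at the borderline where that power is exactly $0$ (equality in $(2.49)_1$ and its analogues); the linear coupling contributions from $2\mu_r\,\mathrm{rot}$ and from $f,g$ carry strictly positive powers of $t$, by the strict inequalities in $(2.50)_1$, $(2.51)_1$ and their counterparts, so they too vanish. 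Once $\Theta(T_1)\le 1/2$ is secured, $D^m(T_1)\le 2^{-m}D^0(T_1)$, whence $\sum_m D^m(T_1)<\infty$ and $\{t^{\alpha-\alpha_0}u^m\}_m$, $\{t^{\beta-\beta_0}\omega^m\}_m$, $\{t^{\gamma-\gamma_0}\theta^m\}_m$ are Cauchy in the stated $C([0,T_1];\cdot)$ spaces.

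The main obstacle I anticipate is the bookkeeping: nine coupled difference inequalities feed into one another, so I must verify that a single $T_1$ makes the full coupled factor $\Theta$ strictly less than $1$, which forces me to track all time-exponents simultaneously and to dispose of the borderline zero-power cases through the vanishing of $M(t)$ rather than through a power of $t$. The other genuinely load-bearing step is the supersolution argument producing the uniform bound $M(t)$ with $M(t)\to 0$, where the nonnegativity of every exponent in (3.9)--(3.11) is used in an essential way.
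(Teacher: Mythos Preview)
Your proposal is correct and follows essentially the same route as the paper. Both arguments first upgrade Lemma~3.1 to a uniform-in-$m$ bound on the iterates on a short interval (your supersolution $\bar K$ is exactly the paper's $K^m\le CK^0$ on $[0,\tau_1]$, obtained from the scalar recursion (3.12)), and then feed this bound into the difference estimates (2.55)--(2.57) to produce a geometric contraction of consecutive differences.

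The only presentational difference is that the paper keeps three separate geometric factors in (3.16)--(3.18), one for each component, whereas you aggregate into a single $D^m$ and a single contraction factor $\Theta(t)$. Your aggregation is cleaner for the coupled system and sidesteps the bookkeeping of tracking which component feeds which; the paper's finer tracking buys nothing extra here since all three factors vanish as $t\to+0$ anyway. Your identification of the two mechanisms driving $\Theta(t)\to 0$---the factor $M(t)$ on quadratic terms (handling the borderline zero-exponent case from equality in (2.48)) and the strictly positive $t$-powers on the linear couplings (from the strict inequalities $(2.49)_1$, $(2.50)_1$, $(2.51)_1$)---matches the paper's use of $K^0(t)\to 0$ and $t^a,t^b\to 0$.
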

\begin{proof}
It follows from (2.48), $(2.49)_{1}$, $(2.50)_{1}$, $(2.51)_{1}$, (3.9)--(3.11) that $K^{m}$ satisfies the following inductive inequality with respect to $m$:
\begin{equation}
K^{m+1}(t)\leq K^{0}(t)+CK^{m}(t)^{2}+CK^{m}(t)t^{x}
\end{equation}
for any $0<t\leq T$, $m \in \mathbb{Z}$, $m\geq 0$, where $x=\min\{1+\beta_{0}-\alpha_{0}-\beta_{1}-\delta_{1}, 1+\alpha_{0}-\beta_{0}-\alpha_{2}-\delta_{2}, 1-\beta_{2}, 1+\gamma_{0}-\alpha_{0}-\gamma_{1}, 1+\gamma_{0}-\beta_{0}-\gamma_{2}\}$.
Since $K^{0}(0)=0$, $K^{m}\leq K^{m+1}$ on $[0,T]$, $x>0$, an elementary calculation shows that there exists a positive constant $\tau_{1}\leq T$ such that $K^{m}\leq CK^{0}$ on $[0,\tau_{1}]$.
Therefore, we can utilize (3.3)--(3.5) to obtain the following inequalities:
\begin{equation}
\max_{\alpha=\alpha_{1}, \alpha_{2}, \alpha_{3}}\{t^{\alpha-\alpha_{0}}\|u^{m}(t)\|_{X^{\alpha}}\}\leq CK^{0}(t),
\end{equation}
\begin{equation}
\max_{\beta=\beta_{1}, \beta_{2}, \beta_{3}}\{t^{\beta-\beta_{0}}\|\omega^{m}(t)\|_{Y^{\beta}}\}\leq CK^{0}(t),
\end{equation}
\begin{equation}
\max_{\gamma=\gamma_{1}, \gamma_{2}, \gamma_{3}}\{t^{\gamma-\gamma_{0}}\|\theta^{m}(t)\|_{Z^{\gamma}}\}\leq CK^{0}(t)
\end{equation}
for any $0<t\leq \tau_{1}$.
It is sufficient for the conclusion that we give $X^{\alpha}$-estimates for $u^{m+1}-u^{m}$, $Y^{\beta}$-estimates for $\omega^{m+1}-\omega^{m}$ and $Z^{\gamma}$-estimates for $\theta^{m+1}-\theta^{m}$.
It can be easily seen from (2.52)--(2.54), (3.2) with $m=0$ that
\begin{equation*}
\max_{\alpha=\alpha_{1}, \alpha_{2}, \alpha_{3}}\{t^{\alpha-\alpha_{0}}\|(u^{1}-u^{0})(t)\|_{X^{\alpha}}\}\leq CK^{0}(t)(K^{0}(t)+1),
\end{equation*}
\begin{equation*}
\max_{\beta=\beta_{1}, \beta_{2}, \beta_{3}}\{t^{\beta-\beta_{0}}\|(\omega^{1}-\omega^{0})(t)\|_{Y^{\beta}}\}\leq CK^{0}(t)(K^{0}(t)+1),
\end{equation*}
\begin{equation*}
\max_{\gamma=\gamma_{1}, \gamma_{2}, \gamma_{3}}\{t^{\gamma-\gamma_{0}}\|(\theta^{1}-\theta^{0})(t)\|_{Z^{\gamma}}\}\leq CK^{0}(t)^{2}
\end{equation*}
for any $0<t\leq \tau_{1}$.
We utilize (2.55)--(2.57), (3.13)--(3.15) and the induction with respect to $m$, and obtain the following inequalities:
\begin{equation}
\max_{\alpha=\alpha_{1}, \alpha_{2}, \alpha_{3}}\{t^{\alpha-\alpha_{0}}\|(u^{m+1}-u^{m})(t)\|_{X^{\alpha}}\}\leq CK^{0}(t)(K^{0}(t)+1)\{C(K^{0}(t)+t^{a})\}^{m},
\end{equation}
\begin{equation}
\max_{\beta=\beta_{1}, \beta_{2}, \beta_{3}}\{t^{\beta-\beta_{0}}\|(\omega^{m+1}-\omega^{m})(t)\|_{Y^{\beta}}\}\leq CK^{0}(t)(K^{0}(t)+1)\{C(K^{0}(t)+t^{b})\}^{m},
\end{equation}
\begin{equation}
\max_{\gamma=\gamma_{1}, \gamma_{2}, \gamma_{3}}\{t^{\gamma-\gamma_{0}}\|(\theta^{m+1}-\theta^{m})(t)\|_{Z^{\gamma}}\}\leq CK^{0}(t)(K^{0}(t)+1)(CK^{0}(t))^{m}
\end{equation}
for any $0<t\leq \tau_{1}$, where $a=\min\{1+\beta_{0}-\alpha_{0}-\beta_{1}-\delta_{1}, 1+\gamma_{0}-\alpha_{0}-\gamma_{1}\}$, $b=\min\{1+\alpha_{0}-\beta_{0}-\alpha_{2}-\delta_{2}, 1-\beta_{2}, 1+\gamma_{0}-\beta_{0}-\gamma_{2}\}$.
Since $K^{0}(0)=0$, $a>0$, $b>0$, we can take a positive constant $T_{1}\leq \tau_{1}$ satisfying $C(K^{0}(T_{1})+T^{a}_{1})<1$, $C(K^{0}(T_{1})+T^{b}_{1})<1$, $CK^{0}(T_{1})<1$.
Then $\{t^{\alpha-\alpha_{0}}u^{m}\}_{m}$, $\{t^{\beta-\beta_{0}}\omega^{m}\}_{m}$ and $\{t^{\gamma-\gamma_{0}}\theta^{m}\}_{m}$ are Cauchy sequences in $C([0,T_{1}];X^{\alpha})$, in $C([0,T_{1}];Y^{\beta})$ and in $C([0,T_{1}];Z^{\gamma})$ respectively.
\end{proof}
\begin{lemma}
Let $\alpha_{0}$, $\beta_{0}$ and $\gamma_{0}$ satisfy $(2.49)_{2}$, $(2.50)_{1}$, $(2.51)_{1}$ or $(2.49)_{1}$, $(2.50)_{2}$, $(2.51)_{1}$ or $(2.49)_{1}$, $(2.50)_{1}$, $(2.51)_{2}$ or $(2.49)_{2}$, $(2.50)_{2}$, $(2.51)_{1}$ or $(2.49)_{2}$, $(2.50)_{1}$, $(2.51)_{2}$ or $(2.49)_{1}$, $(2.50)_{2}$, $(2.51)_{2}$ or $(2.49)_{2}$, $(2.50)_{2}$, $(2.51)_{2}$.
Then there exists a positive constant $T_{2}\leq T$ depending only on $\Omega$, $p$, $q$, $r$, $\alpha_{0}$, $\beta_{0}$, $\gamma_{0}$, $u_{0}$, $\omega_{0}$, $\theta_{0}$, $\mu_{r}$, $L_{f}$, $L_{g}$ and $T$ such that $\{t^{\alpha-\alpha_{0}}u^{m}\}_{m}$, $\{t^{\beta-\beta_{0}}\omega^{m}\}_{m}$ and $\{t^{\gamma-\gamma_{0}}\theta^{m}\}_{m}$ are Cauchy sequences in $C([0,T_{2}];X^{\alpha})$ for $\alpha=\alpha_{1}, \alpha_{2}, \alpha_{3}$, in $C([0,T_{2}];Y^{\beta})$ for $\beta=\beta_{1}, \beta_{2}, \beta_{3}$ and in $C([0,T_{2}];Z^{\gamma})$ for $\gamma=\gamma_{1}, \gamma_{2}, \gamma_{3}$ respectively.
\end{lemma}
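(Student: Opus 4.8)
The plan is to mirror the proof of Lemma 3.2 as closely as possible, since the successive approximation $(u^m,\omega^m,\theta^m)$ and the estimates (3.9)--(3.11), (3.16)--(3.20) are already in place and are valid regardless of which of the cases in (2.49)--(2.51) occurs. The only difference is that in the equality (boundary) cases $(2.49)_2$, $(2.50)_2$, $(2.51)_2$ the exponents $1+\beta_0-\alpha_0-\beta_1-\delta_1$, $1+\gamma_0-\alpha_0-\gamma_1$ and $1+\gamma_0-\beta_0-\gamma_2$ respectively vanish, so three of the \emph{linear} coupling terms---the $\mu_r\,\mathrm{rot}\,\omega$ term and the $L_f f(\theta)$ term in $\mathcal{F}$, and the $L_g g(\theta)$ term in $\mathcal{G}$---no longer carry a positive power $t^x$ and hence cannot be absorbed by shrinking the time interval as in (3.12). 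First I would record that \emph{all} the remaining exponents stay strictly positive: the quadratic exponents are positive by (2.48), and the linear exponents $1-\beta_2$ and $1+\alpha_0-\beta_0-\alpha_2-\delta_2$ are positive by $\beta_2<1$ and by the parenthetical condition $\alpha_2+\delta_2<1+\alpha_0-\beta_0$ in (2.48). Thus the sole obstruction is a collection of \emph{fixed-coefficient} linear couplings whose dependency graph is $\theta\to\omega$, $\theta\to u$ and $\omega\to u$, with no feedback into $\theta$ and no $u\to\omega$ critical term.

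The key observation is that this critical coupling is strictly lower-triangular in the order $\theta\prec\omega\prec u$, hence \emph{nilpotent}. Accordingly I would replace the scalar quantity $K^m$ by the vector $\vec{K}^m=(\Theta^m,W^m,U^m)$, where $\Theta^m$, $W^m$, $U^m$ are the maxima over the $\theta$-, $\omega$- and $u$-families of weighted norms, and recast (3.9)--(3.11) as a vectorial inductive inequality of the form $\vec{K}^{m+1}\le\vec{K}^0+C|\vec{K}^m|^2\vec{1}+L\vec{K}^m+\varepsilon(T)\,\vec{K}^m$, where $L$ is the nonnegative strictly lower-triangular matrix carrying exactly the three critical couplings (with entries proportional to $L_g$, $L_f$, $\mu_r$) and $\varepsilon(T)\to 0$ as $T\to+0$ collects the positive-power linear terms. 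Since $L$ is nilpotent ($L^3=0$), the matrix $I-L$ is invertible with nonnegative inverse $I+L+L^2$; this lets me establish the uniform bound $\vec{K}^m\le\vec{V}$ with $\vec{V}=O(|\vec{K}^0|)$ by solving the closure inequality $(I-L-\varepsilon)\vec{V}\ge\vec{K}^0+C|\vec{V}|^2\vec{1}$ in the topological order $\theta,\omega,u$, choosing $T_2\le T$ small and using $K^0(0)=0$ to absorb the quadratic and $\varepsilon$-terms.

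With the uniform bound in hand, the Cauchy property follows by differencing (2.55)--(2.57) exactly as in Lemma 3.2: the difference vector $\vec{D}^m$ (of weighted norms of $u^{m+1}-u^m$, $\omega^{m+1}-\omega^m$, $\theta^{m+1}-\theta^m$) satisfies $\vec{D}^m\le\bigl(L+\varepsilon(T_2)+C\,|\vec V|\bigr)\vec{D}^{m-1}$. Because $L$ has spectral radius $0$, the spectral radius of the perturbed operator $L+\varepsilon(T_2)+C|\vec V|$ is strictly less than $1$ once $T_2$ is chosen small enough (using again $K^0(0)=0$ so that $|\vec V|$ is small), whence $\vec{D}^m\to 0$ geometrically and $\{t^{\alpha-\alpha_0}u^m\}_m$, $\{t^{\beta-\beta_0}\omega^m\}_m$, $\{t^{\gamma-\gamma_0}\theta^m\}_m$ are Cauchy in the stated spaces on $[0,T_2]$. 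The main obstacle is precisely the loss of $t$-smallness in the critical linear terms; it is overcome entirely by the nilpotency of the triangular coupling $\theta\prec\omega\prec u$, which plays here the role that the analogous temperature-to-velocity coupling plays in the Boussinesq analysis of \cite{Hishida}.
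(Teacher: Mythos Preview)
Your proposal is correct and follows essentially the same approach as the paper: both exploit that the critical (zero-exponent) linear couplings form a strictly triangular, hence nilpotent, dependency $\theta\to\omega\to u$, and both then use a matrix/spectral-radius argument (the paper's matrix $K(t)$ with $K(0)$ strictly upper-triangular and eigenvalues zero is exactly your nilpotent $L$ perturbed by $\varepsilon(t)$ and $CK^0(t)$) to conclude the Cauchy property. The only cosmetic difference is in the uniform-bound step: the paper substitutes once to obtain the two-step scalar recursion $K^{m+2}\le C\bigl(K^0+(K^{m+1})^2+(K^m)^2\bigr)+CK^{m+1}t^{y}$ and then invokes the monotonicity $K^m\le K^{m+1}$ from Lemma~3.1, whereas you build a vector supersolution directly in the topological order $\theta,\omega,u$; both exploit the same triangular structure.
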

\begin{proof}
It is clear from $(2.49)_{2}$, $(2.50)_{2}$, $(2.51)_{2}$ that $1+\beta_{0}-\alpha_{0}-\beta_{1}-\delta_{1}=0$, $1+\gamma_{0}-\alpha_{0}-\gamma_{1}=0$, $1+\gamma_{0}-\alpha_{0}-\gamma_{2}=0$ respectively.
We must consider, instead of (3.12), the following inductive inequality:
\begin{equation}
\begin{split}
&K^{m+1}_{1,\alpha}(t)\leq K^{0}_{1,\alpha}(t)+CK^{m}_{1,\alpha_{1}}(t)^{2}+CK^{m}_{2,\beta_{1}}(t)+CK^{m}_{3,\gamma_{1}}(t), \\
&K^{m+1}_{2,\beta}(t)\leq K^{0}_{2,\beta}(t)+CK^{m}_{1,\alpha_{2}}(t)K^{m}_{2,\beta_{2}}(t)+C(K^{m}_{2,\beta_{2}}(t)t^{1-\beta_{2}}+K^{m}_{1,\alpha_{2}}(t)t^{1+\alpha_{0}-\beta_{0}-\alpha_{2}-\delta_{2}}+K^{m}_{3,\gamma_{2}}(t)), \\
&K^{m+1}_{3,\gamma}(t)\leq K^{0}_{3,\gamma}(t)+C(K^{m}_{1,\alpha_{3}}(t)K^{m}_{3,\gamma_{3}}(t)+K^{m}_{1,\alpha_{3}}(t)^{2}+K^{m}_{1,\alpha_{3}}(t)K^{m}_{2,\beta_{3}}(t)+K^{m}_{2,\beta_{3}}(t)^{2})
\end{split}
\end{equation}
for any $0<t\leq T$, $m \in \mathbb{Z}$, $m\geq 0$.
It can be easily seen from (3.19) that
\begin{equation}
K^{m+2}(t)\leq C(K^{0}(t)+K^{m+1}(t)^{2}+K^{m}(t)^{2})+CK^{m+1}(t)t^{y}
\end{equation}
for any $0<t\leq T$, where $y=\min\{1+\alpha_{0}-\beta_{0}-\alpha_{2}-\delta_{2}, 1-\beta_{2}\}$.
Since $K^{0}(0)=0$, $K^{m}\leq K^{m+1}\leq K^{m+2}$ on $[0,T]$, $y>0$, an elementary calculation shows that there exists a positive constant $\tau_{2}\leq T$ such that $K^{m}\leq CK^{0}$ on $[0,\tau_{2}]$.
It remains to give $X^{\alpha}$-estimates for $u^{m+1}-u^{m}$, $Y^{\beta}$-estimates for $\omega^{m+1}-\omega^{m}$ and $Z^{\gamma}$-estimates for $\theta^{m+1}-\theta^{m}$.
It follows from (2.55)--(2.57) that
\begin{equation}
\max_{\alpha=\alpha_{1}, \alpha_{2}, \alpha_{3}}\{t^{\alpha-\alpha_{0}}\|(u^{m+1}-u^{m})(t)\|_{X^{\alpha}}\}\leq CK^{0}(t)(K^{0}(t)+1)L^{m}_{1}(t),
\end{equation}
\begin{equation}
\max_{\beta=\beta_{1}, \beta_{2}, \beta_{3}}\{t^{\beta-\beta_{0}}\|(\omega^{m+1}-\omega^{m})(t)\|_{Y^{\beta}}\}\leq CK^{0}(t)(K^{0}(t)+1)L^{m}_{2}(t),
\end{equation}
\begin{equation}
\max_{\gamma=\gamma_{1}, \gamma_{2}, \gamma_{3}}\{t^{\gamma-\gamma_{0}}\|(\theta^{m+1}-\theta^{m})(t)\|_{Z^{\gamma}}\}\leq CK^{0}(t)(K^{0}(t)+1)L^{m}_{3}(t)
\end{equation}
for any $0<t\leq \tau_{2}$, where $L^{m}_{1}(t)$, $L^{m}_{2}(t)$ and $L^{m}_{3}(t)$ are defined as
\begin{equation*}
L^{0}(t)=\begin{pmatrix} 1 \\ 1 \\ 1 \end{pmatrix}, \
L^{m}(t)=\begin{pmatrix} L^{m}_{1}(t) \\ L^{m}_{2}(t) \\ L^{m}_{3}(t) \end{pmatrix}, \ L^{m+1}(t)=K(t)L^{m}(t),
\end{equation*}
\begin{equation*}
K(t)=\begin{pmatrix} K^{0}(t) & 1 & 1 \\ K^{0}(t)+t^{1+\alpha_{0}-\beta_{0}-\alpha_{2}-\delta_{2}} & K^{0}(t)+t^{1-\beta_{2}} & 1 \\ K^{0}(t) & K^{0}(t) & K^{0}(t) \end{pmatrix}.
\end{equation*}
Let $\lambda=\lambda(t)$ be a eigenvalue of $K=K(t)$.
Since $\lambda$ is a continuous function on $[0,\tau_{2}]$ satisfying $\lambda(0)=0$, we can take a positive constant $T_{2}\leq \tau_{2}$ satisfying $|\lambda(t)|<1$ for any $0<t\leq T_{2}$.
Then $\{t^{\alpha-\alpha_{0}}u^{m}\}_{m}$, $\{t^{\beta-\beta_{0}}\omega^{m}\}_{m}$ and $\{t^{\gamma-\gamma_{0}}\theta^{m}\}_{m}$ are Cauchy sequences in $C([0,T_{2}];X^{\alpha})$, in $C([0,T_{2}];Y^{\beta})$ and in $C([0,T_{2}];Z^{\gamma})$ respectively.
\end{proof}
Set $T_{*}=\min\{T_{1}, T_{2}\}$.
Then it follows from Lemmas 3.2 and 3.3 that there exists a pair of three functions $(u,\omega,\theta)$ satisfying
\begin{equation*}
u \in C((0,T_{*}];X^{\alpha_{0}}),
\end{equation*}
\begin{equation*}
\omega \in C((0,T_{*}];Y^{\beta_{0}}),
\end{equation*}
\begin{equation*}
\theta \in C((0,T_{*}];Z^{\gamma_{0}})
\end{equation*}
such that
\begin{equation}
\begin{split}
&t^{\alpha-\alpha_{0}}u^{m}\rightarrow t^{\alpha-\alpha_{0}}u & \mathrm{in} \ C([0,T_{*}];X^{\alpha}) \ \mathrm{as} \ m\rightarrow \infty, \\
&t^{\beta-\beta_{0}}\omega^{m}\rightarrow t^{\beta-\beta_{0}}\omega & \mathrm{in} \ C([0,T_{*}];Y^{\beta}) \ \mathrm{as} \ m\rightarrow \infty, \\
&t^{\gamma-\gamma_{0}}\theta^{m}\rightarrow t^{\gamma-\gamma_{0}}\theta & \mathrm{in} \ C([0,T_{*}];Z^{\gamma}) \ \mathrm{as} \ m\rightarrow \infty
\end{split}
\end{equation}
for $\alpha=\alpha_{1}, \alpha_{2}, \alpha_{3}$, $\beta=\beta_{1}, \beta_{2}, \beta_{3}$, $\gamma=\gamma_{1}, \gamma_{2}, \gamma_{3}$.
By applying the dominated convergence theorem to (3.2), we can conclude that $(u,\omega,\theta)$ satisfies (II) in $(0,T_{*}]$.

\subsection{$X^{\alpha}\times Y^{\beta}\times Z^{\gamma}$-estimates for local mild solutions}
We will be concerned with basic properties of local mild solutions of (1.1), (1.2).
It is sufficient for (2.15)--(2.20) that we prove the following lemma:
\begin{lemma}
Let $(u,\omega,\theta)$ be a mild solution of $(1.1)$, $(1.2)$ in $(0,T_{*}]$ given by $(3.24)$.
Then
\begin{equation}
t^{\alpha-\alpha_{0}}\|u(t)-e^{-tA}u_{0}\|_{X^{\alpha}}\leq CK^{0}(t),
\end{equation}
\begin{equation}
t^{\beta-\beta_{0}}\|\omega(t)-e^{-t\Gamma}\omega_{0}\|_{Y^{\beta}}\leq CK^{0}(t),
\end{equation}
\begin{equation}
t^{\gamma-\gamma_{0}}\|\theta(t)-e^{-tB}\theta_{0}\|_{Z^{\gamma}}\leq CK^{0}(t)
\end{equation}
for any $\alpha_{0}\leq\alpha<1$, $\beta_{0}\leq\beta<1$, $\gamma_{0}\leq\gamma<1$, $0<t\leq T_{*}$, where $C$ is a positive constant independent of $u$, $\omega$, $\theta$ and $t$.
\end{lemma}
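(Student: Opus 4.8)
The plan is to exploit the definition of a mild solution together with the initialization of the iteration. Since $(u,\omega,\theta)$ satisfies (II) and the zeroth approximants are $u^{0}(t)=e^{-tA}u_{0}$, $\omega^{0}(t)=e^{-t\Gamma}\omega_{0}$, $\theta^{0}(t)=e^{-tB}\theta_{0}$, we have $u(t)-e^{-tA}u_{0}=\mathcal{F}(u,\omega,\theta)(t)$, and likewise $\omega(t)-e^{-t\Gamma}\omega_{0}=\mathcal{G}(u,\omega,\theta)(t)$ and $\theta(t)-e^{-tB}\theta_{0}=\mathcal{H}(u,\omega,\theta)(t)$. Thus the three asserted estimates reduce to bounding $\|\mathcal{F}(u,\omega,\theta)(t)\|_{X^{\alpha}}$, $\|\mathcal{G}(u,\omega,\theta)(t)\|_{Y^{\beta}}$, $\|\mathcal{H}(u,\omega,\theta)(t)\|_{Z^{\gamma}}$ by $CK^{0}(t)t^{\alpha_{0}-\alpha}$, $CK^{0}(t)t^{\beta_{0}-\beta}$, $CK^{0}(t)t^{\gamma_{0}-\gamma}$ respectively — that is, repeating the computation that produced (3.9)--(3.11), but now for the limit solution and for a continuous range of exponents.

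First I would record the a priori bounds on the solution itself in the discrete spaces: letting $m\to\infty$ in (3.14)--(3.16) and using the convergence (3.24) gives $\|u(s)\|_{X^{\alpha_{i}}}\leq CK^{0}(s)s^{\alpha_{0}-\alpha_{i}}$, $\|\omega(s)\|_{Y^{\beta_{i}}}\leq CK^{0}(s)s^{\beta_{0}-\beta_{i}}$, $\|\theta(s)\|_{Z^{\gamma_{i}}}\leq CK^{0}(s)s^{\gamma_{0}-\gamma_{i}}$ for $i=1,2,3$. I would then insert these into the right-hand sides of (2.52)--(2.54), replace each $K^{0}(s)$ by the larger $K^{0}(t)$ by monotonicity, and evaluate every resulting integral as a beta function, $\int_{0}^{t}(t-s)^{-a}e^{-\lambda(t-s)}s^{b}\,ds\leq B(1-a,1+b)\,t^{1-a+b}$. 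Each term then factors as $t^{\alpha_{0}-\alpha}$ (respectively $t^{\beta_{0}-\beta}$, $t^{\gamma_{0}-\gamma}$) times a residual power of $t$ with nonnegative exponent — for instance the convective term of $\mathcal{F}$ contributes the exponent $1+\alpha_{0}-2\alpha_{1}-\delta_{1}\geq 0$, which is exactly $(2.49)_{1}$, and the coupling terms contribute $1+\alpha_{0}+\beta_{0}-\alpha_{2}-\beta_{2}-\delta_{2}$ and the analogous expressions that (2.48)--(2.51) force to be nonnegative — together with a factor $K^{0}(t)$ or $K^{0}(t)^{2}$. Since $K^{0}$ is bounded on $[0,T_{*}]$ one has $K^{0}(t)^{2}\leq CK^{0}(t)$ and the residual powers are bounded, so each term is $\leq CK^{0}(t)t^{\alpha_{0}-\alpha}$, and summing finishes the $X^{\alpha}$ estimate; the $Y^{\beta}$ and $Z^{\gamma}$ estimates are identical.

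The delicate point is the full range $\alpha,\beta,\gamma<1$. The convective contributions carry singular kernels $(t-s)^{-(\alpha+\delta_{1})}$, $(t-s)^{-(\beta+\delta_{2})}$, $(t-s)^{-(\gamma+\delta_{3})}$, so the beta integrals converge only for $\alpha<1-\delta_{1}$, $\beta<1-\delta_{2}$, $\gamma<1-\delta_{3}$. When $\alpha_{0}>0$ (respectively $\beta_{0}>0$, $\gamma_{0}>0$) the choice $\delta_{1}=0$ (respectively $\delta_{2}=0$, $\delta_{3}=0$) already covers $\alpha<1$ (respectively $\beta,\gamma<1$). When $\alpha_{0}=0$ we have $\delta_{1}>0$, and to reach $\alpha\in[1-\delta_{1},1)$ I would run a short bootstrap: knowing now that $u(s)\in X^{\alpha'}$ with the above bound for every $\alpha'<1-\delta_{1}$, I re-estimate $A^{-\delta_{1}'}P(u\cdot\nabla)u$ via Lemma 2.5 with a fresh pair $(\alpha_{1}',\delta_{1}')$ chosen with $\alpha_{1}'$ just below $\frac12$ and $\delta_{1}'$ so small that $\alpha+\delta_{1}'<1$ while $2\alpha_{1}'+\delta_{1}'\leq 1$; this is feasible because $\alpha_{0}=0$ forces $p\geq 3$ through $\max\{0,\frac{3}{2p}-\frac12\}\leq\alpha_{0}$ in (2.14), which makes the lower bound $\frac{3}{2p}+\frac12\leq 1$ in Lemma 2.5 compatible with $2\alpha_{1}'+\delta_{1}'\leq 1$. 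The same device with $\delta_{2}'$ and $\delta_{3}'$ extends the ranges of $\beta$ and $\gamma$ up to $1$.

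The main obstacle I expect is precisely this exponent bookkeeping: checking, term by term, that every beta integral converges and that each leftover power of $t$ has nonnegative exponent, which is what the long lists (2.48)--(2.51) (derived from (2.13)--(2.14)) are engineered to guarantee, and then cleanly closing the endpoint range near $1$ by the rechoice of auxiliary exponents when the base regularity vanishes.
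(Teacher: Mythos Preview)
Your approach is correct and essentially mirrors the paper's: pass to the limit in the uniform bounds on the iterates (what you label (3.14)--(3.16) is (3.13)--(3.15) in the paper, and your reference to $(2.49)_1$ for the convective exponent $1+\alpha_{0}-2\alpha_{1}-\delta_{1}\geq 0$ should be (2.48)), feed these into (2.52)--(2.54), and then use the freedom in the choice of $\delta_1,\delta_2,\delta_3$ to extend the range to $\alpha,\beta,\gamma<1$. The paper compresses this last step into the single remark ``the choice of $\delta_{1}$, $\delta_{2}$ and $\delta_{3}$ allows us to assume that $\alpha_{0}\leq \alpha<1$\ldots'', whereas you spell out the re-choice of auxiliary exponents more explicitly; otherwise the arguments coincide.
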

\begin{proof}
It can be easily seen that (3.13)--(3.15) with $(u,\omega,\theta)$ instead of $(u^{m},\omega^{m},\theta^{m})$ hold for any $\alpha_{0}\leq\alpha<1-\delta_{1}$, $\beta_{0}\leq\beta<1-\delta_{2}$, $\gamma_{0}\leq\gamma<1-\delta_{3}$.
By applying (3.13)--(3.15) to (2.52)--(2.54), we have the following inequalities:
\begin{equation*}
t^{\alpha-\alpha_{0}}\|u(t)-e^{-tA}u_{0}\|_{X^{\alpha}}\leq CK^{0}(t)(K^{0}(t)+1),
\end{equation*}
\begin{equation*}
t^{\beta-\beta_{0}}\|\omega(t)-e^{-t\Gamma}\omega_{0}\|_{Y^{\beta}}\leq CK^{0}(t)(K^{0}(t)+1),
\end{equation*}
\begin{equation*}
t^{\gamma-\gamma_{0}}\|\theta(t)-e^{-tB}\theta_{0}\|_{Z^{\gamma}}\leq CK^{0}(t)^{2}
\end{equation*}
for any $\alpha_{0}\leq\alpha<1-\delta_{1}$, $\beta_{0}\leq\beta<1-\delta_{2}$, $\gamma_{0}\leq\gamma<1-\delta_{3}$, $0<t\leq T_{*}$.
Furthermore, the choice of $\delta_{1}$, $\delta_{2}$ and $\delta_{3}$ allows us to assume that $\alpha_{0}\leq \alpha<1$, $\beta_{0}\leq \beta<1$, $\gamma_{0}\leq\gamma<1$.
These inequalities clearly yield (3.25)--(3.27).
\end{proof}
It follows from (3.25) with $\alpha=\alpha_{0}$, (3.26) with $\beta=\beta_{0}$, (3.27) with $\gamma=\gamma_{0}$ that
\begin{equation}
\|u(t)-u_{0}\|_{X^{\alpha_{0}}}\leq \|(e^{-tA}-I)u_{0}\|_{X^{\alpha_{0}}}+CK^{0}(t),
\end{equation}
\begin{equation}
\|\omega(t)-\omega_{0}\|_{Y^{\beta_{0}}}\leq \|(e^{-t\Gamma}-I)\omega_{0}\|_{Y^{\beta_{0}}}+CK^{0}(t),
\end{equation}
\begin{equation}
\|\theta(t)-\theta_{0}\|_{Z^{\gamma_{0}}}\leq \|(e^{-tB}-I)\theta_{0}\|_{Z^{\gamma_{0}}}+CK^{0}(t)
\end{equation}
for any $0<t\leq T_{*}$.
By taking $t$ as zero, (3.28)--(3.30) imply that $u(0)=u_{0}$, $\omega(0)=\omega_{0}$, $\theta(0)=\theta_{0}$, consequently, $(u,\omega,\theta)$ is a mild solution of (1.1), (1.2) on $[0,T_{*}]$.
It is obvious from (3.25)--(3.27) that
\begin{equation}
t^{\alpha-\alpha_{0}}\|u(t)\|_{X^{\alpha}}\leq t^{\alpha-\alpha_{0}}\|e^{-tA}u_{0}\|_{X^{\alpha}}+CK^{0}(t),
\end{equation}
\begin{equation}
t^{\beta-\beta_{0}}\|\omega(t)\|_{Y^{\beta}}\leq t^{\beta-\beta_{0}}\|e^{-t\Gamma}\omega_{0}\|_{Y^{\beta}}+CK^{0}(t),
\end{equation}
\begin{equation}
t^{\gamma-\gamma_{0}}\|\theta(t)\|_{Z^{\gamma}}\leq t^{\gamma-\gamma_{0}}\|e^{-tB}\theta_{0}\|_{Z^{\gamma}}+CK^{0}(t)
\end{equation}
for any $\alpha_{0}\leq\alpha<1$, $\beta_{0}\leq\beta<1$, $\gamma_{0}\leq\gamma<1$, $0<t\leq T_{*}$.
(2.1)--(2.3), (3.31)--(3.33) clearly lead to (2.15)--(2.17).
Moreover, it can be easily seen from (2.7)--(2.9), (3.31)--(3.33) that (2.18)--(2.20) hold for any $\alpha_{0}<\alpha<1$, $\beta_{0}<\beta<1$, $\gamma_{0}<\gamma<1$.

\subsection{Uniqueness of mild solutions}
We proceed to the uniqueness of mild solutions of (1.1), (1.2) on $[0,T]$.
Throughout this subsection, it is required that $C$ is a positive constant independent of $t$, consequently, $C$ may depend on $u$, $\omega$ and $\theta$.
For any $\alpha_{0}\leq \alpha<1$, $\beta_{0}\leq \beta<1$, $\gamma_{0}\leq \gamma<1$, $0<\tau\leq t\leq T$, let us introduce the following notation:
\begin{equation*}
\|u(t)\|_{(\alpha)}=t^{\alpha-\alpha_{0}}\|u(t)\|_{X^{\alpha}},
\end{equation*}
\begin{equation*}
\|\omega(t)\|_{(\beta)}=t^{\beta-\beta_{0}}\|\omega(t)\|_{Y^{\beta}},
\end{equation*}
\begin{equation*}
\|\theta(t)\|_{(\gamma)}=t^{\gamma-\gamma_{0}}\|\theta(t)\|_{Z^{\gamma}},
\end{equation*}
\begin{equation*}
\|(u,\omega,\theta)(t)\|_{(\alpha,\beta,\gamma)}=\|u(t)\|_{(\alpha)}+\|\omega(t)\|_{(\beta)}+\|\theta(t)\|_{(\gamma)},
\end{equation*}
\begin{equation*}
\|u\|_{(\alpha;t)}=\sup_{0<s\leq t}s^{\alpha-\alpha_{0}}\|u(s)\|_{X^{\alpha}},
\end{equation*}
\begin{equation*}
\|\omega\|_{(\beta;t)}=\sup_{0<s\leq t}s^{\beta-\beta_{0}}\|\omega(s)\|_{Y^{\beta}},
\end{equation*}
\begin{equation*}
\|\theta\|_{(\gamma;t)}=\sup_{0<s\leq t}s^{\gamma-\gamma_{0}}\|\theta(s)\|_{Z^{\gamma}},
\end{equation*}
\begin{equation*}
\|(u,\omega,\theta)\|_{(\alpha,\beta,\gamma;t)}=\|u\|_{(\alpha;t)}+\|\omega\|_{(\beta;t)}+\|\theta\|_{(\gamma;t)},
\end{equation*}
\begin{equation*}
\|u\|_{(\alpha;\tau,t)}=\max_{\tau\leq s\leq t}\|u(s)\|_{X^{\alpha}},
\end{equation*}
\begin{equation*}
\|\omega\|_{(\beta;\tau,t)}=\max_{\tau\leq s\leq t}\|\omega(s)\|_{Y^{\beta}},
\end{equation*}
\begin{equation*}
\|\theta\|_{(\gamma;\tau,t)}=\max_{\tau\leq s\leq t}\|\theta(s)\|_{Z^{\gamma}},
\end{equation*}
\begin{equation*}
\|(u,\omega,\theta)\|_{(\alpha,\beta,\gamma;\tau,t)}=\|u\|_{(\alpha;\tau,t)}+\|\omega\|_{(\beta;\tau,t)}+\|\theta\|_{(\gamma;\tau,t)}.
\end{equation*}
It is clear that the uniqueness is derived from the continuous dependence with respect to initial data.
We prove the following lemma:
\begin{lemma}
Let $(u,\omega,\theta)$ and $(\bar{u},\bar{\omega},\bar{\theta})$ be two mild solutions of $(1.1)$, $(1.2)$ on $[0,T]$ with initial data $(u_{0},\omega_{0},\theta_{0})$ and $(\bar{u}_{0},\bar{\omega}_{0},\bar{\theta}_{0})$ respectively which satisfy
\begin{equation*}
\mathrm{(i)} \ t^{\alpha-\alpha_{0}}u, \ t^{\alpha-\alpha_{0}}\bar{u} \in C([0,T];X^{\alpha}),
\end{equation*}
\begin{equation*}
t^{\beta-\beta_{0}}\omega, \ t^{\beta-\beta_{0}}\bar{\omega} \in C([0,T];Y^{\beta}),
\end{equation*}
\begin{equation*}
t^{\gamma-\gamma_{0}}\theta, \ t^{\gamma-\gamma_{0}}\bar{\theta} \in C([0,T];Z^{\gamma})
\end{equation*}
for any $\alpha_{0}\leq\alpha<1$, $\beta_{0}\leq\beta<1$, $\gamma_{0}\leq\gamma<1$.
\begin{equation*}
\mathrm{(ii)} \ \|u(t)\|_{X^{\alpha}}=o(t^{\alpha_{0}-\alpha}), \ \|\bar{u}(t)\|_{X^{\alpha}}=o(t^{\alpha_{0}-\alpha}) \ \mathrm{as} \ t\rightarrow +0,
\end{equation*}
\begin{equation*}
\|\omega(t)\|_{Y^{\beta}}=o(t^{\beta_{0}-\beta}), \ \|\bar{\omega}(t)\|_{Y^{\beta}}=o(t^{\beta_{0}-\beta}) \ \mathrm{as} \ t\rightarrow +0,
\end{equation*}
\begin{equation*}
\|\theta(t)\|_{Z^{\gamma}}=o(t^{\gamma_{0}-\gamma}), \ \|\bar{\theta}(t)\|_{Z^{\gamma}}=o(t^{\gamma_{0}-\gamma}) \ \mathrm{as} \ t\rightarrow +0
\end{equation*}
for any $\alpha_{0}<\alpha<1$, $\beta_{0}<\beta<1$, $\gamma_{0}<\gamma<1$.
Then
\begin{equation}
\|(u-\bar{u},\omega-\bar{\omega},\theta-\bar{\theta})(t)\|_{(\alpha,\beta,\gamma)}\leq C\|(u_{0}-\bar{u}_{0},\omega_{0}-\bar{\omega}_{0},\theta_{0}-\bar{\theta}_{0})\|_{(\alpha_{0},\beta_{0},\gamma_{0})}
\end{equation}
for any $\alpha_{0}\leq\alpha<1$, $\beta_{0}\leq\beta<1$, $\gamma_{0}\leq\gamma<1$, $0<t\leq T$, where $C$ is a positive constant independent of $t$.
\end{lemma}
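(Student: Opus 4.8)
The plan is to subtract the two integral representations (II) for $(u,\omega,\theta)$ and $(\bar u,\bar\omega,\bar\theta)$, measure the differences in the weighted quantities $\|\cdot\|_{(\alpha)},\|\cdot\|_{(\beta)},\|\cdot\|_{(\gamma)}$ at the distinguished exponents $\alpha=\alpha_1,\alpha_2,\alpha_3$, $\beta=\beta_1,\beta_2,\beta_3$, $\gamma=\gamma_1,\gamma_2,\gamma_3$, and absorb the nonlinear contributions near $t=0$. For the semigroup part I would write $A^{\alpha}e^{-tA}(u_0-\bar u_0)=A^{\alpha-\alpha_0}e^{-tA}A^{\alpha_0}(u_0-\bar u_0)$ and apply Lemma 2.1 (with exponent $\alpha-\alpha_0\ge 0$) together with its $\Gamma$- and $B$-analogues to obtain $t^{\alpha-\alpha_0}\|e^{-tA}(u_0-\bar u_0)\|_{X^{\alpha}}\le C\|u_0-\bar u_0\|_{X^{\alpha_0}}$, and similarly for $\omega$ and $\theta$; summed, these produce the term $C\|(u_0-\bar u_0,\omega_0-\bar\omega_0,\theta_0-\bar\theta_0)\|_{(\alpha_0,\beta_0,\gamma_0)}$ on the right of $(3.34)$. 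For the nonlinear part I would apply the difference estimates $(2.55)$--$(2.57)$ with $(u_2,\omega_2,\theta_2)=(u,\omega,\theta)$, $(u_1,\omega_1,\theta_1)=(\bar u,\bar\omega,\bar\theta)$, bound every solution factor $\|u(s)\|_{X^{\alpha_i}}$ by $\|u\|_{(\alpha_i;t)}\,s^{\alpha_0-\alpha_i}$ and every difference factor likewise, and evaluate the resulting Beta integrals exactly as in the proof of Lemma 3.1.

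The decisive step is the behaviour as $t\to+0$. After weighting, each term on the right of $(2.55)$--$(2.57)$ carries either a coefficient built from solution norms $\|u\|_{(\alpha_i;t)},\|\bar u\|_{(\alpha_i;t)},\dots$ — coming from the quadratic terms, and tending to $0$ by hypothesis (ii) — or a power $t^{\,\ge 0}$ whose exponent is strictly positive in the subcritical cases $(2.49)_1,(2.50)_1,(2.51)_1$ and is exactly $0$ in the critical cases $(2.49)_2,(2.50)_2,(2.51)_2$. Collecting the weighted difference norms into a vector $\vec N(t)$ and the coefficients into a nonnegative $3\times 3$ matrix $M(t)$, and writing $\vec\Phi(t)$ for the vector of suprema over $(0,t]$ (finite by (i)), I would arrive at $\vec N(t)\le C\vec N_0+M(t)\,\vec\Phi(t)$; taking suprema and using the monotonicity of $M$ gives $(I-M(t))\vec\Phi(t)\le C\vec N_0$. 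The point is that the only entries of $M(0)$ that do not vanish are the three critical linear couplings $\mathrm{rot}\,\omega\to u$, $f(\theta)\to u$ and $g(\theta)\to\omega$: every quadratic coefficient vanishes at $t=0$ by (ii), the subcritical couplings (in particular $\mathrm{rot}\,u\to\omega$, which is strictly subcritical by the parenthetical bound in $(2.48)$) and the diagonal term $-4\mu_r\omega$ vanish through their positive $t$-powers, while no linear term feeds $\theta$. Since the surviving couplings all point upward in the order $\theta\prec\omega\prec u$, the matrix $M(0)$ is strictly upper triangular, hence nilpotent with spectral radius $0$.

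By continuity of the spectral radius there is $\tau'>0$ such that the spectral radius of $M(t)$ is $<1$ on $(0,\tau']$; then $I-M(t)$ is invertible there, $(I-M(t))^{-1}=\sum_{k}M(t)^{k}$ is nonnegative and bounded, and $\vec\Phi(\tau')\le C\vec N_0$, which is $(3.34)$ on $(0,\tau']$. To reach the whole interval I would, for $t\in(\tau',T]$, split each integral in $(2.55)$--$(2.57)$ as $\int_0^{\tau'}+\int_{\tau'}^t$: the first piece is controlled by the local bound just obtained, and on $[\tau',T]$ the solutions are bounded by (i) and the kernels $(t-s)^{-\mu}$ with $\mu<1$ are integrable, so a singular Gronwall inequality (the constant $C$ being allowed to depend on $u,\omega,\theta$) yields the estimate for the distinguished exponents on all of $(0,T]$. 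Feeding this back into $(2.55)$--$(2.57)$ then upgrades it to arbitrary $\alpha_0\le\alpha<1$, $\beta_0\le\beta<1$, $\gamma_0\le\gamma<1$, exactly as in the passage to $(3.25)$--$(3.27)$, completing $(3.34)$.

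The main obstacle is precisely the critical cases: there the linear coupling terms produce $O(1)$ coefficients that can be absorbed neither by smallness in $t$ nor by hypothesis (ii), since $\mu_r$, $L_f$ and $L_g$ are not assumed small here. The argument goes through only because these critical couplings are acyclic, so that the governing matrix is nilpotent at $t=0$; verifying this triangular structure, and confirming that every remaining entry of $M(0)$ genuinely vanishes, is the subtle part, the rest being the same Beta-integral and Gronwall bookkeeping already used in Lemmas 3.1--3.3.
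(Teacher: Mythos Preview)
Your proposal is correct and captures the same essential mechanism as the paper's proof. In particular, your observation that the critical linear couplings $\mathrm{rot}\,\omega\to u$, $f(\theta)\to u$, $g(\theta)\to\omega$ form an acyclic (strictly upper-triangular) pattern is exactly what makes the paper's passage from (3.35)--(3.37) to the scalar inequality (3.38) work in the critical cases: the paper obtains (3.38) by back-substituting the $\theta$-bound into the $\omega$-bound and both into the $u$-bound, which is precisely your nilpotent-matrix argument written out componentwise. (The same matrix device already appears explicitly in the paper's Lemma~3.3.)

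The only genuine methodological difference is in the continuation past the small time $\tau_0$. The paper restarts the integral representation at a variable base point $\tau\ge\tau_0$ via (3.39), obtains an inequality $D(\tau,t)\le C D_0+CN(\tau,t)D(\tau,t)$ with $N(\tau,t)$ built from powers of $t-\tau$, chooses a step size $\tau_1$ independent of $\tau$ with $CN(\tau,\tau+\tau_1)<1$, and iterates finitely many times to cover $[\tau_0,T]$. You instead keep the representation based at $0$, split $\int_0^t=\int_0^{\tau'}+\int_{\tau'}^t$, feed the local bound into the first piece, and close with a singular Gronwall inequality on $[\tau',T]$. Both routes are standard and yield the same conclusion; the paper's finite-step iteration avoids invoking Gronwall and keeps the dependence of constants slightly more explicit, while your Gronwall version is shorter to write. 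Either is acceptable here since $C$ is allowed to depend on the two solutions.
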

\begin{proof}
$D_{0}$, $D$ and $M$ are defined as
\begin{equation*}
D_{0}=\|(u_{0}-\bar{u}_{0},\omega_{0}-\bar{\omega}_{0},\theta_{0}-\bar{\theta}_{0})\|_{(\alpha_{0},\beta_{0},\gamma_{0})},
\end{equation*}
\begin{equation*}
\begin{split}
D(t)=\max\{&\|(u-\bar{u},\omega-\bar{\omega},\theta-\bar{\theta})\|_{(\alpha_{1},\beta_{1},\gamma_{1};t)}, \|(u-\bar{u},\omega-\bar{\omega},\theta-\bar{\theta})\|_{(\alpha_{2},\beta_{2},\gamma_{2};t)}, \\
&\|(u-\bar{u},\omega-\bar{\omega},\theta-\bar{\theta})\|_{(\alpha_{3},\beta_{3},\gamma_{3};t)}\},
\end{split}
\end{equation*}
\begin{equation*}
\begin{split}
M(t)=\max\{&\|u\|_{(\alpha_{1};t)}, \|u\|_{(\alpha_{3};t)}, \|\bar{u}\|_{(\alpha_{1};t)}, \|\bar{u}\|_{(\alpha_{2};t)}, \|\bar{u}\|_{(\alpha_{3};t)}, \\
&\|\omega\|_{(\beta_{2};t)}, \|\omega\|_{(\beta_{3};t)}, \|\bar{\omega}\|_{(\beta_{3};t)}, \|\theta\|_{(\gamma_{3};t)}\}.
\end{split}
\end{equation*}
By applying (2.55)--(2.57) to $(u,\omega,\theta)$ and $(\bar{u},\bar{\omega},\bar{\theta})$, we have the following inequalities:
\begin{equation}
\begin{split}
\|(u-\bar{u})(t)\|_{(\alpha)}\leq& C\|u_{0}-\bar{u}_{0}\|_{(\alpha_{0})}+CM(t)\|u-\bar{u}\|_{(\alpha_{1};t)}+Ct^{1+\beta_{0}-\alpha_{0}-\beta_{1}-\delta_{1}}\|\omega-\bar{\omega}\|_{(\beta_{1};t)} \\
&+Ct^{1+\gamma_{0}-\alpha_{0}-\gamma_{1}}\|\theta-\bar{\theta}\|_{(\gamma_{1};t)},
\end{split}
\end{equation}
\begin{equation}
\begin{split}
\|(\omega-\bar{\omega})(t)\|_{(\beta)}\leq& C\|\omega_{0}-\bar{\omega}_{0}\|_{(\beta_{0})}+CM(t)\|(u-\bar{u},\omega-\bar{\omega},0)\|_{(\alpha_{2},\beta_{2},\gamma_{0};t)}+Ct^{1-\beta_{2}}\|\omega-\bar{\omega}\|_{(\beta_{2};t)} \\
&+Ct^{1+\alpha_{0}-\beta_{0}-\alpha_{2}-\delta_{2}}\|u-\bar{u}\|_{(\alpha_{2};t)}+Ct^{1+\gamma_{0}-\beta_{0}-\gamma_{2}}\|\theta-\bar{\theta}\|_{(\gamma_{2};t)},
\end{split}
\end{equation}
\begin{equation}
\|(\theta-\bar{\theta})(t)\|_{(\gamma)}\leq C\|\theta_{0}-\bar{\theta}_{0}\|_{(\gamma_{0})}+CM(t)\|(u-\bar{u},\omega-\bar{\omega},\theta-\bar{\theta})\|_{(\alpha_{3},\beta_{3},\gamma_{3};t)}
\end{equation}
for any $\alpha_{0}\leq\alpha<1-\delta_{1}$, $\beta_{0}\leq\beta<1-\delta_{2}$, $\gamma_{0}\leq\gamma<1-\delta_{3}$, $0<t\leq T$.
Moreover, it can be easily seen from (3.35) with $\alpha=\alpha_{1}, \alpha_{2}, \alpha_{3}$, (3.36) with $\beta=\beta_{1}, \beta_{2}, \beta_{3}$, (3.37) with $\gamma=\gamma_{1}, \gamma_{2}, \gamma_{3}$ that
\begin{equation}
D(t)\leq CD_{0}+CN(t)D(t)
\end{equation}
for any $0<t\leq T$, where
\begin{equation*}
N(t):=\begin{cases}
M(t)+t^{1+\beta_{0}-\alpha_{0}-\beta_{1}-\delta_{1}}+t^{1+\gamma_{0}-\alpha_{0}-\gamma_{1}}+t^{1-\beta_{2}} \\
+t^{1+\alpha_{0}-\beta_{0}-\alpha_{2}-\delta_{2}}+t^{1+\gamma_{0}-\beta_{0}-\gamma_{2}} & \mathrm{if} \ (2.49)_{1}, (2.50)_{1}, (2.51)_{1} \\
M(t)+t^{1+\gamma_{0}-\alpha_{0}-\gamma_{1}}+t^{1-\beta_{2}}+t^{1+\alpha_{0}-\beta_{0}-\alpha_{2}-\delta_{2}} \\
+t^{1+\gamma_{0}-\beta_{0}-\gamma_{2}} & \mathrm{if} \ (2.49)_{2}, (2.50)_{1}, (2.51)_{1}, \\
M(t)+t^{1+\beta_{0}-\alpha_{0}-\beta_{1}-\delta_{1}}+t^{1-\beta_{2}}+t^{1+\alpha_{0}-\beta_{0}-\alpha_{2}-\delta_{2}} \\
+t^{1+\gamma_{0}-\beta_{0}-\gamma_{2}} & \mathrm{if} \ (2.49)_{1}, (2.50)_{2}, (2.51)_{1}, \\
M(t)+t^{1+\beta_{0}-\alpha_{0}-\beta_{1}-\delta_{1}}+t^{1+\gamma_{0}-\alpha_{0}-\gamma_{1}}+t^{1-\beta_{2}} \\
+t^{1+\alpha_{0}-\beta_{0}-\alpha_{2}-\delta_{2}} & \mathrm{if} \ (2.49)_{1}, (2.50)_{1}, (2.51)_{2} \\
M(t)+t^{1-\beta_{2}}+t^{1+\alpha_{0}-\beta_{0}-\alpha_{2}-\delta_{2}}+t^{1+\gamma_{0}-\beta_{0}-\gamma_{2}} & \mathrm{if} \ (2.49)_{2}, (2.50)_{2}, (2.51)_{1}, \\
M(t)+t^{1+\gamma_{0}-\alpha_{0}-\gamma_{1}}+t^{1-\beta_{2}}+t^{1+\alpha_{0}-\beta_{0}-\alpha_{2}-\delta_{2}} & \mathrm{if} \ (2.49)_{2}, (2.50)_{1}, (2.51)_{2}, \\
M(t)+t^{1+\beta_{0}-\alpha_{0}-\beta_{1}-\delta_{1}}+t^{1-\beta_{2}}+t^{1+\alpha_{0}-\beta_{0}-\alpha_{2}-\delta_{2}} & \mathrm{if} \ (2.49)_{1}, (2.50)_{2}, (2.51)_{2}, \\
M(t)+t^{1-\beta_{2}}+t^{1+\alpha_{0}-\beta_{0}-\alpha_{2}-\delta_{2}} & \mathrm{if} \ (2.49)_{2}, (2.50)_{2}, (2.51)_{2}. \\
\end{cases}
\end{equation*}
Since $(u,\omega,\theta)$ and $(\bar{u},\bar{\omega},\bar{\theta})$ satisfy (i), (ii), $N$ is a monotone increasing continuous function on $[0,T]$ satisfying $N(0)=0$.
Then we can take a positive constant $\tau_{0}\leq T$ satisfying $CN(\tau_{0})<1$, consequently, $D(\tau_{0})\leq CD_{0}$.
It remains to prove (3.34) for any $\tau_{0}\leq t\leq T$.
For any $\tau_{0}\leq \tau\leq T$, $D(\tau,\cdot)$ and $M(\tau,\cdot)$ are defined as
\begin{equation*}
\begin{split}
D(\tau,t)=\max\{&\|(u-\bar{u},\omega-\bar{\omega},\theta-\bar{\theta})\|_{(\alpha_{1},\beta_{1},\gamma_{1};\tau,t)}, \|(u-\bar{u},\omega-\bar{\omega},\theta-\bar{\theta})\|_{(\alpha_{2},\beta_{2},\gamma_{2};\tau,t)}, \\
&\|(u-\bar{u},\omega-\bar{\omega},\theta-\bar{\theta})\|_{(\alpha_{3},\beta_{3},\gamma_{3};\tau,t)}\},
\end{split}
\end{equation*}
\begin{equation*}
\begin{split}
M(\tau,t)=\max\{&\|u\|_{(\alpha_{1};\tau,t)}, \|u\|_{(\alpha_{3};\tau,t)}, \|\bar{u}\|_{(\alpha_{1};\tau,t)}, \|\bar{u}\|_{(\alpha_{2};\tau,t)}, \|\bar{u}\|_{(\alpha_{3};\tau,t)}, \|\omega\|_{(\beta_{2};\tau,t)}, \\
&\|\omega\|_{(\beta_{3};\tau,t)}, \|\bar{\omega}\|_{(\beta_{3};\tau,t)}, \|\theta\|_{(\gamma_{3};\tau,t)}\}.
\end{split}
\end{equation*}
It is necessary to remark that $(u,\omega,\theta)$ and $(\bar{u},\bar{\omega},\bar{\theta})$ satisfy
\begin{equation}
\begin{split}
&u(t)=e^{-(t-\tau)A}u(\tau)+\displaystyle\int^{t}_{\tau}e^{-(t-s)A}F(u,\omega,\theta)(s)ds, \\
&\omega(t)=e^{-(t-\tau)\Gamma}\omega(\tau)+\displaystyle\int^{t}_{\tau}e^{-(t-s)\Gamma}G(u,\omega,\theta)(s)ds, \\
&\theta(t)=e^{-(t-\tau)B}\theta(\tau)+\displaystyle\int^{t}_{\tau}e^{-(t-s)B}H(u,\omega,\theta)(s)ds
\end{split}
\end{equation}
for any $\tau\leq t\leq T$.
We subtract (3.39) with $(\bar{u},\bar{\omega},\bar{\theta})$ from (3.39) with $(u,\omega,\theta)$, and obtain
\begin{equation*}
\begin{split}
\|(u-\bar{u})(t)\|_{X^{\alpha}}\leq& \|e^{-(t-\tau)A}(u-\bar{u})(\tau)\|_{X^{\alpha}} \\
&+\int^{t}_{\tau}\|e^{-(t-s)A}(F(u,\omega,\theta)-F(\bar{u},\bar{\omega},\bar{\theta}))(s)\|_{X^{\alpha}}ds,
\end{split}
\end{equation*}
\begin{equation*}
\begin{split}
\|(\omega-\bar{\omega})(t)\|_{Y^{\beta}}\leq& \|e^{-(t-\tau)\Gamma}(\omega-\bar{\omega})(\tau)\|_{Y^{\beta}} \\
&+\int^{t}_{\tau}\|e^{-(t-s)\Gamma}(G(u,\omega,\theta)-G(\bar{u},\bar{\omega},\bar{\theta}))(s)\|_{Y^{\beta}}ds,
\end{split}
\end{equation*}
\begin{equation*}
\begin{split}
\|(\theta-\bar{\theta})(t)\|_{Z^{\gamma}}\leq& \|e^{-(t-\tau)B}(\theta-\bar{\theta})(\tau)\|_{Z^{\gamma}} \\
&+\int^{t}_{\tau}\|e^{-(t-s)B}(H(u,\omega,\theta)-H(\bar{u},\bar{\omega},\bar{\theta}))(s)\|_{Z^{\gamma}}ds
\end{split}
\end{equation*}
for any $\alpha_{0}\leq\alpha<1-\delta_{1}$, $\beta_{0}\leq\beta<1-\delta_{2}$, $\gamma_{0}\leq\gamma<1-\delta_{3}$, $\tau\leq t\leq T$.
It is clear that
\begin{equation*}
\int^{t}_{\tau}\|e^{-(t-s)A}(F(u,\omega,\theta)-F(\bar{u},\bar{\omega},\bar{\theta}))(s)\|_{X^{\alpha}}ds,
\end{equation*}
\begin{equation*}
\int^{t}_{\tau}\|e^{-(t-s)\Gamma}(G(u,\omega,\theta)-G(\bar{u},\bar{\omega},\bar{\theta}))(s)\|_{Y^{\beta}}ds,
\end{equation*}
\begin{equation*}
\int^{t}_{\tau}\|e^{-(t-s)B}(H(u,\omega,\theta)-H(\bar{u},\bar{\omega},\bar{\theta}))(s)\|_{Z^{\gamma}}ds
\end{equation*}
are estimated like (2.55)--(2.57), consequently, we have the following inequalities:
\begin{equation}
\begin{split}
\|(u-\bar{u})(t)\|_{X^{\alpha}}\leq& C\tau^{\alpha_{0}-\alpha}_{0}D_{0}+C(t-\tau)^{1-(\alpha+\delta_{1})}M(\tau_{0},T)\|u-\bar{u}\|_{(\alpha_{1};\tau,t)} \\
&+C(t-\tau)^{1-(\alpha+\delta_{1})}\|\omega-\bar{\omega}\|_{(\beta_{1};\tau,t)}+C(t-\tau)^{1-\alpha}\|\theta-\bar{\theta}\|_{(\gamma_{1};\tau,t)},
\end{split}
\end{equation}
\begin{equation}
\begin{split}
\|(\omega-\bar{\omega})(t)\|_{Y^{\beta}}\leq& C\tau^{\beta_{0}-\beta}_{0}D_{0}+C(t-\tau)^{1-(\beta+\delta_{2})}M(\tau_{0},T)\|(u-\bar{u},\omega-\bar{\omega},0)\|_{(\alpha_{2},\beta_{2},\gamma_{0};\tau,t)} \\
&+C(t-\tau)^{1-\beta}\|\omega-\bar{\omega}\|_{(\beta_{2};\tau,t)}+C(t-\tau)^{1-(\beta+\delta_{2})}\|u-\bar{u}\|_{(\alpha_{2};\tau,t)} \\
&+C(t-\tau)^{1-\beta}\|\theta-\bar{\theta}\|_{(\gamma_{2};\tau,t)},
\end{split}
\end{equation}
\begin{equation}
\|(\theta-\bar{\theta})(t)\|_{Z^{\gamma}}\leq C\tau^{\gamma_{0}-\gamma}_{0}D_{0}+C\{(t-\tau)^{1-(\gamma+\delta_{3})}+(t-\tau)^{1-\gamma}\}M(\tau_{0},T)\|(u-\bar{u},\omega-\bar{\omega},\theta-\bar{\theta})\|_{(\alpha_{2},\beta_{2},\gamma_{2};\tau,t)}
\end{equation}
for any $\tau\leq t\leq T$.
Similarly to (3.38), it follows from (3.40) with $\alpha=\alpha_{1}, \alpha_{2}, \alpha_{3}$, (3.41) with $\beta=\beta_{1}, \beta_{2}, \beta_{3}$, (3.42) with $\gamma=\gamma_{1}, \gamma_{2}, \gamma_{3}$ that
\begin{equation}
D(\tau,t)\leq C(\tau^{\alpha_{0}-\alpha}_{0}+\tau^{\beta_{0}-\beta}_{0}+\tau^{\gamma_{0}-\gamma}_{0})D_{0}+CN(\tau,t)D(\tau,t)
\end{equation}
for any $\tau\leq t\leq T$, where
\begin{equation*}
\begin{split}
N(\tau,t):=&\{(t-\tau)^{1-(\alpha+\delta_{1})}+(t-\tau)^{1-(\beta+\delta_{2})}+(t-\tau)^{1-(\gamma+\delta_{3})}+(t-\tau)^{1-\gamma}\}M(\tau_{0},T) \\
&+(t-\tau)^{1-(\alpha+\delta_{1})}+(t-\tau)^{1-\alpha}+(t-\tau)^{1-(\beta+\delta_{2})}+(t-\tau)^{1-\beta}.
\end{split}
\end{equation*}
It is clear that there exists a positive constant $\tau_{1}\leq T-\tau$ independent of $\tau$ such that $CN(\tau,\tau+\tau_{1})<1$, consequently, $D(\tau,\tau+\tau_{1})\leq CD_{0}$.
We repeat to carry out the same proof as above, and obtain $D(\tau_{0},T)\leq CD_{0}$.
\end{proof}

\subsection{Existence of global mild solutions}
The main purpose of this subsection is to extend a mild solution of (1.1), (1.2) locally in time to the one globally in time.
By virtue of Theorem 2.1, it is essential for Theorem 2.2 that we obtain global $X^{\alpha}$-estimates (2.21) for $u$, global $Y^{\beta}$-estimates (2.22) for $\omega$ and global $Z^{\gamma}$-estimates (2.23) for $\theta$.
For any $0<\lambda<\Lambda_{1}$, $\lambda<\lambda_{1}<\Lambda_{1}$, $\lambda<\lambda_{2}<\min\{2\lambda, \lambda_{1}\}$, let us introduce monotone increasing continuous functions on $[0,\infty)$ defined as
\begin{equation*}
E_{1,\alpha}(t)=\sup_{0<s\leq t}m(s)^{\alpha-\alpha_{0}}e^{\lambda s}\|u(s)\|_{X^{\alpha}},
\end{equation*}
\begin{equation*}
E_{2,\beta}(t)=\sup_{0<s\leq t}m(s)^{\beta-\beta_{0}}e^{\lambda s}\|\omega(s)\|_{Y^{\beta}},
\end{equation*}
\begin{equation*}
E_{3,\gamma}(t)=\sup_{0<s\leq t}m(s)^{\gamma-\gamma_{0}}e^{\lambda_{2} s}\|\theta(s)\|_{Z^{\gamma}},
\end{equation*}
where $m(t)=\min\{t,1\}$.
It is obvious that (2.21)--(2.23) are established by proving the following lemma:
\begin{lemma}
There exist positive constants $\varepsilon_{1}$ and $\varepsilon_{2}$ depending only on $\Omega$, $p$, $q$, $r$, $\alpha_{0}$, $\beta_{0}$, $\gamma_{0}$, $L_{f}$, $L_{g}$ and $\lambda$ such that
\begin{equation}
E_{1,\alpha}(t)\leq C(\|u_{0}\|_{X^{\alpha_{0}}}+\|\omega_{0}\|_{Y^{\beta_{0}}}+\|\theta_{0}\|_{Z^{\gamma_{0}}}),
\end{equation}
\begin{equation}
E_{2,\beta}(t)\leq C(\|u_{0}\|_{X^{\alpha_{0}}}+\|\omega_{0}\|_{Y^{\beta_{0}}}+\|\theta_{0}\|_{Z^{\gamma_{0}}}),
\end{equation}
\begin{equation}
E_{3,\gamma}(t)\leq C(\|u_{0}\|_{X^{\alpha_{0}}}+\|\omega_{0}\|_{Y^{\beta_{0}}}+\|\theta_{0}\|_{Z^{\gamma_{0}}})
\end{equation}
for any $\alpha_{0}\leq\alpha<1$, $\beta_{0}\leq\beta<1$, $\gamma_{0}\leq\gamma<1$, $t>0$, where $C$ is a positive constant independent of $u$, $\omega$, $\theta$ and $t$ provided that
\begin{equation*}
\mu_{r}\leq\varepsilon_{1},
\end{equation*}
\begin{equation*}
\|u_{0}\|_{X^{\alpha_{0}}}+\|\omega_{0}\|_{Y^{\beta_{0}}}+\|\theta_{0}\|_{Z^{\gamma_{0}}}\leq\varepsilon_{2}.
\end{equation*}
\end{lemma}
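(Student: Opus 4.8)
The plan is to run a global continuity argument for the weighted quantities $E_{1,\alpha}$, $E_{2,\beta}$ and $E_{3,\gamma}$; the smallness hypotheses on $\mu_{r}$ and on the initial data are precisely what make it close. First I would fix the decay rates by applying Lemma 2.1 with exponent $\lambda$ to $e^{-tA}$ and $e^{-t\Gamma}$, and with an exponent $\lambda_{1}$ (any number with $\lambda_{2}<\lambda_{1}<\Lambda_{1}$) to $e^{-tB}$. Substituting the definitions of $E_{1,\alpha}$, $E_{2,\beta}$, $E_{3,\gamma}$ into the integral identities (II) and invoking (2.52)--(2.54) together with Lemma 2.1, every term is dominated by a product of $E$-factors and a weighted convolution of the form $m(t)^{\ast}e^{\ast t}\int_{0}^{t}(t-s)^{-\ast}e^{-\ast(t-s)}m(s)^{\ast}e^{-\ast s}\,ds$. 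The homogeneous contributions $e^{-tA}u_{0}$, $e^{-t\Gamma}\omega_{0}$, $e^{-tB}\theta_{0}$ are controlled directly by Lemma 2.1, producing $C\|u_{0}\|_{X^{\alpha_{0}}}$, $C\|\omega_{0}\|_{Y^{\beta_{0}}}$, $C\|\theta_{0}\|_{Z^{\gamma_{0}}}$ after using $m(t)\leq t$.

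The heart of the argument is to show that every such weighted convolution is bounded uniformly in $t\in(0,\infty)$. For the bilinear $u\mapsto u$ and $u\leftrightarrow\omega$ terms the exponentials collapse to a single $e^{-\lambda s}$ inside the integral, so uniform boundedness is immediate. In the $\theta$-equation the quadratic source $\Phi(u;\omega)$ decays like $e^{-2\lambda s}$, and the surviving exponential factor is $e^{-(\lambda_{1}-\lambda_{2})t}e^{(\lambda_{1}-2\lambda)s}$; here the interlacing $\lambda<\lambda_{2}<\min\{2\lambda,\lambda_{1}\}$ is exactly what keeps this bounded for large $t$, since $\lambda_{2}<\lambda_{1}$ handles the prefactor and $\lambda_{2}<2\lambda$ handles the case $\lambda_{1}>2\lambda$. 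The power singularities $(t-s)^{-\ast}$ are integrable because the exponents obey $(2.48)$--$(2.51)$, which keep every argument of the Beta function positive; splitting each integral at $s=t/2$ and examining the behaviour near $s=0$, near $s=t$ and as $t\to\infty$ then gives a bound independent of $t$. I expect this uniform-in-$t$ control to be the main obstacle, since it is where the three rates and the inequalities $(2.13)$, $(2.14)$ are spent.

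Collecting these estimates yields a closed system of the schematic form
\begin{equation*}
E_{1}\leq CD_{0}+C\mu_{r}E_{2}+CL_{f}E_{3}+C\mathcal{E}^{2},\quad E_{2}\leq CD_{0}+C\mu_{r}(E_{1}+E_{2})+CL_{g}E_{3}+C\mathcal{E}^{2},\quad E_{3}\leq CD_{0}+C\mathcal{E}^{2},
\end{equation*}
where $D_{0}=\|u_{0}\|_{X^{\alpha_{0}}}+\|\omega_{0}\|_{Y^{\beta_{0}}}+\|\theta_{0}\|_{Z^{\gamma_{0}}}$ and $\mathcal{E}=E_{1}+E_{2}+E_{3}$. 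The decisive point is that the $\theta$-row has no linear self-coupling: the possibly large constants $L_{f}$, $L_{g}$ multiply only $E_{3}$, and $E_{3}$ is itself bounded by $D_{0}$ plus quadratic terms. Hence I would substitute the third inequality into the first two, turning $L_{f}E_{3}$ and $L_{g}E_{3}$ into contributions of the form $C(1+L_{f}+L_{g})D_{0}$ plus quadratic remainders, and then choose $\varepsilon_{1}$ so small that the $2\times2$ coefficient matrix coupling $E_{1}$ and $E_{2}$ through $\mu_{r}$ is boundedly invertible with nonnegative inverse. This reduces the system to a single scalar inequality $\mathcal{E}\leq CD_{0}+C\mathcal{E}^{2}$.

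Finally I would close by a continuity argument in $t$. Each $E_{i}$ at the exponents $\alpha_{1},\alpha_{2},\alpha_{3},\beta_{1},\beta_{2},\beta_{3},\gamma_{1},\gamma_{2},\gamma_{3}$ is a nondecreasing continuous function vanishing at $t=0$, by the $o$-decay in Theorem 2.1 (ii), so $\mathcal{E}$ is continuous with $\mathcal{E}(0)=0$ on the interval of existence. Choosing $\varepsilon_{2}$ so small that $4C^{2}D_{0}<1$ forces $\mathcal{E}(t)\leq 2CD_{0}$ to persist for as long as the solution exists; feeding the resulting bounds back into (II) once more upgrades this to all $\alpha_{0}\leq\alpha<1$, $\beta_{0}\leq\beta<1$, $\gamma_{0}\leq\gamma<1$, giving $(2.45)$--$(2.47)$. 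Combined with the local existence theorem applied with a time step depending only on the now uniformly bounded data, the a priori bound also extends the solution to $[0,\infty)$.
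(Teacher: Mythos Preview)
Your overall strategy matches the paper's: derive self-referential inequalities for the $E$-functionals from (II) via (2.52)--(2.54), observe that the $E_{3}$-inequality contains no linear self-term, substitute it into the $L_{f}E_{3}$ and $L_{g}E_{3}$ contributions so that they become quadratic, use the smallness of $\mu_{r}$ to absorb the remaining linear couplings, and close by continuity. Your account of why the constants $L_{f},L_{g}$ cause no obstruction is in fact more explicit than what the paper writes.

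There is, however, a concrete gap in your choice of semigroup rates. If you estimate $e^{-tA}$ and $e^{-t\Gamma}$ via Lemma~2.1 with the \emph{same} exponent $\lambda$ that appears in the weights defining $E_{1,\alpha}$ and $E_{2,\beta}$, then the linear $\mu_{r}$-couplings (the terms $2\mu_{r}P\,\mathrm{rot}\,\omega$ in $F$ and $-4\mu_{r}\omega$, $2\mu_{r}\,\mathrm{rot}\,u$ in $G$) are not bounded uniformly in $t$. For instance, the $\mathrm{rot}\,\omega$ contribution to $m(t)^{\alpha-\alpha_{0}}e^{\lambda t}\|u(t)\|_{X^{\alpha}}$ produces the factor
\[
e^{\lambda t}\int_{0}^{t}(t-s)^{-(\alpha+\delta_{1})}e^{-\lambda(t-s)}\,m(s)^{-(\beta_{1}-\beta_{0})}e^{-\lambda s}\,ds
=\int_{0}^{t}(t-s)^{-(\alpha+\delta_{1})}m(s)^{-(\beta_{1}-\beta_{0})}\,ds,
\]
and for $t>1$ the tail $\int_{1}^{t}(t-s)^{-(\alpha+\delta_{1})}\,ds$ grows like $t^{1-(\alpha+\delta_{1})}$. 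The smallness of $\mu_{r}$ cannot compensate for an unbounded factor. Your discussion of the weighted convolutions addresses only the bilinear terms and the $\Phi$ term in the $\theta$-equation, and overlooks precisely these linear pieces.

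The paper repairs this by applying Lemma~2.1 with a strictly larger rate $\lambda_{1}\in(\lambda,\Lambda_{1})$ to \emph{all three} semigroups $e^{-tA}$, $e^{-t\Gamma}$, $e^{-tB}$; the residual factor $e^{-(\lambda_{1}-\lambda)(t-s)}$ (respectively $e^{-(\lambda_{1}-\lambda_{2})(t-s)}$) then makes every linear convolution uniformly bounded in $t$. With that single correction the rest of your argument goes through as written.
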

\begin{proof}
It follows from $\mathrm{(II)}_{1}$, (2.52) that 
\begin{equation*}
\begin{split}
&m(t)^{\alpha-\alpha_{0}}e^{\lambda t}\|u(t)\|_{X^{\alpha}}\leq C_{A,\alpha-\alpha_{0},\lambda_{1}}e^{-(\lambda_{1}-\lambda)t}\|u_{0}\|_{X^{\alpha_{0}}} \\
&+C_{A,\alpha+\delta_{1},\lambda_{1}}C_{1}m(t)^{\alpha-\alpha_{0}}e^{-(\lambda_{1}-\lambda)t}E_{1,\alpha_{1}}(t)^{2}\int^{t}_{0}(t-s)^{-(\alpha+\delta_{1})}m(s)^{-2(\alpha_{1}-\alpha_{0})}e^{-(2\lambda-\lambda_{1})s}ds \\
&+2C_{A,\alpha+\delta_{1},\lambda_{1}}C_{5}\mu_{r}m(t)^{\alpha-\alpha_{0}}e^{-(\lambda_{1}-\lambda)t}E_{2,\beta_{1}}(t)\int^{t}_{0}(t-s)^{-(\alpha+\delta_{1})}m(s)^{-(\beta_{1}-\beta_{0})}e^{-(\lambda-\lambda_{1})s}ds \\
&+C_{A,\alpha,\lambda_{1}}C_{8}L_{f}m(t)^{\alpha-\alpha_{0}}e^{-(\lambda_{1}-\lambda)t}E_{3,\gamma_{1}}(t)\int^{t}_{0}(t-s)^{-\alpha}m(s)^{-(\gamma_{1}-\gamma_{0})}e^{-(\lambda_{2}-\lambda_{1})s}ds \\
&\leq C_{A,\alpha-\alpha_{0},\lambda_{1}}\|u_{0}\|_{X^{\alpha_{0}}}+Cm(t)^{1+\alpha_{0}-2\alpha_{1}-\delta_{1}}e^{-\lambda t}E_{1,\alpha_{1}}(t)^{2}+C\mu_{r}m(t)^{1+\beta_{0}-\alpha_{0}-\beta_{1}-\delta_{1}}E_{2,\beta_{1}}(t) \\
&+CL_{f}m(t)^{1+\gamma_{0}-\alpha_{0}-\gamma_{1}}e^{-(\lambda_{2}-\lambda)t}E_{3,\gamma_{1}}(t),
\end{split}
\end{equation*}
\begin{equation}
E_{1,\alpha}(t)\leq C(\|u_{0}\|_{X^{\alpha_{0}}}+E_{1,\alpha_{1}}(t)^{2}+\mu_{r}E_{2,\beta_{1}}(t)+L_{f}E_{3,\gamma_{1}}(t))
\end{equation}
for any $\alpha_{0}\leq \alpha<1-\delta_{1}$, $t>0$.
Similarly to (3.47), we can utilize $\mathrm{(II)}_{2}$, $\mathrm{(II)}_{3}$, (2.53), (2.54) to obtain the following inequality:
\begin{equation*}
\begin{split}
&m(t)^{\beta-\beta_{0}}e^{\lambda t}\|\omega(t)\|_{Y^{\beta}}\leq C_{\Gamma,\beta-\beta_{0},\lambda_{1}}e^{-(\lambda_{1}-\lambda)t}\|\omega_{0}\|_{Y^{\beta_{0}}} \\
&+C_{\Gamma,\beta+\delta_{2},\lambda_{1}}C_{2}m(t)^{\beta-\beta_{0}}e^{-(\lambda_{1}-\lambda)t}E_{1,\alpha_{2}}(t)E_{2,\beta_{2}}(t)\int^{t}_{0}(t-s)^{-(\beta+\delta_{2})}m(s)^{-(\alpha_{2}-\alpha_{0})-(\beta_{2}-\beta_{0})}e^{-(2\lambda-\lambda_{1})s}ds \\
&+4C_{\Gamma,\beta,\lambda_{1}}C_{6}\mu_{r}m(t)^{\beta-\beta_{0}}e^{-(\lambda_{1}-\lambda)t}E_{2,\beta_{2}}(t)\int^{t}_{0}(t-s)^{-\beta}m(s)^{-(\beta_{2}-\beta_{0})}e^{-(\lambda-\lambda_{1})s}ds \\
&+2C_{\Gamma,\beta+\delta_{2},\lambda_{1}}C_{7}\mu_{r}m(t)^{\beta-\beta_{0}}e^{-(\lambda_{1}-\lambda)t}E_{1,\alpha_{2}}(t)\int^{t}_{0}(t-s)^{-(\beta+\delta_{2})}m(s)^{-(\alpha_{2}-\alpha_{0})}e^{-(\lambda-\lambda_{1})s}ds \\
&+C_{\Gamma,\beta,\lambda_{1}}C_{9}L_{g}m(t)^{\beta-\beta_{0}}e^{-(\lambda_{1}-\lambda)t}E_{3,\gamma_{2}}(t)\int^{t}_{0}(t-s)^{-\beta}m(s)^{-(\gamma_{2}-\gamma_{0})}e^{-(\lambda_{2}-\lambda_{1})s}ds \\
&\leq C_{\Gamma,\beta-\beta_{0},\lambda_{1}}\|\omega_{0}\|_{Y^{\beta_{0}}}+Cm(t)^{1+\alpha_{0}-\alpha_{2}-\beta_{2}-\delta_{2}}e^{-\lambda t}E_{1,\alpha_{2}}(t)E_{2,\beta_{2}}(t)+C\mu_{r}m(t)^{1-\beta_{2}}E_{2,\beta_{2}}(t) \\
&+C\mu_{r}m(t)^{1+\alpha_{0}-\beta_{0}-\alpha_{2}-\delta_{2}}E_{1,\alpha_{2}}(t)+CL_{g}m(t)^{1+\gamma_{0}-\beta_{0}-\gamma_{2}}e^{-(\lambda_{2}-\lambda)t}E_{3,\gamma_{2}}(t),
\end{split}
\end{equation*}
\begin{equation}
E_{2,\beta}(t)\leq C(\|\omega_{0}\|_{Y^{\beta_{0}}}+E_{1,\alpha_{2}}(t)E_{2,\beta_{2}}(t)+\mu_{r}E_{2,\beta_{2}}(t)+\mu_{r}E_{1,\alpha_{2}}(t)+L_{g}E_{3,\gamma_{2}}(t))
\end{equation}
for any $\beta_{0}\leq \beta<1-\delta_{2}$, $t>0$,
\begin{equation*}
\begin{split}
&m(t)^{\gamma-\gamma_{0}}e^{\lambda_{2}t}\|\theta(t)\|_{Z^{\gamma}}\leq C_{B,\gamma-\gamma_{0},\lambda_{1}}e^{-(\lambda_{1}-\lambda_{2})t}\|\theta_{0}\|_{Z^{\gamma_{0}}} \\
&+C_{B,\gamma+\delta_{3},\lambda_{1}}C_{3}m(t)^{\gamma-\gamma_{0}}e^{-(\lambda_{1}-\lambda_{2})t}E_{1,\alpha_{3}}(t)E_{3,\gamma_{3}}(t)\int^{t}_{0}(t-s)^{-(\gamma+\delta_{3})}m(s)^{-(\alpha_{3}-\alpha_{0})-(\gamma_{3}-\gamma_{0})}e^{-(\lambda+\lambda_{2}-\lambda_{1})s}ds \\
&+C_{B,\gamma,\lambda_{1}}C_{4}(1+\mu_{r})m(t)^{\gamma-\gamma_{0}}e^{-(\lambda_{1}-\lambda_{2})t}E_{1,\alpha_{3}}(t)^{2}\int^{t}_{0}(t-s)^{-\gamma}m(s)^{-2(\alpha_{3}-\alpha_{0})}e^{-(2\lambda-\lambda_{1})s}ds \\
&+2C_{B,\gamma,\lambda_{1}}C_{4}(1+\mu_{r})m(t)^{\gamma-\gamma_{0}}e^{-(\lambda_{1}-\lambda_{2})t}E_{1,\alpha_{3}}(t)E_{2,\beta_{3}}(t)\int^{t}_{0}(t-s)^{-\gamma}m(s)^{-(\alpha_{3}-\alpha_{0})-(\beta_{3}-\beta_{0})}e^{-(2\lambda-\lambda_{1})s}ds \\
&+C_{B,\gamma,\lambda_{1}}C_{4}(1+\mu_{r})m(t)^{\gamma-\gamma_{0}}e^{-(\lambda_{1}-\lambda_{2})t}E_{2,\beta_{3}}(t)^{2}\int^{t}_{0}(t-s)^{-\gamma}m(s)^{-2(\beta_{3}-\beta_{0})}e^{-(2\lambda-\lambda_{1})s}ds \\
&\leq C_{B,\gamma-\gamma_{0},\lambda_{1}}\|\theta_{0}\|_{Z^{\gamma_{0}}}+Cm(t)^{1+\alpha_{0}-\alpha_{3}-\gamma_{3}-\delta_{3}}e^{-\lambda t}E_{1,\alpha_{3}}(t)E_{3,\gamma_{3}}(t) \\
&+C(1+\mu_{r})m(t)^{1+2\alpha_{0}-\gamma_{0}-2\alpha_{3}}e^{-(2\lambda-\lambda_{2})t}E_{1,\alpha_{3}}(t)^{2} \\
&+C(1+\mu_{r})m(t)^{1+\alpha_{0}+\beta_{0}-\gamma_{0}-\alpha_{3}-\beta_{3}}e^{-(2\lambda-\lambda_{2})t}E_{1,\alpha_{3}}(t)E_{2,\beta_{3}}(t) \\
&+C(1+\mu_{r})m(t)^{1+2\beta_{0}-\gamma_{0}-2\beta_{3}}e^{-(2\lambda-\lambda_{2})t}E_{2,\beta_{3}}(t)^{2},
\end{split}
\end{equation*}
\begin{equation}
E_{3,\gamma}(t)\leq C\{\|\theta_{0}\|_{Z^{\gamma_{0}}}+E_{1,\alpha_{3}}(t)E_{3,\gamma_{3}}(t)+(1+\mu_{r})E_{1,\alpha_{3}}(t)^{2}+(1+\mu_{r})E_{1,\alpha_{3}}(t)E_{2,\beta_{3}}(t)+(1+\mu_{r})E_{2,\beta_{3}}(t)^{2}\}
\end{equation}
for any $\gamma_{0}\leq \gamma<1-\delta_{3}$, $t>0$.
Set $E(t)=\max\{E_{1,\alpha}(t), E_{2,\beta}(t), E_{3,\gamma}(t) \ ; \ \alpha=\alpha_{1}, \alpha_{2}, \alpha_{3}, \beta=\beta_{1}, \beta_{2}, \beta_{3}, \gamma=\gamma_{1}, \gamma_{2}, \gamma_{3}\}$.
Then (3.47)--(3.49) lead to the following inequality:
\begin{equation*}
E(t)\leq C\{(\|u_{0}\|_{X^{\alpha_{0}}}+\|\omega_{0}\|_{Y^{\beta_{0}}}+\|\theta_{0}\|_{Z^{\gamma_{0}}})+\mu_{r}E(t)+(1+\mu_{r})E(t)^{2}\},
\end{equation*}
\begin{equation}
E(t)\leq C\{(\|u_{0}\|_{X^{\alpha_{0}}}+\|\omega_{0}\|_{Y^{\beta_{0}}}+\|\theta_{0}\|_{Z^{\gamma_{0}}})+E(t)^{2}\}
\end{equation}
for any $t>0$ provided that $\mu_{r}$ is sufficiently small.
An elementary calculation shows that
\begin{equation}
E(t)\leq C(\|u_{0}\|_{X^{\alpha_{0}}}+\|\omega_{0}\|_{Y^{\beta_{0}}}+\|\theta_{0}\|_{Z^{\gamma_{0}}})
\end{equation}
for any $t>0$ provided that $\|u_{0}\|_{X^{\alpha_{0}}}$, $\|\omega_{0}\|_{Y^{\beta_{0}}}$ and $\|\theta_{0}\|_{Z^{\gamma_{0}}}$ are sufficiently small.
Therefore, it is clear from (3.51) that (3.44)--(3.46) are established by (3.47)--(3.49).
\end{proof}

\section{Proof of Theorems 2.3 and 2.4}
We will prove Theorems 2.3 and 2.4 in this section.
Since the proof of Theorem 2.4 is essentially the same as the proof of Theorem 2.3, we have only to prove Theorem 2.3.
Furthermore, in proving Theorem 2.3, we restrict ourselves to the case where $\delta_{1}=0$, $\delta_{2}=0$, $\delta_{3}=0$.
Even if $\delta_{1}>0$ or $\delta_{2}>0$ or $\delta_{3}>0$, it is sufficient for Theorem 2.3 that we slightly modify the argument in this section.

\subsection{$X^{\alpha}\times Y^{\beta}\times Z^{\gamma}$-estimates for integrals}
Theorem 2.3 is proved by the following lemmas:
\begin{lemma}
Let
\begin{equation}
\mathcal{F}(t)=\int^{t}_{0}e^{-(t-s)A}F(s)ds
\end{equation}
with $F \in C((0,T];L^{p}_{\sigma}(\Omega))$ satisfying
\begin{equation}
\|F(t)\|_{p}\leq C_{F}t^{-a}
\end{equation}
for any $0<t<t+h\leq T$, where $C_{F}$ is a positive constant, $0\leq a<1$.
Then
\begin{equation*}
\mathrm{(i)} \ \mathcal{F} \in C^{0,\tilde{\alpha}}((0,T];X^{\alpha})
\end{equation*}
for any $0\leq \alpha<1$, $0<\tilde{\alpha}<1-\alpha$.
\begin{equation}
\mathrm{(ii)} \ \|\mathcal{F}(t+h)-\mathcal{F}(t)\|_{X^{\alpha}}\leq L_{\mathcal{F}}C_{F}(h^{1-\alpha}t^{-a}+h^{\tilde{\alpha}}t^{1-\alpha-\tilde{\alpha}-a})
\end{equation}
for any $0<t<t+h\leq T$, where $L_{\mathcal{F}}=L_{\mathcal{F}}(\alpha,\tilde{\alpha})$ is a positive constant.
\end{lemma}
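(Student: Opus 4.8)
The plan is to establish the quantitative bound (ii) first and then read off the qualitative H\"older statement (i). Throughout I write $A$ for $A_p$ and $X^{\alpha}$ for $X^{\alpha}_{p}$ as in the conventions of section 3, and I fix some $0<\lambda<\Lambda_{1}$ so that Lemmas 2.1 and 2.2 are available. Before estimating differences I would first check that $\mathcal{F}(t)\in D(A^{\alpha})=X^{\alpha}$ for each $0<t\le T$: applying $A^{\alpha}$ under the integral sign and combining (2.1) with the hypothesis $\|F(s)\|_{p}\le C_{F}s^{-a}$ reduces matters to $\int^{t}_{0}(t-s)^{-\alpha}s^{-a}\,ds$, a beta integral which is finite because $\alpha<1$ and $a<1$. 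Hence the integral defining $A^{\alpha}\mathcal{F}(t)$ converges absolutely in $L^{p}_{\sigma}(\Omega)$ and $\mathcal{F}(t)\in X^{\alpha}$.

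The heart of the argument is the standard splitting
\begin{equation*}
\mathcal{F}(t+h)-\mathcal{F}(t)=\int^{t+h}_{t}e^{-(t+h-s)A}F(s)\,ds+\int^{t}_{0}\bigl(e^{-hA}-I\bigr)e^{-(t-s)A}F(s)\,ds=:I_{1}+I_{2},
\end{equation*}
where the second integrand is rewritten using the semigroup property $e^{-(t+h-s)A}=e^{-hA}e^{-(t-s)A}$. For $I_{1}$ I would apply $A^{\alpha}$, invoke (2.1) with exponent $\alpha$, bound $\|F(s)\|_{p}\le C_{F}s^{-a}\le C_{F}t^{-a}$ (legitimate since $s\ge t$ on the range of integration and $a\ge 0$) together with $e^{-\lambda(t+h-s)}\le 1$, and compute $\int^{t+h}_{t}(t+h-s)^{-\alpha}\,ds=h^{1-\alpha}/(1-\alpha)$; this yields the first term $h^{1-\alpha}t^{-a}$.

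For $I_{2}$ I would apply $A^{\alpha}$ inside the integral and then use (2.4) with exponent $\tilde{\alpha}$ to extract a factor $h^{\tilde{\alpha}}$ at the cost of upgrading $A^{\alpha}$ to $A^{\alpha+\tilde{\alpha}}$, namely $\|(e^{-hA}-I)A^{\alpha}e^{-(t-s)A}F(s)\|_{p}\le C_{A,\tilde{\alpha}}h^{\tilde{\alpha}}\|A^{\alpha+\tilde{\alpha}}e^{-(t-s)A}F(s)\|_{p}$. Applying (2.1) with exponent $\alpha+\tilde{\alpha}$, discarding $e^{-\lambda(t-s)}\le 1$, and inserting the bound on $F$ leaves
\begin{equation*}
\|A^{\alpha}I_{2}\|_{p}\le C_{A,\tilde{\alpha}}C_{A,\alpha+\tilde{\alpha},\lambda}C_{F}h^{\tilde{\alpha}}\int^{t}_{0}(t-s)^{-(\alpha+\tilde{\alpha})}s^{-a}\,ds=C\,C_{F}h^{\tilde{\alpha}}B(1-\alpha-\tilde{\alpha},1-a)\,t^{1-\alpha-\tilde{\alpha}-a},
\end{equation*}
which is the second term. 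Summing the two estimates gives (ii) with $L_{\mathcal{F}}=L_{\mathcal{F}}(\alpha,\tilde{\alpha})$ absorbing $C_{A,\alpha,\lambda}$, $C_{A,\alpha+\tilde{\alpha},\lambda}$, $C_{A,\tilde{\alpha}}$ and the beta value.

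Finally, (i) follows from (ii): on any compact subinterval $[\varepsilon,T]\subset(0,T]$ the factors $t^{-a}$ and $t^{1-\alpha-\tilde{\alpha}-a}$ are bounded, so $\|\mathcal{F}(t+h)-\mathcal{F}(t)\|_{X^{\alpha}}\le C(\varepsilon)(h^{1-\alpha}+h^{\tilde{\alpha}})$; since $\tilde{\alpha}<1-\alpha$ the term $h^{\tilde{\alpha}}$ dominates for small $h$, giving uniform $\tilde{\alpha}$-H\"older continuity on $[\varepsilon,T]$ and hence $\mathcal{F}\in C^{0,\tilde{\alpha}}((0,T];X^{\alpha})$. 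I expect the main obstacle to be the estimate of $I_{2}$: the integrand $(t-s)^{-(\alpha+\tilde{\alpha})}$ is singular at $s=t$, and its integrability rests precisely on $\alpha+\tilde{\alpha}<1$, i.e. $\tilde{\alpha}<1-\alpha$, which is exactly the hypothesis and cannot be relaxed. One must also ensure (2.4) is used only with an exponent in $(0,1]$, which is guaranteed by $0<\tilde{\alpha}<1-\alpha\le 1$.
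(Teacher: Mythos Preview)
Your proof is correct and is precisely the standard semigroup argument that the paper does not spell out but merely cites as \cite[Lemma 3.4]{Hishida}. The decomposition into the ``tail'' integral $I_{1}$ over $[t,t+h]$ and the ``difference'' integral $I_{2}$ over $[0,t]$, followed by (2.1) on $I_{1}$ and the combination of (2.4) with exponent $\tilde{\alpha}$ and (2.1) with exponent $\alpha+\tilde{\alpha}$ on $I_{2}$, is exactly how the cited lemma is proved, so your approach coincides with the reference the paper invokes.
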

\begin{proof}
It is \cite[Lemma 3.4]{Hishida}.
\end{proof}
\begin{lemma}
Let $\mathcal{F}$ be a integral given by $(4.1)$ with $F \in C((0,T];L^{p}_{\sigma}(\Omega))$ satisfying $(4.2)$ and
\begin{equation}
\|F(t+h)-F(t)\|_{p}\leq L_{F}h^{b}t^{-c}
\end{equation}
for any $0<t<t+h\leq T$, where $L_{F}$ is positive constant, $0<b\leq 1$, $c>0$.
Then
\begin{equation*}
\mathrm{(i)} \ \mathcal{F} \in C^{0,\hat{\alpha}}((0,T];X^{1}), \ d_{t}\mathcal{F} \in C^{0,\tilde{\alpha}}((0,T];X^{\alpha})
\end{equation*}
for any $0<\hat{\alpha}<b$, $0\leq \alpha<b$, $0<\tilde{\alpha}<b-\alpha$.
\begin{equation*}
\mathrm{(ii)} \ d_{t}\mathcal{F}(t)+A\mathcal{F}(t)=F(t)
\end{equation*}
for any $0<t\leq T$.
\begin{equation}
\mathrm{(iii)} \ \|\mathcal{F}(t)\|_{X^{1}}\leq C_{1,\mathcal{F}}(C_{F}t^{-a}+L_{F}t^{b-c})
\end{equation}
for any $0<t\leq T$, where $C_{1,\mathcal{F}}=C_{1,\mathcal{F}}(b,c)$ is a positive constant.
\begin{equation}
\mathrm{(iv)} \ \|d_{t}\mathcal{F}(t)\|_{X^{\alpha}}\leq C_{2,\mathcal{F}}(C_{F}t^{-(\alpha+a)}+L_{F}t^{b-(\alpha+c)})
\end{equation}
for any $0\leq \alpha< b$, $0<t\leq T$, where $C_{2,\mathcal{F}}=C_{2,\mathcal{F}}(\alpha,b,c)$ is a positive constant.
\end{lemma}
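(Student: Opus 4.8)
The plan is to follow the classical semigroup argument showing that a Hölder continuous forcing turns the mild solution into a strong solution of the inhomogeneous equation, closely paralleling the proof of Lemma 4.1 already cited from \cite{Hishida}. The starting point is the decomposition that isolates the non‑integrable singularity of $Ae^{-(t-s)A}$ at $s=t$. Writing
\begin{equation*}
A\mathcal{F}(t)=\int^{t}_{0}Ae^{-(t-s)A}(F(s)-F(t))\,ds+\int^{t}_{0}Ae^{-(t-s)A}F(t)\,ds,
\end{equation*}
I would use $(4.4)$ together with $(2.1)$ (applied with exponent $1$) to bound the integrand of the first term by $C(t-s)^{b-1}s^{-c}$, which is integrable near $s=t$ because $b>0$, while the second term telescopes to $F(t)-e^{-tA}F(t)$ via $\partial_{s}e^{-(t-s)A}=Ae^{-(t-s)A}$. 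This establishes $\mathcal{F}(t)\in X^{1}$, and combining $\|F(t)-e^{-tA}F(t)\|_{p}\le CC_{F}t^{-a}$ with the beta‑function evaluation of $\int^{t}_{0}(t-s)^{b-1}s^{-c}\,ds$ yields estimate (iii).

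For (ii) I would reproduce the standard difference‑quotient computation: using the decomposition above one shows that $\mathcal{F}$ is differentiable on $(0,T]$ with $d_{t}\mathcal{F}(t)=F(t)-A\mathcal{F}(t)$, hence
\begin{equation*}
d_{t}\mathcal{F}(t)=e^{-tA}F(t)-\int^{t}_{0}Ae^{-(t-s)A}(F(s)-F(t))\,ds.
\end{equation*}
Applying $A^{\alpha}$ and invoking $(2.1)$ gives $\|A^{\alpha}e^{-tA}F(t)\|_{p}\le CC_{F}t^{-(\alpha+a)}$ and $\|A^{1+\alpha}e^{-(t-s)A}(F(s)-F(t))\|_{p}\le CL_{F}(t-s)^{b-1-\alpha}s^{-c}$ by $(4.4)$; the latter is integrable in $s$ precisely when $\alpha<b$, and the beta‑function bound then produces (iv) with the stated exponents $t^{-(\alpha+a)}$ and $t^{b-(\alpha+c)}$.

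The remaining and most delicate point is the Hölder continuity in (i). For $\mathcal{F}\in C^{0,\hat{\alpha}}((0,T];X^{1})$ and $d_{t}\mathcal{F}\in C^{0,\tilde{\alpha}}((0,T];X^{\alpha})$ I would estimate $A\mathcal{F}(t+h)-A\mathcal{F}(t)$ and $A^{\alpha}(d_{t}\mathcal{F}(t+h)-d_{t}\mathcal{F}(t))$ by splitting each integral over $(0,t)$ and $(t,t+h)$. On the common range one writes $e^{-(t+h-s)A}-e^{-(t-s)A}=(e^{-hA}-I)e^{-(t-s)A}$ and applies $(2.4)$ to extract a factor $h^{\hat{\alpha}}$ (respectively $h^{\tilde{\alpha}}$) at the cost of an extra power of $A$, while the resulting kernels are controlled by $(2.1)$ together with the Hölder bound $(4.4)$ on $F$; the piece over $(t,t+h)$ is handled directly by $(2.1)$, $(4.2)$ and $(4.4)$. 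This is the technical heart of the argument, since near $s=t$ the smoothing of the semigroup competes with the singularity of $F$, so the admissible Hölder exponents $\hat{\alpha}<b$ and $\tilde{\alpha}<b-\alpha$ emerge exactly as the thresholds keeping the split integrals finite.
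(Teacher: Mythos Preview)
Your proposal is correct and reproduces precisely the classical Fujita--Kato/Hishida argument to which the paper defers; the paper's own proof consists solely of the citation ``It is \cite[Lemmas 2.13 and 2.14]{Fujita}, \cite[Lemma 3.5]{Hishida}.'' Your decomposition of $A\mathcal{F}(t)$ via subtraction of $F(t)$, the telescoping of the second integral to $(I-e^{-tA})F(t)$, the formula $d_{t}\mathcal{F}(t)=e^{-tA}F(t)-\int_{0}^{t}Ae^{-(t-s)A}(F(s)-F(t))\,ds$, and the $h$-splitting for the H\"older estimates are exactly the ingredients those references use, so there is nothing to add.
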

\begin{proof}
It is \cite[Lemmas 2.13 and 2.14]{Fujita}, \cite[Lemma 3.5]{Hishida}.
\end{proof}
We remark that the regularity lemmas similar to Lemmas 4.1 and 4.2 are still valid for $\Gamma$ $(B)$, $G$ $(H)$ and $\mathcal{G}$ $(\mathcal{H})$ instead of $A$, $F$ and $\mathcal{F}$ respectively.

It is useful for the time derivative of strong solutions of (1.1), (1.2) to be stated the following generalized Gronwall lemma:
\begin{lemma}
Let $y$ be a nonnegative continuous and integrable function in $(0,T]$ satisfying
\begin{equation}
y(t)\leq \sum^{l}_{i=1}a_{i}t^{-\alpha_{i}}+\sum^{m}_{j=1}b_{j}\int^{t}_{0}(t-s)^{-\beta_{j}}y(s)ds
\end{equation}
for any $0<t\leq T$, where $a_{i}>0$, $b_{j}>0$, $0\leq \alpha_{i}<1$, $0\leq \beta_{j}<1$.
Then
\begin{equation}
y(t)\leq C\sum^{l}_{i=1}a_{i}t^{-\alpha_{i}}(1+B_{n_{\beta}+1}(t)e^{CB_{n_{\beta}+1}(t)})\sum^{n_{\beta}}_{k=0}B_{k}(t)
\end{equation}
for any $0<t\leq T$, where $C=C(\alpha_{1},\cdots,\alpha_{l},\beta_{1},\cdots,\beta_{m})$ is a positive constant, $n_{\beta}=[\beta/(1-\beta)]+1$, $\beta=\max\{\beta_{j} \ ; \ j=1,\cdots,m\}$,
\begin{equation*}
B_{k}(t)=\left(\sum^{m}_{j=1}b_{j}(t)\right)^{k}, \ b_{j}(t)=b_{j}t^{1-\beta_{j}}.
\end{equation*}
\end{lemma}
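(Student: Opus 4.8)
The plan is to read the hypothesis (4.8) as a fixed-point inequality for the nonnegative linear Volterra operator
\begin{equation*}
(\mathcal{K}y)(t)=\sum_{j=1}^{m}b_{j}\int_{0}^{t}(t-s)^{-\beta_{j}}y(s)\,ds,
\end{equation*}
so that, writing $\phi(t)=\sum_{i=1}^{l}a_{i}t^{-\alpha_{i}}$, (4.8) becomes $y\leq\phi+\mathcal{K}y$. Because the kernel is nonnegative and $y\geq 0$, $\mathcal{K}$ is order preserving, and all the convolutions below are finite since $y$ is integrable and $\alpha_{i},\beta_{j}<1$. Applying $\mathcal{K}$ repeatedly and adding $\phi$ at each stage yields, by induction on $N$,
\begin{equation*}
y\leq\sum_{k=0}^{N-1}\mathcal{K}^{k}\phi+\mathcal{K}^{N}y\qquad(N\geq 1).
\end{equation*}
I would fix the number of iterations at $N=n_{\beta}+1$ and control the finite sum and the remainder separately.

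For the finite sum the engine is the convolution identity
\begin{equation*}
\int_{0}^{t}(t-s)^{-\beta_{j}}s^{-\sigma}\,ds=B(1-\beta_{j},1-\sigma)\,t^{1-\beta_{j}-\sigma}\qquad(\beta_{j}<1,\ \sigma<1),
\end{equation*}
which is legitimate throughout because the source exponents only increase under $\mathcal{K}$, so the relevant $\sigma$ never exceeds $\max_{i}\alpha_{i}<1$. Expanding $\mathcal{K}^{k}\phi$ as a sum over $i$ and multi-indices $(j_{1},\dots,j_{k})$, each term equals $a_{i}b_{j_{1}}\cdots b_{j_{k}}$ times a product of beta functions times $t^{-\alpha_{i}+\sum_{l}(1-\beta_{j_{l}})}$, i.e. exactly the monomials that appear when one expands $\phi(t)B_{k}(t)$. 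Taking $C$ to be the maximum of the finitely many beta-function products arising for $0\leq k\leq n_{\beta}$ gives $\mathcal{K}^{k}\phi(t)\leq C\,\phi(t)B_{k}(t)$, whence $\Psi(t):=\sum_{k=0}^{n_{\beta}}\mathcal{K}^{k}\phi(t)\leq C\phi(t)\sum_{k=0}^{n_{\beta}}B_{k}(t)$.

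For the remainder I would use that $n_{\beta}$ is chosen precisely so that the $(n_{\beta}+1)$-fold kernel is no longer singular: every monomial in the kernel of $\mathcal{K}^{n_{\beta}+1}$ carries the exponent $\sum_{l}(1-\beta_{j_{l}})-1\geq(n_{\beta}+1)(1-\beta)-1>0$, since $n_{\beta}=[\beta/(1-\beta)]+1>\beta/(1-\beta)$. Bounding $(t-s)^{e}\leq t^{e}$ for these nonnegative exponents collapses the kernel and gives $(\mathcal{K}^{n_{\beta}+1}y)(t)\leq Ct^{-1}B_{n_{\beta}+1}(t)\int_{0}^{t}y(s)\,ds$. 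Setting $Y(t)=\int_{0}^{t}y(s)\,ds$, the inequality $y\leq\Psi+\mathcal{K}^{n_{\beta}+1}y$ becomes $Y'(t)\leq\Psi(t)+Ct^{-1}B_{n_{\beta}+1}(t)Y(t)$; the integrating factor $\exp(-C\int_{0}^{t}s^{-1}B_{n_{\beta}+1}(s)\,ds)$, together with the elementary power-law bounds $\int_{0}^{t}s^{-1}B_{n_{\beta}+1}(s)\,ds\leq CB_{n_{\beta}+1}(t)$ and $\int_{0}^{t}\Psi(s)\,ds\leq C\sum_{i}a_{i}t^{1-\alpha_{i}}\sum_{k=0}^{n_{\beta}}B_{k}(t)$, yields $Y(t)\leq e^{CB_{n_{\beta}+1}(t)}\int_{0}^{t}\Psi$. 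Substituting back into $y\leq\Psi+Ct^{-1}B_{n_{\beta}+1}(t)Y$ and combining with the bound on $\Psi$ produces exactly (4.9).

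The main obstacle is the bookkeeping in the middle step: verifying that the iterated beta-function convolutions reassemble into the compact factors $B_{k}(t)$, and checking that $n_{\beta}=[\beta/(1-\beta)]+1$ is the exact number of iterations needed so that $\mathcal{K}^{n_{\beta}+1}$ has an integrable, nonsingular kernel and the ordinary Gronwall step becomes available. Once these two points are pinned down, the remaining estimates are routine power-law integrations.
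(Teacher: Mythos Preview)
Your argument is correct and is precisely the standard iteration scheme for singular Gronwall inequalities: iterate the Volterra operator until the composed kernel loses its singularity, bound the finite sum of iterates via repeated beta-function convolutions, and then close with the classical Gronwall argument on the nonsingular remainder. The paper does not give its own proof here but simply refers to \cite[Remark to Lemma 3.6]{Hishida}, and your outline reproduces exactly that method, including the identification of $n_{\beta}=[\beta/(1-\beta)]+1$ as the minimal number of iterations making $(n_{\beta}+1)(1-\beta)-1>0$.
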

\begin{proof}
It is \cite[Remark to Lemma 3.6]{Hishida}.
\end{proof}

\subsection{Regularity of mild solutions}
We will show not only that a mild solution of (1.1), (1.2) can be a strong solution but also that (2.26)--(2.28) are established.
It can be easily seen from Lemmas 2.5--2.13, (2.15)--(2.17) that
\begin{equation}
\|F(u,\omega,\theta)(t)\|_{p}\leq C(t^{2(\alpha_{0}-\alpha_{1})}+t^{\beta_{0}-\beta_{1}}+t^{\gamma_{0}-\gamma_{1}})\times(\|u_{0}\|_{X^{\alpha_{0}}}+\|\omega_{0}\|_{Y^{\beta_{0}}}+\|\theta_{0}\|_{Z^{\gamma_{0}}}),
\end{equation}
\begin{equation}
\|G(u,\omega,\theta)(t)\|_{q}\leq C(t^{\alpha_{0}+\beta_{0}-\alpha_{2}-\beta_{2}}+t^{\beta_{0}-\beta_{2}}+t^{\alpha_{0}-\alpha_{2}}+t^{\gamma_{0}-\gamma_{2}})(\|u_{0}\|_{X^{\alpha_{0}}}+\|\omega_{0}\|_{Y^{\beta_{0}}}+\|\theta_{0}\|_{Z^{\gamma_{0}}}),
\end{equation}
\begin{equation}
\begin{split}
\|H(u,\omega,\theta)(t)\|_{r}\leq& C(t^{\alpha_{0}+\gamma_{0}-\alpha_{3}-\gamma_{3}}+t^{2(\alpha_{0}-\alpha_{3})}+t^{\alpha_{0}+\beta_{0}-\alpha_{3}-\beta_{3}}+t^{2(\beta_{0}-\beta_{3})}) \\
&\times(\|u_{0}\|_{X^{\alpha_{0}}}+\|\omega_{0}\|_{Y^{\beta_{0}}}+\|\theta_{0}\|_{Z^{\gamma_{0}}})
\end{split}
\end{equation}
for any $0<t\leq T$.
Since
\begin{equation*}
u(t+h)-u(t)=(e^{-hA}-I)e^{-tA}u_{0}+\mathcal{F}(u,\omega,\theta)(t+h)-\mathcal{F}(u,\omega,\theta)(t),
\end{equation*}
\begin{equation*}
\omega(t+h)-\omega(t)=(e^{-h\Gamma}-I)e^{-t\Gamma}\omega_{0}+\mathcal{G}(u,\omega,\theta)(t+h)-\mathcal{G}(u,\omega,\theta)(t),
\end{equation*}
\begin{equation*}
\theta(t+h)-\theta(t)=(e^{-hB}-I)e^{-tB}\theta_{0}+\mathcal{H}(u,\omega,\theta)(t+h)-\mathcal{H}(u,\omega,\theta)(t)
\end{equation*}
for any $0<t<t+h\leq T$, it follows from (4.3), (4.9)--(4.11) that
\begin{equation}
\begin{split}
\|u(t+h)-u(t)\|_{X^{\alpha}}\leq& C(h^{b_{1}}t^{\alpha_{0}-\alpha-b_{1}}+h^{1-\alpha}t^{2(\alpha_{0}-\alpha_{1})}+h^{1-\alpha}t^{\beta_{0}-\beta_{1}}+h^{1-\alpha}t^{\gamma_{0}-\gamma_{1}}) \\
&\times(\|u_{0}\|_{X^{\alpha_{0}}}+\|\omega_{0}\|_{Y^{\beta_{0}}}+\|\theta_{0}\|_{Z^{\gamma_{0}}}),
\end{split}
\end{equation}
\begin{equation}
\begin{split}
\|\omega(t+h)-\omega(t)\|_{Y^{\beta}}\leq& C(h^{b_{2}}t^{\beta_{0}-\beta-b_{2}}+h^{1-\beta}t^{\alpha_{0}+\beta_{0}-\alpha_{2}-\beta_{2}}+h^{1-\beta}t^{\beta_{0}-\beta_{2}}+h^{1-\beta}t^{\alpha_{0}-\alpha_{2}} \\
&+h^{1-\beta}t^{\gamma_{0}-\gamma_{2}})(\|u_{0}\|_{X^{\alpha_{0}}}+\|\omega_{0}\|_{Y^{\beta_{0}}}+\|\theta_{0}\|_{Z^{\gamma_{0}}}),
\end{split}
\end{equation}
\begin{equation}
\begin{split}
\|\theta(t+h)-\theta(t)\|_{Z^{\gamma}}\leq& C(h^{b_{3}}t^{\gamma_{0}-\gamma-b_{3}}+h^{1-\gamma}t^{\alpha_{0}+\gamma_{0}-\alpha_{3}-\gamma_{3}}+h^{1-\gamma}t^{2(\alpha_{0}-\alpha_{3})}+h^{1-\gamma}t^{\alpha_{0}+\beta_{0}-\alpha_{3}-\beta_{3}} \\
&+h^{1-\gamma}t^{2(\beta_{0}-\beta_{3})})(\|u_{0}\|_{X^{\alpha_{0}}}+\|\omega_{0}\|_{Y^{\beta_{0}}}+\|\theta_{0}\|_{Z^{\gamma_{0}}})
\end{split}
\end{equation}
for any $0<b_{1}<1-\alpha$, $0<b_{2}<1-\beta$, $0<b_{3}<1-\gamma$, $0<t<t+h\leq T$.
It is derived from (4.11) with $\alpha=\alpha_{1}, \alpha_{2}, \alpha_{3}$, (4.12) with $\beta=\beta_{1}, \beta_{2}, \beta_{3}$, (4.13) with $\gamma=\gamma_{1}, \gamma_{2}, \gamma_{3}$ that
\begin{equation}
F(u,\omega,\theta) \in C^{0,\hat{\alpha}}((0,T];L^{p}_{\sigma}(\Omega)),
\end{equation}
\begin{equation}
G(u,\omega,\theta) \in C^{0,\hat{\beta}}((0,T];(L^{q}(\Omega))^{3}),
\end{equation}
\begin{equation}
H(u,\omega,\theta) \in C^{0,\hat{\gamma}}((0,T];L^{r}(\Omega))
\end{equation}
for any $0<\hat{\alpha}<\min\{1-\alpha_{1}, 1-\beta_{1}, 1-\gamma_{1}\}$, $0<\hat{\beta}<\min\{1-\alpha_{2}, 1-\beta_{2}, 1-\gamma_{2}\}$, $0<\hat{\gamma}<\min\{1-\alpha_{3}, 1-\beta_{3}, 1-\gamma_{3}\}$.
Therefore, Lemma 4.2 (i), (ii) admit that $(u,\omega,\theta)$ is a strong solution of (1.1), (1.2).
By applying (4.5) to (II), we have the following inequalities:
\begin{equation}
t^{1-\alpha_{0}}\|u(t)\|_{X^{1}}\leq CM_{1}(t)(\|u_{0}\|_{X^{\alpha_{0}}}+\|\omega_{0}\|_{Y^{\beta_{0}}}+\|\theta_{0}\|_{Z^{\gamma_{0}}}),
\end{equation}
\begin{equation}
t^{1-\beta_{0}}\|\omega(t)\|_{Y^{1}}\leq CM_{2}(t)(\|u_{0}\|_{X^{\alpha_{0}}}+\|\omega_{0}\|_{Y^{\beta_{0}}}+\|\theta_{0}\|_{Z^{\gamma_{0}}}),
\end{equation}
\begin{equation}
t^{1-\gamma_{0}}\|\theta(t)\|_{Z^{1}}\leq CM_{3}(t)(\|u_{0}\|_{X^{\alpha_{0}}}+\|\omega_{0}\|_{Y^{\beta_{0}}}+\|\theta_{0}\|_{Z^{\gamma_{0}}})
\end{equation}
for any $0<t\leq T$, where
\begin{equation*}
\begin{split}
M_{1}(t):=&1+t^{1+\alpha_{0}-2\alpha_{1}}+t^{1+\beta_{0}-\alpha_{0}-\beta_{1}}+t^{1+\gamma_{0}-\alpha_{0}-\gamma_{1}}+t^{2(1+\alpha_{0}-2\alpha_{1})} \\
&+t^{2+\beta_{0}-2\alpha_{1}-\beta_{1}}+t^{2+\gamma_{0}-2\alpha_{1}-\gamma_{1}}+t^{2+\beta_{0}-\beta_{1}-\alpha_{2}-\beta_{2}}+t^{2+\beta_{0}-\alpha_{0}-\beta_{1}-\beta_{2}} \\
&+t^{2-\beta_{1}-\alpha_{2}}+t^{2+\gamma_{0}-\alpha_{0}-\beta_{1}-\gamma_{2}}+t^{2+\gamma_{0}-\gamma_{1}-\alpha_{3}-\gamma_{3}}+t^{2+\alpha_{0}-\gamma_{1}-2\alpha_{3}} \\
&+t^{2+\beta_{0}-\gamma_{1}-\alpha_{3}-\beta_{3}}+t^{2+2\beta_{0}-\alpha_{0}-\gamma_{1}-2\beta_{3}},
\end{split}
\end{equation*}
\begin{equation*}
\begin{split}
M_{2}(t):=&1+t^{1+\alpha_{0}-\alpha_{2}-\beta_{2}}+t^{1-\beta_{2}}+t^{1+\alpha_{0}-\beta_{0}-\alpha_{2}}+t^{1+\gamma_{0}-\beta_{0}-\gamma_{2}} \\
&+t^{2+2\alpha_{0}-2\alpha_{1}-\alpha_{2}-\beta_{2}}+t^{2+\beta_{0}-\beta_{1}-\alpha_{2}-\beta_{2}}+t^{2+\gamma_{0}-\gamma_{1}-\alpha_{2}-\beta_{2}}+t^{2(1+\alpha_{0}-\alpha_{2}-\beta_{2})} \\
&+t^{2+\alpha_{0}-\alpha_{2}-2\beta_{2}}+t^{2+2\alpha_{0}-\beta_{0}-2\alpha_{2}-\beta_{2}}+t^{2+\alpha_{0}+\gamma_{0}-\beta_{0}-\alpha_{2}-\beta_{2}-\gamma_{2}}+t^{2(1-\beta_{2})} \\
&+t^{2+\alpha_{0}-\beta_{0}-\alpha_{2}-\beta_{2}}+t^{2+\gamma_{0}-\beta_{0}-\beta_{2}-\gamma_{2}}+t^{2+2\alpha_{0}-\beta_{0}-2\alpha_{1}-\alpha_{2}}+t^{2-\beta_{1}-\alpha_{2}} \\
&+t^{2+\gamma_{0}-\beta_{0}-\gamma_{1}-\alpha_{2}}+t^{2+\alpha_{0}+\gamma_{0}-\beta_{0}-\gamma_{2}-\alpha_{3}-\gamma_{3}}+t^{2+2\alpha_{0}-\beta_{0}-\gamma_{2}-2\alpha_{3}} \\
&+t^{2+\alpha_{0}-\gamma_{2}-\alpha_{3}-\beta_{3}}+t^{2+\beta_{0}-\gamma_{2}-2\beta_{3}},
\end{split}
\end{equation*}
\begin{equation*}
\begin{split}
M_{3}(t):=&1+t^{1+\alpha_{0}-\alpha_{3}-\gamma_{3}}+t^{1+2\alpha_{0}-\gamma_{0}-2\alpha_{3}}+t^{1+\alpha_{0}+\beta_{0}-\gamma_{0}-\alpha_{3}-\beta_{3}}+t^{1+2\beta_{0}-\gamma_{0}-2\beta_{3}} \\
&+t^{2+2\alpha_{0}-2\alpha_{1}-\alpha_{3}-\gamma_{3}}+t^{2+\beta_{0}-\beta_{1}-\alpha_{3}-\gamma_{3}}+t^{2+\gamma_{0}-\gamma_{1}-\alpha_{3}-\gamma_{3}}+t^{2(1+\alpha_{0}-\alpha_{3}-\gamma_{3})} \\
&+t^{2+3\alpha_{0}-\gamma_{0}-3\alpha_{3}-\gamma_{3}}+t^{2+2\alpha_{0}+\beta_{0}-\gamma_{0}-2\alpha_{3}-\beta_{3}-\gamma_{3}}+t^{2+\alpha_{0}+2\beta_{0}-\gamma_{0}-\alpha_{3}-2\beta_{3}-\gamma_{3}} \\
&+t^{2+3\alpha_{0}-\gamma_{0}-2\alpha_{1}-2\alpha_{3}}+t^{2+\alpha_{0}+\beta_{0}-\gamma_{0}-\beta_{1}-2\alpha_{3}}+t^{2+\alpha_{0}-\gamma_{1}-2\alpha_{3}} \\
&+t^{2+2\alpha_{0}+\beta_{0}-\gamma_{0}-2\alpha_{1}-\alpha_{3}-\beta_{3}}+t^{2+2\beta_{0}-\gamma_{0}-\beta_{1}-\alpha_{3}-\beta_{3}}+t^{2+\beta_{0}-\gamma_{1}-\alpha_{3}-\beta_{3}} \\
&+t^{2+2\alpha_{0}+\beta_{0}-\gamma_{0}-\alpha_{2}-\beta_{2}-\alpha_{3}-\beta_{3}}+t^{2+\alpha_{0}+\beta_{0}-\gamma_{0}-\beta_{2}-\alpha_{3}-\beta_{3}}+t^{2+2\alpha_{0}-\gamma_{0}-\alpha_{2}-\alpha_{3}-\beta_{3}} \\
&+t^{2+\alpha_{0}-\gamma_{2}-\alpha_{3}-\beta_{3}}+t^{2+\alpha_{0}+2\beta_{0}-\gamma_{0}-\alpha_{2}-\beta_{2}-2\beta_{3}}+t^{2+2\beta_{0}-\gamma_{0}-\beta_{2}-2\beta_{3}} \\
&+t^{2+\alpha_{0}+\beta_{0}-\gamma_{0}-\alpha_{2}-2\beta_{3}}+t^{2+\beta_{0}-\gamma_{2}-2\beta_{3}}.
\end{split}
\end{equation*}
It is clear from (2.48)--(2.51) that (2.26)--(2.28) are established by (4.18)--(4.20).
\begin{remark}
Theorem 2.3 (i) can be, more precisely, stated as follows:
\begin{equation*}
u \in C^{0,\hat{\alpha}}((0,T];X^{1}), \ d_{t}u \in C^{0,\tilde{\alpha}}((0,T];X^{\alpha}),
\end{equation*}
\begin{equation*}
\omega \in C^{0,\hat{\beta}}((0,T];Y^{1}), \ d_{t}\omega \in C^{0,\tilde{\beta}}((0,T];Y^{\beta}),
\end{equation*}
\begin{equation*}
\theta \in C^{0,\hat{\gamma}}((0,T];Z^{1}), \ d_{t}\theta \in C^{0,\tilde{\gamma}}((0,T];Z^{\gamma})
\end{equation*}
for any $0<\hat{\alpha}<\min\{1-\alpha_{1}, 1-\beta_{1}, 1-\gamma_{1}\}$, $0<\hat{\beta}<\min\{1-\alpha_{2}, 1-\beta_{2}, 1-\gamma_{2}\}$, $0<\hat{\gamma}<\min\{1-\alpha_{3}, 1-\beta_{3}, 1-\gamma_{3}\}$, $0\leq\alpha<\min\{1-\alpha_{1}, 1-\beta_{1}, 1-\gamma_{1}\}$, $0\leq\beta<\min\{1-\alpha_{2}, 1-\beta_{2}, 1-\gamma_{2}\}$, $0\leq\gamma<\min\{1-\alpha_{3}, 1-\beta_{3}, 1-\gamma_{3}\}$, $0<\tilde{\alpha}<\min\{1-\alpha_{1}, 1-\beta_{1}, 1-\gamma_{1}\}-\alpha$, $0<\tilde{\beta}<\min\{1-\alpha_{2}, 1-\beta_{2}, 1-\gamma_{2}\}-\beta$, $0<\tilde{\gamma}<\min\{1-\alpha_{3}, 1-\beta_{3}, 1-\gamma_{3}\}-\gamma$.
\end{remark}

\subsection{Regularity of the time derivative of strong solutions}
We will obtain the stronger regularity of strong solutions of (1.1), (1.2) under appropriate assumptions for $p$, $q$, $r$, $\alpha_{0}$, $\beta_{0}$ and $\gamma_{0}$.
Let us remark that $(u,\omega,\theta)$ satisfies (3.39) for any $0<\tau<t<T$.
Then it can be easily seen from (3.39) that
\begin{equation}
\begin{split}
u(t+h)-u(t)=&(e^{-hA}-I)e^{-tA}u(\tau) \\
&+\int^{\tau+h}_{\tau}e^{-(t+h-s)A}F(u,\omega,\theta)(s)ds \\
&+\int^{t}_{\tau}e^{-(t-s)A}(F(u,\omega,\theta)(s+h)-F(u,\omega,\theta)(s))ds,
\end{split}
\end{equation}
\begin{equation}
\begin{split}
\omega(t+h)-\omega(t)=&(e^{-h\Gamma}-I)e^{-t\Gamma}\omega(\tau) \\
&+\int^{\tau+h}_{\tau}e^{-(t+h-s)\Gamma}G(u,\omega,\theta)(s)ds \\
&+\int^{t}_{\tau}e^{-(t-s)\Gamma}(G(u,\omega,\theta)(s+h)-G(u,\omega,\theta)(s))ds,
\end{split}
\end{equation}
\begin{equation}
\begin{split}
\theta(t+h)-\theta(t)=&(e^{-hB}-I)e^{-tB}\theta(\tau) \\
&+\int^{\tau+h}_{\tau}e^{-(t+h-s)B}H(u,\omega,\theta)(s)ds \\
&+\int^{t}_{\tau}e^{-(t-s)B}(H(u,\omega,\theta)(s+h)-H(u,\omega,\theta)(s))ds
\end{split}
\end{equation}
for any $\tau<t<t+h\leq T$.
It follows from (2.1), (2.4), (2.26), (4.9) that
\begin{equation*}
\|(e^{-hA}-I)e^{-tA}u(\tau)\|_{X^{\alpha}}\leq Ch(t-\tau)^{-\alpha}\tau^{\alpha_{0}-1}(\|u_{0}\|_{X^{\alpha_{0}}}+\|\omega_{0}\|_{Y^{\beta_{0}}}+\|\theta_{0}\|_{Z^{\gamma_{0}}}),
\end{equation*}
\begin{equation*}
\begin{split}
&\int^{t+\tau}_{\tau}\|e^{-(t+h-s)A}F(u,\omega,\theta)(s)\|_{X^{\alpha}}ds \\
&\leq C\int^{\tau+h}_{\tau}(t+h-s)^{-\alpha}(s^{2(\alpha_{0}-\alpha_{1})}+s^{\beta_{0}-\beta_{1}}+s^{\gamma_{0}-\gamma_{1}})ds(\|u_{0}\|_{X^{\alpha_{0}}}+\|\omega_{0}\|_{Y^{\beta_{0}}}+\|\theta_{0}\|_{Z^{\gamma_{0}}}) \\
&\leq Ch(t-\tau)^{-\alpha}(\tau^{2(\alpha_{0}-\alpha_{1})}+\tau^{\beta_{0}-\beta_{1}}+\tau^{\gamma_{0}-\gamma_{1}})(\|u_{0}\|_{X^{\alpha_{0}}}+\|\omega_{0}\|_{Y^{\beta_{0}}}+\|\theta_{0}\|_{Z^{\gamma_{0}}})
\end{split}
\end{equation*}
for any $\alpha_{0}\leq \alpha<1$, $\tau<t<t+h\leq T$.
Similarly to $u$ and $F(u,\omega,\theta)$, we can utilize (2.2), (2.3), (2.5), (2.6), (2.27), (2.28), (4.10), (4.11) to obtain that
\begin{equation*}
\|(e^{-h\Gamma}-I)e^{-t\Gamma}\omega(\tau)\|_{Y^{\beta}}\leq Ch(t-\tau)^{-\beta}\tau^{\beta_{0}-1}(\|u_{0}\|_{X^{\alpha_{0}}}+\|\omega_{0}\|_{Y^{\beta_{0}}}+\|\theta_{0}\|_{Z^{\gamma_{0}}}),
\end{equation*}
\begin{equation*}
\begin{split}
&\int^{t+\tau}_{\tau}\|e^{-(t+h-s)\Gamma}G(u,\omega,\theta)(s)\|_{Y^{\beta}}ds \\
&\leq C\int^{\tau+h}_{\tau}(t+h-s)^{-\beta}(s^{\alpha_{0}+\beta_{0}-\alpha_{2}-\beta_{2}}+s^{\beta_{0}-\beta_{2}}+s^{\alpha_{0}-\alpha_{2}}+s^{\gamma_{0}-\gamma_{2}})ds(\|u_{0}\|_{X^{\alpha_{0}}}+\|\omega_{0}\|_{Y^{\beta_{0}}}+\|\theta_{0}\|_{Z^{\gamma_{0}}}) \\
&\leq Ch(t-\tau)^{-\beta}(\tau^{\alpha_{0}+\beta_{0}-\alpha_{2}-\beta_{2}}+\tau^{\beta_{0}-\beta_{2}}+\tau^{\alpha_{0}-\alpha_{2}}+\tau^{\gamma_{0}-\gamma_{2}})(\|u_{0}\|_{X^{\alpha_{0}}}+\|\omega_{0}\|_{Y^{\beta_{0}}}+\|\theta_{0}\|_{Z^{\gamma_{0}}})
\end{split}
\end{equation*}
for any $\beta_{0}\leq \beta<1$, $\tau<t<t+h\leq T$,
\begin{equation*}
\|(e^{-hB}-I)e^{-tB}\theta(\tau)\|_{Z^{\gamma}}\leq Ch(t-\tau)^{-\gamma}\tau^{\gamma_{0}-1}(\|u_{0}\|_{X^{\alpha_{0}}}+\|\omega_{0}\|_{Y^{\beta_{0}}}+\|\theta_{0}\|_{Z^{\gamma_{0}}}),
\end{equation*}
\begin{equation*}
\begin{split}
&\int^{t+\tau}_{\tau}\|e^{-(t+h-s)B}H(u,\omega,\theta)(s)\|_{Z^{\gamma}}ds \\
&\leq C\int^{\tau+h}_{\tau}(t+h-s)^{-\gamma}(s^{\alpha_{0}+\gamma_{0}-\alpha_{3}-\gamma_{3}}+s^{2(\alpha_{0}-\alpha_{3})}+s^{\alpha_{0}+\beta_{0}-\alpha_{3}-\beta_{3}}+s^{2(\beta_{0}-\beta_{3})})ds \\
&\times(\|u_{0}\|_{X^{\alpha_{0}}}+\|\omega_{0}\|_{Y^{\beta_{0}}}+\|\theta_{0}\|_{Z^{\gamma_{0}}}) \\
&\leq Ch(t-\tau)^{-\gamma}(\tau^{\alpha_{0}+\gamma_{0}-\alpha_{3}-\gamma_{3}}+\tau^{2(\alpha_{0}-\alpha_{3})}+\tau^{\alpha_{0}+\beta_{0}-\alpha_{3}-\beta_{3}}+\tau^{2(\beta_{0}-\beta_{3})})(\|u_{0}\|_{X^{\alpha_{0}}}+\|\omega_{0}\|_{Y^{\beta_{0}}}+\|\theta_{0}\|_{Z^{\gamma_{0}}})
\end{split}
\end{equation*}
for any $\gamma_{0}\leq \gamma<1$, $\tau<t<t+h\leq T$.
Therefore, it follows from (4.21)--(4.23) that
\begin{equation}
\begin{split}
\|u(t+h)-u(t)\|_{X^{\alpha}}\leq& C_{1}(\tau)h(t-\tau)^{-\alpha}\tau^{\alpha_{0}-1}(\|u_{0}\|_{X^{\alpha_{0}}}+\|\omega_{0}\|_{Y^{\beta_{0}}}+\|\theta_{0}\|_{Z^{\gamma_{0}}}) \\
&+C_{A,\alpha,\lambda}\int^{t}_{\tau}(t-s)^{-\alpha}\|F(u,\omega,\theta)(s+h)-F(u,\omega,\theta)(s)\|_{p}ds,
\end{split}
\end{equation}
\begin{equation}
\begin{split}
\|\omega(t+h)-\omega(t)\|_{Y^{\beta}}\leq& C_{2}(\tau)h(t-\tau)^{-\beta}\tau^{\beta_{0}-1}(\|u_{0}\|_{X^{\alpha_{0}}}+\|\omega_{0}\|_{Y^{\beta_{0}}}+\|\theta_{0}\|_{Z^{\gamma_{0}}}) \\
&+C_{\Gamma,\beta,\lambda}\int^{t}_{\tau}(t-s)^{-\beta}\|G(u,\omega,\theta)(s+h)-G(u,\omega,\theta)(s)\|_{q}ds,
\end{split}
\end{equation}
\begin{equation}
\begin{split}
\|\theta(t+h)-\theta(t)\|_{Z^{\gamma}}\leq& C_{3}(\tau)h(t-\tau)^{-\gamma}\tau^{\gamma_{0}-1}(\|u_{0}\|_{X^{\alpha_{0}}}+\|\omega_{0}\|_{Y^{\beta_{0}}}+\|\theta_{0}\|_{Z^{\gamma_{0}}}) \\
&+C_{B,\gamma,\lambda}\int^{t}_{\tau}(t-s)^{-\gamma}\|H(u,\omega,\theta)(s+h)-H(u,\omega,\theta)(s)\|_{r}ds
\end{split}
\end{equation}
for any $\tau<t<t+h\leq T$, where
\begin{equation*}
C_{1}(\tau):=C(1+\tau^{1+\alpha_{0}-2\alpha_{1}}+\tau^{1+\beta_{0}-\alpha_{0}-\beta_{1}}+\tau^{1+\gamma_{0}-\alpha_{0}-\gamma_{1}}),
\end{equation*}
\begin{equation*}
C_{2}(\tau):=C(1+\tau^{1+\alpha_{0}-\alpha_{2}-\beta_{2}}+\tau^{1-\beta_{2}}+\tau^{1+\alpha_{0}-\beta_{0}-\alpha_{2}}+\tau^{1+\gamma_{0}-\beta_{0}-\gamma_{2}}),
\end{equation*}
\begin{equation*}
C_{3}(\tau):=C(1+\tau^{1+\alpha_{0}-\alpha_{3}-\gamma_{3}}+\tau^{1+2\alpha_{0}-\gamma_{0}-2\alpha_{3}}+\tau^{1+\alpha_{0}+\beta_{0}-\gamma_{0}-\alpha_{3}-\beta_{3}}+\tau^{1+2\beta_{0}-\gamma_{0}-2\beta_{3}}).
\end{equation*}
Moreover, we can obtain $L^{p}_{\sigma}$-estimates for $F(u,\omega,\theta)(t+h)-F(u,\omega,\theta)(t)$, $(L^{q})^{3}$-estimates for $G(u,\omega,\theta)(t+h)-G(u,\omega,\theta)(t)$ and $L^{r}$-estimates for $H(u,\omega,\theta)(t+h)-H(u,\omega,\theta)(t)$ with the aid of (4.24)--(4.26), consequently,
\begin{equation}
\begin{split}
\|F(u,\omega&,\theta)(t+h)-F(u,\omega,\theta)(t)\|_{p} \\
\leq& Ch\{(t-\tau)^{-\alpha_{1}}\tau^{2\alpha_{0}-\alpha_{1}-1}+(t-\tau)^{-\beta_{1}}\tau^{\beta_{0}-1}+(t-\tau)^{-\gamma_{1}}\tau^{\gamma_{0}-1}\} \\
&\times(\|u_{0}\|_{X^{\alpha_{0}}}+\|\omega_{0}\|_{Y^{\beta_{0}}}+\|\theta_{0}\|_{Z^{\gamma_{0}}}) \\
&+C\tau^{\alpha_{0}-\alpha_{1}}\int^{t}_{\tau}(t-s)^{-\alpha_{1}}\|F(u,\omega,\theta)(s+h)-F(u,\omega,\theta)(s)\|_{p}ds \\
&+C\int^{t}_{\tau}(t-s)^{-\beta_{1}}\|G(u,\omega,\theta)(s+h)-G(u,\omega,\theta)(s)\|_{q}ds \\
&+C\int^{t}_{\tau}(t-s)^{-\gamma_{1}}\|H(u,\omega,\theta)(s+h)-H(u,\omega,\theta)(s)\|_{r}ds,
\end{split}
\end{equation}
\begin{equation}
\begin{split}
\|G(u,\omega&,\theta)(t+h)-G(u,\omega,\theta)(t)\|_{q} \\
\leq& Ch\{(t-\tau)^{-\alpha_{2}}(\tau^{\alpha_{0}+\beta_{0}-\beta_{2}-1}+\tau^{\alpha_{0}-1})+(t-\tau)^{-\beta_{2}}(\tau^{\alpha_{0}+\beta_{0}-\alpha_{2}-1} \\
&+\tau^{\beta_{0}-1})+(t-\tau)^{-\gamma_{2}}\tau^{\gamma_{0}-1}\}(\|u_{0}\|_{X^{\alpha_{0}}}+\|\omega_{0}\|_{Y^{\beta_{0}}}+\|\theta_{0}\|_{Z^{\gamma_{0}}}) \\
&+C(\tau^{\beta_{0}-\beta_{2}}+1)\int^{t}_{\tau}(t-s)^{-\alpha_{2}}\|F(u,\omega,\theta)(s+h)-F(u,\omega,\theta)(s)\|_{p}ds \\
&+C(\tau^{\alpha_{0}-\alpha_{2}}+1)\int^{t}_{\tau}(t-s)^{-\beta_{2}}\|G(u,\omega,\theta)(s+h)-G(u,\omega,\theta)(s)\|_{q}ds \\
&+C\int^{t}_{\tau}(t-s)^{-\gamma_{2}}\|H(u,\omega,\theta)(s+h)-H(u,\omega,\theta)(s)\|_{r}ds,
\end{split}
\end{equation}
\begin{equation}
\begin{split}
\|H(u,\omega&,\theta)(t+h)-H(u,\omega,\theta)(t)\|_{r} \\
\leq& Ch\{(t-\tau)^{-\alpha_{3}}(\tau^{\alpha_{0}+\gamma_{0}-\gamma_{3}-1}+\tau^{2\alpha_{0}-\alpha_{3}-1}+\tau^{\alpha_{0}+\beta_{0}-\beta_{3}-1})+(t-\tau)^{-\beta_{3}}(\tau^{\alpha_{0}+\beta_{0}-\alpha_{3}-1} \\
&+\tau^{2\beta_{0}-\beta_{3}-1})+(t-\tau)^{-\gamma_{3}}\tau^{\alpha_{0}+\gamma_{0}-\alpha_{3}-1}\}(\|u_{0}\|_{X^{\alpha_{0}}}+\|\omega_{0}\|_{Y^{\beta_{0}}}+\|\theta_{0}\|_{Z^{\gamma_{0}}}) \\
&+C(\tau^{\gamma_{0}-\gamma_{3}}+\tau^{\alpha_{0}-\alpha_{3}}+\tau^{\beta_{0}-\beta_{3}})\int^{t}_{\tau}(t-s)^{-\alpha_{3}}\|F(u,\omega,\theta)(s+h)-F(u,\omega,\theta)(s)\|_{p}ds \\
&+C(\tau^{\alpha_{0}-\alpha_{3}}+\tau^{\beta_{0}-\beta_{3}})\int^{t}_{\tau}(t-s)^{-\beta_{3}}\|G(u,\omega,\theta)(s+h)-G(u,\omega,\theta)(s)\|_{q}ds \\
&+C\tau^{\alpha_{0}-\alpha_{3}}\int^{t}_{\tau}(t-s)^{-\gamma_{3}}\|H(u,\omega,\theta)(s+h)-H(u,\omega,\theta)(s)\|_{r}ds
\end{split}
\end{equation}
for any $\tau<t<t+h\leq T$.
Let $p$, $q$, $r$, $\alpha_{0}$, $\beta_{0}$ and $\gamma_{0}$ satisfy (2.24), and set
\begin{equation*}
\begin{split}
y(t)=&\|F(u,\omega,\theta)(t+h)-F(u,\omega,\theta)(t)\|_{p}+\|G(u,\omega,\theta)(t+h)-G(u,\omega,\theta)(t)\|_{q} \\
&+\|H(u,\omega,\theta)(t+h)-H(u,\omega,\theta)(t)\|_{r}.
\end{split}
\end{equation*}
By applying Lemma 4.3 for $(\tau,T-h]$ instead of $(0,T]$ to (4.27)--(4.29) and letting $\tau=t/2$, we have the following inequality:
\begin{equation}
y(t)\leq ChM(t)(\|u_{0}\|_{X^{\alpha_{0}}}+\|\omega_{0}\|_{Y^{\beta_{0}}}+\|\theta_{0}\|_{Z^{\gamma_{0}}})
\end{equation}
for any $0<t\leq T-h$, where
\begin{equation*}
\begin{split}
M(t):=&t^{2(\alpha_{0}-\alpha_{1})-1}+t^{\beta_{0}-\beta_{1}-1}+t^{\gamma_{0}-\gamma_{1}-1}+t^{\alpha_{0}+\beta_{0}-\alpha_{2}-\beta_{2}-1}+t^{\beta_{0}-\beta_{2}-1}+t^{\alpha_{0}-\alpha_{2}-1} \\
&+t^{\gamma_{0}-\gamma_{2}-1}+t^{\alpha_{0}+\gamma_{0}-\alpha_{3}-\gamma_{3}-1}+t^{2(\alpha_{0}-\alpha_{3})-1}+t^{\alpha_{0}+\beta_{0}-\alpha_{3}-\beta_{3}-1}+t^{2(\beta_{0}-\beta_{3})-1}.
\end{split}
\end{equation*}
It is obvious from (4.30) that
\begin{equation*}
F(u,\omega,\theta) \in C^{0,1}((0,T];L^{p}_{\sigma}(\Omega)),
\end{equation*}
\begin{equation*}
G(u,\omega,\theta) \in C^{0,1}((0,T];(L^{q}(\Omega))^{3}),
\end{equation*}
\begin{equation*}
H(u,\omega,\theta) \in C^{0,1}((0,T];L^{r}(\Omega)).
\end{equation*}
Therefore, Lemma 4.2 (i) yields Theorem 2.3 (ii).
Let $p$, $q$, $r$, $\alpha_{0}$, $\beta_{0}$ and $\gamma_{0}$ satisfy (2.25) in addition to (2.24).
By applying (4.6) to (II), it follows from (4.9)--(4.11), (4.30) that
\begin{equation}
t^{1+\alpha-\alpha_{0}}\|d_{t}u(t)\|_{X^{\alpha}}\leq CM_{1}(t)(\|u_{0}\|_{X^{\alpha_{0}}}+\|\omega_{0}\|_{Y^{\beta_{0}}}+\|\theta_{0}\|_{Z^{\gamma_{0}}}),
\end{equation}
\begin{equation}
t^{1+\beta-\beta_{0}}\|d_{t}\omega(t)\|_{Y^{\beta}}\leq CM_{2}(t)(\|u_{0}\|_{X^{\alpha_{0}}}+\|\omega_{0}\|_{Y^{\beta_{0}}}+\|\theta_{0}\|_{Z^{\gamma_{0}}}),
\end{equation}
\begin{equation}
t^{1+\beta-\beta_{0}}\|d_{t}\theta(t)\|_{Z^{\gamma}}\leq CM_{3}(t)(\|u_{0}\|_{X^{\alpha_{0}}}+\|\omega_{0}\|_{Y^{\beta_{0}}}+\|\theta_{0}\|_{Z^{\gamma_{0}}})
\end{equation}
for $0<t\leq T$, where
\begin{equation*}
\begin{split}
M_{1}(t):=&1+t^{1+\alpha_{0}-2\alpha_{1}}+t^{1+\beta_{0}-\alpha_{0}-\beta_{1}}+t^{1+\gamma_{0}-\alpha_{0}-\gamma_{1}}+t^{1+\beta_{0}-\alpha_{2}-\beta_{2}} \\
&+t^{1-\alpha_{2}}+t^{1+\beta_{0}-\alpha_{0}-\beta_{2}}+t^{1+\gamma_{0}-\alpha_{0}-\gamma_{2}}+t^{1+\gamma_{0}-\alpha_{3}-\gamma_{3}} \\
&+t^{1+\alpha_{0}-2\alpha_{3}}+t^{1+\beta_{0}-\alpha_{3}-\beta_{3}}+t^{1+2\beta_{0}-\alpha_{0}-2\beta_{3}},
\end{split}
\end{equation*}
\begin{equation*}
\begin{split}
M_{2}(t):=&1+t^{1+2\alpha_{0}-\beta_{0}-2\alpha_{1}}+t^{1-\beta_{1}}+t^{1+\gamma_{0}-\beta_{0}-\gamma_{1}}+t^{1+\alpha_{0}-\alpha_{2}-\beta_{2}} \\
&+t^{1+\alpha_{0}-\beta_{0}-\alpha_{2}}+t^{1-\beta_{2}}+t^{1+\gamma_{0}-\beta_{0}-\gamma_{2}}+t^{1+\alpha_{0}+\gamma_{0}-\beta_{0}-\alpha_{3}-\gamma_{3}} \\
&+t^{1+2\alpha_{0}-\beta_{0}-2\alpha_{3}}+t^{1+\alpha_{0}-\alpha_{3}-\beta_{3}}+t^{1+\beta_{0}-2\beta_{3}},
\end{split}
\end{equation*}
\begin{equation*}
\begin{split}
M_{3}(t):=&1+t^{1+2\alpha_{0}-\gamma_{0}-2\alpha_{1}}+t^{1+\beta_{0}-\gamma_{0}-\beta_{1}}+t^{1-\gamma_{1}}+t^{1+\alpha_{0}+\beta_{0}-\gamma_{0}-\alpha_{2}-\beta_{2}} \\
&+t^{1+\alpha_{0}-\gamma_{0}-\alpha_{2}}+t^{1+\beta_{0}-\gamma_{0}-\beta_{2}}+t^{1-\gamma_{2}}+t^{1+\alpha_{0}-\alpha_{3}-\gamma_{3}} \\
&+t^{1+2\alpha_{0}-\gamma_{0}-2\alpha_{3}}+t^{1+\alpha_{0}+\beta_{0}-\gamma_{0}-\alpha_{3}-\beta_{3}}+t^{1+2\beta_{0}-\gamma_{0}-2\beta_{3}}.
\end{split}
\end{equation*}
It is clear from (2.48)--(2.51) that (2.29)--(2.31) are established by (4.31)--(4.33).
\begin{remark}
Even if $p$, $q$, $r$, $\alpha_{0}$, $\beta_{0}$ and $\gamma_{0}$ satisfy only (2.13), (2.14), it can be easily seen from (4.15)--(4.17) that (2.29)--(2.31) hold for any $0\leq\alpha<\min\{1-\alpha_{1}, 1-\beta_{1}, 1-\gamma_{1}\}$, $0\leq\beta<\min\{1-\alpha_{2}, 1-\beta_{2}, 1-\gamma_{2}\}$, $0\leq\gamma<\min\{1-\alpha_{3}, 1-\beta_{3}, 1-\gamma_{3}\}$.
\end{remark}

\section{Proof of Corollaries 2.1 and 2.2}
We will prove Corollaries 2.1 and 2.2 in this section.
Since Corollary 2.2 is proved the same as in Corollary 2.1, it is essential for Corollaries 2.1 and 2.2 that we prove Corollary 2.1.

\subsection{$(W^{k,p})^{3}\times (W^{k,q})^{3}\times W^{k,r}$-estimates for linear and nonlinear terms}
We will state and prove some lemmas for $(W^{k,p})^{3}\times (W^{k,q})^{3}\times W^{k,r}$-estimates.
They admit that we obtain $(W^{k,p})^{3}$-estimates for $F(u,\omega,\theta)$, $(W^{k,q})^{3}$-estimates for $G(u,\omega,\theta)$ and $W^{k,r}$-estimates for $H(u,\omega,\theta)$.
\begin{lemma}
\rm{(i)} Let $3<p<\infty$.
Then
\begin{equation}
\|P(u\cdot\nabla)v\|_{p}\leq C\|u\|_{1,p}\|v\|_{1,p}
\end{equation}
for any $u, v \in (W^{1,p}(\Omega))^{3}$, where $C=C(p)$ is a positive constant.

\rm{(ii)} Let $k \in \mathbb{Z}$, $k>3/p$.
Then
\begin{equation}
\|P(u\cdot\nabla)v\|_{k,p}\leq C\|u\|_{k,p}\|v\|_{k+1,p}
\end{equation}
for any $u \in (W^{k,p}(\Omega))^{3}$, $v \in (W^{k+1,p}(\Omega))^{3}$, where $C=C(k,p)$ is a positive constant.
\end{lemma}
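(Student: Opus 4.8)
The plan is to reduce both estimates to two ingredients: the boundedness of the Helmholtz projection $P$ on an appropriate function space, and a pointwise product estimate for the bilinear expression $(u\cdot\nabla)v=\sum_{j=1}^{3}u_{j}\partial_{j}v$, written out componentwise as $[(u\cdot\nabla)v]_{i}=\sum_{j}u_{j}\partial_{j}v_{i}$.

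For part (i), I would first invoke the boundedness of $P$ on $(L^{p}(\Omega))^{3}$ (the same fact from \cite[Theorem 1]{Fujiwara} already used in the proof of Lemma 2.12), which reduces the claim to $\|(u\cdot\nabla)v\|_{p}\leq C\|u\|_{1,p}\|v\|_{1,p}$. Applying H\"{o}lder's inequality componentwise gives $\|(u\cdot\nabla)v\|_{p}\leq C\|u\|_{\infty}\|\nabla v\|_{p}$. Since $p>3=\dim\Omega$, the Sobolev embedding $W^{1,p}(\Omega)\hookrightarrow L^{\infty}(\Omega)$ yields $\|u\|_{\infty}\leq C\|u\|_{1,p}$, and trivially $\|\nabla v\|_{p}\leq\|v\|_{1,p}$. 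Combining these gives (5.1).

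For part (ii), the hypothesis $k>3/p$ is exactly the condition $kp>3$ under which $W^{k,p}(\Omega)$ is a Banach algebra, so that $\|fg\|_{k,p}\leq C\|f\|_{k,p}\|g\|_{k,p}$. I would apply this with $f=u_{j}\in W^{k,p}(\Omega)$ and $g=\partial_{j}v_{i}$, which belongs to $W^{k,p}(\Omega)$ precisely because $v\in(W^{k+1,p}(\Omega))^{3}$; summing over $i,j$ then gives $\|(u\cdot\nabla)v\|_{k,p}\leq C\|u\|_{k,p}\|v\|_{k+1,p}$. It remains only to pass $P$ through the $W^{k,p}$-norm.

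The main obstacle is establishing that $P$ is bounded on $(W^{k,p}(\Omega))^{3}$, since the theory of \cite{Fujiwara} supplies boundedness only on $(L^{p}(\Omega))^{3}$. I would deduce the Sobolev-space bound from the elliptic regularity of the Neumann problem underlying the Helmholtz decomposition: writing $Pw=w-\nabla\phi$ where $\phi$ solves $\Delta\phi=\mathrm{div}\,w$ in $\Omega$ with $\partial_{\nu}\phi=w\cdot\nu$ on $\partial\Omega$, the estimate $\|\nabla\phi\|_{k,p}\leq C\|w\|_{k,p}$ follows from $W^{k+1,p}$-regularity for this Neumann problem on the $C^{2,1}$-domain $\Omega$ (within the range of $k$ allowed by the boundary regularity), whence $\|Pw\|_{k,p}\leq C\|w\|_{k,p}$. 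Composing this with the algebra estimate above yields (5.2).
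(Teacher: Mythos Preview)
Your argument is correct and is essentially the content behind the paper's proof, which is merely a citation to \cite[Lemma 3.3]{Giga 3}. In fact, the very ingredients you use (Sobolev embedding $W^{1,p}\hookrightarrow L^{\infty}$ for $p>3$, the Banach algebra property of $W^{k,p}$ for $kp>3$, and boundedness of $P$ on $(W^{k,p}(\Omega))^{3}$) are exactly the ones the paper invokes in the proofs of the parallel Lemmas 5.2 and 5.5, so your write-up simply unpacks the cited result in the same spirit.
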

\begin{proof}
It is \cite[Lemma 3.3]{Giga 3}.
\end{proof}
\begin{lemma}
\rm{(i)} Let $3<p<\infty$, $1<q<\infty$.
Then
\begin{equation}
\|(u\cdot\nabla)\omega\|_{q}\leq C\|u\|_{1,p}\|\omega\|_{1,q}
\end{equation}
for any $u \in (W^{1,p}(\Omega))^{3}$, $\omega \in (W^{1,q}(\Omega))^{3}$, where $C=C(p,q)$ is a positive constant.

\rm{(ii)} Let $3<p<\infty$, $3<q<\infty$, $q\leq p$, $k \in \mathbb{Z}$, $k>3/q$.
Then
\begin{equation}
\|(u\cdot\nabla)\omega\|_{k,q}\leq C\|u\|_{k,p}\|\omega\|_{k+1,q}
\end{equation}
for any  $u \in (W^{k,p}(\Omega))^{3}$, $\omega \in (W^{k+1,q}(\Omega))^{3}$, where $C=C(k,p,q)$ is a positive constant.
\end{lemma}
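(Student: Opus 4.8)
The plan is to reduce both estimates to a Sobolev embedding together with an elementary pointwise product bound, handling the two parts separately. In each case I would write $(u\cdot\nabla)\omega=\sum_{j=1}^{3}u_{j}\partial_{j}\omega$ and bound the relevant norm of each summand.

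For part (i), the key observation is that $\Omega$ is a bounded domain in $\mathbb{R}^{3}$ and $p>3$, so the Sobolev embedding (as in \cite[Theorem 1.6.1]{Henry}) yields $W^{1,p}(\Omega)\hookrightarrow L^{\infty}(\Omega)$ and hence $\|u\|_{\infty}\leq C\|u\|_{1,p}$. I would then apply Hölder's inequality to each factor $u_{j}\partial_{j}\omega$ to obtain
\begin{equation*}
\|(u\cdot\nabla)\omega\|_{q}\leq\|u\|_{\infty}\|\nabla\omega\|_{q}\leq C\|u\|_{1,p}\|\omega\|_{1,q},
\end{equation*}
which is (5.3). This part is immediate once the embedding is in hand.

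For part (ii), I would rely on the fact that $W^{k,q}(\Omega)$ is a Banach algebra under pointwise multiplication. Since $3<q<\infty$ and $k>3/q$ give $kq>3=\dim\Omega$, we have $W^{k,q}(\Omega)\hookrightarrow L^{\infty}(\Omega)$, and a Leibniz expansion combined with Gagliardo--Nirenberg interpolation produces the algebra estimate $\|fg\|_{k,q}\leq C\|f\|_{k,q}\|g\|_{k,q}$. Because $q\leq p$ and $\Omega$ is bounded, $L^{p}(\Omega)\hookrightarrow L^{q}(\Omega)$, whence $W^{k,p}(\Omega)\hookrightarrow W^{k,q}(\Omega)$ with $\|u\|_{k,q}\leq C\|u\|_{k,p}$. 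Noting that $u_{j}\in W^{k,q}(\Omega)$ and $\partial_{j}\omega\in W^{k,q}(\Omega)$ (the latter since $\omega\in W^{k+1,q}(\Omega)$), the algebra estimate gives
\begin{equation*}
\|u_{j}\partial_{j}\omega\|_{k,q}\leq C\|u_{j}\|_{k,q}\|\partial_{j}\omega\|_{k,q}\leq C\|u\|_{k,p}\|\omega\|_{k+1,q},
\end{equation*}
and summing over $j=1,2,3$ yields (5.4).

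The hard part, to the extent there is one, is confined to part (ii): establishing (or citing) the Banach algebra property of $W^{k,q}(\Omega)$ on the bounded domain. This hinges on the subcritical embedding $W^{k,q}\hookrightarrow L^{\infty}$ supplied by $k>3/q$, together with a bounded extension operator $W^{k,q}(\Omega)\to W^{k,q}(\mathbb{R}^{3})$ so that the Leibniz rule and interpolation may be applied on all of $\mathbb{R}^{3}$; the $C^{2,1}$-regularity of $\partial\Omega$ assumed throughout is exactly what furnishes such an extension. Everything else, namely the embeddings $W^{1,p}\hookrightarrow L^{\infty}$ and $W^{k,p}\hookrightarrow W^{k,q}$ and the finite summation, is routine. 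I note in passing that $q>3$ already forces $3/q<1\leq k$, so the hypothesis $k>3/q$ holds automatically for every integer $k\geq1$, and the same scheme therefore covers all admissible $k$.
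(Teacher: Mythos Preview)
Your proof is correct and follows essentially the same route as the paper: part (i) via the Sobolev embedding $W^{1,p}(\Omega)\hookrightarrow L^{\infty}(\Omega)$ for $p>3$ followed by H\"older, and part (ii) via the Banach algebra property of $W^{k,q}(\Omega)$ for $k>3/q$ (the paper cites \cite[Theorem 4.39]{Adams}) together with $W^{k,p}(\Omega)\hookrightarrow W^{k,q}(\Omega)$ from $q\leq p$. Your additional remarks on the extension operator and the automatic validity of $k>3/q$ are sound but not needed for the argument.
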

\begin{proof}
(i) Let us notice that $W^{1,p}(\Omega)\hookrightarrow C(\overline{\Omega})$ from the Sobolev embedding theorem.
Then we obtain that
\begin{equation*}
\begin{split}
\|(u\cdot\nabla)\omega\|_{q}&\leq C\|u\|_{\infty}\|\omega\|_{1,q} \\
&\leq C\|u\|_{1,p}\|\omega\|_{1,q}.
\end{split}
\end{equation*}

(ii) It is known in \cite[Theorem 4.39]{Adams} that $W^{k,q}(\Omega)$ is a Banach algebra for any $k \in \mathbb{Z}$, $k>3/q$.
Therefore, the conclusion follows immediately from the above fact and $q\leq p$.
\end{proof}
\begin{lemma}
\rm{(i)} Let $3<p<\infty$, $1<r<\infty$.
Then
\begin{equation}
\|(u\cdot\nabla)\theta\|_{r}\leq C\|u\|_{1,p}\|\theta\|_{1,r}
\end{equation}
for any $u \in (W^{1,p}(\Omega))^{3}$, $\theta \in W^{1,r}(\Omega)$, where $C=C(p,r)$ is a positive constant.

\rm{(ii)} Let $3<p<\infty$, $3<r<\infty$, $r\leq p$, $k \in \mathbb{Z}$, $k>3/r$.
Then
\begin{equation}
\|(u\cdot\nabla)\theta\|_{k,r}\leq C\|u\|_{k,p}\|\theta\|_{k+1,r}
\end{equation}
for any  $u \in (W^{k,p}(\Omega))^{3}$, $\theta \in W^{k+1,r}(\Omega)$, where $C=C(k,p,r)$ is a positive constant.
\end{lemma}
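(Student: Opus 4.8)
The plan is to mirror the proof of Lemma 5.2 almost verbatim, the only difference being that $\theta$ is a scalar field rather than a vector field, which affects none of the estimates. For part (i), I would first invoke the Sobolev embedding theorem: since $\Omega\subset\mathbb{R}^{3}$ is bounded and $p>3$, one has $W^{1,p}(\Omega)\hookrightarrow C(\overline{\Omega})$, so in particular $\|u\|_{\infty}\leq C\|u\|_{1,p}$. Writing $(u\cdot\nabla)\theta=\sum_{i=1}^{3}u_{i}\partial_{i}\theta$ and estimating the product in $L^{r}$ by placing $u$ in $L^{\infty}$ and $\nabla\theta$ in $L^{r}$ (H\"older's inequality with the pairing $L^{\infty}\cdot L^{r}\to L^{r}$), I obtain
\[
\|(u\cdot\nabla)\theta\|_{r}\leq C\|u\|_{\infty}\|\nabla\theta\|_{r}\leq C\|u\|_{1,p}\|\theta\|_{1,r},
\]
which is exactly $(5.6)$.

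For part (ii), the hypothesis $k>3/r$ lets me use \cite[Theorem 4.39]{Adams} to conclude that $W^{k,r}(\Omega)$ is a Banach algebra, so that $\|\phi\psi\|_{k,r}\leq C\|\phi\|_{k,r}\|\psi\|_{k,r}$. The one point requiring a little care is to place $u$ itself in $W^{k,r}$: because $\Omega$ is bounded and $r\leq p$, the inclusion $L^{p}(\Omega)\hookrightarrow L^{r}(\Omega)$ upgrades componentwise to $W^{k,p}(\Omega)\hookrightarrow W^{k,r}(\Omega)$, whence $u\in(W^{k,r}(\Omega))^{3}$ with $\|u\|_{k,r}\leq C\|u\|_{k,p}$. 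Since $\partial_{i}\theta\in W^{k,r}(\Omega)$ whenever $\theta\in W^{k+1,r}(\Omega)$, applying the Banach algebra estimate to each term $u_{i}\partial_{i}\theta$ and summing over $i$ gives
\[
\|(u\cdot\nabla)\theta\|_{k,r}\leq C\sum_{i=1}^{3}\|u_{i}\|_{k,r}\|\partial_{i}\theta\|_{k,r}\leq C\|u\|_{k,p}\|\theta\|_{k+1,r},
\]
which is $(5.7)$.

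The argument is entirely routine, and I do not anticipate a genuine obstacle; the load-bearing hypotheses are precisely those listed in the statement. In (i) the condition $p>3$ is exactly what guarantees the embedding $W^{1,p}(\Omega)\hookrightarrow C(\overline{\Omega})$ in dimension three, and in (ii) the pair of conditions $k>3/r$ and $r\leq p$ supplies, respectively, the Banach algebra structure of $W^{k,r}(\Omega)$ and the embedding $W^{k,p}(\Omega)\hookrightarrow W^{k,r}(\Omega)$ needed to regard $u$ as a multiplier on $W^{k,r}(\Omega)$. The only thing to verify, then, is that these exponent conditions are the ones hypothesized, which they are; no further estimate is needed.
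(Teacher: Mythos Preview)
Your proposal is correct and follows exactly the approach the paper uses for the analogous Lemma~5.2; the paper itself simply cites \cite[Lemma~5.2]{Kakizawa} for Lemma~5.3, so your written-out argument is in fact more detailed than what appears here but matches the intended proof.
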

\begin{proof}
It is \cite[Lemma 5.2]{Kakizawa}.
\end{proof}
\begin{lemma}
\rm{(i)} Let $3<p<\infty$, $3<q<\infty$, $1<r<\infty$, $2r\leq p$, $2r\leq q$.
Then
\begin{equation}
\|\Phi(u,v;\omega,\psi)\|_{r}\leq C(1+\mu_{r})(\|u\|_{1,p}\|v\|_{1,p}+\|u\|_{1,p}\|\psi\|_{1,q}+\|v\|_{1,p}\|\omega\|_{1,q}+\|\omega\|_{1,q}\|\psi\|_{1,q})
\end{equation}
for any $u, v \in (W^{1,p}(\Omega))^{3}$, $\omega, \psi \in (W^{1,q}(\Omega))^{3}$ where $C=C(p,q,r)$ is a positive constant.

\rm{(ii)} Let $3<p<\infty$, $3<q<\infty$, $1<r<\infty$, $2r\leq p$, $2r\leq q$, $k \in \mathbb{Z}$, $k>3/p$, $k>3/q$.
Then
\begin{equation}
\begin{split}
\|\Phi(u,v;\omega,\psi)\|_{k,r}\leq& C(1+\mu_{r})(\|u\|_{k+1,p}\|v\|_{k+1,p}+\|u\|_{k+1,p}\|\psi\|_{k+1,q} \\
&+\|v\|_{k+1,p}\|\omega\|_{k+1,q}+\|\omega\|_{k+1,q}\|\psi\|_{k+1,q})
\end{split}
\end{equation}
for any $u, v \in (W^{k+1,p}(\Omega))^{3}$, $\omega, \psi \in (W^{k+1,q}(\Omega))^{3}$, where $C=C(k,p,q,r)$ is a positive constant.
\end{lemma}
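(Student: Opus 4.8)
The plan is to exploit the bilinear structure of $\Phi(u,v;\omega,\psi)=\Phi_{1}(u,v)+\Phi_{2}(u,v;\omega,\psi)+\Phi_{3}(\omega,\psi)+\Phi_{4}(\omega,\psi)+\Phi_{5}(\omega,\psi)$. First I would expand each $\Phi_{i}$ into a finite sum of pointwise products of two factors, each factor being a first-order quantity: $D(u)$ and $\mathrm{rot}u$ (and likewise for $v$) are built from $\nabla u$ and are controlled by $\|u\|_{1,p}$ (resp. $\|u\|_{k+1,p}$), while $\mathrm{div}\omega$, $\nabla\omega$ and $\omega$ itself (and likewise for $\psi$) are controlled by $\|\omega\|_{1,q}$ (resp. $\|\omega\|_{k+1,q}$). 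The expansion $\frac{1}{4}\mathrm{rot}u\cdot\mathrm{rot}v-\frac{1}{2}\mathrm{rot}u\cdot\psi-\frac{1}{2}\omega\cdot\mathrm{rot}v+\omega\cdot\psi$ of the factor behind the coefficient $4\mu_{r}$ in $\Phi_{2}$ produces exactly the four product types $\|u\|\,\|v\|$, $\|u\|\,\|\psi\|$, $\|v\|\,\|\omega\|$, $\|\omega\|\,\|\psi\|$ on the right-hand side, whereas $\Phi_{1}$ and $\Phi_{3},\Phi_{4},\Phi_{5}$ carry the fixed coefficients $\mu,c_{0},c_{a},c_{d}$ and feed only the first and last types. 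This is what separates the constant $1$ (from the $\mu$- and $c$-terms) from the factor $\mu_{r}$ (from $\Phi_{2}$), producing the prefactor $1+\mu_{r}$.

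For (i) I would estimate each product in $L^{r}$ by the Schwarz inequality with the conjugate pair $(2r,2r)$, exactly as in the proof of Lemma 2.8: $\|f\cdot g\|_{r}\leq\|f\|_{2r}\|g\|_{2r}$. Each $u,v$-factor then satisfies $\|f\|_{2r}\leq C\|f\|_{p}\leq C\|u\|_{1,p}$ and each $\omega,\psi$-factor satisfies $\|g\|_{2r}\leq C\|g\|_{q}\leq C\|\omega\|_{1,q}$, where the inclusions $L^{p}(\Omega)\hookrightarrow L^{2r}(\Omega)$ and $L^{q}(\Omega)\hookrightarrow L^{2r}(\Omega)$ hold on the bounded domain $\Omega$ precisely because $2r\leq p$ and $2r\leq q$. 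Summing over the finitely many products and collecting coefficients as above yields (i).

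For (ii) the idea is the same but with the Banach algebra property of Sobolev spaces replacing Hölder. Since $k>3/p$ and $k>3/q$, both $W^{k,p}(\Omega)$ and $W^{k,q}(\Omega)$ are Banach algebras (\cite[Theorem 4.39]{Adams}, as used in the proof of Lemma 5.2). After differentiating once, every factor lies in $W^{k,p}(\Omega)$ or $W^{k,q}(\Omega)$ with norm bounded by $\|u\|_{k+1,p}$ or $\|\omega\|_{k+1,q}$. For the products of two factors of the same integrability type (namely $\Phi_{1}$, $\Phi_{3}$, $\Phi_{4}$, $\Phi_{5}$, together with the $\mathrm{rot}u\cdot\mathrm{rot}v$ and $\omega\cdot\psi$ pieces of $\Phi_{2}$) the algebra property keeps the product in $W^{k,p}(\Omega)$ or $W^{k,q}(\Omega)$, and the bounded-domain inclusions $W^{k,p}(\Omega)\hookrightarrow W^{k,r}(\Omega)$, $W^{k,q}(\Omega)\hookrightarrow W^{k,r}(\Omega)$ (valid since $r\leq 2r\leq p$ and $r\leq 2r\leq q$) dispose of those terms.

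The one genuinely delicate point, which I expect to be the main obstacle, is the pair of mixed products $\mathrm{rot}u\cdot\psi$ and $\omega\cdot\mathrm{rot}v$ in $\Phi_{2}$, where one factor lies in $W^{k,p}(\Omega)$ and the other in $W^{k,q}(\Omega)$. The naive remedy of embedding both into $W^{k,2r}(\Omega)$ and invoking an algebra property fails, because $k>3/p$ does not force $k>3/(2r)$, so $W^{k,2r}(\Omega)$ need not be a Banach algebra. Instead I would descend to the algebra with the smaller integrability exponent: assuming for instance $p\leq q$, the bounded-domain inclusion $W^{k,q}(\Omega)\hookrightarrow W^{k,p}(\Omega)$ places both factors in the Banach algebra $W^{k,p}(\Omega)$, so their product lies in $W^{k,p}(\Omega)\hookrightarrow W^{k,r}(\Omega)$ with norm bounded by $\|u\|_{k+1,p}\|\psi\|_{k+1,q}$; the case $q\leq p$ is symmetric using $W^{k,q}(\Omega)$. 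Collecting all contributions and tracking the $\mu_{r}$-coefficient exactly as in (i) then gives (ii).
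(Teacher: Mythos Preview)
Your argument for (i) matches the paper's exactly: Schwarz with exponents $(2r,2r)$, then the inclusions $L^{p},L^{q}\hookrightarrow L^{2r}$ on the bounded domain.

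For (ii) your proof is correct but takes a different route from the paper. You invoke the Banach algebra property of $W^{k,p}(\Omega)$ and $W^{k,q}(\Omega)$ (using $k>3/p$, $k>3/q$), and then must handle the mixed products $\mathrm{rot}\,u\cdot\psi$, $\omega\cdot\mathrm{rot}\,v$ by a case split on $\min(p,q)$. The paper bypasses this entirely: it applies the Leibniz rule to $\partial^{\alpha}\Phi$ and then the Schwarz inequality termwise in $L^{r}$, obtaining directly
\[
\|\Phi(u,v;\omega,\psi)\|_{k,r}\leq C(1+\mu_{r})\bigl(\|u\|_{k+1,2r}\|v\|_{k+1,2r}+\cdots+\|\omega\|_{k+1,2r}\|\psi\|_{k+1,2r}\bigr),
\]
after which the inclusions $W^{k+1,p}(\Omega)\hookrightarrow W^{k+1,2r}(\Omega)$ and $W^{k+1,q}(\Omega)\hookrightarrow W^{k+1,2r}(\Omega)$ (from $2r\leq p$, $2r\leq q$) finish the job. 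This is more uniform---no algebra hypothesis is actually needed for the estimate, and mixed and unmixed products are treated identically---whereas your approach, while valid, genuinely uses the assumptions $k>3/p$, $k>3/q$ and requires the extra embedding-into-$W^{k,\min(p,q)}$ step.
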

\begin{proof}
(i) After applying the Schwarz inequality to $\|\Phi(u,v;\omega,\psi)\|_{r}$, we can obtain (5.7) by $W^{1,p}(\Omega)\hookrightarrow W^{1,2r}(\Omega)$, $W^{1,q}(\Omega)\hookrightarrow W^{1,2r}(\Omega)$.

(ii) It can be easily seen from the Leibniz rule and the Schwarz inequality that
\begin{equation*}
\begin{split}
\|\Phi(u,v;\omega,\psi)\|_{k,q}\leq& C(1+\mu_{r})(\|u\|_{k+1,2r}\|v\|_{k+1,2r}+\|u\|_{k+1,2r}\|\psi\|_{k+1,2r} \\
&+\|v\|_{k+1,2r}\|\omega\|_{k+1,2r}+\|\omega\|_{k+1,2r}\|\psi\|_{k+1,2r}).
\end{split}
\end{equation*}
Therefore, $W^{k+1,p}(\Omega)\hookrightarrow W^{k+1,2r}(\Omega)$, $W^{k+1,q}(\Omega)\hookrightarrow W^{k+1,2r}(\Omega)$ imply (5.8).
\end{proof}
\begin{lemma}
\rm{(i)} Let $1<p<\infty$, $1<q<\infty$, $p\leq q$.
Then
\begin{equation}
\|P\mathrm{rot}\omega\|_{p}\leq C\|\omega\|_{1,q}
\end{equation}
for any $\omega \in (W^{1,q}(\Omega))^{3}$, where $C=C(p,q)$ is a positive constant.

\rm{(ii)} Let $1<p<\infty$, $1<q<\infty$, $p\leq q$, $k \in \mathbb{Z}$, $k\geq 1$.
Then
\begin{equation}
\|P\mathrm{rot}\omega\|_{k,p}\leq C\|\omega\|_{k+1,q}
\end{equation}
for any $\omega \in (W^{k+1,q}(\Omega))^{3}$, where $C=C(k,p,q)$ is a positive constant.
\end{lemma}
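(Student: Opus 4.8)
The plan is to reduce both parts to the boundedness of the Leray projection $P=P_p$ on $(L^{p}(\Omega))^{3}$, which is available by \cite[Theorem 1]{Fujiwara}, together with the elementary fact that on a bounded domain $L^{q}(\Omega)\hookrightarrow L^{p}(\Omega)$ whenever $p\leq q$ (H\"{o}lder's inequality), so that $(W^{m,q}(\Omega))^{3}\hookrightarrow(W^{m,p}(\Omega))^{3}$ for every integer $m\geq 0$.

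For part (i) I would argue directly. Since $\mathrm{rot}\,\omega$ is a linear combination of the first-order derivatives of the components of $\omega$, one has $\|\mathrm{rot}\,\omega\|_{p}\leq C\|\omega\|_{1,p}$. Applying the boundedness of $P$ on $(L^{p}(\Omega))^{3}$ gives $\|P\mathrm{rot}\,\omega\|_{p}\leq C\|\mathrm{rot}\,\omega\|_{p}\leq C\|\omega\|_{1,p}$, and the embedding $W^{1,q}(\Omega)\hookrightarrow W^{1,p}(\Omega)$ (valid because $p\leq q$ and $\Omega$ is bounded) yields $\|\omega\|_{1,p}\leq C\|\omega\|_{1,q}$, which is (5.9).

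For part (ii) the corresponding step is the $W^{k,p}$-boundedness of $P$ on curl fields. Here I would use the Helmholtz decomposition $P\mathrm{rot}\,\omega=\mathrm{rot}\,\omega-\nabla\pi$, where, because $\mathrm{div}(\mathrm{rot}\,\omega)=0$, the pressure $\pi$ solves the Neumann problem $\Delta\pi=0$ in $\Omega$, $\partial_{\nu}\pi=\nu\cdot\mathrm{rot}\,\omega$ on $\partial\Omega$. The term $\mathrm{rot}\,\omega$ is controlled by $\|\mathrm{rot}\,\omega\|_{k,p}\leq C\|\omega\|_{k+1,p}\leq C\|\omega\|_{k+1,q}$ exactly as in (i), so it remains to bound $\|\nabla\pi\|_{k,p}$. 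Since $\omega\in(W^{k+1,q}(\Omega))^{3}\subset(W^{k+1,p}(\Omega))^{3}$ we have $\mathrm{rot}\,\omega\in(W^{k,p}(\Omega))^{3}$, and for $k\geq 1$ its normal trace satisfies $\nu\cdot\mathrm{rot}\,\omega\in W^{k-1/p,p}(\partial\Omega)$; elliptic regularity for the Neumann problem then gives $\|\pi\|_{k+1,p}\leq C\|\nu\cdot\mathrm{rot}\,\omega\|_{W^{k-1/p,p}(\partial\Omega)}\leq C\|\mathrm{rot}\,\omega\|_{k,p}\leq C\|\omega\|_{k+1,q}$. Combining the two bounds gives (5.10). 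Equivalently, one may simply invoke the $W^{k,p}$-boundedness of $P$, which is precisely this elliptic estimate packaged as an operator bound.

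The main obstacle is the step in (ii): unlike the $L^{p}$ case, $P$ does not commute with differentiation, so passing the projection through $k$ derivatives genuinely requires the Neumann regularity estimate above rather than a direct application of \cite[Theorem 1]{Fujiwara}. The delicate point is that this estimate, hence the constant $C(k,p,q)$, depends on the smoothness of $\partial\Omega$; I would therefore check that the stated boundary regularity suffices for the value of $k$ in use (the trace condition $k\geq 1$ and the $W^{k+1,p}$-solvability of the Neumann problem, which for $k=1$ is exactly the $C^{2,1}$ case) before asserting the bound for larger $k$.
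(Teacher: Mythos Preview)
Your proof of (i) is identical to the paper's. For (ii), the paper simply invokes the $W^{k,p}$-boundedness of $P$ as a known fact from \cite[Lemma 3.3]{Giga 3} and then proceeds exactly as you do: $\|P\mathrm{rot}\,\omega\|_{k,p}\leq C\|\mathrm{rot}\,\omega\|_{k,p}\leq C\|\omega\|_{k+1,p}\leq C\|\omega\|_{k+1,q}$. Your Helmholtz/Neumann argument is precisely the content behind that citation, so the two proofs coincide once you recognize the elliptic estimate as the operator bound---which you yourself note at the end. Your remark about boundary regularity is apt: the paper assumes $\partial\Omega\in C^{2,1}$, which suffices for $k=1$ (the only case actually used downstream in Lemma~5.9 and Corollary~2.1), and the citation to \cite{Giga 3} does not by itself cover arbitrary $k$ without further smoothness of $\partial\Omega$.
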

\begin{proof}
(i) Since $P$ is a bounded operator in $(L^{p}(\Omega))^{3}$ and $\|\mathrm{rot}\omega\|_{p}\leq C\|\omega\|_{1,p}$, it follows from $W^{1,q}(\Omega)\hookrightarrow W^{1,p}(\Omega)$ that we obtain (5.9).

(ii) It is known in \cite[Lemma 3.3]{Giga 3} that $P$ is a bounded operator not only in $(L^{p}(\Omega))^{3}$ but also in $(W^{k,p}(\Omega))^{3}$.
Since $\|\mathrm{rot}\omega\|_{k,p}\leq C\|\omega\|_{k+1,p}$, (5.10) follows immediately from $W^{k+1,q}(\Omega)\hookrightarrow W^{k+1,p}(\Omega)$.
\end{proof}
\begin{lemma}
\rm{(i)} Let $1<p<\infty$, $1<q<\infty$, $q\leq p$.
Then
\begin{equation}
\|\mathrm{rot}u\|_{q}\leq C\|u\|_{1,p}
\end{equation}
for any $u \in (W^{1,p}(\Omega))^{3}$, where $C=C(p,q)$ is a positive constant.

\rm{(ii)} Let $1<p<\infty$, $1<q<\infty$, $q\leq p$, $k \in \mathbb{Z}$, $k\geq 1$.
Then
\begin{equation}
\|\mathrm{rot}u\|_{k,q}\leq C\|u\|_{k+1,p}
\end{equation}
for any $\omega \in (W^{k+1,q}(\Omega))^{3}$, where $C=C(k,p,q)$ is a positive constant.
\end{lemma}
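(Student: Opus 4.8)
The plan is to prove this exactly as the companion statement Lemma 5.5 was handled, but with the roles of the exponents reversed. The essential point is that $\mathrm{rot}$ is a first-order differential operator, so $\mathrm{rot}\,u$ is built from linear combinations of the first partial derivatives of the components of $u$; hence it is controlled by one more derivative of $u$ in the \emph{same} integrability exponent, and the passage from the exponent $p$ down to the smaller exponent $q$ is obtained purely from a Sobolev inclusion valid on the bounded domain $\Omega$. Note that here the hypothesis is $q\leq p$, which is the opposite direction to the $p\leq q$ of Lemma 5.5, and this is precisely what makes the relevant inclusion point the right way.

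For part (i) I would first observe that, since $\mathrm{rot}\,u$ is a combination of first-order derivatives of $u$, one has the elementary pointwise (hence $L^{q}$) bound
\begin{equation*}
\|\mathrm{rot}\,u\|_{q}\leq C\|u\|_{1,q}
\end{equation*}
for any $u\in (W^{1,q}(\Omega))^{3}$. Because $\Omega$ is bounded and $q\leq p$, we have the continuous inclusion $W^{1,p}(\Omega)\hookrightarrow W^{1,q}(\Omega)$, so that $\|u\|_{1,q}\leq C\|u\|_{1,p}$ for any $u\in (W^{1,p}(\Omega))^{3}$. Combining these two inequalities yields (5.11).

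For part (ii) the argument is identical, carried out one differentiation order higher. Since $\mathrm{rot}$ lowers the number of available derivatives by one, one has $\|\mathrm{rot}\,u\|_{k,q}\leq C\|u\|_{k+1,q}$, and then the inclusion $W^{k+1,p}(\Omega)\hookrightarrow W^{k+1,q}(\Omega)$ (again from $q\leq p$ together with the boundedness of $\Omega$) gives $\|u\|_{k+1,q}\leq C\|u\|_{k+1,p}$, whence (5.12) follows. I would interpret the statement with $u\in (W^{k+1,p}(\Omega))^{3}$ in place of the evident misprint $\omega\in (W^{k+1,q}(\Omega))^{3}$ in the hypothesis.

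I do not expect any genuine obstacle here: both parts reduce to the trivial derivative count for $\mathrm{rot}$ followed by a standard Sobolev embedding on a bounded domain. The only substantive thing to check is that the inclusion has the correct orientation, which is guaranteed by $q\leq p$; this is exactly the dual situation to Lemma 5.5, where $p\leq q$ was used for the reverse embedding.
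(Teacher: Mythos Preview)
Your proof is correct and follows exactly the same approach as the paper: bound $\|\mathrm{rot}\,u\|_{k,q}$ by $C\|u\|_{k+1,q}$ using that $\mathrm{rot}$ is first order, then apply the inclusion $W^{k+1,p}(\Omega)\hookrightarrow W^{k+1,q}(\Omega)$ from $q\leq p$ and boundedness of $\Omega$. Your identification of the misprint in part~(ii) is also correct.
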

\begin{proof}
(i) Since $\|\mathrm{rot}u\|_{q}\leq C\|u\|_{1,q}$, it follows from $W^{1,p}(\Omega)\hookrightarrow W^{1,q}(\Omega)$ that we obtain (5.11).

(ii) Since $\|\mathrm{rot}u\|_{k,q}\leq C\|u\|_{k+1,q}$, (5.12) follows immediately from $W^{k+1,p}(\Omega)\hookrightarrow W^{k+1,q}(\Omega)$.
\end{proof}
\begin{lemma}
\rm{(i)} Let $f \in C^{0,1}(\mathbb{R};\mathbb{R}^{3})$ with the Lipschitz constant $L_{f}$, $f(0)=0$, $3<p<\infty$, $3<r<\infty$, $r\leq p$.
Then
\begin{equation}
\|Pf(\theta)\|_{p}\leq CL_{f}\|\theta\|_{1,r}
\end{equation}
for any $\theta \in W^{1,r}(\Omega)$, where $C=C(p,r)$ is a positive constant.

\rm{(ii)} Let $f \in C^{0,1}(\mathbb{R};\mathbb{R}^{3})\cap C^{1}(\mathbb{R};\mathbb{R}^{3})$ with the Lipschitz constant $L_{f}$, $f(0)=0$, $3<p<\infty$, $3<r<\infty$, $r\leq p$.
Then
\begin{equation}
\|Pf(\theta)\|_{1,p}\leq C\|\theta\|_{2,r}
\end{equation}
for any $\theta \in W^{2,r}(\Omega)$, where $C=C(p,r)$ is a positive constant.
\end{lemma}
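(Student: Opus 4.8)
The plan is to reduce both estimates to three ingredients already available: the boundedness of the Helmholtz projection $P$, the Lipschitz hypothesis on $f$ together with $f(0)=0$, and the Sobolev embeddings valid once the integrability exponent exceeds $3$; no genuinely new estimate is required. For part (i), I would first invoke that $P$ is bounded on $(L^{p}(\Omega))^{3}$ (by \cite[Theorem 1]{Fujiwara}, exactly as in the proof of Lemma 2.12), so that $\|Pf(\theta)\|_{p}\leq C\|f(\theta)\|_{p}$. Since $f$ is Lipschitz with constant $L_{f}$ and $f(0)=0$, the pointwise bound $|f(\theta)|=|f(\theta)-f(0)|\leq L_{f}|\theta|$ yields $\|f(\theta)\|_{p}\leq L_{f}\|\theta\|_{p}$. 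Finally, because $r>3$ the Sobolev embedding theorem gives $W^{1,r}(\Omega)\hookrightarrow C(\overline{\Omega})$ (as already used in the proof of Lemma 5.2 (i)), and since $\Omega$ is bounded one has $C(\overline{\Omega})\hookrightarrow L^{p}(\Omega)$; hence $\|\theta\|_{p}\leq C\|\theta\|_{1,r}$, and concatenating the three inequalities gives $(5.13)$.

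For part (ii), I would argue in parallel but now use that $P$ is bounded on $(W^{1,p}(\Omega))^{3}$ (cf.\ \cite[Lemma 3.3]{Giga 3}, as in the proof of Lemma 5.5 (ii)), so that $\|Pf(\theta)\|_{1,p}\leq C\|f(\theta)\|_{1,p}$. Splitting the $W^{1,p}$-norm into its $L^{p}$ part and its gradient part, the $L^{p}$ part is handled exactly as in (i): $\|f(\theta)\|_{p}\leq L_{f}\|\theta\|_{p}\leq C\|\theta\|_{1,r}\leq C\|\theta\|_{2,r}$. For the gradient part, the hypothesis $f\in C^{1}(\mathbb{R};\mathbb{R}^{3})$ together with $\theta\in W^{2,r}(\Omega)\subset W^{1,r}(\Omega)$ permits the chain rule $\nabla(f(\theta))=f'(\theta)\nabla\theta$ componentwise; and since $f$ is Lipschitz with constant $L_{f}$, its derivative satisfies $|f'|\leq L_{f}$ everywhere, so $|\nabla(f(\theta))|\leq L_{f}|\nabla\theta|$ and $\|\nabla(f(\theta))\|_{p}\leq L_{f}\|\nabla\theta\|_{p}$. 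It then remains to bound $\|\nabla\theta\|_{p}$ by $\|\theta\|_{2,r}$: as $\nabla\theta\in W^{1,r}(\Omega)$ and $r>3$, the same embedding chain $W^{1,r}(\Omega)\hookrightarrow C(\overline{\Omega})\hookrightarrow L^{p}(\Omega)$ gives $\|\nabla\theta\|_{p}\leq C\|\nabla\theta\|_{1,r}\leq C\|\theta\|_{2,r}$. Collecting the two parts yields $(5.14)$, the constant absorbing $L_{f}$.

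The only delicate point will be the chain rule in the gradient part of (ii): one must verify that $f(\theta)$ genuinely lies in $W^{1,p}(\Omega)$ and that $\nabla(f(\theta))=f'(\theta)\nabla\theta$ holds in the weak sense. This is the standard composition rule for a $C^{1}$ function with bounded derivative acting on a Sobolev function, the boundedness $|f'|\leq L_{f}$ being furnished by the Lipschitz hypothesis and $f(0)=0$ ensuring $f(\theta)\in L^{p}(\Omega)$ rather than merely $\nabla(f(\theta))\in L^{p}(\Omega)$. Everything else is a routine concatenation of the boundedness of $P$ and of Sobolev embeddings that are available precisely because $r>3$ (the hypothesis $r\leq p$ is not even needed for this particular estimate), so I would expect no further obstruction.
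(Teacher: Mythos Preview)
Your proposal is correct and follows essentially the same route as the paper: boundedness of $P$ on $(L^{p})^{3}$ (resp.\ $(W^{1,p})^{3}$), the Lipschitz bound $\|f(\theta)\|_{p}\leq L_{f}\|\theta\|_{p}$ (resp.\ $\|f(\theta)\|_{1,p}\leq C\|\theta\|_{1,p}$ via $f'\in C_{b}$), and then the Sobolev embedding $W^{1,r}\hookrightarrow L^{p}$ (resp.\ $W^{2,r}\hookrightarrow W^{1,p}$), which the paper states directly while you factor through $C(\overline{\Omega})$. Your observation that the hypothesis $r\leq p$ is superfluous once $r>3$ is also correct.
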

\begin{proof}
(i) Since $P$ is a bounded operator in $(L^{p}(\Omega))^{3}$ and $\|f(\theta)\|_{p}\leq L_{f}\|\theta\|_{p}$, it follows from $W^{1,r}(\Omega)\hookrightarrow L^{p}(\Omega)$ that we obtain (5.13).

(ii) It is known in \cite[Lemma 3.3]{Giga 3} that $P$ is a bounded operator not only in $(L^{p}(\Omega))^{3}$ but also in $(W^{1,p}(\Omega))^{3}$.
Since $f \in C^{0,1}(\mathbb{R};\mathbb{R}^{3})\cap C^{1}(\mathbb{R};\mathbb{R}^{3})$ implies $f' \in C_{b}(\mathbb{R};\mathbb{R}^{3})$, $\|f(\theta)\|_{1,p}\leq C\|\theta\|_{1,p}$.
Therefore, (5.14) follows immediately from $W^{2,r}(\Omega)\hookrightarrow W^{1,p}(\Omega)$.
\end{proof}
\begin{lemma}
\rm{(i)} Let $g \in C^{0,1}(\mathbb{R};\mathbb{R}^{3})$ with the Lipschitz constant $L_{g}$, $g(0)=0$, $3<q<\infty$, $3<r<\infty$, $r\leq q$.
Then
\begin{equation}
\|g(\theta)\|_{q}\leq CL_{g}\|\theta\|_{1,r}
\end{equation}
for any $\theta \in W^{1,r}(\Omega)$, where $C=C(q,r)$ is a positive constant.

\rm{(ii)} Let $g \in C^{0,1}(\mathbb{R};\mathbb{R}^{3})\cap C^{1}(\mathbb{R};\mathbb{R}^{3})$ with the Lipschitz constant $L_{g}$, $g(0)=0$, $3<q<\infty$, $3<r<\infty$, $r\leq q$.
Then
\begin{equation}
\|g(\theta)\|_{1,q}\leq C\|\theta\|_{2,r}
\end{equation}
for any $\theta \in W^{2,r}(\Omega)$, where $C=C(q,r)$ is a positive constant.
\end{lemma}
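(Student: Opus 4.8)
The plan is to mirror the proof of Lemma 5.7 almost verbatim, the only simplification being that here the nonlinearity $g(\theta)$ is not composed with the Helmholtz projection $P$, so no boundedness of $P$ need be invoked. In both parts the argument factors into two independent pieces: a pointwise Nemytskii-type bound for the composition $g(\theta)$, and a Sobolev embedding that upgrades the exponent from $r$ to $q$. The unifying observation is that $r>3$ forces, by Morrey's embedding, $W^{1,r}(\Omega)\hookrightarrow C(\overline{\Omega})$, which makes all the required embeddings hold for every finite $q$; the hypothesis $r\le q$ is then comfortably sufficient.

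For part (i): first I would note that $g\in C^{0,1}(\mathbb{R};\mathbb{R}^{3})$ with $g(0)=0$ gives the pointwise bound $|g(s)|\le L_{g}|s|$, hence $\|g(\theta)\|_{q}\le L_{g}\|\theta\|_{q}$ whenever the right-hand side is finite. Then, since $r>3$, Morrey's embedding yields $W^{1,r}(\Omega)\hookrightarrow C(\overline{\Omega})\hookrightarrow L^{q}(\Omega)$, so that $\|\theta\|_{q}\le C\|\theta\|_{1,r}$. Combining the two inequalities produces $(5.15)$ with $C=C(q,r)$. This is exactly the analogue of Lemma 5.7(i) with the operator $P$ deleted.

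For part (ii): the key additional ingredient is that $g\in C^{0,1}(\mathbb{R};\mathbb{R}^{3})\cap C^{1}(\mathbb{R};\mathbb{R}^{3})$ implies $g'\in C_{b}(\mathbb{R};\mathbb{R}^{3})$, i.e.\ $g'$ is bounded. I would apply the chain rule $\nabla(g(\theta))=g'(\theta)\nabla\theta$ and estimate $\|\nabla(g(\theta))\|_{q}\le\|g'\|_{\infty}\|\nabla\theta\|_{q}$, which together with the zeroth-order bound from part (i) gives $\|g(\theta)\|_{1,q}\le C\|\theta\|_{1,q}$. Finally the embedding $W^{2,r}(\Omega)\hookrightarrow W^{1,q}(\Omega)$, again immediate since $r>3$ gives $W^{2,r}(\Omega)\hookrightarrow C^{1}(\overline{\Omega})\hookrightarrow W^{1,q}(\Omega)$, delivers $(5.16)$.

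The only point that genuinely requires care, and which I expect to be the main (if modest) obstacle, is the rigorous justification of the chain rule $\nabla(g(\theta))=g'(\theta)\nabla\theta$ for a merely Lipschitz-and-$C^{1}$ composing function applied to a Sobolev argument $\theta\in W^{2,r}(\Omega)$. Because $r>3$ makes $\theta$ continuous (indeed $C^{1}$), this composition formula is classical, but I would state it explicitly, invoking the standard result on composing a $C^{1}$ function with bounded derivative with a $W^{1,q}$ function. Everything else reduces to checking that the exponent conditions needed for the two Sobolev embeddings are implied by $3<r\le q<\infty$, which follows at once from the Morrey range $r>3$.
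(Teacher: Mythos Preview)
Your proposal is correct and follows essentially the same approach as the paper's proof: the pointwise Lipschitz bound gives $\|g(\theta)\|_{q}\le L_{g}\|\theta\|_{q}$, followed by $W^{1,r}(\Omega)\hookrightarrow L^{q}(\Omega)$ for part (i); and $g'\in C_{b}$ gives $\|g(\theta)\|_{1,q}\le C\|\theta\|_{1,q}$, followed by $W^{2,r}(\Omega)\hookrightarrow W^{1,q}(\Omega)$ for part (ii). The paper's proof is more terse (it does not spell out the chain rule or the Morrey route to the embeddings), but the logical skeleton is identical.
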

\begin{proof}
(i) Since $\|g(\theta)\|_{q}\leq L_{g}\|\theta\|_{q}$, it follows from $W^{1,r}(\Omega)\hookrightarrow L^{q}(\Omega)$ that we obtain (5.15).

(ii) Since $g \in C^{0,1}(\mathbb{R};\mathbb{R}^{3})\cap C^{1}(\mathbb{R};\mathbb{R}^{3})$ implies $g' \in C_{b}(\mathbb{R};\mathbb{R}^{3})$, $\|g(\theta)\|_{1,q}\leq C\|\theta\|_{1,q}$.
Therefore, (5.16) follows immediately from $W^{2,r}(\Omega)\hookrightarrow W^{1,q}(\Omega)$.
\end{proof}

\subsection{Regularity of strong solutions}
It is essential for Corollary 2.1 that we prove the following lemmas:
\begin{lemma}
Let $f \in C^{0,1}(\mathbb{R};\mathbb{R}^{3})$, $f(0)=0$, $g \in C^{0,1}(\mathbb{R};\mathbb{R}^{3})$, $g(0)=0$, $p$, $q=p$ and $r$ satisfy $(2.38)$.
Then
\begin{equation*}
F(u,\omega,\theta) \in C^{0,\hat{\alpha}}((0,T];(W^{1,p}(\Omega))^{3}),
\end{equation*}
\begin{equation*}
G(u,\omega,\theta) \in C^{0,\hat{\beta}}((0,T];(W^{1,p}(\Omega))^{3}),
\end{equation*}
\begin{equation*}
H(u,\omega,\theta) \in C^{0,\hat{\gamma}}((0,T];W^{1,r}(\Omega))
\end{equation*}
for any $0<\hat{\alpha}<1$, $0<\hat{\beta}<1$, $0<\hat{\gamma}<1$.
\end{lemma}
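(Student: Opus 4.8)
The plan is to establish each of the three claims by splitting the corresponding nonlinear term into its summands and proving temporal Hölder continuity of each summand separately in the indicated Sobolev space, the final exponents $\hat\alpha,\hat\beta,\hat\gamma$ being the minima of those produced. The whole argument rests on the regularity of the strong solution already obtained: by Theorem 2.3 and Remark 4.1, for the present choice $\alpha_0=\beta_0=\gamma_0=0$ one has $u\in C^{0,\hat\alpha}((0,T];X^1)$, $\omega\in C^{0,\hat\beta}((0,T];Y^1)$ and $\theta\in C^{0,\hat\gamma}((0,T];Z^1)$. Through the embeddings of Lemma 2.4 with $\alpha=\beta=\gamma=1$ and $k=2$, namely $X^1\hookrightarrow(W^{2,p}(\Omega))^3$, $Y^1\hookrightarrow(W^{2,q}(\Omega))^3$ and $Z^1\hookrightarrow W^{2,r}(\Omega)$, the triple $(u,\omega,\theta)$ is bounded and Hölder continuous in $(W^{2,p})^3\times(W^{2,q})^3\times W^{2,r}$ on each compact subinterval $[\varepsilon,T]\subset(0,T]$; by the convention of Section 2.1, Hölder continuity on $(0,T]$ is exactly tested on such subintervals, where all these norms are finite. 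Throughout I will use that $(2.38)$ forces $q=p$, $r\le p$ and $2r\le p$, and that $p,q,r>3$ makes $1>3/p,3/q,3/r$.

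For the multilinear summands this is routine bookkeeping. I would treat the convective terms $P(u\cdot\nabla)u$, $(u\cdot\nabla)\omega$ and $(u\cdot\nabla)\theta$ by the bilinear splitting
\begin{equation*}
P(a(t+h)\cdot\nabla)b(t+h)-P(a(t)\cdot\nabla)b(t)=P\bigl((a(t+h)-a(t))\cdot\nabla\bigr)b(t+h)+P\bigl(a(t)\cdot\nabla\bigr)\bigl(b(t+h)-b(t)\bigr)
\end{equation*}
and estimating the two pieces by the part~(ii) inequalities of Lemmas 5.1, 5.2 and 5.3 with $k=1$. Each appearing difference, such as $\|u(t+h)-u(t)\|_{1,p}$ or $\|\omega(t+h)-\omega(t)\|_{2,q}$, is $O(h^{\hat\alpha})$ (resp.\ $O(h^{\hat\beta})$, $O(h^{\hat\gamma})$) by the Hölder continuity just recalled, the complementary factor being bounded on $[\varepsilon,T]$. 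The quadratic dissipation $\Phi(u;\omega)=\Phi(u,u;\omega,\omega)$ is handled the same way, expanding the increment through the multilinearity of $\Phi(\cdot,\cdot;\cdot,\cdot)$ and invoking Lemma 5.4(ii), whose hypotheses $2r\le p$, $2r\le q$ are guaranteed by $(2.38)$. The linear and zeroth-order summands $2\mu_r P\mathrm{rot}\omega$, $2\mu_r\mathrm{rot}u$ and $-4\mu_r\omega$ transmit Hölder continuity immediately via Lemmas 5.5(ii) and 5.6(ii) and the embeddings above. Since $H$ contains only convective and dissipation terms, this already yields $H(u,\omega,\theta)\in C^{0,\hat\gamma}((0,T];W^{1,r}(\Omega))$ in full.

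The genuine difficulty is confined to the Nemytskii summands $Pf(\theta)$ in $F$ and $g(\theta)$ in $G$, and this is where I expect the real work to lie. Since $f$ is Lipschitz and $\theta(t)\in Z^1\hookrightarrow W^{2,r}$, the Sobolev chain rule gives $f(\theta(t))\in(W^{1,p})^3$ with $\nabla f(\theta(t))=f'(\theta(t))\nabla\theta(t)$ and $\|f(\theta(t))\|_{1,p}\le CL_f\|\theta(t)\|_{1,p}$, so at least these terms are bounded in the target space; note also that $r>3$ gives $Z^1\hookrightarrow C^1(\overline{\Omega})$, whence $\nabla\theta(t)\in L^\infty$ on $[\varepsilon,T]$. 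For the time increment I would write
\begin{equation*}
\nabla f(\theta(t+h))-\nabla f(\theta(t))=f'(\theta(t+h))\bigl(\nabla\theta(t+h)-\nabla\theta(t)\bigr)+\bigl(f'(\theta(t+h))-f'(\theta(t))\bigr)\nabla\theta(t).
\end{equation*}
The first term is bounded by $L_f\|\theta(t+h)-\theta(t)\|_{1,p}=O(h^{\hat\gamma})$ and is harmless. The second term, dominated by $\|\nabla\theta(t)\|_\infty\,\|f'(\theta(t+h))-f'(\theta(t))\|_p$, is the obstruction, since $f'$ is merely bounded and no modulus of continuity is at hand. The redeeming structural feature is that $p>3$: then $\theta(t+h)\to\theta(t)$ uniformly on $\overline{\Omega}$, and because $\nabla\theta=0$ almost everywhere on the preimage of any Lebesgue-null set, dominated convergence gives $f'(\theta(t+h))\nabla\theta(t)\to f'(\theta(t))\nabla\theta(t)$ in $L^p$, i.e.\ continuity of $Pf(\theta)$ in $W^{1,p}$. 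Promoting this continuity to the asserted Hölder bound is the crux: I would extract a rate from the uniform estimate $\|\theta(t+h)-\theta(t)\|_\infty\le C\|\theta(t+h)-\theta(t)\|_{2,r}=O(h^{\hat\gamma})$ together with the boundedness of $\nabla\theta(t)$, controlling $\|f'(\theta(t+h))-f'(\theta(t))\|_p$ through the measure of the set on which the argument of $f'$ crosses its points of non-smoothness. The term $g(\theta)$ in $G$ is identical, using $q=p$ and $r\le q$. Collecting the exponents and taking minima then yields the asserted Hölder regularity of $F$, $G$ and $H$, with $\hat\alpha,\hat\beta,\hat\gamma$ inherited from the Hölder exponents of $(u,\omega,\theta)$ in $X^1$, $Y^1$, $Z^1$.
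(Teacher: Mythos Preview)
Your treatment of the multilinear and linear summands (convective terms, $\Phi$, $\mathrm{rot}$-terms, $-4\mu_r\omega$) is correct and is exactly the argument the paper has in mind: its one-line proof invokes Theorem~2.3(ii) for the H\"older regularity of $(u,\omega,\theta)$ in $X^1\times Y^1\times Z^1$ together with Lemmas~5.1--5.8(ii) at $k=1$, which play precisely the role of the bilinear estimates you apply after the telescoping decomposition.

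The gap is in your handling of the Nemytskii summands $Pf(\theta)$ and $g(\theta)$. You yourself flag that ``promoting this continuity to the asserted H\"older bound is the crux'' and then offer only a heuristic (``controlling $\|f'(\theta(t+h))-f'(\theta(t))\|_p$ through the measure of the set on which the argument of $f'$ crosses its points of non-smoothness''). This does not produce a H\"older rate: for a merely Lipschitz $f$ the derivative $f'$ is bounded but can be discontinuous, and a level-set/coarea argument of the kind you sketch yields at best an exponent of order $\hat\gamma/p$, certainly not every $\hat\alpha\in(0,1)$ as the lemma asserts. So the argument does not close under the hypothesis $f\in C^{0,1}$ alone.

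Observe that the paper's own proof cites Lemmas~5.7(ii) and~5.8(ii), whose hypotheses are $f,g\in C^{0,1}(\mathbb{R};\mathbb{R}^3)\cap C^1(\mathbb{R};\mathbb{R}^3)$---matching the assumptions of Corollary~2.1, which is the only consumer of Lemma~5.9. This strongly indicates the $C^{0,1}$-only hypothesis in the statement of Lemma~5.9 is a slip. Under $f\in C^1$ with $f'\in C_b$, your second term is bounded by $\omega_{f'}\bigl(\|\theta(t+h)-\theta(t)\|_\infty\bigr)\|\nabla\theta(t)\|_p$, where $\omega_{f'}$ is the modulus of continuity of $f'$ on the compact range of $\theta$; this gives continuity, and a H\"older rate once one has a modulus for $f'$. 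You should either adopt the $C^1$ hypothesis (as the paper in effect does) or weaken the conclusion from ``for any $0<\hat\alpha<1$'' to ``for some $\hat\alpha>0$''; attempting to push through with $f$ merely Lipschitz, as you do, cannot succeed.
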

\begin{proof}
It follows immediately from Theorem 2.3 (ii), Lemmas 5.1--5.8 (ii) with $k=1$.
\end{proof}
\begin{lemma}
Let $f \in C^{0,1}(\mathbb{R};\mathbb{R}^{3})$, $f(0)=0$, $g \in C^{0,1}(\mathbb{R};\mathbb{R}^{3})$, $g(0)=0$, $p$, $q=p$ and $r$ satisfy $(2.38)$.
Then
\begin{equation*}
u \in C^{0,\hat{\alpha}}((0,T];(W^{3,p}(\Omega))^{3}),
\end{equation*}
\begin{equation*}
\omega \in C^{0,\hat{\beta}}((0,T];(W^{3,p}(\Omega))^{3}),
\end{equation*}
\begin{equation*}
\theta \in C^{0,\hat{\gamma}}((0,T];W^{3,r}(\Omega))
\end{equation*}
for any $0<\hat{\alpha}<1/2$, $0<\hat{\beta}<1/2$, $0<\hat{\gamma}<1/2$.
\end{lemma}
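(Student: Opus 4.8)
The plan is to upgrade the already-established strong solution from the $W^{2,\cdot}$-regularity encoded in $X^{1}_{p}$, $Y^{1}_{p}$, $Z^{1}_{r}$ to $W^{3,\cdot}$-regularity by reading the evolution equations as stationary elliptic problems with H\"older-continuous right-hand sides and then invoking elliptic regularity to gain two spatial derivatives. Since $(u,\omega,\theta)$ is a strong solution (Theorem 2.3), it satisfies $\mathrm{(I)}$ pointwise in $(0,T]$, so
\begin{equation*}
Au=F(u,\omega,\theta)-d_{t}u, \quad \Gamma\omega=G(u,\omega,\theta)-d_{t}\omega, \quad B\theta=H(u,\omega,\theta)-d_{t}\theta.
\end{equation*}
First I would control the three right-hand sides in $W^{1,\cdot}$-spaces. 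Lemma 5.9 already gives $F(u,\omega,\theta)\in C^{0,\hat{\alpha}}((0,T];(W^{1,p}(\Omega))^{3})$, $G(u,\omega,\theta)\in C^{0,\hat{\beta}}((0,T];(W^{1,p}(\Omega))^{3})$ and $H(u,\omega,\theta)\in C^{0,\hat{\gamma}}((0,T];W^{1,r}(\Omega))$ for all exponents in $(0,1)$, so only the time-derivative terms remain to be understood.

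For those I would use Theorem 2.3 (ii): because $(2.38)$ permits $\alpha_{0}=\beta_{0}=\gamma_{0}=0$ satisfying $(2.24)$, that part of Theorem 2.3 applies and yields $d_{t}u\in C^{0,\tilde{\alpha}}((0,T];X^{1/2}_{p})$, $d_{t}\omega\in C^{0,\tilde{\beta}}((0,T];Y^{1/2}_{p})$ and $d_{t}\theta\in C^{0,\tilde{\gamma}}((0,T];Z^{1/2}_{r})$ for any $0<\tilde{\alpha},\tilde{\beta},\tilde{\gamma}<1/2$. Lemma 2.4 with $\alpha=\beta=\gamma=1/2$, $k=1$ and $s=p$ (resp. $s=r$) gives the continuous inclusions $X^{1/2}_{p},Y^{1/2}_{p}\hookrightarrow (W^{1,p}(\Omega))^{3}$ and $Z^{1/2}_{r}\hookrightarrow W^{1,r}(\Omega)$, whence $d_{t}u,d_{t}\omega$ are H\"older continuous into $(W^{1,p}(\Omega))^{3}$ and $d_{t}\theta$ into $W^{1,r}(\Omega)$ with exponents strictly below $1/2$. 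Combining this with Lemma 5.9, the three right-hand sides above lie in $C^{0,\hat{\alpha}}((0,T];(W^{1,p}(\Omega))^{3})$, $C^{0,\hat{\beta}}((0,T];(W^{1,p}(\Omega))^{3})$ and $C^{0,\hat{\gamma}}((0,T];W^{1,r}(\Omega))$ for any $0<\hat{\alpha},\hat{\beta},\hat{\gamma}<1/2$; the ceiling $1/2$ is imposed precisely by the time-derivative terms, which is exactly the H\"older exponent claimed in the statement.

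Finally I would apply elliptic regularity in the spatial variable. For $B=-\Delta$ and $\Gamma=-\Delta-\nabla\mathrm{div}$ these are the Agmon--Douglis--Nirenberg estimates for Dirichlet problems of (strongly) elliptic operators, which on a $C^{2,1}$-domain furnish a bounded solution operator from $W^{1,\cdot}(\Omega)$ data to $W^{3,\cdot}(\Omega)$; since $\theta\in D(B)$ and $\omega\in D(\Gamma)$ with $B\theta\in W^{1,r}(\Omega)$ and $\Gamma\omega\in(W^{1,p}(\Omega))^{3}$, this gives $\theta\in W^{3,r}(\Omega)$ and $\omega\in(W^{3,p}(\Omega))^{3}$ with the asserted time-H\"older continuity. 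The hard part is the velocity: $Au=-P\Delta u$ recovers $u$ only together with an associated pressure, so here I would invoke the $L^{p}$-regularity theory for the stationary Stokes system, which on a $C^{2,1}$-boundary maps $W^{1,p}$ right-hand sides (the solenoidal and zero-trace constraints being already satisfied by $u\in D(A_{p})$) to solutions $u\in(W^{3,p}(\Omega))^{3}$ with pressure in $W^{2,p}(\Omega)$. Boundedness of all three solution operators transfers the $C^{0,\cdot}$-in-time regularity of the right-hand sides verbatim, yielding $u,\omega\in C^{0,\hat{\alpha}}((0,T];(W^{3,p}(\Omega))^{3})$ and $\theta\in C^{0,\hat{\gamma}}((0,T];W^{3,r}(\Omega))$ for all exponents below $1/2$. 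The two points needing care are the borderline (equality) case of Lemma 2.4 underlying $X^{1/2}_{p}\hookrightarrow(W^{1,p}(\Omega))^{3}$, and the amount of boundary smoothness required for the $W^{1}\!\to\!W^{3}$ Stokes estimate; both are compatible with the standing $C^{2,1}$ hypothesis on $\partial\Omega$.
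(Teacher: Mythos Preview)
Your proposal is correct and follows essentially the same route as the paper: you rewrite $Au=F-d_{t}u$, $\Gamma\omega=G-d_{t}\omega$, $B\theta=H-d_{t}\theta$, control the right-hand sides in $W^{1,\cdot}$ via Lemma~5.9 together with Theorem~2.3~(ii) at level $\alpha=\beta=\gamma=1/2$ combined with the embeddings $X^{1/2}_{p},Y^{1/2}_{p}\hookrightarrow(W^{1,p})^{3}$, $Z^{1/2}_{r}\hookrightarrow W^{1,r}$, and then invert. The only cosmetic difference is that the paper encapsulates your explicit appeal to Agmon--Douglis--Nirenberg/Stokes $W^{1}\!\to\!W^{3}$ regularity in the single line $u=A^{-1}(F-d_{t}u)$, $\omega=\Gamma^{-1}(G-d_{t}\omega)$, $\theta=B^{-1}(H-d_{t}\theta)$.
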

\begin{proof}
It is clear from (2.10)--(2.12) that $X^{1/2}\hookrightarrow (W^{1,p}(\Omega))^{3}$, $Y^{1/2}\hookrightarrow (W^{1,p}(\Omega))^{3}$, $Z^{1/2}\hookrightarrow W^{1,r}(\Omega)$.
By applying Theorem 2.3 (ii) with $\alpha=1/2$, $\beta=1/2$, $\gamma=1/2$ to $(d_{t}u,d_{t}\omega,d_{t}\theta)$, the conclusion follows immediately from Lemma 5.9, $u=A^{-1}(F(u,\omega,\theta)-d_{t}u)$, $\omega=\Gamma^{-1}(G(u,\omega,\theta)-d_{t}\omega)$, $\theta=B^{-1}(H(u,\omega,\theta)-d_{t}\theta)$.
\end{proof}
\begin{lemma}
Let $f \in C^{0,1}(\mathbb{R};\mathbb{R}^{3})\cap C^{1}(\mathbb{R};\mathbb{R}^{3})$, $f(0)=0$, $g \in C^{0,1}(\mathbb{R};\mathbb{R}^{3})\cap C^{1}(\mathbb{R};\mathbb{R}^{3})$, $g(0)=0$, $p$, $q=p$ and $r$ satisfy $(2.38)$.
Then
\begin{equation*}
d_{t}F(u,\omega,\theta) \in C^{0,\hat{\alpha}}((0,T];L^{p}_{\sigma}(\Omega)),
\end{equation*}
\begin{equation*}
d_{t}G(u,\omega,\theta) \in C^{0,\hat{\beta}}((0,T];(L^{p}(\Omega))^{3}),
\end{equation*}
\begin{equation*}
d_{t}H(u,\omega,\theta) \in C^{0,\hat{\gamma}}((0,T];L^{r}(\Omega))
\end{equation*}
for any $0<\hat{\alpha}<\min\{1-\alpha_{1}, 1-\beta_{1}, 1-\gamma_{1}\}$, $0<\hat{\beta}<\min\{1-\alpha_{2}, 1-\beta_{2}, 1-\gamma_{2}\}$, $0<\hat{\gamma}<\min\{1-\alpha_{3}, 1-\beta_{3}, 1-\gamma_{3}\}$.
\end{lemma}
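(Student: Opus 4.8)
The plan is to differentiate $F$, $G$, $H$ in $t$ by the product and chain rules and then to reduce every resulting summand to one of the bounded (bi)linear maps already estimated in Lemmas 5.1--5.8, feeding in the time regularity of $(u,\omega,\theta)$ from Lemma 5.10 and of $(d_tu,d_t\omega,d_t\theta)$ from Theorem 2.3 (ii) (Remark 4.1). Concretely, I would first record
\begin{equation*}
d_tF=-P((d_tu)\cdot\nabla)u-P(u\cdot\nabla)(d_tu)+2\mu_rP\mathrm{rot}(d_t\omega)+Pf'(\theta)d_t\theta,
\end{equation*}
\begin{equation*}
d_tG=-((d_tu)\cdot\nabla)\omega-(u\cdot\nabla)(d_t\omega)-4\mu_rd_t\omega+2\mu_r\mathrm{rot}(d_tu)+g'(\theta)d_t\theta,
\end{equation*}
\begin{equation*}
d_tH=-((d_tu)\cdot\nabla)\theta-(u\cdot\nabla)(d_t\theta)+2\Phi(u,d_tu;\omega,d_t\omega),
\end{equation*}
the last identity using the bilinearity and symmetry of $\Phi$ in its two argument pairs.

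Next I would pin down the spaces in which each factor varies Hölder-continuously. By Lemma 5.10, $u,\omega\in C^{0,\hat{\alpha}}((0,T];(W^{3,p}(\Omega))^3)$ and $\theta\in C^{0,\hat{\gamma}}((0,T];W^{3,r}(\Omega))$, so all three are uniformly Hölder in time into $(W^{1,p})^3$, $(W^{1,p})^3$, $W^{1,r}$ and bounded on every $[\varepsilon,T]$. By Theorem 2.3 (ii) with $\alpha_0=\beta_0=\gamma_0=0$, choosing the fractional exponent equal to $1/2$ and invoking $X^{1/2},Y^{1/2}\hookrightarrow(W^{1,p})^3$ and $Z^{1/2}\hookrightarrow W^{1,r}$ from (2.10)--(2.12), I obtain $d_tu,d_t\omega\in C^{0,\tilde{\cdot}}((0,T];(W^{1,p})^3)$ and $d_t\theta\in C^{0,\tilde{\gamma}}((0,T];W^{1,r})$ for suitable positive exponents (with $q=p$). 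Each bilinear summand above is then the value of a fixed bounded bilinear map (Lemma 5.1 (i), 5.2 (i), 5.3 (i), or 5.4 (i)) on two arguments that are bounded and Hölder in time, and the elementary estimate $\|B(a(t),b(t))-B(a(s),b(s))\|\le\|B\|(\|a(t)-a(s)\|\,\|b(t)\|+\|a(s)\|\,\|b(t)-b(s)\|)$ gives Hölder continuity of each such product with the smaller of the two exponents. The linear rotation terms $P\mathrm{rot}(d_t\omega)$ and $\mathrm{rot}(d_tu)$ and the zeroth-order term $d_t\omega$ are handled directly by Lemmas 5.5 (i), 5.6 (i) and the relevant embedding, inheriting the exponent of $d_t\omega$ or $d_tu$.

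The delicate part, and what I expect to be the main obstacle, is the composition terms $Pf'(\theta)d_t\theta$ and $g'(\theta)d_t\theta$. Here I would split
\begin{equation*}
f'(\theta(t))d_t\theta(t)-f'(\theta(s))d_t\theta(s)=f'(\theta(t))\bigl(d_t\theta(t)-d_t\theta(s)\bigr)+\bigl(f'(\theta(t))-f'(\theta(s))\bigr)d_t\theta(s).
\end{equation*}
Since $f\in C^{0,1}\cap C^1$ forces $f'\in C_b(\mathbb{R};\mathbb{R}^3)$, the first term is bounded in $L^p$ by $\|f'\|_\infty\|d_t\theta(t)-d_t\theta(s)\|_p$ and hence is Hölder in time with the exponent of $d_t\theta$. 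The second term is the crux: because $r>3$ the embedding $W^{3,r}(\Omega)\hookrightarrow C(\overline{\Omega})$ together with Lemma 5.10 makes $\theta$ uniformly Hölder in time into $C(\overline{\Omega})$ and keeps $\theta(t,\cdot)$ in a fixed compact interval on each $[\varepsilon,T]$, so that $\|f'(\theta(t))-f'(\theta(s))\|_\infty\,\|d_t\theta(s)\|_p$ must be controlled through the modulus of continuity of $f'$ on that compact range. This is the step demanding the most care, since mere bounded continuity of $f'$ yields only continuity of $t\mapsto f'(\theta(t))$; extracting a genuine Hölder exponent for the second term is where the high spatial (hence temporal) regularity of $\theta$ and the Lipschitz structure of $f$ must be used carefully, and the identical treatment applies to $g'(\theta)d_t\theta$. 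Collecting the Hölder exponents of the individual summands and taking their minimum then gives the claimed $C^{0,\hat{\alpha}}$, $C^{0,\hat{\beta}}$, $C^{0,\hat{\gamma}}$ memberships.
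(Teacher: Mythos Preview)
Your overall approach matches the paper's one-line proof: differentiate $F$, $G$, $H$ in $t$, invoke Theorem 2.3(ii) (at the exponents $\alpha_i,\beta_i,\gamma_i$) for the H\"older regularity of $u,\omega,\theta$ and of $d_tu,d_t\omega,d_t\theta$, and feed each summand through the bounded (bi)linear maps of Lemmas 5.1--5.8(i). Your detour through Lemma~5.10 and the embedding $X^{1/2}\hookrightarrow (W^{1,p})^3$ is a harmless variation on the paper's direct use of the spaces $X^{\alpha_i}$, $Y^{\beta_i}$, $Z^{\gamma_i}$.

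The step you flag as ``demanding the most care'' is, however, a genuine gap that you have not closed. With only $f\in C^{0,1}(\mathbb{R};\mathbb{R}^3)\cap C^1(\mathbb{R};\mathbb{R}^3)$ one gets $f'\in C_b$, but $f'$ need satisfy no H\"older condition whatsoever; hence there is no way to bound $\|f'(\theta(t))-f'(\theta(s))\|_\infty$ by any positive power of $|t-s|$, no matter how regular $\theta$ is in time. Your appeal to ``the Lipschitz structure of $f$'' does not help here: Lipschitz continuity of $f$ controls first differences of $f$, not of $f'$. Consequently the second piece of your splitting yields only continuity of $t\mapsto Pf'(\theta(t))d_t\theta(t)$ in $L^p_\sigma(\Omega)$, not H\"older continuity, and the stated conclusion $d_tF\in C^{0,\hat{\alpha}}$ does not follow from the argument as written. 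The paper's terse proof gives no alternative mechanism for this term either; the natural repair is to strengthen the hypothesis to $f,g\in C^{1,\delta}$ for some $\delta>0$ (so that $f',g'$ are locally H\"older), after which your splitting goes through on each compact subinterval of $(0,T]$ with exponent the minimum of those coming from the two pieces.
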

\begin{proof}
It follows immediately from Theorem 2.3 (ii) with $\alpha=\alpha_{1},\alpha_{2}, \alpha_{3}$, $\beta=\beta_{1}, \beta_{2},\beta_{3}$, $\gamma=\gamma_{1},\gamma_{2}, \gamma_{3}$, Lemmas 5.1--5.8 (i).
\end{proof}
\begin{lemma}
Let $f \in C^{0,1}(\mathbb{R};\mathbb{R}^{3})\cap C^{1}(\mathbb{R};\mathbb{R}^{3})$, $f(0)=0$, $g \in C^{0,1}(\mathbb{R};\mathbb{R}^{3})\cap C^{1}(\mathbb{R};\mathbb{R}^{3})$, $g(0)=0$, $p$, $q=p$ and $r$ satisfy $(2.38)$.
Then
\begin{equation*}
d_{t}u \in C^{0,\hat{\alpha}}((0,T];X^{1}), \ d^{2}_{t}u \in C^{0,\tilde{\alpha}}((0,T];X^{\alpha}),
\end{equation*}
\begin{equation*}
d_{t}\omega \in C^{0,\hat{\beta}}((0,T];Y^{1}), \ d^{2}_{t}\omega \in C^{0,\tilde{\beta}}((0,T];Y^{\beta}),
\end{equation*}
\begin{equation*}
d_{t}\theta \in C^{0,\hat{\gamma}}((0,T];Z^{1}), \ d^{2}_{t}\theta \in C^{0,\tilde{\gamma}}((0,T];Z^{\gamma})
\end{equation*}
for any $0<\hat{\alpha}<\min\{1-\alpha_{1}, 1-\beta_{1}, 1-\gamma_{1}\}$, $0<\hat{\beta}<\min\{1-\alpha_{2}, 1-\beta_{2}, 1-\gamma_{2}\}$,  $0<\hat{\gamma}<\min\{1-\alpha_{3}, 1-\beta_{3}, 1-\gamma_{3}\}$, $0\leq\alpha<\min\{1-\alpha_{1}, 1-\beta_{1}, 1-\gamma_{1}\}$, $0\leq\beta<\min\{1-\alpha_{2}, 1-\beta_{2}, 1-\gamma_{2}\}$, $0\leq\gamma<\min\{1-\alpha_{3}, 1-\beta_{3}, 1-\gamma_{3}\}$, $0<\tilde{\alpha}<\min\{1-\alpha_{1}, 1-\beta_{1}, 1-\gamma_{1}\}-\alpha$, $0<\tilde{\beta}<\min\{1-\alpha_{2}, 1-\beta_{2}, 1-\gamma_{2}\}-\beta$, $0<\tilde{\gamma}<\min\{1-\alpha_{3}, 1-\beta_{3}, 1-\gamma_{3}\}-\gamma$.
\end{lemma}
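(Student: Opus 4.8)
The plan is to show that each time derivative $d_t u$, $d_t\omega$, $d_t\theta$ is itself a strong solution of a linear evolution equation whose inhomogeneous term is the (Hölder continuous) time derivative $d_t F(u,\omega,\theta)$, $d_t G(u,\omega,\theta)$, $d_t H(u,\omega,\theta)$, and then to invoke the regularity Lemma 4.2 together with its counterparts for $\Gamma$ and $B$, exactly as they were used to pass from a mild solution to a strong solution in the proof of Theorem 2.3. In effect I want to repeat the bootstrap one level up, the new data being supplied by Lemma 5.11.

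First I would fix an arbitrary $\tau$ with $0<\tau<T$ and differentiate the variation-of-constants representation $(3.39)$ in $t$ on $(\tau,T]$. Writing $\mathcal{F}_\tau(t)=\int_\tau^t e^{-(t-s)A}F(u,\omega,\theta)(s)\,ds$ and changing variables $s=t-\sigma$, differentiation produces a boundary term $e^{-(t-\tau)A}F(u,\omega,\theta)(\tau)$ together with the integral $\int_\tau^t e^{-(t-s)A}d_t F(u,\omega,\theta)(s)\,ds$; this is legitimate because Lemma 5.11 guarantees that $F(u,\omega,\theta)$ is continuously differentiable in $t$ with Hölder continuous derivative. Differentiating the homogeneous part gives $-Ae^{-(t-\tau)A}u(\tau)=e^{-(t-\tau)A}(-Au(\tau))$, and the evolution equation (I) lets me substitute $-Au(\tau)=d_t u(\tau)-F(u,\omega,\theta)(\tau)$, which makes sense since $u(\tau)\in X^1=D(A)$. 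The two occurrences of $e^{-(t-\tau)A}F(u,\omega,\theta)(\tau)$ cancel, leaving
\[
d_t u(t)=e^{-(t-\tau)A}d_t u(\tau)+\int_\tau^t e^{-(t-s)A}d_t F(u,\omega,\theta)(s)\,ds,
\]
and analogous identities for $d_t\omega$ and $d_t\theta$ with $\Gamma,G$ and $B,H$.

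Second, I would apply Lemma 4.2 and its $\Gamma$, $B$ analogs to the integral terms above on $(\tau,T]$. By Lemma 5.11 the functions $d_t F(u,\omega,\theta)$, $d_t G(u,\omega,\theta)$, $d_t H(u,\omega,\theta)$ are Hölder continuous with exponents reaching up to $\min\{1-\alpha_1,1-\beta_1,1-\gamma_1\}$, $\min\{1-\alpha_2,1-\beta_2,1-\gamma_2\}$, $\min\{1-\alpha_3,1-\beta_3,1-\gamma_3\}$ respectively, so Lemma 4.2 places each integral in $C^{0,\hat\alpha}((\tau,T];X^1)$ with time derivative in $C^{0,\tilde\alpha}((\tau,T];X^\alpha)$ for precisely the ranges of $\hat\alpha$, $\alpha$, $\tilde\alpha$ asserted in the statement. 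The homogeneous contributions $e^{-(t-\tau)A}d_t u(\tau)$ and so on are smooth for $t>\tau$ because $d_t u(\tau)\in X^\alpha$ by Theorem 2.3(ii). Since $\tau\in(0,T)$ is arbitrary and the admissible exponent ranges do not depend on $\tau$, the memberships $d_t u\in C^{0,\hat\alpha}((0,T];X^1)$, $d^2_t u\in C^{0,\tilde\alpha}((0,T];X^\alpha)$, and their $\omega$, $\theta$ counterparts, hold on all of $(0,T]$.

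The main obstacle is the rigorous justification of the first step: one must verify that $F(u,\omega,\theta)$ is genuinely $C^1$ in time — not merely that the difference quotients of $u$ converge — so that differentiating the convolution under the integral sign and obtaining the cancellation of the two boundary terms is valid. This rests on combining the strong-solution regularity $d_t u\in C^{0,\tilde\alpha}((0,T];X^\alpha)$ from Theorem 2.3(ii) with the chain rule for $f,g\in C^1$ and the product and embedding estimates of Lemmas 5.1--5.8, which is exactly the content recorded in Lemma 5.11. Once the integral identity for $d_t u$ (and its analogs) is secured, the remainder is a direct transcription of the argument already carried out for Theorem 2.3.
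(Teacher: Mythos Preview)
Your proposal is correct and follows essentially the same route as the paper: derive the variation-of-constants identity $d_t u(t)=e^{-(t-\tau)A}d_t u(\tau)+\int_\tau^t e^{-(t-s)A}d_t F(u,\omega,\theta)(s)\,ds$ (and its $\Gamma,B$ analogs) by differentiating in $t$, then invoke Lemma 4.2 together with the H\"older continuity of $d_t F$, $d_t G$, $d_t H$ supplied by Lemma 5.11. The paper states this more tersely (``differentiate (I) with respect to $t$''), but your explicit derivation of the cancellation of the boundary terms and your remark that Lemma 5.11 is what licenses the differentiation are exactly the points the paper is compressing.
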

\begin{proof}
Lemma 5.11 admits that we differentiate (I) with respect to $t$ and obtain the following abstract integral equations:
\begin{equation*}
\begin{split}
&d_{t}u(t)=e^{-(t-\tau)A}d_{t}u(\tau)+\displaystyle\int^{t}_{\tau}e^{-(t-s)A}d_{t}F(u,\omega,\theta)(s)ds, \\
&d_{t}\omega(t)=e^{-(t-\tau)\Gamma}d_{t}\omega(\tau)+\displaystyle\int^{t}_{\tau}e^{-(t-s)\Gamma}d_{t}G(u,\omega,\theta)(s)ds, \\
&d_{t}\theta(t)=e^{-(t-\tau)B}d_{t}\theta(\tau)+\displaystyle\int^{t}_{\tau}e^{-(t-s)B}d_{t}H(u,\omega,\theta)(s)ds
\end{split}
\end{equation*}
for any $0<\tau\leq t\leq T$.
Therefore, the conclusion follows immediately from Lemmas 4.2 (\cite[Lemma 2.14]{Fujita}) and 5.11.
\end{proof}
It follows from the Sobolev embedding theorem that $W^{k+1,p}(\Omega)\hookrightarrow C^{k,\alpha}(\overline{\Omega})$, $W^{k+1,p}(\Omega)$ $\hookrightarrow C^{k,\beta}(\overline{\Omega})$, $W^{k+1,r}(\Omega)\hookrightarrow C^{k,\gamma}(\overline{\Omega})$ for any $k \in \mathbb{Z}$, $k\geq 0$, $0<\alpha<1-3/p$, $0<\beta<1-3/p$, $0<\gamma<1-3/r$.
Therefore, Lemmas 5.10 and 5.12 yield Corollary 2.1.



\begin{thebibliography}{99}

\bibitem[1]{Adams}R.A. Adams, J.J.F. Fournier, Sobolev Spaces (Second Edition), Academic Press, 2003.

\bibitem[2]{Fujita}H. Fujita, T. Kato, On the Navier-Stokes initial value problem. I, Arch. Ration. Mech. Anal. 16 (1964), 269--315.

\bibitem[3]{Fujiwara}D. Fujiwara, H. Morimoto, An $L_{r}$-theorem of the Helmholtz decomposition of vector fields, J. Fac. Sci. Univ. Tokyo Sect. IA, Math. 24 (1977), 685--700.

\bibitem[4]{Giga 1}Y. Giga, Analyticity of the semigroup generated by the Stokes operator in $L_{r}$ spaces, Math. Z. 178 (1981), 297--329.

\bibitem[5]{Giga 2}Y. Giga, Domains of fractional powers of the Stokes operator in $L_{r}$ spaces, Arch. Ration. Mech. Anal. 89 (1985), 251--265.

\bibitem[6]{Giga 3}Y. Giga, T. Miyakawa, Solutions in $L_{r}$ of the Navier-Stokes initial value problem, Arch. Ration. Mech. Anal. 89 (1985), 267--281.

\bibitem[7]{Henry}D. Henry, Geometric Theory of Semilinear Parabolic Equations (Lecture Notes in Mathematics 840), Springer-Verlag, 1981.

\bibitem[8]{Hishida}T. Hishida, Existence and regularizing properties of solutions for the nonstationary convection problem, Funkcial. Ekvac. 34 (1991), 449--474.

\bibitem[9]{Kagei}Y. Kagei, M. Skowron, Nonstationary flows of nonsymmetric fluids with thermal convection, Hiroshima Math. J. 23 (1993), 343--363.

\bibitem[10]{Kakizawa}R. Kakizawa, The initial value problem for motion of incompressible viscous and heat-conductive fluids in Banach spaces, Hiroshima Math. J. 40 (2010), to appear.

\bibitem[11]{Lamb}H. Lamb, Hydrodynamics (Sixth Edition), Cambridge University Press, 1932.

\bibitem[12]{Lukaszewicz 1}G. {\L}ukaszewicz, On non-stationary flows of incompressible asymmetric flows, Math. Methods Appl. Sci. 13 (1990), 219--232.

\bibitem[13]{Lukaszewicz 2}{G. \L}ukaszewicz, Micropolar Fluids: Theory and Applications, Birkh\"{a}user, 1999.

\bibitem[14]{Pazy}A. Pazy, Semigroups of Linear Operators and Applications to Partial Differential Equations (Applied Mathematical Sciences 44), Springer-Verlag, 1983.

\bibitem[15]{Serrin}J. Serrin, Mathematical Principles of Classical Fluid Mechanics (Fluid Dynamics I, Encyclopedia of Physics VIII/1), Springer-Verlag, 1959.

\bibitem[16]{Yamaguchi}N. Yamaguchi, Existence of global strong solution to the micropolar fluid system in a bounded domain, Math. Methods Appl. Sci. 28 (2005), 1507--1526.

\bibitem[17]{Shibata}Y. Shibata, R. Shimada, On a generalized resolvent estimate for the Stokes system with Robin boundary condition, J. Math. Soc. Japan 59 (2007), 469--519.

\end{thebibliography}
\end{document}